\def\dbar{{\mathchar'26\mkern-12mu d}} 
\newcommand{\Smooth}{\ensuremath{{\mathcal C}^{\infty}}}
\newcommand{\G}{\ensuremath{\mathcal{G}}}
\newcommand{\Greg}{\ensuremath{\mathcal{G}}^{\infty}}
\newcommand{\GLtwo}{\ensuremath{\mathcal{G}}_{L^2}}
\newcommand{\GregLtwo}{\ensuremath{\mathcal{G}}^{\infty}_{L^2}}%
\numberwithin{equation}{section}
\newtheorem{thm}{Theorem}[section]
\newtheorem{defn}[thm]{Definition}
\newtheorem{lem}[thm]{Lemma}
\newtheorem{prop}[thm]{Proposition}
\newtheorem{cor}[thm]{Corollary}
\theoremstyle{definition}
\newtheorem{rem}[thm]{Remark}
\newtheorem{ex}[thm]{Example}
\begin{document}

\title[Factorization of hyperbolic operators]{Factorization of second-order strictly hyperbolic operators with logarithmic slow scale coefficients and generalized microlocal approximations}

\author[M. Glogowatz]{Martina Glogowatz}

\address{Faculty of Mathematics, University of Vienna, Austria}
\email{martina.glogowatz@univie.ac.at}

\date{\today}
\subjclass[2010]{35S05, 46F30.}
\keywords{hyperbolic equations and systems; algebras of generalized functions.}

\thanks{Partially supported by FWF grant P24420 'Spectral invariants: Index and Noncommutative Residue'}

\begin{abstract}
We give a factorization procedure for a strictly hyperbolic partial differential operator of second order with logarithmic slow scale coefficients. From this we can microlocally diagonalize the full wave operator which results in a coupled system of two first-order pseudodifferential equations in a microlocal sense. Under the assumption that the full wave equation is microlocal regular in a fixed domain of the phase space, we can approximate the problem by two one-way wave equations where a dissipative term is added to suppress singularities outside the given domain. We obtain well-posedness of the corresponding Cauchy problem for the approximated one-way wave equation with a dissipative term.
\end{abstract}
\maketitle 
\setcounter{tocdepth}{1}
\tableofcontents
\setlength{\parskip}{10pt plus 2pt minus 1pt}
\section{Basic notions}
In this section we specify the basic notions that will be needed for our constructions. As the problem is treated within the framework of Colombeau algebras we refer to the literature \cite{Colombeau:85, Moe:92, NPS:98, GKOS:01, GarettoGramchevMoe:05, Garetto:08} for a systematic treatment in this field. 

One of the main objects in our setting are Colombeau generalized functions based on $\GLtwo$ which were first introduced in \cite{BO:92}. The elements in this algebra are given by equivalence classes $u := [(u_{\varepsilon})_{\varepsilon\in (0,1]}]$ of nets of regularizing functions $u_\varepsilon$ in the Sobolev space $H^{\infty}=\cap_{k \in \mathbb{Z}} H^k$ corresponding to certain asymptotic seminorm estimates. More precisely, we denote by $\mathcal{M}_{H^{\infty}}$ the nets of moderate growth whose elements are characterized by the property
\begin{equation*}
\forall \alpha \in \mathbb{N}^n \ \exists N \in \mathbb{N}: \lVert \partial^{\alpha} u_{\varepsilon} \rVert_{L^{2}(\mathbb{R}^n)} = \mathcal{O}({\varepsilon}^{-N}) \ \mbox{as} \ {\varepsilon} \to 0.
\end{equation*}
Negligible nets will be denoted by $\mathcal{N}_{H^{\infty}}$ and are nets in $\mathcal{M}_{H^{\infty}}$ whose elements satisfy the following additional condition
\begin{equation*}
\forall q \in \mathbb{N}: \lVert u_{\varepsilon} \rVert_{L^{2}(\mathbb{R}^n)} = \mathcal{O}({\varepsilon}^{q}) \ \mbox{as} \ {\varepsilon} \to 0.
\end{equation*}
Then the algebra of generalized functions based on $L^{2}$-norm estimates is defined as the factor space $\mathcal{G}_{H^{\infty}} = \mathcal{M}_{H^{\infty}} / \mathcal{N}_{H^{\infty}}$. By abuse of notation, we continue to write $\GLtwo (\mathbb{R}^n)$ for $\G_{H^{\infty}}$. For simplicity, we shall also use the notation $(u_\varepsilon)_{\varepsilon}$ instead of $(u_\varepsilon)_{\varepsilon \in (0,1]}$ throughout the paper. 

Using \cite[Theorem 2.7]{BO:92}, we first note that the distributions $H^{-\infty} = \cup_{k \in \mathbb{Z}} H^k$ are linearly embedded in $\GLtwo(\mathbb{R}^n)$ by convolution with a mollifier $\varphi_\varepsilon(x) = \varepsilon^{-n} \varphi(\varepsilon^{-1}x)$ where $\varphi \in \mathcal{S}(\mathbb{R}^n)$ is a Schwartz function such that
\begin{eqnarray}\label{mollifier}
\int \! \varphi(x) \, dx  =  1, \qquad \int \! x^{\alpha}  \varphi(x) \, dx  =  0 \quad \mbox{for all} \ \alpha \in \mathbb{N}^n, \ |\alpha| \geq 1.
\end{eqnarray}
Further, by the same result, $H^{\infty}(\mathbb{R}^n)$ is embedded as a subalgebra of $\GLtwo (\mathbb{R}^n)$. 

More generally, we introduce Colombeau algebras based on a locally convex vector space $E$ topologized through a family of seminorms $\{ p_i \}_{i \in I}$ as in \cite[Section 1]{GarMoe:2011a}. To continue, the elements
\begin{equation}\label{eqn:loc_convex_VS}
\begin{split}
\ensuremath{\mathcal{M}}_{E} &:= \{ (u_{\varepsilon})_{\varepsilon} \in E^{(0,1]} \hspace{2pt} | \hspace{2pt} \forall i \in I  \ \exists N \in \mathbb{N}: p_i( u_{\varepsilon}) = \mathcal{O} ({\varepsilon}^{-N}) \ \text{as } \varepsilon \to 0 \} \\
\ensuremath{\mathcal{M}}_E^{\infty} &:= \{ (u_{\varepsilon})_{\varepsilon} \in E^{(0,1]} \hspace{2pt} | \hspace{2pt} \exists N \in \mathbb{N} \ \forall i \in I:  p_i( u_{\varepsilon}) = \mathcal{O} ({\varepsilon}^{-N}) \ \text{as} \ {\varepsilon} \to 0\} \\
\ensuremath{\mathcal{M}}_E^{lsc} &:= \{ (u_{\varepsilon})_{\varepsilon} \in E^{(0,1]} \hspace{2pt} | \hspace{2pt} \forall i \in I  \ \exists (\omega_\varepsilon)_\varepsilon \in \Pi_{lsc}:  p_i( u_{\varepsilon}) = \mathcal{O} (\omega_\varepsilon) \ \text{as} \ {\varepsilon} \to 0\} \\
\ensuremath{\mathcal N}_{E} &:= \{ (u_{\varepsilon})_{\varepsilon} \in E^{(0,1]} \hspace{2pt} | \hspace{2pt} \forall i \in I  \ \forall q \in \mathbb{N}: p_i( u_{\varepsilon}) = \mathcal{O} ({\varepsilon}^{q}) \ \text{as} \ {\varepsilon} \to 0\}
\end{split}
\end{equation}
are said to be $E$-moderate, $E$-regular, $E$-moderate of logarithmic slow scale type and $E$-negligible, respectively. 

Then, $\ensuremath{\mathcal N}_{E}$ is an ideal in $\mathcal{M}_{E}$ and the space of Colombeau algebra based on $E$ is defined by the factor space $\ensuremath{{\mathcal G}_{E}} = \ensuremath{\mathcal{M}}_{E} / \ensuremath{\mathcal N}_{E}$ and possesses the structure of a $\widetilde{\mathbb{C}}$-module. The space of the regular Colombeau algebra $\ensuremath{{\mathcal G}_{E}}^{\infty} = \ensuremath{\mathcal{M}}_{E}^{\infty} / \ensuremath{\mathcal N}_{E}$ is again a $\widetilde{\mathbb{C}}$-module whereas the space of the logarithmic slow scale algebra $\ensuremath{{\mathcal G}_{E}}^{lsc} = \ensuremath{\mathcal{M}}_{E}^{lsc} / \ensuremath{\mathcal N}_{E}$ is a $\mathbb{C}$-module. 
\begin{ex}
Setting $E = \mathbb{C}$ one gets the ring of complex generalized numbers $\widetilde{\mathbb{C}} = \G_{\mathbb{C}}$ with the absolute value as the corresponding seminorm. Furthermore, we denote by $\widetilde{\mathbb{R}} = \G_{\mathbb{R}}$ the ring of real generalized numbers.

Further, let  $\Omega$ be an open subset of $\mathbb{R}^n$. Then the Colombeau algebra $\G(\Omega) = \mathcal{E}_M(\Omega) / \ensuremath{{\mathcal N}}(\Omega)$ is obtained by taking $E=\mathcal{C}^{\infty}(\Omega)$ endowed with the topology induced by the family of seminorms $p_{K,i}(f)=\sup \{ |\partial^{\alpha}f(x)| : x \in K, |\alpha| \le i \}$ with varying $K \Subset \Omega$ and $i \in \mathbb{N}$ for some fixed $f \in \mathcal{C}^{\infty}(\Omega)$.

Another example is the Colombeau algebra $\G_{p,p}(\Omega)$, $1 \le p \le \infty$, when $E$ is set to $W^{\infty,p}(\Omega)$ and the topology is determined by the collection of seminorms $p_i(f) = \sup\{ || \partial^{\alpha} f ||_p : |\alpha| \le i \}$, $f \in W^{\infty,p}(\Omega)$, as $i$ varies over $\mathbb{N}$. We note that $\GLtwo = \G_{2,2}$. For more general Colombeau algebras based on Sobolev spaces we refer to \cite[Section 2]{BO:92}. Using the notation there we have the following equivalences: $\mathcal{E}_{M,p}(\Omega) = \mathcal{M}_{W^{\infty,p}(\Omega)}$ and $\mathcal{N}_{p,p}(\Omega) = \mathcal{N}_{W^{\infty,p}(\Omega)}$. 
\end{ex}
As different growth types are crucial in regularity theory we introduce the set of logarithmic slow scale nets by
\begin{equation*}
\begin{split}
\Pi_{lsc} := \Big\{ (\omega_\varepsilon)_\varepsilon \in \mathbb{R}^{(0,1]} \ \big| & \ \exists \eta \in (0,1] \ \exists c > 0 \ \forall \varepsilon \in (0,\eta]: c \le \omega_\varepsilon,  \\ 
\ & \ \exists \eta \in (0,1] \ \forall p \ge 0 \ \exists c_p > 0: |\omega_\varepsilon|^p \le c_p \log \Big(\frac{1}{\varepsilon} \Big) \ \ \ \varepsilon \in (0,\eta] \Big\}.
\end{split}
\end{equation*}
Further, we call a net $(\omega_\varepsilon)_\varepsilon \in \Pi_{lsc}$ logarithmic slow scale strictly nonzero if there exists $(r_\varepsilon)_\varepsilon \in \Pi_{lsc}$ and an $\eta \in (0,1]$ such that $|\omega_\varepsilon| \ge 1/r_\varepsilon$ for $\varepsilon \in (0,\eta]$. 
\section{Pseudodifferential calculus}

In this section we introduce a general calculus for pseudodifferential operators which are standard quantizations of generalized symbols. Since most of the techniques are similar to the usual theory of pseudodifferential operators we also refer to \cite{Hoermander:3,Taylor:81}. A detailed discussion on pseudodifferential operators with Colombeau generalized symbols can be found in \cite{GarettoGramchevMoe:05,Garetto:08}. As usual, we write $D_{x_j}=-i\partial_{x_j} = -i\frac{\partial}{\partial_{x_j}}$.

We shall initially discuss the main notions of generalized symbols. As already indicated above we will study symbols which satisfy asymptotic growth conditions with respect to $(\omega_\varepsilon)_\varepsilon \in \Pi_{lsc}$. As usual, we use the notation $\langle \xi \rangle := (1 + |\xi|^2)^{1/2}$.

We let $\rho, \delta \in [0,1]$ and $m \in \mathbb{R}$. We denote by $S^m_{\rho,\delta} = S^m_{\rho,\delta}(\mathbb{R}^n \times \mathbb{R}^n)$ the set of symbols of order $m$ and type $(\rho,\delta)$ as first introduced by H\"ormander in \cite[Definition 2.1]{Hoermander:67}. Since the symbol class $S^m_{\rho,\delta}$ satisfies global estimates we remark that our space $\mathcal{M}_{S^m_{\rho,\delta}}$ is different to that in \cite[Section 1.4]{GHO:09}. 

As in the following, we will typically encounter subspaces of $\mathcal{M}_{S^m_{\rho,\delta}}$ subjected to logarithmic slow scale asymptotics we can choose in (\ref{eqn:loc_convex_VS}) $E = S^m_{\rho,\delta}$ and obtain symbols of the form:

\begin{defn}\label{defn:lscSymbol}
Let $m \in \mathbb{R}$. The set of moderate logarithmic slow scale symbols $S^m_{\rho,\delta,lsc} (\mathbb{R}^n \times \mathbb{R}^n)$ of order $m$ and type $(\rho,\delta)$ consists of all $(a_\varepsilon)_\varepsilon \in S^m_{\rho,\delta}(\mathbb{R}^{n} \times \mathbb{R}^n)^{(0,1]}$ such that
\begin{equation*}
\begin{split}
\forall \alpha, \beta \in \mathbb{N}^n & \ \exists (\omega_\varepsilon)_\varepsilon \in \Pi_{lsc} : \\
& q^{(m)}_{\alpha, \beta} (a_{\varepsilon}) \hspace{-1.5pt} := \hspace{-1.5pt} \sup_{(x,\xi) \in  \mathbb{R}^{2n}} | \partial_{\xi}^{\alpha} \partial_x^{\beta} a_\varepsilon (x,\xi) | \langle \xi \rangle^{-m + \rho |\alpha| - \delta |\beta|} = \mathcal{O}( \omega_{\varepsilon}) \quad \text{ as } \varepsilon \to 0.
\end{split}
\end{equation*}
An element of $N^m_{\rho,\delta} (\mathbb{R}^n \times \mathbb{R}^n)$ is said to be negligible in $S^m_{\rho,\delta,lsc} (\mathbb{R}^n \times \mathbb{R}^n)$ if it fulfills the following condition:
\begin{equation*}
\forall \alpha, \beta \in \mathbb{N}^n \ \forall q \in \mathbb{N} : \quad q^{(m)}_{\alpha, \beta} (a_{\varepsilon}) = \mathcal{O}(\varepsilon^q) \quad \text{ as } \varepsilon \to 0 .
\end{equation*}
Then, logarithmically slow scaled symbols of order $m$ are defined as the factor space
\begin{equation*}
\widetilde{S}^m_{\rho,\delta,lsc} (\mathbb{R}^n \times \mathbb{R}^n) := S^m_{\rho,\delta,lsc} (\mathbb{R}^n \times \mathbb{R}^n) / N^m_{\rho,\delta} (\mathbb{R}^n \times \mathbb{R}^n).
\end{equation*}
and in the following we will assume that $\delta < \rho$. In the case that $\rho=1$ and $\delta=0$ we use the abbreviation $\widetilde{S}^m_{lsc}$ for $\widetilde{S}^m_{1,0,lsc}$.
\end{defn}
\begin{rem}
Moreover, the space $\widetilde{S}^{-\infty}_{lsc}$ of logarithmic slow scale symbols of order $-\infty$ consists of equivalence classes $a$ whose representatives $(a_\varepsilon)_\varepsilon$ have the property that
\begin{equation*}
\forall m \in \mathbb{R} \ \forall \alpha, \beta \in \mathbb{N}^n \ \exists (\omega_\varepsilon)_\varepsilon \in \Pi_{lsc}: \quad  q^{(m)}_{\alpha, \beta} (a_{\varepsilon}) = \mathcal{O}( \omega_{\varepsilon}) \quad \text{ as } \varepsilon \to 0.
\end{equation*}
\end{rem}
Since $(\omega_\varepsilon)_\varepsilon$ is a logarithmic slow scale net, we note that the net of a symbol $(a_\varepsilon)_\varepsilon$ in $S^m_{lsc}$ can always be estimated as follows
\begin{equation*}
\forall \alpha, \beta \in \mathbb{N}^n : \quad q^{(m)}_{\alpha, \beta} (a_{\varepsilon}) = \mathcal{O}\Bigl( \log\Big(\frac{1}{\varepsilon}\Big) \Bigr) \quad \text{ as } \varepsilon \to 0.
\end{equation*}
To give an example, let $P(x,D_x) = \sum_{|\alpha| \leq m} a_{\alpha}(x) D^{\alpha}_x$ be a partial differential operator with bounded and measurable coefficients. Then the logarithmically slow scaled regularization of the symbol is given by $p_\varepsilon (x,\xi) := (p(.,\xi) \mathbin{*} \varphi_{\omega_\varepsilon^{-1}})(x)$ and $(p_\varepsilon)_\varepsilon$ is contained in $S^m_{lsc} (\mathbb{R}^n \times \mathbb{R}^n)$.

Also, we will make use of the following symbol class:
\begin{defn}\label{defn:SymbolOpenSet}
Let $U \subset \mathbb{R}^n \times \mathbb{R}^n$ be open and conic with respect to the second variable. We say that a generalized symbol $a$ is in $\widetilde{S}^m_{\rho,\delta,lsc}(U)$ if it has a representative $\left(a_\varepsilon\right)_\varepsilon$ with $a_\varepsilon \in \mathcal C^{\infty} (U)$ for fixed $\varepsilon \in (0,1]$ and for any compact set $K \subset \text{pr}_2(U)$ (independent of $\varepsilon$) and $V_K := \{ (x,\xi) \in U \ | \ \xi \in K \}$ we have
\begin{equation*}
\begin{split}
\forall \alpha,\beta \in \mathbb{N}^n \ \exists (\omega_\varepsilon)_\varepsilon & \in \Pi_{lsc}  \ \exists C>0: \quad \\
& |\partial_{\xi}^{\alpha} \partial_x^{\beta} a_\varepsilon(x,\xi)| \leq C \omega_\varepsilon \langle \xi\rangle^{m-\rho|\alpha|+\delta|\beta|} \qquad (x,\xi) \in V_{K}^c 
\end{split}
\end{equation*}
for $\varepsilon$ sufficiently small and where $V_{K}^c := \{ (x,\lambda\xi) \ | \ (x,\xi) \in V_K, \lambda \ge 1 \}$.
\end{defn}
Note that Definition~\ref{defn:lscSymbol} is equivalent to Definition~\ref{defn:SymbolOpenSet} in the case that $U=\mathbb{R}^n \times \mathbb{R}^n$. For completeness, we recall the definition for global symbols $S^m_{\rho,\delta}(U)$ which is similar to that for local symbols, see \cite[Definition 2.3, page 141]{CP:82}: For $U$ being an open subset of $\mathbb{R}^n \, \! \times \! \, \mathbb{R}^n$, which is conic in the second variable, we say that $a \in S^m_{\rho,\delta}(U)$ if $a \in \mathcal{C}^{\infty}(U)$ and for each compact $K \subset \text{pr}_2(U)$ and $V_K:=\{ (x,\xi) \in U \ | \ \xi \in K \}$ we have
\begin{equation*}
\forall \alpha, \beta \in \mathbb{N}^n: \quad \sup_{(x,\xi) \in V_K^{c}} | \partial_{\xi}^{\alpha} \partial_x^{\beta} a (x,\xi) | \langle \xi \rangle^{-m + \rho|\alpha|-\delta|\beta|} < \infty
\end{equation*}
where $V_K^{c}:= \{(x,\lambda \xi) \ | \ (x,\xi) \in V_K, \lambda \geq 1 \}$.

We choose the following convention for defining the Fourier transform $\mathcal{F}$ of a function $u \in L^2(\mathbb{R}^n)$:
\begin{align*}
\mathcal{F} u(\xi) &:= \hat{u}(\xi) :=  \int_{\mathbb{R}^{n}} e^{-i x \xi} u(x) \,dx := \lim_{\sigma \to 0_+} \int_{\mathbb{R}^{n}} e^{-i x \xi - \sigma \langle x \rangle} u(x) \,dx.
\end{align*}
Then the Fourier transform is an isomorphism on $L^2$ and the inverse Fourier transform of $u \in L^2$ is given by the formula
\begin{align*}
\mathcal{F}^{-1} u(x) &:= \int_{\mathbb{R}^{n}} e^{i x \xi} u(\xi) \,\ \dbar \xi := \lim_{\sigma \to 0_+} \int_{\mathbb{R}^{n}} e^{i x \xi - \sigma \langle \xi \rangle} u(\xi) \,\ \dbar \xi.
\end{align*}
where we have set $\ \dbar \xi := (2 \pi)^{-n} \, d  \xi$. More information of the Fourier transform acting on $\GLtwo$ can be found in \cite{Anton:99}.
As already mentioned above, we will focus on generalized pseudodifferential operators having the following phase-amplitude representation:
\begin{defn}
Let $(a_\varepsilon)_\varepsilon  \in a \in \widetilde{S}^m_{\rho,\delta,lsc}(\mathbb{R}^n \times \mathbb{R}^{n})$ and let $(u_\varepsilon)_\varepsilon$ be a representative of $u \in \GLtwo$. We define the corresponding linear operator $A: \GLtwo \to \GLtwo$ by
\begin{equation*}
A(x,D)u(x) := \int e^{i(x-y) \xi} a(x,\xi) u(y) \,d y \,\ \dbar \xi := (A_\varepsilon (x,D) u_\varepsilon (x))_\varepsilon + \mathcal{N}_{H^{\infty}} (\mathbb{R}^n)
\end{equation*}
where 
\begin{equation*}
\begin{split}
A_{\varepsilon}(x,D_x) u_\varepsilon(x) &:= \int e^{i (x-y) \xi} a_\varepsilon(x,\xi) u_\varepsilon (y) \,dy \,\ \dbar \xi = \int e^{i x \xi} a_\varepsilon(x, \xi) \hat{u}_\varepsilon (\xi) \,\ \dbar \xi =\\
& = \mathcal{F}^{-1}_{\xi \to x} \Bigl( a_\varepsilon(x,\xi) \hat{u}_\varepsilon (\xi) \Bigr) 
\end{split}
\end{equation*}
where the last integral is interpreted as an oscillatory integral. The operator $A$ is called the generalized pseudodifferential operator with generalized symbol $a$. Later on, we will sometimes write $A \in \Psi^m_{\rho,\delta,lsc}(\mathbb{R}^n)$ to denote that $A$ is a generalized pseudodifferential operator with symbol in $\widetilde{S}^m_{\rho,\delta,lsc}(\mathbb{R}^n \times \mathbb{R}^{n})$. In the case that $(\rho,\delta)=(1,0)$, we will write $A \in \Psi^m_{lsc}(\mathbb{R}^n)$ for short.
\end{defn}
\subsection{Generalized point values of a generalized symbol}\label{subsec:GenPointValues}
As above, let $U \times \Gamma \subseteq \mathbb{R}^n \times \mathbb{R}^n$ be open and conic with respect to the second variable and $(a_\varepsilon)_\varepsilon$ a generalized symbol in $S^m_{\rho,\delta,lsc}(U \times \Gamma)$. 
\begin{defn}
Let $\Gamma \subseteq \mathbb{R}^n$ be an open cone. On 
\begin{equation*}
\Gamma_{M} := \{ (\zeta_\varepsilon)_\varepsilon \in \Gamma^{(0,1]} \ | \ \exists N \in \mathbb{N}: |\zeta_\varepsilon| = \mathcal{O}(\varepsilon^{-N}) \text{ as } \varepsilon \to 0 \}
\end{equation*}
we introduce the equivalence relation 
\begin{equation*}
(\zeta^1_\varepsilon)_\varepsilon \sim (\zeta^2_\varepsilon)_\varepsilon \Leftrightarrow \forall m \in \mathbb{N}: |\zeta^1_\varepsilon - \zeta^2_\varepsilon| = \mathcal{O}(\varepsilon^m) \text{ as } \varepsilon \to 0
\end{equation*}
and denote by $\widetilde{\Gamma} := \Gamma_{M} / \sim$ the generalized cone with respect to $\Gamma$.
\end{defn}
\begin{lem}
Let $a \in \widetilde{S}^m_{\rho,\delta,lsc}(U \times \Gamma)$ and $\widetilde{\zeta} \in \widetilde{\Gamma}$. Then the generalized point value of $a$ at $\widetilde{\zeta} = [(\zeta_\varepsilon)_\varepsilon]$, 
\begin{equation*}
a(z,\widetilde{\zeta}) := [(a_\varepsilon(z,\zeta_\varepsilon))_\varepsilon]
\end{equation*} 
is a well-defined element in $\widetilde{\mathbb{C}}$.
\end{lem}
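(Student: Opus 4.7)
The plan is to verify that $(a_\varepsilon(z,\zeta_\varepsilon))_\varepsilon$ defines an element of $\widetilde{\mathbb{C}}$, which amounts to two checks: moderateness of the numerical net, and independence of the resulting class modulo negligibility from the specific representatives $(a_\varepsilon)_\varepsilon$ of $a$ and $(\zeta_\varepsilon)_\varepsilon$ of $\widetilde\zeta$. Fix $z \in U$ and such representatives; by the definition of $\Gamma_M$ there is an $N \in \mathbb{N}$ with $|\zeta_\varepsilon| = \mathcal{O}(\varepsilon^{-N})$ and $\zeta_\varepsilon \in \Gamma$ for each $\varepsilon$.

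For moderateness, I would select a compact $K \subset \Gamma$ such that $(z,\zeta_\varepsilon)$ lies in the conic extension $V_K^c$ for all sufficiently small $\varepsilon$, constructing $K$ as a closed conic neighborhood in $\Gamma$ of the directions $\zeta_\varepsilon/|\zeta_\varepsilon|$. Definition~\ref{defn:SymbolOpenSet} with $\alpha = \beta = 0$ then gives
$$|a_\varepsilon(z,\zeta_\varepsilon)| \le C \omega_\varepsilon \langle \zeta_\varepsilon\rangle^m$$
for some $(\omega_\varepsilon)_\varepsilon \in \Pi_{lsc}$. Since $\omega_\varepsilon$ is dominated by every inverse power of $\varepsilon$ and $\langle \zeta_\varepsilon\rangle^{|m|}$ is moderate, the product is $\mathcal{O}(\varepsilon^{-M})$ for some $M \in \mathbb{N}$.

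For independence from the representative of $\widetilde\zeta$, let $(\zeta'_\varepsilon)_\varepsilon \sim (\zeta_\varepsilon)_\varepsilon$, so $|\zeta_\varepsilon - \zeta'_\varepsilon| = \mathcal{O}(\varepsilon^q)$ for every $q$. For $\varepsilon$ small the segment from $\zeta'_\varepsilon$ to $\zeta_\varepsilon$ lies in the open set $\Gamma$, and the fundamental theorem of calculus yields
$$a_\varepsilon(z,\zeta_\varepsilon) - a_\varepsilon(z,\zeta'_\varepsilon) = \int_0^1 \nabla_\xi a_\varepsilon\bigl(z,\zeta'_\varepsilon + t(\zeta_\varepsilon - \zeta'_\varepsilon)\bigr)\, dt \cdot (\zeta_\varepsilon - \zeta'_\varepsilon).$$
The symbol bound on $\nabla_\xi a_\varepsilon$ (gaining a factor $\langle\cdot\rangle^{-\rho}$) keeps the integrand moderate on the conic extension, so the whole difference is $\mathcal{O}(\varepsilon^q)$ for every $q$. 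Independence from the representative of $a$ is analogous and simpler: if $(a_\varepsilon - a'_\varepsilon)_\varepsilon \in N^m_{\rho,\delta}$, then $|a_\varepsilon(z,\zeta_\varepsilon) - a'_\varepsilon(z,\zeta_\varepsilon)| \le C_q \varepsilon^q \langle\zeta_\varepsilon\rangle^m$ for every $q$, and the moderateness of $\langle\zeta_\varepsilon\rangle^m$ lets the polynomial loss be absorbed by taking $q$ large.

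The main obstacle is the initial choice of the compact $K$ with $(z,\zeta_\varepsilon) \in V_K^c$ uniformly in $\varepsilon$: a priori the directions $\zeta_\varepsilon/|\zeta_\varepsilon|$ could wander toward $\partial \Gamma$ as $\varepsilon \to 0$, so a single compact subset of $\Gamma$ need not capture them all. I would handle this either by passing to an equivalent representative lying in a fixed closed subcone of $\Gamma$, or by splitting the net into a bounded part (controlled by the smoothness of $a_\varepsilon$ on compact subsets of $U \times \Gamma$) and an unbounded part (controlled by the conic symbol estimates). Once this geometric point is settled, the remaining bookkeeping with the slow-scale growth of $\omega_\varepsilon$ is routine.
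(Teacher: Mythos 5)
Your proposal follows the same three-step scheme as the paper's proof: moderateness of $(a_\varepsilon(z,\zeta_\varepsilon))_\varepsilon$ from the zeroth-order symbol estimate together with the polynomial bound on $|\zeta_\varepsilon|$, then independence of the representative of $\widetilde\zeta$ via the fundamental theorem of calculus applied to $\nabla_\xi a_\varepsilon$, and finally independence of the representative of $a$ via the negligibility estimate. This is exactly what the paper does.

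The geometric obstacle you flag at the end is genuine, and in fact the paper's own proof glosses over it silently. The estimate in Definition~\ref{defn:SymbolOpenSet} is asserted only on the sets $V_K^c$, with the constant and the net $(\omega_\varepsilon)_\varepsilon$ allowed to depend on $K$; if the direction $\zeta_\varepsilon/|\zeta_\varepsilon|$ drifts toward $\partial\Gamma$, or if $|\zeta_\varepsilon|$ decays rapidly, no single $K$ captures the whole net, and the inequality $|a_\varepsilon(z,\zeta_\varepsilon)| \le C\omega_\varepsilon\langle\zeta_\varepsilon\rangle^m$ used in the first step of both arguments is not available. Neither of your proposed repairs, however, actually closes this gap. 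Passing to a $\sim$-equivalent representative cannot change the asymptotic direction when $|\zeta_\varepsilon|$ is bounded below, since $\zeta'_\varepsilon - \zeta_\varepsilon = \mathcal{O}(\varepsilon^q)$ forces $\zeta'_\varepsilon/|\zeta'_\varepsilon| - \zeta_\varepsilon/|\zeta_\varepsilon| \to 0$. And the ``bounded part'' is not controlled by smoothness of $a_\varepsilon$ on compacta, because the symbol class imposes no $\varepsilon$-uniform bound on $a_\varepsilon$ off the sets $V_K^c$: one can build a net $(a_\varepsilon)_\varepsilon \in S^m_{\rho,\delta,lsc}(U\times\Gamma)$ which equals $|\xi|^m$ outside a shrinking conic neighborhood of a direction approaching $\partial\Gamma$ but grows like $e^{1/\varepsilon}$ along that direction; this net passes every $V_K^c$ estimate, yet $(a_\varepsilon(z,\zeta_\varepsilon))_\varepsilon$ fails to be moderate along the corresponding $(\zeta_\varepsilon)_\varepsilon$. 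What is actually needed is an extra hypothesis on $\widetilde\zeta$ -- that the representative $(\zeta_\varepsilon)_\varepsilon$ eventually lies in a fixed $V_K^c$, i.e.\ the directions stay inside a compactly embedded subcone of $\Gamma$ and $|\zeta_\varepsilon|$ is bounded below -- precisely the uniform support condition the paper imposes later when it introduces $\Gamma_{s,M}$. Under that assumption your argument (and the paper's) goes through as written.
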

\begin{proof}
The proof is similar to the proof of \cite[Proposition 1.2.45]{GKOS:01}. We let $(z,\zeta_\varepsilon)_\varepsilon \in U \times \Gamma_M$ and $(a_\varepsilon)_\varepsilon \in S^m_{\rho,\delta,lsc}(U \times \Gamma)$. 
For any index $i\in \mathbb{N}$ $C_i$ denotes a positive constant, $(\omega_{i,\varepsilon})_\varepsilon$ a logarithmic slow scale net and $N_i$ a natural number.
Then
\begin{equation*}
|a_\varepsilon(z,\zeta_\varepsilon)| \le C_1 \omega_{1,\varepsilon} (1+|\zeta_\varepsilon|)^m \le C_1 \omega_{1,\varepsilon} (1+ C_2 \varepsilon^{-N_2})^{\max(m,0)} \le C_3 \varepsilon^{-N_3}
\end{equation*}
so $(a_\varepsilon(z,\zeta_\varepsilon))_\varepsilon \in \mathbb{R}_M = \mathcal{M}_\mathbb{R}$.

We now show that $\widetilde{\zeta}^1 \sim \widetilde{\zeta}^2$ implies $a(z,\widetilde{\zeta}^1) \sim a(z,\widetilde{\zeta}^2)$. So let $\widetilde{\zeta}^1 \sim \widetilde{\zeta}^2$. Then
\begin{equation*}
\begin{split}
|a_\varepsilon(z,\zeta_\varepsilon^1) - a_\varepsilon(z,\zeta_\varepsilon^2)| &\le |\zeta_\varepsilon^1-\zeta_\varepsilon^2| \int_{0}^{1} |D_\zeta a_\varepsilon(z, \zeta_\varepsilon^1+\sigma(\zeta_\varepsilon^2-\zeta_\varepsilon^1))| \, d\sigma\\
&\le C_4 \omega_{4,\varepsilon} (1+ C_5 \varepsilon^{-N_5})^{\max(m,0)} |\zeta_\varepsilon^1-\zeta_\varepsilon^2| \le C_6 \varepsilon^{-N_6} \varepsilon^p
\end{split}
\end{equation*}
for all $p \ge 0$ and $\varepsilon$ small enough. Hence $(a_\varepsilon(z,\zeta_\varepsilon^1))_\varepsilon - (a_\varepsilon(z,\zeta_\varepsilon^2))_\varepsilon \sim 0$. 

Finally, if $(a_\varepsilon)_\varepsilon \in \mathcal{N}_{S^m}(U \times \Gamma)$ we have
\begin{equation*}
|a_\varepsilon(z,\zeta_\varepsilon)| \le C \varepsilon^p (1+|\zeta_\varepsilon|)^m \le C \varepsilon^p (1+ C_7 \varepsilon^{-N_7})^{\max(m,0)} 
\end{equation*}
for any $p \ge 0$ as $\varepsilon \to 0$. So $(a_\varepsilon(z,\zeta_\varepsilon))_\varepsilon \sim 0$.
\end{proof}
In the case of generalized logarithmic slow scale cones we get a similar result.
\begin{defn}
Let $\Gamma \subseteq \mathbb{R}^n$ be an open cone. On 
\begin{equation*}
\Gamma_{M,lsc} := \{ (\zeta_\varepsilon)_\varepsilon \in \Gamma^{(0,1]} \ | \ \exists (\omega_\varepsilon)_\varepsilon \in \Pi_{lsc}: |\zeta_\varepsilon| = \mathcal{O}(\omega_\varepsilon) \text{ as } \varepsilon \to 0 \}
\end{equation*}
we introduce the equivalence relation 
\begin{equation*}
(\zeta^1_\varepsilon)_\varepsilon \sim (\zeta^2_\varepsilon)_\varepsilon \Leftrightarrow \forall m \in \mathbb{N}: |\zeta^1_\varepsilon - \zeta^2_\varepsilon| = \mathcal{O}(\varepsilon^m) \text{ as } \varepsilon \to 0
\end{equation*}
and denote by $\widetilde{\Gamma}_{lsc} := \Gamma_{M,lsc} / \sim$ the generalized logarithmic slow scale cone of $\Gamma$.
\end{defn}

\begin{lem}
Let $a \in \widetilde{S}^m_{\rho,\delta,lsc}(U \times \Gamma)$ and $\widetilde{\zeta} \in \widetilde{\Gamma}_{lsc}$. Then the generalized point value of $a$ at $\widetilde{\zeta} = [(\zeta_\varepsilon)_\varepsilon]$, 
\begin{equation*}
a(z,\widetilde{\zeta}) := [(a_\varepsilon(z,\zeta_\varepsilon))_\varepsilon]
\end{equation*} 
is a well-defined element of $\widetilde{\mathbb{C}}_{lsc}$.
\end{lem}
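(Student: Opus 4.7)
The plan is to follow the structure of the preceding lemma for $\widetilde{\Gamma}$ almost line by line, with the sole modification that at every step where that proof yielded an $\varepsilon^{-N}$ bound it now yields a logarithmic slow scale bound, so that $(a_\varepsilon(z,\zeta_\varepsilon))_\varepsilon$ belongs to $\mathcal{M}_{\mathbb{C}}^{lsc}$ rather than $\mathcal{M}_{\mathbb{C}}$, and hence defines an element of $\widetilde{\mathbb{C}}_{lsc}$. The one new ingredient that makes this work is the closure of $\Pi_{lsc}$ under finite sums, products and positive powers: for $(\omega_\varepsilon)_\varepsilon,(\omega'_\varepsilon)_\varepsilon \in \Pi_{lsc}$ and $p>0$, both $(\omega_\varepsilon\omega'_\varepsilon)_\varepsilon$ and $(\omega_\varepsilon^p)_\varepsilon$ satisfy the defining bound $|\cdot|^{p'}\le c_{p'}\log(1/\varepsilon)$, as one checks directly from $|\omega_\varepsilon|^{pp'}\le c_{pp'}\log(1/\varepsilon)$ and AM--GM applied to $|\omega_\varepsilon\omega'_\varepsilon|^{p'}$.

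For the moderateness step I would fix representatives $(a_\varepsilon)_\varepsilon \in S^m_{\rho,\delta,lsc}(U\times\Gamma)$ and $(\zeta_\varepsilon)_\varepsilon \in \Gamma_{M,lsc}$ with $|\zeta_\varepsilon|=\mathcal{O}(\omega_{0,\varepsilon})$, and apply the $(\alpha,\beta)=(0,0)$ symbol estimate on a compact cone containing the directions of $\zeta_\varepsilon$ for small $\varepsilon$. This gives
\[
|a_\varepsilon(z,\zeta_\varepsilon)| \;\le\; C\,\omega_{1,\varepsilon}\,(1+|\zeta_\varepsilon|)^m \;\le\; C\,\omega_{1,\varepsilon}\,(1+\omega_{0,\varepsilon})^{\max(m,0)},
\]
and the right-hand side is a slow scale net by the closure properties above, so $(a_\varepsilon(z,\zeta_\varepsilon))_\varepsilon \in \mathcal{M}_{\mathbb{C}}^{lsc}$.

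Independence of representatives is handled by the same mean value argument as in the preceding lemma. If $(\zeta^1_\varepsilon)_\varepsilon\sim(\zeta^2_\varepsilon)_\varepsilon$, then for $\varepsilon$ small (so that $[\zeta^1_\varepsilon,\zeta^2_\varepsilon]\subset\Gamma$, which is automatic from the openness of $\Gamma$ and the negligibility of the difference),
\[
|a_\varepsilon(z,\zeta^1_\varepsilon)-a_\varepsilon(z,\zeta^2_\varepsilon)| \;\le\; |\zeta^1_\varepsilon-\zeta^2_\varepsilon|\,\sup_{\sigma\in[0,1]}|D_\zeta a_\varepsilon(z,\zeta^1_\varepsilon+\sigma(\zeta^2_\varepsilon-\zeta^1_\varepsilon))|
\;\le\; C\,\widetilde{\omega}_\varepsilon\,|\zeta^1_\varepsilon-\zeta^2_\varepsilon|,
\]
where $\widetilde{\omega}_\varepsilon\in\Pi_{lsc}$ by the same stability argument applied to the symbol estimate for $D_\zeta a_\varepsilon$. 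Since every slow scale net is $\mathcal{O}(\varepsilon^{-q})$ for any $q>0$ while $|\zeta^1_\varepsilon-\zeta^2_\varepsilon|=\mathcal{O}(\varepsilon^{q'})$ for every $q'$, the product is $\mathcal{O}(\varepsilon^p)$ for every $p$, i.e.\ negligible in $\mathcal{N}_\mathbb{C}$. The same computation, with the symbol seminorm of a negligible $(a_\varepsilon)_\varepsilon\in\mathcal{N}_{S^m}(U\times\Gamma)$ playing the role of $|\zeta^1_\varepsilon-\zeta^2_\varepsilon|$, shows that point values of negligible symbol representatives are themselves in $\mathcal{N}_\mathbb{C}$, finishing the well-definedness in $\widetilde{\mathbb{C}}_{lsc}$.

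The proof is essentially routine given the previous lemma; the only genuine obstacle is the bookkeeping needed to ensure that expressions such as $\omega_{1,\varepsilon}(1+\omega_{0,\varepsilon})^{\max(m,0)}$ are again in $\Pi_{lsc}$, which reduces to the stated closure properties of $\Pi_{lsc}$. A minor side issue is the requirement that the line segment used in the mean value theorem lie inside $\Gamma$; as noted, this is automatic for sufficiently small $\varepsilon$ by openness of $\Gamma$ together with negligibility of $\zeta^1_\varepsilon-\zeta^2_\varepsilon$.
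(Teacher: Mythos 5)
Your proposal is correct and follows the same route the paper intends: the paper gives no explicit proof, remarking only that ``in the case of generalized logarithmic slow scale cones we get a similar result,'' and your argument is precisely the obvious adaptation of the preceding lemma's proof, with the one genuinely new point --- closure of $\Pi_{lsc}$ under sums, products, and positive powers --- correctly isolated and justified. The remark about the mean value segment staying in $\Gamma$ is a technicality the paper's own proof of the preceding lemma also glosses over, so your treatment is in line with the paper.
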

\subsection{Asymptotic Expansion}
To prepare the factorization theorem of section~\ref{sec:Factorization} we will have to consider products of pseudodifferential operators. In the sequel we will construct a complete symbolic calculus for generalized pseudodifferential operators. 

We therefore start with some general observations concerning the notion of asymptotic expansion of a generalized symbol in $\widetilde{S}^m_{\rho,\delta,lsc}$. The presentation of the results are inspired by the results given in \cite[Section 2.5]{Garetto:08} and \cite{GarettoGramchevMoe:05}. There, the techniques are described by means of generalized symbols as well as for slow scale symbols. The modification to logarithmic slow scale symbols is evident.

The definition of the asymptotic expansion is now the following:
\begin{defn}\label{defn:AE_rep}
Let $\{m_j\}_j$ be a strictly decreasing sequence of real numbers such that $m_j \to -\infty$ as $j \to \infty$, $m_0 = m$. 
\begin{itemize}[leftmargin=.25in]
\item[(i)]
Further, let $\{ (a_{j,{\varepsilon}})_{\varepsilon} \}_{j}$ be a sequence with $( a_{j,{\varepsilon}} )_{\varepsilon} \in \mathcal{M}_{S^{m_j}_{\rho,\delta}}$. We say that the formal series $\sum_{j=0}^{\infty} (a_{j,\varepsilon})_\varepsilon$ is the asymptotic expansion for $(a_\varepsilon)_\varepsilon \in \mathcal{C}^{\infty}(\mathbb{R}^n \times \mathbb{R}^n)^{(0,1]}$, denoted by $(a_\varepsilon)_\varepsilon \sim \sum_{j} (a_{j,\varepsilon})_\varepsilon$, if and only if
\begin{equation*}
(a_\varepsilon - \sum_{j=0}^{N-1} a_{j,\varepsilon})_\varepsilon \in \mathcal{M}_{S^{m_N}_{\rho,\delta}} \qquad \forall N \geq 1.
\end{equation*}
\item[(ii)]
Further, let $\{ (a_{j,{\varepsilon}})_{\varepsilon} \}_{j}$ be a sequence with $( a_{j,{\varepsilon}} )_{\varepsilon} \in S^{m_j}_{\rho,\delta,lsc}$. We say that the formal series $\sum_{j=0}^{\infty} (a_{j,\varepsilon})_\varepsilon$ is the asymptotic expansion for $(a_\varepsilon)_\varepsilon \in \mathcal{C}^{\infty}(\mathbb{R}^n \times \mathbb{R}^n)^{(0,1]}$, denoted by $(a_\varepsilon)_\varepsilon \sim \sum_{j} (a_{j,\varepsilon})_\varepsilon$, if and only if
\begin{equation*}
(a_\varepsilon - \sum_{j=0}^{N-1} a_{j,\varepsilon})_\varepsilon \in S^{m_N}_{\rho,\delta,lsc} \qquad \forall N \geq 1.
\end{equation*}
\end{itemize}
In both cases, $(a_{0,\varepsilon})_\varepsilon$ is said to be the principal symbol of $(a_\varepsilon)_\varepsilon$.
\end{defn}
Also, we introduce the restriction $\mathcal{M}_{S^m_{\rho,\delta,cl}}(\mathbb{R}^n \times \mathbb{R}^n) \subseteq \mathcal{M}_{S^m_{\rho,\delta}}$ of classical generalized symbols. 
\begin{defn}
We say that a generalized symbol $(a_\varepsilon)_\varepsilon \in \mathcal{M}_{S^m_{\rho,\delta}}$ is classical, denoted by $(a_\varepsilon)_\varepsilon \in \mathcal{M}_{S^m_{\rho,\delta,cl}}$, if there exists a sequence $\{(a_{j,\varepsilon})_\varepsilon \}_j$ with symbols $(a_{j,\varepsilon})_\varepsilon$ in $\mathcal{M}_{S^{m-j}_{\rho,\delta}} (\mathbb{R}^n \times (\mathbb{R}^n \setminus 0))$ homogeneous of degree $m-j$ in $|\xi| \ge 1$, $j \in \mathbb{N}$, such that for any cut-off function $\varphi \in \mathcal{C}^{\infty}_0(\mathbb{R}^n)$ equal to 1 near the origin we have
\begin{equation}\label{AE_ph}
\Big(a_\varepsilon(x,\xi) - \sum_{j=0}^{N-1} (1-\varphi(\xi))a_{j,\varepsilon}(x,\xi)\Big)_\varepsilon \in \mathcal{M}_{S^{m-N}_{\rho,\delta}} \qquad \forall N \geq 1.
\end{equation}
As above, we will write $(a_\varepsilon)_\varepsilon \sim \sum_{j} (a_{j,\varepsilon})_\varepsilon$ if (\ref{AE_ph}) holds and we call $(a_{0,\varepsilon})_\varepsilon$ the principal symbol of $(a_\varepsilon)_\varepsilon$.
\end{defn}
Replacing $\mathcal{M}_{S^{m}_{\rho,\delta}}$ by $\mathcal{M}_{S^{m}_{\rho,\delta}}^{lsc}$, one obtains classical symbols of logarithmic slow scale type.

As a result, we obtain that any infinite sum of symbols of strictly decreasing orders can be summarized as follows.
\begin{thm}\label{thm:AE}
\mbox{}
\begin{itemize}[leftmargin=.25in,itemsep=5pt]
\item[(i)]
Let $( a_{j,{\varepsilon}} )_{\varepsilon} \in \mathcal{M}_{S^{m_j}_{\rho,\delta}}$ with $m_j \searrow -\infty$ as $j \to \infty$, $m_0 = m$ as in (i) of Definition~\ref{defn:AE_rep}. Then there exists $(a_\varepsilon)_\varepsilon \in \mathcal{M}_{S^{m}_{\rho,\delta}}$ such that $(a_\varepsilon)_\varepsilon \sim \sum_{j} (a_{j,\varepsilon})_\varepsilon$. Moreover, if $(a'_\varepsilon)_\varepsilon  \sim \sum_{j} (a_{j,\varepsilon})_\varepsilon$, then $(a_\varepsilon - a'_\varepsilon)_\varepsilon \in \mathcal{M}_{S^{-\infty}}$.
\item[(ii)]
Let $( a_{j,{\varepsilon}} )_{\varepsilon} \in S^{m_j}_{\rho,\delta,lsc}$ with $m_j \searrow -\infty$ as $j \to \infty$, $m_0 = m$ as in (ii) of Definition~\ref{defn:AE_rep}. Then there exists $(a_\varepsilon)_\varepsilon \in S^{m}_{\rho,\delta,lsc}$ such that $(a_\varepsilon)_\varepsilon \sim \sum_{j} (a_{j,\varepsilon})_\varepsilon$. Moreover, if $(a'_\varepsilon)_\varepsilon  \sim \sum_{j} (a_{j,\varepsilon})_\varepsilon$, then $(a_\varepsilon - a'_\varepsilon)_\varepsilon \in S^{-\infty}_{lsc}$.
\end{itemize}
\end{thm}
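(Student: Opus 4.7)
The plan is to adapt the classical Borel-type summation argument to the $\varepsilon$-dependent setting, tracking carefully how the cut-off scales depend on $\varepsilon$ so as to preserve the moderate, resp.\ logarithmic slow scale, bounds. I fix $\psi \in \mathcal{C}^\infty(\mathbb{R}^n)$ with $\psi(\xi)=0$ for $|\xi|\le 1$ and $\psi(\xi)=1$ for $|\xi|\ge 2$, and look for a representative of the form
\begin{equation*}
a_\varepsilon(x,\xi) \;:=\; \sum_{j\ge 0}\, \psi\bigl(\xi/t_{j,\varepsilon}\bigr)\, a_{j,\varepsilon}(x,\xi),
\end{equation*}
where the scales $t_{j,\varepsilon}>0$ are to be chosen. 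Because $\psi(\xi/t_{j,\varepsilon})$ vanishes for $|\xi|\le t_{j,\varepsilon}$, the series is locally finite in $\xi$ as soon as $t_{j,\varepsilon}\to\infty$ for fixed $\varepsilon$, so $a_\varepsilon$ is automatically smooth.

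Setting $M_{j,\varepsilon}:=\max_{|\alpha|+|\beta|\le j}q^{(m_j)}_{\alpha,\beta}(a_{j,\varepsilon})$ and using the support relation $\langle\xi\rangle\ge t_{j,\varepsilon}$ on $\mathrm{supp}\,\psi(\cdot/t_{j,\varepsilon})$ together with the inequality $t_{j,\varepsilon}^{-|\alpha_1|}\le C\langle\xi\rangle^{-|\alpha_1|}$ on the support of each $\partial^{\alpha_1}\psi(\cdot/t_{j,\varepsilon})$, Leibniz' rule delivers, for $|\alpha|+|\beta|\le j$, an estimate of the form
\begin{equation*}
q^{(m_{j-1})}_{\alpha,\beta}\bigl(\psi(\xi/t_{j,\varepsilon})\,a_{j,\varepsilon}\bigr) \;\le\; C_{\alpha,\beta}\,M_{j,\varepsilon}\,t_{j,\varepsilon}^{m_{j}-m_{j-1}}.
\end{equation*}
Since $m_j-m_{j-1}<0$, I can force this quantity below $2^{-j}$ by choosing $t_{j,\varepsilon}$ as a sufficiently large positive rational power of $2^j M_{j,\varepsilon}$. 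The splitting
\begin{equation*}
a_\varepsilon - \sum_{j<N} a_{j,\varepsilon} \;=\; \sum_{j<N}\bigl(\psi(\xi/t_{j,\varepsilon})-1\bigr)a_{j,\varepsilon} \;+\; \sum_{j\ge N}\psi(\xi/t_{j,\varepsilon})a_{j,\varepsilon}
\end{equation*}
then exhibits the first (finite) sum as compactly supported in $\xi$---hence of order $-\infty$---and the second as $S^{m_N}$-summable by a geometric $2^{-j}$ bound.

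The difference between (i) and (ii) lies only in the type of $\varepsilon$-control on $M_{j,\varepsilon}$. In~(i), $M_{j,\varepsilon}=\mathcal{O}(\varepsilon^{-N_j})$, so any positive rational power of it is still moderate, and the construction delivers $(a_\varepsilon)_\varepsilon\in\mathcal{M}_{S^{m}_{\rho,\delta}}$. In~(ii), $M_{j,\varepsilon}=\mathcal{O}(\omega_{j,\varepsilon})$ for some $(\omega_{j,\varepsilon})_\varepsilon\in\Pi_{lsc}$, and the step I view as the main obstacle is verifying that $\Pi_{lsc}$ is closed under the operations I perform---finite sums, bounded products, and positive rational powers---so that $(t_{j,\varepsilon})_\varepsilon$ and the resulting per-seminorm bound on $a_\varepsilon$ remain logarithmic slow scale. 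Once that closure is in place, I split the series for fixed $(\alpha,\beta)$ at $J:=|\alpha|+|\beta|$: the head is a finite lsc-bounded sum, the tail is controlled by the geometric $2^{-j}$ series from the previous paragraph, and together they certify $(a_\varepsilon)_\varepsilon\in S^{m}_{\rho,\delta,lsc}$.

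Uniqueness in both cases is immediate from the definition. If $(a'_\varepsilon)_\varepsilon\sim\sum_j(a_{j,\varepsilon})_\varepsilon$ as well, then for every $N\ge 1$,
\begin{equation*}
(a_\varepsilon-a'_\varepsilon)_\varepsilon \;=\; \bigl(a_\varepsilon-{\textstyle\sum_{j<N}a_{j,\varepsilon}}\bigr)_\varepsilon \;-\; \bigl(a'_\varepsilon-{\textstyle\sum_{j<N}a_{j,\varepsilon}}\bigr)_\varepsilon
\end{equation*}
lies in $\mathcal{M}_{S^{m_N}_{\rho,\delta}}$ in case~(i), resp.\ in $S^{m_N}_{\rho,\delta,lsc}$ in case~(ii); since $m_N\searrow-\infty$, the difference lies in $\mathcal{M}_{S^{-\infty}}$, resp.\ in $S^{-\infty}_{lsc}$.
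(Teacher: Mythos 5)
Your construction is exactly the one the paper (following Garetto's Theorem 2.2) uses: writing $\psi(\xi/t_{j,\varepsilon})=\psi(\lambda_{j,\varepsilon}\xi)$ with $\lambda_{j,\varepsilon}=1/t_{j,\varepsilon}$, the cut-off scales you pick as a positive power of $2^{j}M_{j,\varepsilon}$ are precisely the ``inverse of an appropriate strict positive net'' (case (i)) resp.\ ``inverse of a logarithmic slow scale net'' (case (ii)) that the paper's remark describes, and the geometric $2^{-j}$ decay plus closure of $\Pi_{lsc}$ under sums, products and positive powers is the same mechanism. The argument is correct and essentially identical to the paper's; the only cosmetic slips are that the exponent $1/(m_{j-1}-m_j)$ need not be rational (irrelevant, since $\Pi_{lsc}$ and the moderate nets are closed under arbitrary positive real powers), and you should note that $t_{j,\varepsilon}$ can be enlarged to be monotone in $j$ so that the series is locally finite for each fixed $\varepsilon$.
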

\begin{rem}
The proof of this theorem can be found in \cite[Theorem 2.2]{Garetto:08}. The same proof remains valid for classical symbol classes. We recall the construction of the symbol $(a_\varepsilon)_\varepsilon$ of Theorem~\ref{thm:AE}:

Let $\psi \in \mathcal{C}^{\infty}(\mathbb{R}^n)$, $0 \le \psi(\xi) \le 1$ such that $\psi(\xi)=0$ for $|\xi| \le 1$ and $\psi(\xi)=1$ for $|\xi| \ge 2$. As in \cite[Theorem 2.2]{Garetto:08} one can define
\begin{equation*}
a_\varepsilon(x,\xi) := \sum_{j \in \mathbb{N}} \psi(\lambda_{j,\varepsilon} \xi) a_{j,\varepsilon}(x,\xi)
\end{equation*}
where $\lambda_{j,\varepsilon}$ are appropriate positive constants with $\lambda_{j+1,\varepsilon} < \lambda_{j,\varepsilon} < 1$, $\lambda_{j,\varepsilon} \to 0$ as $j \to \infty$. In case (i) of Theorem~\ref{thm:AE}, $\lambda_{j,\varepsilon}$ are taken to be the inverse of an appropriate strict positive net. In Theorem~\ref{thm:AE} (ii), it turns out that $(\lambda_{j,\varepsilon})$ can be chosen to be an inverse of a logarithmic slow scale net.

We also remark that Theorem~\ref{thm:AE} can be carried over to classical symbols in a corresponding way.
\end{rem}
Noting that in Theorem~\ref{thm:AE} (i) one can replace the moderateness by negligibility and we introduce as in \cite[Definition 2.6 (ii)]{Garetto:08}:
\begin{defn}
Let $\{m_j\}_j$ be as in Definition~\ref{defn:AE_rep}. Further let $\{ a_{j} \}_{j}$ be a sequence in $\widetilde{S}^{m_j}_{\rho,\delta,lsc}$. We say that the formal series $\sum_{j=0}^{\infty} a_j$ is the asymptotic expansion of $a \in \widetilde{S}^{m}_{\rho,\delta,lsc}$, denoted by $a \sim \sum_{j} a_{j}$, if and only if there exists a representative $(a_\varepsilon)_\varepsilon$ of $a$ and, for every $j$ representatives $(a_{j,\varepsilon})_\varepsilon$ of $a_j$, such that $(a_\varepsilon)_\varepsilon \sim \sum_{j} (a_{j,\varepsilon})_\varepsilon$.
\end{defn}
Similarly, we introduce the following definition for classical symbols.
\begin{defn}\label{defn:AE_ph}
Let $\{ a_{j} \}_{j \in \mathbb{N}}$ be a sequence with $a_j \in \widetilde{S}^{m-j}_{\rho,\delta,lsc}(\mathbb{R}^n \times (\mathbb{R}^n \setminus 0))$ homogeneous of degree $m-j$. We say that the formal series $\sum_{j=0}^{\infty} a_j$ is the asymptotic expansion of $a \in \widetilde{S}^{m}_{\rho,\delta,cl,lsc}$, denoted by $a \sim \sum_{j} a_{j}$, if and only if there exists a representative $(a_\varepsilon)_\varepsilon$ of $a$ and, for every $j$ representatives $(a_{j,\varepsilon})_\varepsilon$ of $a_j$, such that $(a_\varepsilon)_\varepsilon \sim \sum_{j} (a_{j,\varepsilon})_\varepsilon$.
\end{defn}
\subsection{Composition and Adjoint of Pseudodifferential Operators}
In this subsection we briefly recall the composition law of two pseudodifferential operators and adjoint operators. A detailed discussion for slow scale regular generalized symbols can be found in \cite[Section 5]{GarettoGramchevMoe:05}. Again, the adaptation to logarithmic slow scale symbols is evident.

Therefore, for logarithmic slow scale symbols, we get the following result:
\begin{thm}\label{thm:composition}
Let $A(x,D_x)$ and $B(x,D_x)$ be two pseudodifferential operators with generalized symbols $a \in \widetilde{S}^{m_1}_{\rho,\delta,lsc}$ and $b \in \widetilde{S}^{m_2}_{\rho,\delta,lsc}$ respectively. Then the product $A B$ is well-defined and maps $\GLtwo$ into itself. Moreover $A B$ is a pseudodifferential operator with generalized symbol $a \# b$ in $\widetilde{S}^{m_1+m_2}_{\rho,\delta,lsc}$ having the representation
\begin{equation*}
a \# b (x,\xi) \sim \sum_{|\alpha| \ge 0} \frac{1}{\alpha!} D^{\alpha}_{\xi} a(x,\xi) \partial^{\alpha}_{x} b(x,\xi).
\end{equation*}
\end{thm}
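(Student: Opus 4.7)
The plan is to work at the level of representatives, following the classical composition scheme for symbols and then verifying that every estimate that appears preserves the logarithmic slow scale asymptotics. Fix representatives $(a_\varepsilon)_\varepsilon \in S^{m_1}_{\rho,\delta,lsc}$ and $(b_\varepsilon)_\varepsilon \in S^{m_2}_{\rho,\delta,lsc}$. For each fixed $\varepsilon$ the classical calculus provides the symbol
\begin{equation*}
c_\varepsilon(x,\xi) = \int e^{-iy\eta}\, a_\varepsilon(x,\xi+\eta)\, b_\varepsilon(x+y,\xi)\, dy\, \dbar\eta
\end{equation*}
as an oscillatory integral, with $(c_\varepsilon)_\varepsilon \in S^{m_1+m_2}_{\rho,\delta}$ pointwise in $\varepsilon$. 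The problem then splits into (i) establishing the asymptotic expansion in $\widetilde{S}^{m_1+m_2}_{\rho,\delta,lsc}$, (ii) verifying well-definedness modulo negligible symbols, and (iii) justifying the mapping property $AB\colon \GLtwo \to \GLtwo$.

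For (i), I would Taylor expand $a_\varepsilon(x,\xi+\eta)$ in $\eta$ at the origin to order $N$, yielding
\begin{equation*}
c_\varepsilon(x,\xi) = \sum_{|\alpha|<N} \frac{1}{\alpha!}\, D^\alpha_\xi a_\varepsilon(x,\xi)\, \partial^\alpha_x b_\varepsilon(x,\xi) + r_{N,\varepsilon}(x,\xi),
\end{equation*}
where $r_{N,\varepsilon}$ has the usual oscillatory integral representation coming from the integral form of Taylor's remainder. Each term of the finite sum and each derivative estimate on $r_{N,\varepsilon}$ involves only finite products and sums of $q^{(m_i)}_{\alpha,\beta}$-seminorms of $a_\varepsilon$ and $b_\varepsilon$. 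The key algebraic fact is that $\Pi_{lsc}$ is closed under finite sums and products: if $|\omega_\varepsilon|^{2p} \le c_{2p}\log(1/\varepsilon)$ and $|\tau_\varepsilon|^{2p} \le c'_{2p}\log(1/\varepsilon)$, then $|\omega_\varepsilon \tau_\varepsilon|^p \le (c_{2p}c'_{2p})^{1/2}\log(1/\varepsilon)$. Consequently each term lies in $S^{m_1+m_2-|\alpha|(\rho-\delta)}_{\rho,\delta,lsc}$ and $r_{N,\varepsilon}$ is controlled in $S^{m_1+m_2-N(\rho-\delta)}_{\rho,\delta,lsc}$. Since $\rho>\delta$, the orders tend to $-\infty$ and Theorem~\ref{thm:AE}(ii) realizes the prescribed asymptotic series as a genuine symbol $a\#b \in \widetilde{S}^{m_1+m_2}_{\rho,\delta,lsc}$.

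For (ii), if either representative is replaced by a negligible one, the same oscillatory integral bounds produce $\mathcal{O}(\varepsilon^q)$ seminorm estimates for every $q$, since the rapid decay of one factor absorbs the logarithmic slow scale growth of the other. Hence the class $a\#b$ depends only on $a$ and $b$. For (iii), one reduces to uniform $H^s \to H^{s-m_1-m_2}$ bounds for $(c_\varepsilon)_\varepsilon$ with constants of at most logarithmic slow scale growth, which follow from Calder\'on--Vaillancourt applied to $(c_\varepsilon)_\varepsilon \in S^{m_1+m_2}_{\rho,\delta}$ pointwise in $\varepsilon$ (valid because $\delta<\rho$), using once again closure of $\Pi_{lsc}$ under products to combine the logarithmic slow scale bounds of both factors into a logarithmic slow scale bound for the composition. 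Combined with the $H^\infty$-preserving property of each $A_\varepsilon$ and $B_\varepsilon$, this yields moderateness of $A_\varepsilon B_\varepsilon u_\varepsilon$ in $\mathcal{M}_{H^\infty}$ whenever $(u_\varepsilon)_\varepsilon \in \mathcal{M}_{H^\infty}$, and negligibility whenever $(u_\varepsilon)_\varepsilon \in \mathcal{N}_{H^\infty}$.

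The main obstacle will be the remainder bookkeeping for $r_{N,\varepsilon}$: one must bound the oscillatory integral uniformly in $\varepsilon$ in terms of only finitely many seminorms of $a_\varepsilon$ and $b_\varepsilon$ of a priori fixed orders, so that the logarithmic slow scale control on each factor combines into a single logarithmic slow scale control on the remainder. The combinatorial skeleton is identical to the slow scale case in \cite[Section 5]{GarettoGramchevMoe:05}; what needs genuine verification is only the stability of $\Pi_{lsc}$ under the finite algebraic operations that arise, which is provided by the simple computation displayed above.
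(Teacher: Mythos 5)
Your proposal is correct and takes essentially the same approach as the paper: the paper simply cites \cite[Theorem 5.15]{GarettoGramchevMoe:05} and remarks that the adaptation from slow scale to logarithmic slow scale is evident, and what you have written is precisely a sketch of that adaptation, with the crucial observation correctly identified, namely that $\Pi_{lsc}$ is stable under the finitely many sums and products arising in the Taylor remainder estimates and the negligibility check. The only additional point you might state explicitly for completeness is that $\varepsilon^q\,\omega_\varepsilon = \mathcal{O}(\varepsilon^{q'})$ for every $q' < q$ whenever $(\omega_\varepsilon)_\varepsilon \in \Pi_{lsc}$, which is what makes the negligibility absorption in step (ii) go through.
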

For the proof we refer to \cite[Theorem 5.15]{GarettoGramchevMoe:05}. We note that Theorem~\ref{thm:composition} can also be stated for classical symbols. In detail, if $a \in \widetilde{S}^{m_1}_{\rho,\delta,cl,lsc}$ and $b \in \widetilde{S}^{m_2}_{\rho,\delta,cl,lsc}$, then $a \# b$ in $\widetilde{S}^{m_1+m_2}_{\rho,\delta,cl,lsc}$.

Also useful is the following theorem:
\begin{thm}
Let $A(x,D_x)$ be a pseudodifferential operators with generalized symbol $a \in \widetilde{S}^{m}_{\rho,\delta,lsc}$. Then the adjoint operator $A^*$ is in $\Psi^{m}_{\rho,\delta,lsc}$ and its symbol $a^*$ is given by the asymptotic expansion
\begin{equation*}
a^* (x,\xi) \sim \sum_{|\alpha| \ge 0} \frac{i^{|\alpha|}}{\alpha!} D^{\alpha}_{x} D^{\alpha}_{\xi} \overline{a(x,\xi)}.
\end{equation*}
\end{thm}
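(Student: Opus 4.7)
The plan is to reduce the statement to the classical adjoint construction on the level of representatives, and then verify that the logarithmic slow scale structure is preserved at every step. Fix a representative $(a_\varepsilon)_\varepsilon$ of $a$ in $S^m_{\rho,\delta,lsc}$. For each fixed $\varepsilon$ one has $a_\varepsilon \in S^m_{\rho,\delta}$ in the usual Hörmander sense, so $A_\varepsilon(x,D_x)\colon \mathcal{S}(\mathbb{R}^n) \to \mathcal{S}(\mathbb{R}^n)$ has a well-defined formal $L^2$-adjoint. Reading off the Schwartz kernel and integrating by parts in the pairing $\langle A_\varepsilon u, v\rangle = \langle u, A_\varepsilon^{*} v\rangle$ gives
\[
A_\varepsilon^{*} v(y) \;=\; \iint e^{i(y-x)\xi}\,\overline{a_\varepsilon(x,\xi)}\,v(x)\,dx\,\dbar\xi,
\]
which is a pseudodifferential operator with double (amplitude) symbol $b_\varepsilon(y,x,\xi) := \overline{a_\varepsilon(x,\xi)}$.

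Next I would convert this double symbol into a standard left symbol by the usual oscillatory integral reduction: writing $x = y + z$ and carrying out stationary-phase / integration-by-parts estimates yields
\[
a^{*}_\varepsilon(y,\xi) \;=\; \iint e^{-iz\eta}\,\overline{a_\varepsilon(y+z,\xi+\eta)}\,dz\,\dbar\eta,
\]
together with the fact that $a^{*}_\varepsilon \in S^{m}_{\rho,\delta}$. Taylor-expanding $\overline{a_\varepsilon(y+z,\xi+\eta)}$ in $z$ to order $N-1$ and using $\int e^{-iz\eta} z^{\alpha}\,dz\,\dbar\eta = i^{|\alpha|} \delta^{(\alpha)}_0$ produces the finite sum $\sum_{|\alpha|<N} \frac{i^{|\alpha|}}{\alpha!} D^{\alpha}_x D^{\alpha}_\xi \overline{a_\varepsilon(y,\xi)}$ with a remainder of order $m - (\rho-\delta)N$, controlled by the standard integration-by-parts argument from the theory of oscillatory integrals. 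This is precisely the construction in \cite[Theorem 5.15]{GarettoGramchevMoe:05} for slow scale symbols; since the assumption $\delta < \rho$ is made, the orders of the asymptotic terms strictly decrease.

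The key technical point is to track the $\varepsilon$-dependence. The seminorm bounds entering both the individual Taylor terms $D^{\alpha}_x D^{\alpha}_\xi \overline{a_\varepsilon}$ and the remainder estimate involve only finitely many of the bounds $q^{(m)}_{\alpha,\beta}(a_\varepsilon) = \mathcal{O}(\omega_{\alpha,\beta,\varepsilon})$. Since the product and sum of finitely many logarithmic slow scale nets is again in $\Pi_{lsc}$, each Taylor term lies in $\widetilde{S}^{m-(\rho-\delta)|\alpha|}_{\rho,\delta,lsc}$ and the remainder belongs to $\mathcal{M}_{S^{m-(\rho-\delta)N}_{\rho,\delta}}^{lsc}$. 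Applying Theorem~\ref{thm:AE}(ii) to the sequence $\bigl\{\tfrac{i^{|\alpha|}}{\alpha!} D^{\alpha}_x D^{\alpha}_\xi \overline{a_\varepsilon}\bigr\}_{\alpha}$ then yields a symbol $(a^{*}_\varepsilon)_\varepsilon$ in $S^{m}_{\rho,\delta,lsc}$ with the stated asymptotic expansion.

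Finally, well-definedness on equivalence classes: if $(a_\varepsilon - \tilde a_\varepsilon)_\varepsilon \in N^m_{\rho,\delta}$, the reduction formula gives $(a^{*}_\varepsilon - \tilde a^{*}_\varepsilon)_\varepsilon \in N^m_{\rho,\delta}$ because the oscillatory integral estimates preserve the $\mathcal{O}(\varepsilon^{q})$ bounds for every $q$. Hence $A^{*} \in \Psi^{m}_{\rho,\delta,lsc}(\mathbb{R}^n)$ and the expansion holds in $\widetilde{S}^{m}_{\rho,\delta,lsc}$. The main (but not deep) obstacle is the careful remainder estimate in the oscillatory integral reduction combined with the lsc bookkeeping — both are direct adaptations of \cite[Section 5]{GarettoGramchevMoe:05} from the slow scale to the logarithmic slow scale setting, made possible by the closure of $\Pi_{lsc}$ under the finite arithmetic operations that arise.
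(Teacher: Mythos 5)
Your proposal is correct and reconstructs exactly the argument the paper defers to: the paper's ``proof'' consists of the citation to \cite[Theorem 5.12]{GarettoGramchevMoe:05} (slow scale case) together with the remark that the lsc adaptation is evident, and you carry out precisely that adaptation -- amplitude operator with amplitude $\overline{a_\varepsilon(x,\xi)}$, oscillatory-integral reduction to a left symbol, Taylor expansion, then verification that each step preserves membership in $S^{m-(\rho-\delta)N}_{\rho,\delta,lsc}$ because only finitely many seminorms and hence finitely many lsc nets enter each estimate. One cosmetic slip: you cite Theorem~5.15 of \cite{GarettoGramchevMoe:05}, which is the composition result; the adjoint statement the paper invokes is Theorem~5.12 of that reference.
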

For the proof see \cite[Theorem 5.12]{GarettoGramchevMoe:05}.
\section{The governing equation in the Colombeau setting}\label{sec:GovEqn}

The present survey is devoted to wave propagation in inhomogeneous acoustic media in the case of non-smooth background data. The operator structure is motivated from wave propagation phenomena occuring in a number of physical applications, such as underwater acoustic or seismology. Hence, the evolution direction is the depth coordinate.  
\subsection{Derivation of the inhomogeneous wave equation}

The acoustic wave equation characterizes fluid motions and can be derived from the conservation laws of mass and momentum together with the equation of state of thermodynamical equilibrium. This system of acoustic equations can be linearized and therefore describe small perturbations from  a state of rest of the pressure $U(t,x) \in \mathbb{R}$, the density $\rho'(t,x) \in \mathbb{R}$ and the velocity $\vec{v'}(t,x) \in \mathbb{R}^n$ that moves in a fluid with given wave speed $c(x)$ and density $\rho(x)$, ($x \in \mathbb{R}^n, t \in \mathbb{R}$). To explain how these waves are generated or added to the fluid one can introduce so-called sources by adding source terms in the equation of mass, momentum and energy. Assuming an isentropic process the linearized acoustic system can be written as (cf. \cite{Howe:98, KinslerFrey:99, SO:09, BrekhovskikhGodin:99})
\begin{align*}
\frac{\partial \rho'}{\partial t} &= \text{div}(\rho \vec{v'}) + m & (\text{mass conservation})&\\
\frac{\partial \vec{v'}}{\partial t} &= \frac{1}{\rho}\nabla U + \vec{f} & (\text{momentum conservation})&\\
\frac{\partial U}{\partial t} &= c^2 \Bigl( \frac{\partial \rho'}{\partial t} + \vec{v'} \nabla{\rho} \Bigr) & (\text{equation of state})&
\end{align*}
where the mass source term $m$ and the force source term $\vec{f}$ are supposed to vanish in the undisturbed state. Here $m$ represents a volume injection of a source such as, for example, bodies whose volume is oscillating. An example for a body force $\vec{f}$ of a source is an oscillating rigid body of constant volume. Note that these equations also hold for the fluid at rest.

Substituting the state equation into the equation of mass and using the conservation of momentum to eliminate the velocity term gives
\begin{equation*}
\text{div}( \frac{1}{\rho} \nabla U) - \frac{1}{c^2 \rho} \frac{\partial^2}{\partial t^2} U = F
\end{equation*}
where $F:= - \frac{\partial m/\rho}{\partial t} + \text{div}(\vec{f}/\rho)$ denotes the source function. We remark that in the absence of sources one derives the homogeneous wave equation.

In the following we will study the Colombeau generalized partial differential equation of the form
\begin{equation}\label{eqn:GovEqn}
LU := \big( \partial_z \frac{1}{\rho} \partial_z + \sum_{j=1}^{n-1} \partial_{x_j} \frac{1}{\rho} \partial_{x_j} - \frac{1}{\rho} \frac{1}{c^2} \partial^2_t \big) U = F
\end{equation}
with the (pressure) wave field $U \in \GLtwo(\mathbb{R}^{n+1})$ and $F \in \GLtwo(\mathbb{R}^{n+1})$ a source term. For the space coordinates we will allocate the vertical direction $z$, which we call the depth, the lateral directions are denoted by $x$. 

We assume that the coefficients $\frac{1}{\rho}=\frac{1}{\rho}(x,z)$, $\frac{1}{c}=\frac{1}{c}(x,z)$ are Colombeau generalized functions in $\G^{lsc}_{\infty,\infty} := \ensuremath{\mathcal{M}}_{W^{\infty,\infty}}^{lsc} / \ensuremath{\mathcal{N}}_{W^{\infty,\infty}}^{}$ and meet the following requirements: there are representatives $\Big(\frac{1}{\rho_\varepsilon}\Big)_\varepsilon \in \frac{1}{\rho}$, $\Big(\frac{1}{c^2_\varepsilon}\Big)_\varepsilon \in \frac{1}{c^2}$ such that
\begin{itemize}[leftmargin=.35in,topsep=-5pt, itemsep=5pt]
\setlength\itemsep{1em}
\item[(i)] there exist H\"older continuous functions $\frac{1}{\rho^*}, \frac{1}{c^*} \in \mathcal{C}^{0,\mu}(\mathbb{R}^n)$ for some $\mu \in (0,1)$ such that for all $(x,z) \in \mathbb{R}^n$ we have $\frac{1}{\rho_\varepsilon} = \frac{1}{\rho^*} \ast \varphi_{\omega_\varepsilon^{-1}}, \frac{1}{c_\varepsilon} = \frac{1}{c^*} \ast \varphi_{\omega_\varepsilon^{-1}}$ where $\varphi$ is as in (\ref{mollifier}), $(\omega_\varepsilon)_\varepsilon \in \Pi_{lsc}$ and the convolution is taken with respect to $x$ and $z$
\item[(ii)] $\exists \eta \in (0,1] \ \exists$ constants $c_0,\ c_1, \ \rho_0$ and $\rho_1$ such that $0 < c_0 \le c_\varepsilon(x,z) \le c_1 < \infty$ and $0 < \rho_0 \le \rho_\varepsilon(x,z) \le \rho_1 < \infty$ for all $(x,z) \in \mathbb{R}^n$ and $\varepsilon \in (0,\eta]$.
\end{itemize}
The lower bound assumption is sometimes referred to as strong positivity (cf. \cite[Section 1]{GarettoHoermann:05} or \cite[Section 2]{Garetto:06}). The upper bound means uniform boundedness of the $0$-th derivative of the representatives. For more details on the assumption (i) we refer to the remark given below.

Given a regularized operator as above, we carry out all transformations within algebras of generalized functions from now on. More explicitly, we will study the action of the linear operator $L$ from $\GLtwo$ into itself in the following sense: on the level of representatives $L$ acts as
\begin{equation*}
(u_\varepsilon)_\varepsilon \mapsto \Big( \big( \partial_z \frac{1}{\rho_\varepsilon} \partial_z + \sum_{j=1}^{n-1} \partial_{x_j} \frac{1}{\rho_\varepsilon} \partial_{x_j} - \frac{1}{\rho_\varepsilon} \frac{1}{c^2_\varepsilon} \partial^2_t \big) u_\varepsilon \Big)_\varepsilon \qquad \forall (u_\varepsilon)_\varepsilon \in \mathcal{M}_{H^{\infty}}.
\end{equation*}
This explains our governing equation in (\ref{eqn:GovEqn}) where $U=[(u_\varepsilon)_\varepsilon]$.  

\begin{rem}
In order to realize the logarithmic slow scale type conditions on the coefficients of a partial differential operator, one usually uses a rescaling in the mollification. 
In our case, the regularization is obtained by convolution with the logarithmically scaled mollifier $\varphi_{\omega_\varepsilon^{-1}}(.) := \omega_\varepsilon^n \varphi(\omega_\varepsilon .)$ with $\varphi \in \mathcal{S}(\mathbb{R}^n)$ as in (\ref{mollifier}) and $(\omega_\varepsilon)_\varepsilon \in \Pi_{lsc}$. 

For completeness, we give the following implications for our type of coefficients:
Let $u \in \mathcal{C}^{0,\mu}(\mathbb{R}^n)$ be a real-valued H\"older continuous function with exponent $\mu \in (0,1)$ and such that $\inf(u) \ge c$ for some positive constant $c$. Further $\varphi$ is a mollifier as above and $(\omega_\varepsilon)_\varepsilon \in \Pi_{lsc}$. 
One may think of $\omega_\varepsilon = \log ( \log (\frac{1}{\varepsilon} ) )$ as an explicit example. Then $\exists c>0: \omega_\varepsilon \ge c$ for all $\varepsilon \in (0,\frac{1}{11}]$ and $\forall p \ge 0, \exists c_p > 0: \omega_\varepsilon^p \le c_p \log(\frac{1}{\varepsilon})$ for all $\varepsilon \in (0,1]$. So given some $(\omega_\varepsilon)_\varepsilon \in \Pi_{lsc}$ the logarithmic slow scale regularization of $u$ is given by $u_\varepsilon (x) = u \ast \varphi_{\omega_\varepsilon^{-1}} (x)$ and satisfies the following estimates (see \cite[Theorem 7]{GH:04a}, or \cite[Section 5]{Pilipovic:13}):
\begin{equation*}
\lVert \partial^{\alpha} u_\varepsilon \rVert_{L^{\infty}} = \begin{cases} \mathcal{O}(1) & |\alpha|=0 \\ \mathcal{O}\big(\omega_\varepsilon^{|\alpha|-\mu}\big) & |\alpha|>0 \end{cases} \qquad  (\varepsilon \to 0)
\end{equation*}
and 
\begin{equation*}
\exists u_0>0 \ \exists \eta \in (0,1]: \quad | u_\varepsilon(x) | \ge u_0 \qquad x \in \mathbb{R}^n, \ \varepsilon \in (0,\eta].
\end{equation*}
Here, the latter inequality is the strong positivity and the first estimate guarantees that the net $(u_\varepsilon)_\varepsilon$ is in $\ensuremath{\mathcal{M}}_{W^{\infty,\infty}}^{lsc}$. Moreover, note that $u \in \mathcal{C}^{0,\mu}$ implies that for every $x \in \mathbb{R}^n$ we have
\begin{equation*}
|u_\varepsilon(x) - u(x)| \le \int |u(x-\omega_\varepsilon^{-1} y)-u(x)| \ |\varphi(y)|  \,d y  = \mathcal{O} (\omega_\varepsilon^{-\mu}) \qquad \varepsilon \to 0.
\end{equation*}
Again, with a slight abuse of notation we write $u$ for the equivalence class of the Colombeau regularization of $u \in \mathcal{C}^{0,\mu}$, i.e: 
\begin{equation*}
u:= (u_\varepsilon)_\varepsilon + \ensuremath{\mathcal{N}}_{W^{\infty,\infty}}^{} \in \ensuremath{\mathcal{G}}_{\infty,\infty}^{lsc}.
\end{equation*}
\end{rem}
\subsection{Time extrapolation of the wave field}\label{subsec:TimeMigration}

In a paper of Garetto and Oberguggenberger, see \cite{GarMoe:2011a}, the authors studied well-posedness of Cauchy problems with respect to the time variable for strictly hyperbolic systems and higher order equations in the Colombeau setting using symmetrisers and the theory of generalized pseudodifferential operators. There they imposed conditions concerning the asymptotic scale of the regularization parameter of the operator. Concretely, the authors proved existence, uniqueness and regularity of generalized solutions in the case that the regularization parameter is chosen logarithmic slow scale. More details can be found in \cite[Theorem 4.2]{GarMoe:2011a}.
\subsection{Depth evolution processes}

The concept of one-way wave equations, also known as paraxial wave equations, was first introduced by \cite{Cl:85} and has become a standard tool in depth migration processes due to ill-posedness of the full wave equation. In fact, they are expressions for the first depth derivative of a wave field and thus of the form
\begin{equation}\label{op:one-way}
\partial_z + iB_{\pm}(x,z,D_t,D_x).
\end{equation}
where $B_{\pm}$ are (microlocal) pseudodifferential operators.
Our derivation of one-way wave equations starts with a factorization of the operator $L$ in (\ref{eqn:GovEqn}) into terms of the form (\ref{op:one-way}).
\begin{rem}
As already noted in subsection \ref{subsec:TimeMigration}, one obtains well-posed problems in the model of time migration for strictly hyperbolic operators. We therefore give the following link.

Note that the operator $L$ in (\ref{eqn:GovEqn}) is strictly hyperbolic in the following sense:
Let $\theta \in (0,\pi/2)$ be a fixed angle and
\begin{align*}
\begin{split}
I'_{\theta} &:= \Bigl\{ (x,z,\tau,\xi) \in \mathbb{R}^n \times \mathbb{R}^n \ | \ \tau \neq 0, \ |c^*(x,z) \tau^{-1} \xi | < \sin \theta \Bigr\}.
\end{split}
\end{align*}
\end{rem}
For completeness we recall the following definition.
\begin{defn}
The operator $L$ in (\ref{eqn:GovEqn}) is called generalized strictly hyperbolic in $I'_{\theta}$ if there exists a choice of representatives of the coefficients in $\ensuremath{\mathcal{G}}_{\infty,\infty}^{lsc}(\mathbb{R}^n)$ such that the corresponding principal symbol of $L_\varepsilon(\zeta; x,z,\tau,\xi)$ has $2$ distinct real-valued roots $(\zeta_{j,\varepsilon})_\varepsilon$, $j=1,2$ such that
\begin{equation}\label{eqn:CharRoots}
|\zeta_{1,\varepsilon}(x,z,\tau,\xi) - \zeta_{2,\varepsilon}(x,z,\tau,\xi)| \ge \zeta_\varepsilon \langle (\tau, \xi) \rangle
\end{equation}
holds for some strictly nonzero net $(\zeta_\varepsilon)_\varepsilon$, for all $(x,z,\tau,\xi) \in I'_{\theta}$ and $\varepsilon$ sufficiently small.
\end{defn}
\section{Ellipticity of pseudodifferential operators}\label{sec:ell}
In this section, we introduce the notion of elliptic operators which enables us to construct a parametrix for such operators. Furthermore such operators will play an important role in the characterization of the generalized wave front set (cf. section~\ref{sec:lscWaveFrontSet}).
\subsection{Ellipticity}

In \cite[Definition 1.2]{GarettoHoermann:05} and \cite[Proposition 2.7]{Garetto:08} the authors introduced the concept of slow scale ellipticity which can be carried over to logarithmic slow scale ellipticity easily. With this in mind, we define the property of logarithmic slow scale ellipticity for our class of operators as follows.
\begin{defn}\label{defn:lsc-ellipticity}
We say that a generalized symbol $a \in \widetilde{S}^{m}_{lsc}$ is logarithmic slow scale micro-elliptic at $(x_0,\xi_0) \in T^*\mathbb{R}^n \setminus 0$ if it has a representative $(a_\varepsilon)_\varepsilon$ such that the following is satisfied: $\exists$ relatively compact open neighborhood $U$ of $x_0$, $\exists$ conic neighborhood $\Gamma$ of $\xi_0$, $\exists$ nets $(r_\varepsilon)_\varepsilon \in \Pi_{lsc}$,  $(s_\varepsilon)_\varepsilon \in \Pi_{lsc}$ and a constant $\eta \in (0,1]$ such that
\begin{equation}\label{eqn:lsc-ell}
|a_\varepsilon(x,\xi)| \geq \frac{1}{s_\varepsilon} \langle \xi \rangle^m \quad (x,\xi) \in U \times \Gamma, \ |\xi| \geq r_\varepsilon, \ \varepsilon \in (0,\eta].
\end{equation}
In the sequel, we will sometimes use the abbreviation lsc for logarithmic slow scale.
We denote by $\text{Ell}_{lsc}(a)$ the set of points $(x_0,\xi_0) \in T^* \mathbb{R}^n \setminus 0$ where $a$ is logarithmic slow scale micro-elliptic. 
If there exists a representative $(a_\varepsilon)_\varepsilon \in a$ such that (\ref{eqn:lsc-ell}) holds for any $(x_0,\xi_0) \in T^* \mathbb{R}^n \setminus 0$ then the symbol $a$ is called logarithmic slow scale elliptic. 
\end{defn}
\begin{defn}
More generally, we say that a generalized symbol $a \in \widetilde{S}^{m}_{sc}$ is slow scale micro-elliptic at $(x_0,\xi_0) \in T^*\mathbb{R}^n \setminus 0$ if (\ref{eqn:lsc-ell}) holds for some nets $(r_\varepsilon)_\varepsilon, (s_\varepsilon)_\varepsilon \in \Pi_{sc}$.
\end{defn}
Note that a logarithmic slow scale elliptic symbol is also slow scale elliptic but not vice versa.
\begin{rem}
We first observe that condition $(\ref{eqn:lsc-ell})$ is independent of the choice of representative. Indeed, let $(a_\varepsilon)_\varepsilon \in a$ as in Definition~\ref{defn:lsc-ellipticity} and $(a'_\varepsilon)_\varepsilon$ another representative of $a$. Then for some arbitrary but fixed $p>0$ there $\exists c_1 > 0, \ \exists \eta_1 \in (0,1]$ such that on $U \times \Gamma$ we have
\begin{equation*}
|a'_\varepsilon(x,\xi)| \ge |a_\varepsilon(x,\xi)| - |(a'_\varepsilon-a_\varepsilon)(x,\xi)| \ge \frac{1}{s_\varepsilon} \langle \xi \rangle^m \big( 1- c_1 s_\varepsilon \varepsilon^{2p} \big) \qquad |\xi| \ge r_\varepsilon, \ \varepsilon \in (0,\eta_1].
\end{equation*}
Since $\exists \eta_2 \in (0,1], \ \exists c_2 >0$ so that $s_\varepsilon \le c_2 \varepsilon^{-p}$ for $\varepsilon \in (0,\eta_2]$ we get
\begin{equation*}
1-c_1 s_\varepsilon \varepsilon^{2p} \ge 1- c_1 c_2 \varepsilon^p \ge 1/2 \qquad \forall \varepsilon \in (0, \min(\eta_2, \big(2c_1 c_2\big)^{-1/p}) ].
\end{equation*}
Redefining $\eta:= \min(\eta_1, \eta_2, \big(2c_1 c_2\big)^{-1/p})$ we obtain
\begin{equation*}
|a'_\varepsilon(x,\xi)| \geq \frac{1}{2s_\varepsilon} \langle \xi \rangle^m \quad (x,\xi) \in U \times \Gamma, \ |\xi| \geq r_\varepsilon, \ \varepsilon \in (0,\eta].
\end{equation*}
\end{rem}

Moreover, we notice that the notion of (logarithmic) slow scale ellipticity is stable under lower order (logarithmic) slow scale perturbations. For a proof one reworks essentially the lines in \cite[Proposition 1.3]{GarettoHoermann:05}. For completeness we give the proof of the following proposition.
\begin{prop}\label{prop:lsc-ell}
Let $(a_\varepsilon)_\varepsilon \in S^m_{lsc}(\mathbb{R}^n \times \mathbb{R}^n)$ be logarithmic slow scale micro-elliptic at the point $(x_0,\xi_0)$. Then
\begin{itemize}[leftmargin=.35in, topsep=-5pt]
\setlength\itemsep{1em}
\item[(i)]
$
\!
\begin{aligned}[t]
&\forall \alpha, \beta \in \mathbb{N}^n \ \exists (\lambda_\varepsilon)_\varepsilon \in \Pi_{lsc} \ \exists \eta \in (0,1]: \\
&\phantom{\forall \alpha, \beta \in } |\partial_{\xi}^{\alpha} \partial_x^{\beta} a_\varepsilon(x,\xi)| \le \lambda_\varepsilon |a_\varepsilon(x,\xi)| \langle \xi \rangle^{-|\alpha|} \quad U \times \Gamma, \ |\xi| \ge r_\varepsilon, \ \varepsilon \in (0,\eta]
\end{aligned}
$ 
\item[(ii)] if $(b_\varepsilon)_\varepsilon \in S^{m'}_{lsc}(\mathbb{R}^n \times \mathbb{R}^n)$ with $m' < m$, then (\ref{eqn:lsc-ell}) is valid for the net $(a_\varepsilon+b_\varepsilon)_\varepsilon$.
\end{itemize}
\end{prop}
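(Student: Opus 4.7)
The plan is to exploit the tension between the symbol estimates of $(a_\varepsilon)_\varepsilon$ provided by $S^m_{lsc}$ and the lower bound provided by the micro-ellipticity hypothesis \eqref{eqn:lsc-ell}. In both parts the algebraic manipulation is elementary; the real content lies in verifying that the resulting nets remain in $\Pi_{lsc}$.

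For (i), I start from the defining estimate of $S^m_{lsc}$, which yields, for every $\alpha,\beta$, a constant $C_{\alpha,\beta}>0$ and a net $(\omega_{\alpha,\beta,\varepsilon})_\varepsilon \in \Pi_{lsc}$ with $|\partial_\xi^\alpha \partial_x^\beta a_\varepsilon(x,\xi)| \le C_{\alpha,\beta}\,\omega_{\alpha,\beta,\varepsilon}\,\langle \xi \rangle^{m-|\alpha|}$ for $\varepsilon$ small. Multiplying by the pointwise inequality $|a_\varepsilon(x,\xi)|^{-1} \le s_\varepsilon \langle \xi \rangle^{-m}$, valid on $U\times\Gamma$ for $|\xi|\ge r_\varepsilon$ and $\varepsilon\in(0,\eta]$ by the ellipticity assumption, gives the claim with $\lambda_\varepsilon := C_{\alpha,\beta}\,\omega_{\alpha,\beta,\varepsilon}\,s_\varepsilon$.

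For (ii), by the reverse triangle inequality and the symbol bound $|b_\varepsilon(x,\xi)| \le C\mu_\varepsilon \langle \xi\rangle^{m'}$ for some $(\mu_\varepsilon)_\varepsilon \in \Pi_{lsc}$, one obtains
\begin{equation*}
|a_\varepsilon(x,\xi)+b_\varepsilon(x,\xi)| \ge \langle\xi\rangle^m \Bigl(\frac{1}{s_\varepsilon} - C\mu_\varepsilon \langle \xi\rangle^{m'-m}\Bigr)
\end{equation*}
on $U\times\Gamma$ for $|\xi|\ge r_\varepsilon$. Since $m'<m$, the bracketed expression is at least $1/(2s_\varepsilon)$ as soon as $\langle\xi\rangle \ge (2Cs_\varepsilon \mu_\varepsilon)^{1/(m-m')}$. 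Accordingly, I would replace $s_\varepsilon$ by $2s_\varepsilon$ and $r_\varepsilon$ by $r'_\varepsilon := \max\bigl(r_\varepsilon,\,(2Cs_\varepsilon \mu_\varepsilon)^{1/(m-m')}\bigr)$ to recover the ellipticity estimate for $(a_\varepsilon+b_\varepsilon)_\varepsilon$.

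The recurring obstacle in both parts — and really the only nontrivial point — is verifying closure of $\Pi_{lsc}$ under the operations used: finite products, multiplication by positive constants, raising to a fixed positive real power (with exponent $1/(m-m')$ in (ii)), and maxima of two nets. Each such closure follows by splitting the logarithmic test exponent, for instance by AM--GM in the form $(\omega_\varepsilon s_\varepsilon)^p \le \tfrac{1}{2}(\omega_\varepsilon^{2p}+s_\varepsilon^{2p})$, and applying the defining bound of $\Pi_{lsc}$ to each factor separately. One must also intersect the finitely many small-$\varepsilon$ neighborhoods on which the individual estimates hold in order to extract a single valid $\eta$. Once these closure properties are granted, both parts reduce to the substitutions indicated above.
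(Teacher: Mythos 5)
Your proof is correct and follows essentially the same route as the paper: for (i) you multiply the $S^m_{lsc}$ symbol estimate by the reciprocal of the ellipticity lower bound and set $\lambda_\varepsilon = C\,\omega_\varepsilon s_\varepsilon$, and for (ii) you use the reverse triangle inequality and enlarge $r_\varepsilon$ to $\max\bigl(r_\varepsilon, (2Cs_\varepsilon\omega_\varepsilon)^{1/(m-m')}\bigr)$, exactly as in the paper. Your extra remarks about verifying closure of $\Pi_{lsc}$ under products, constant multiples, fixed positive powers, and maxima make explicit something the paper takes for granted but do not change the argument.
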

\begin{proof}
We first show (i). Since $(a_\varepsilon)_\varepsilon \in S^m_{lsc}$ is logarithmic slow scale elliptic we obtain
\begin{equation*}
\begin{split}
\forall \alpha, \beta \in \mathbb{N}^n \ \exists (\omega_\varepsilon)_\varepsilon \in \Pi_{lsc} & \ \exists c > 0 \ \exists \eta \in (0,1] \ \text{such that}\\
&|\partial_{\xi}^{\alpha} \partial_x^{\beta} a_\varepsilon(x,\xi)| \le c \omega_\varepsilon \langle \xi \rangle^{m-|\alpha|} \le c \omega_\varepsilon s_\varepsilon \langle \xi \rangle^{-|\alpha|} |a_\varepsilon(x,\xi)|
\end{split}
\end{equation*}
on $U \times \Gamma$ for $|\xi| \ge r_\varepsilon, \ \varepsilon \in (0,\eta]$. Setting $\lambda_\varepsilon := \omega_\varepsilon s_\varepsilon$ then $(\lambda_\varepsilon)_\varepsilon \in \Pi_{lsc}$ as desired.

To show the second part let $(b_\varepsilon)_\varepsilon \in S^{m'}_{lsc}$. Then
\begin{equation*}
|a_\varepsilon(x,\xi) + b_\varepsilon(x,\xi)| \ge \frac{1}{s_\varepsilon} \langle \xi \rangle^m - c\omega_\varepsilon\langle \xi \rangle^{m'} = \frac{1}{s_\varepsilon} \langle \xi \rangle^m \Big( 1- c s_\varepsilon \omega_\varepsilon \langle \xi \rangle^{m'-m} \Big) \ge \frac{1}{2s_\varepsilon} \langle \xi \rangle^m
\end{equation*}
on $U \times \Gamma$, $|\xi| \ge r'_\varepsilon := \max(r_\varepsilon, (2c s_\varepsilon \omega_\varepsilon)^{1/(m-m')})$ and $(r'_\varepsilon)_\varepsilon \in \Pi_{lsc}$.
\end{proof}
We note that the previous proposition can also be carried over for symbols that are slow scale micro-ellipticity.
\begin{rem}
In case of smooth symbols the notion of lsc-ellipticity reduces to the classical one and is therefore equivalent to the complement of the characteristic set. Indeed, given $a \in S^m(\mathbb{R}^n \times \mathbb{R}^n)$ and a representative $(a_\varepsilon)_\varepsilon$ of an lsc-elliptic symbol $a \in S^m_{lsc}(\mathbb{R}^n \times \mathbb{R}^n)$ then there exist nets $(s_\varepsilon)_\varepsilon$, $(r_\varepsilon)_\varepsilon \in \Pi_{lsc}$ and constants $c>0$, $\eta \in (0,1]$ such that for every $q \in \mathbb{N}$ we have:
\begin{equation*}
|a(x,\xi)| \geq |a_\varepsilon(x,\xi)| - |a(x,\xi)-a_\varepsilon(x,\xi)| \geq \langle \xi \rangle^m  \Big(\frac{1}{s_{\varepsilon}} - c \varepsilon^q \Big) \qquad \text{for} \ |\xi| \ge r_\varepsilon, \ \varepsilon \in (0,\eta].
\end{equation*}
As we are allowed to fix an $\varepsilon$ small enough the last expression is bounded away from 0. In particular $\mbox{Ell}_{lsc}(a)^c$ is equal to the characteristic set $\text{Char}(a(x,D))$. Note that this result remains valid if one replaces lsc by sc (cf. \cite[Remark 1.4]{GarettoHoermann:05}).
\end{rem}
Before we proceed, we give an equivalent characterization of the notion of lsc-ellipticity concerning the principal symbol. In \cite[Proposition 3.3]{HO:04} it is proved that a generalized partial differential operator with regular coefficients in $\Greg = \Greg_{C^\infty}$ is W-elliptic (weak elliptic) if its principal symbol is S-elliptic (strong elliptic). For the precise meaning of weak and strong ellipticity consult \cite{HO:04}. Referring to this we want to give the following proposition:
\begin{prop}\label{prop:ps-ell}
Given a generalized symbol $(a_\varepsilon)_\varepsilon \in S^m_{cl,lsc}$ then the following properties are equivalent:
\begin{itemize}[topsep=-5pt, itemsep=5pt]
\item[(i)] $(a_\varepsilon)_\varepsilon$ is lsc-elliptic

\item[(ii)] the principal symbol $(a_{m,\varepsilon})_\varepsilon$ satisfies the following condition: $\exists (s_\varepsilon)_\varepsilon \in \Pi_{lsc}$, $\exists \eta \in (0,1]$ and $r > 0$ such that
\begin{equation*}
 |a_{m,\varepsilon}(x,\xi)| \geq \frac{1}{s_\varepsilon} \langle \xi \rangle^m \qquad \forall (x,\xi) \in T^*\mathbb{R}^n \ \text{with} \ |\xi| \geq r \ \text{and} \ \varepsilon \in (0,\eta].
\end{equation*}
\end{itemize}
\end{prop}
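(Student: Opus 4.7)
The backbone of the argument is the classical decomposition guaranteed by Definition~\ref{defn:AE_ph}: choosing the cut-off $\varphi$ from the construction of asymptotic sums, we may write
\begin{equation*}
a_\varepsilon(x,\xi) = (1-\varphi(\xi))\,a_{m,\varepsilon}(x,\xi) + R_\varepsilon(x,\xi),\qquad (R_\varepsilon)_\varepsilon \in \mathcal{M}_{S^{m-1}_{\rho,\delta}}^{lsc},
\end{equation*}
so that the control of $a_\varepsilon$ and of its principal part $a_{m,\varepsilon}$ differ only by a term of order $m-1$ with lsc bounds. Combined with Proposition~\ref{prop:lsc-ell}(ii), which says that lsc-ellipticity is preserved under such lower-order perturbations, this reduces both implications to a comparison between $a_\varepsilon$ and $a_{m,\varepsilon}$ for large $|\xi|$.

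\textbf{The easy direction $(ii)\Rightarrow(i)$.} Suppose the principal symbol satisfies the uniform lower bound of (ii). On $|\xi|\ge \max(r,2)$ we have $1-\varphi(\xi)=1$, and the $S^{m-1}_{lsc}$-estimate yields $|R_\varepsilon(x,\xi)|\le C\omega_\varepsilon\langle\xi\rangle^{m-1}$ for some $(\omega_\varepsilon)_\varepsilon\in\Pi_{lsc}$. Hence
\begin{equation*}
|a_\varepsilon(x,\xi)| \ge \frac{1}{s_\varepsilon}\langle\xi\rangle^m\bigl(1 - C s_\varepsilon \omega_\varepsilon \langle\xi\rangle^{-1}\bigr) \ge \frac{1}{2s_\varepsilon}\langle\xi\rangle^m
\end{equation*}
as soon as $\langle\xi\rangle\ge 2Cs_\varepsilon\omega_\varepsilon$. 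The net $(2Cs_\varepsilon\omega_\varepsilon)_\varepsilon$ is lsc (using $\omega^p\omega'^{p}\le \sqrt{c_{2p}c'_{2p}}\log(1/\varepsilon)$), and the bound is uniform in $x$ and in the direction of $\xi$, so in particular $(a_\varepsilon)_\varepsilon$ is lsc-micro-elliptic at every $(x_0,\xi_0)\in T^*\mathbb{R}^n\setminus 0$, i.e.\ (i) holds.

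\textbf{The harder direction $(i)\Rightarrow(ii)$.} Fix the representative from Definition~\ref{defn:lsc-ellipticity}. By (i) and the same perturbation estimate applied in reverse, for every $(x_0,\xi_0)$ there are neighbourhoods $U_{x_0}\times\Gamma_{\xi_0}$ and lsc nets $(r^{(x_0,\xi_0)}_\varepsilon)_\varepsilon,(s^{(x_0,\xi_0)}_\varepsilon)_\varepsilon$ such that
\begin{equation*}
|a_{m,\varepsilon}(x,\xi)| \ge \frac{1}{2s^{(x_0,\xi_0)}_\varepsilon}\langle\xi\rangle^m,\qquad (x,\xi)\in U_{x_0}\times\Gamma_{\xi_0},\ |\xi|\ge \max(r^{(x_0,\xi_0)}_\varepsilon,2).
\end{equation*}
The homogeneity of $a_{m,\varepsilon}$ of degree $m$ in $\xi$ on $|\xi|\ge 1$ now removes the dependence on the radial variable: the previous bound is equivalent, up to a fixed constant, to $|a_{m,\varepsilon}(x,\omega)|\ge 1/s^{(x_0,\xi_0)}_\varepsilon$ for $\omega\in S^{n-1}$ in a neighbourhood of $\xi_0/|\xi_0|$, and the constant net $r\equiv 1$ can be used. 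For fixed $x_0$, the compactness of $S^{n-1}$ lets us cover $\{x_0\}\times S^{n-1}$ by finitely many such neighbourhoods $U_i\times\Gamma_i$; since finite maxima of lsc nets are lsc, we obtain a single lsc net $\tilde s^{x_0}_\varepsilon$ valid on $V_{x_0}\times S^{n-1}$, where $V_{x_0}:=\bigcap_i U_i$ is a neighbourhood of $x_0$.

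\textbf{Main obstacle.} The remaining, genuinely delicate step is to upgrade this $x$-locally uniform estimate to an estimate uniform over all of $\mathbb{R}^n$. Since $\mathbb{R}^n$ is not compact, a bare covering argument fails; one has to combine the pointwise non-vanishing coming from (i) with the uniform $S^m_{lsc}$ upper bound on $\partial_x a_{m,\varepsilon}$ to control the oscillation of $a_{m,\varepsilon}(\cdot,\omega)$ across $\mathbb{R}^n$. I would argue by contradiction: if no uniform lsc net $(s_\varepsilon)_\varepsilon$ existed, one could extract sequences $\varepsilon_k\to 0$, $x_k\in\mathbb{R}^n$, $\omega_k\in S^{n-1}$ along which $s_{\varepsilon_k}|a_{m,\varepsilon_k}(x_k,\omega_k)|\to 0$ for every candidate lsc $(s_\varepsilon)_\varepsilon$, and then use the lsc gradient bounds on $a_{m,\varepsilon}$ to transport this near-vanishing to a fixed point, where it contradicts the local bound provided by (i) together with the lower-order perturbation stability. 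Making this extraction compatible with the lsc scale is where the proof has to be executed carefully.
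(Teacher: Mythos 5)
Your direction $(ii)\Rightarrow(i)$ is correct and follows essentially the same route as the paper: bound the lower-order tail by $C\omega_\varepsilon\langle\xi\rangle^{m-1}$ and absorb it by taking $|\xi|$ larger than an lsc net; the paper's version does not even need the classical decomposition explicitly, only that $a_\varepsilon-a_{m,\varepsilon}$ is of order $m-1$ with lsc bounds.

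The direction $(i)\Rightarrow(ii)$, however, has a real gap rooted in a misreading of what \emph{lsc-elliptic} means. In Definition~\ref{defn:lsc-ellipticity} and, decisively, in Theorem~\ref{thm:parametrix} and in the paper's own proof of this proposition, ``$(a_\varepsilon)_\varepsilon$ is lsc-elliptic'' is the \emph{global} statement: there exist a single pair of lsc nets $(r_\varepsilon)_\varepsilon,(s_\varepsilon)_\varepsilon$ and a single $\eta$ with $|a_\varepsilon(x,\xi)|\ge s_\varepsilon^{-1}\langle\xi\rangle^m$ for all $x\in\mathbb{R}^n$, all $\xi$ with $|\xi|\ge r_\varepsilon$, and all $\varepsilon\le\eta$. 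You instead treat (i) as micro-ellipticity at every point with $(U,\Gamma,r_\varepsilon,s_\varepsilon)$ allowed to vary with $(x_0,\xi_0)$, which forces you into the globalization problem that you then (correctly) identify as unresolved. With the intended reading that problem simply does not arise: the lower bound is already uniform in $x$. What remains is the trick you only half-deploy — set $\zeta=\xi/|\xi|$, use the lower bound $|a_\varepsilon(x,\xi)||\xi|^{-m}\ge s_\varepsilon^{-1}\langle|\xi|^{-1}\rangle^m\ge \min(2^{m/2},1)/s_\varepsilon$ for $|\xi|\ge\max(1,r_\varepsilon)$, then bound the homogeneous terms $a_{m-j,\varepsilon}(x,\zeta)|\xi|^{-j}$ for $1\le j\le N$ and the $j>N$ tail (via Theorem~\ref{thm:AE}) each by $1/(4s'_\varepsilon)$ by choosing $|\xi|$ larger than a suitable lsc net, and conclude by triangle inequality that $|a_{m,\varepsilon}(x,\zeta)|\ge 1/(2s'_\varepsilon)$; homogeneity then gives the estimate for $|\xi|\ge 1$ with $r=1$. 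The $x$-uniformity of each of these bounds is automatic because the $S^m_{lsc}$ seminorms are already global suprema in $x$. Independently of the misreading, the contradiction/diagonal-extraction scheme you sketch is unlikely to close: $\Pi_{lsc}$ is not a countable cofinal scale, so extracting $\varepsilon_k\to 0$ that simultaneously defeats \emph{every} lsc net is exactly the kind of statement one cannot make cleanly, and the paper's direct estimate avoids it entirely.
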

\begin{proof}
The proof follows similar arguments to those used in \cite[Proposition 3.3]{HO:04}.
We first assume that condition (ii) holds. Then there exist nets $(s_\varepsilon)_\varepsilon \in \Pi_{lsc}, (\omega_\varepsilon)_\varepsilon \in \Pi_{lsc}$ and constants $c, r > 0$ such that
\begin{equation*}
\begin{split}
|a_\varepsilon(x,\xi)| &= |a_{m,\varepsilon}(x,\xi) + \sum_{j\ge 1} a_{m-j,\varepsilon}(x,\xi)| \ge \frac{1}{s_\varepsilon}\langle \xi \rangle^m - c \omega_\varepsilon\langle \xi \rangle^{m-1} \\
& \geq \frac{1}{s_\varepsilon} \langle \xi \rangle^m \Big( 1- c \omega_\varepsilon s_\varepsilon \langle \xi \rangle^{-1} \Big)
\end{split}
\end{equation*}
for $|\xi| \ge r$ and $\varepsilon$ small enough. Setting $r_\varepsilon = \max(r,2c\omega_\varepsilon s_\varepsilon)$ then $(r_\varepsilon)_\varepsilon \in \Pi_{lsc}$ and we obtain
\begin{equation*}
|a_\varepsilon(x,\xi)| \ge \frac{1}{2s_\varepsilon} \langle \xi \rangle^m \qquad |\xi| \ge r_\varepsilon, \ \varepsilon \in (0,\eta]
\end{equation*}
for some $\eta \in (0,1]$, showing (i).

To prove that (i) implies (ii), suppose that $(a_\varepsilon)_\varepsilon$ satisfies (\ref{eqn:lsc-ell}) on $\mathbb{R}^n \times \mathbb{R}^n$. Let $\zeta \in \mathbb{R}^n, |\zeta|=1$ and choose $\xi \in \mathbb{R}^n$ such that $|\xi| \ge r_\varepsilon$ and $\zeta = \xi/|\xi|$ with $(r_\varepsilon)_\varepsilon \in \Pi_{lsc}$ as in (\ref{eqn:lsc-ell}). Then $\exists (s_\varepsilon')_\varepsilon \in \Pi_{lsc}, \ \exists (r'_\varepsilon)_\varepsilon \in \Pi_{lsc}, \ \exists \eta \in (0,1]$ so that
\begin{equation*}
\begin{split}
|\sum_{j \ge 0} a_{m-j,\varepsilon}(x,\zeta) |\xi|^{-j} | &= |a_\varepsilon(x,\xi)| |\xi|^{-m} \ge \frac{1}{s_\varepsilon} \langle \xi \rangle^{m} \frac{1}{|\xi|^m} = \frac{1}{s_\varepsilon} \langle |\xi|^{-1} \rangle^m \geq \\
&\geq \frac{\min(2^{m/2},1)}{s_\varepsilon} =: \frac{1}{s_\varepsilon'}
\end{split}
\end{equation*}
for $|\xi| \ge r'_\varepsilon := \max(1,r_\varepsilon), \ \varepsilon \in (0,\eta]$.

Now fix $N \in \mathbb{N}, \ N \ge 1$. Since $(a_\varepsilon)_\varepsilon \in S^m_{lsc}$ we obtain $ \forall (t_\varepsilon)_\varepsilon \in \Pi_{lsc}, \ \forall 1 \le j \le N, \ \exists (\omega_{j,\varepsilon})_\varepsilon \in \Pi_{lsc}, \ \exists \eta \in (0,1]$ that 
\begin{equation*}
|a_{m-j,\varepsilon}(x,\zeta)| \frac{1}{|\xi|^j} \le \omega_{j,\varepsilon} \frac{1}{|\xi|^j} \le \frac{1}{t_\varepsilon}  \qquad \text{for} \ |\xi| \ge \max_{1 \le j \le N} (r_\varepsilon,(t_\varepsilon \omega_{j,\varepsilon})^{1/j})
\end{equation*} 
and $\varepsilon \in (0,\eta]$. At this point we are free to choose $(t_\varepsilon)_\varepsilon \in \Pi_{lsc}$ such that $\frac{N}{t_\varepsilon} = \frac{1}{4s'_\varepsilon}$. 

On the other hand, we can use Theorem~\ref{thm:AE} for classical symbols to show that $\exists (\omega_\varepsilon)_\varepsilon \in \Pi_{lsc}, \exists \eta \in (0,1]$ such that
\begin{equation*}
\begin{split}
|\sum_{j \ge N+1} a_{m-j,\varepsilon}(x,\zeta) |\xi|^{-j} | &\le \frac{1}{|\xi|^{N+1}} |\sum_{j \ge N+1} a_{m-j,\varepsilon}(x,\zeta) | \le  \frac{1}{|\xi|^{N+1}} \omega_\varepsilon \langle \zeta \rangle^{m-N-1} \le\\
&\le \frac{2^{m-N-1} \omega_\varepsilon}{|\xi|^{N+1}} \le \frac{1}{4s'_\varepsilon}
\end{split}
\end{equation*}
for any $|\xi| \ge \max(1, r_\varepsilon, (2^{m-N-1} \omega_\varepsilon s'_\varepsilon)^{\frac{1}{N+1}})$ and $\varepsilon \in (0,\eta]$. At this point we reset $r'_\varepsilon:= \max_{1 \le j \le N}(1, r_\varepsilon, (2^{m-N-1} \omega_\varepsilon s'_\varepsilon)^{\frac{1}{N+1}}, (t_\varepsilon \omega_{j,\varepsilon})^{1/j})$. Then $(r'_\varepsilon)_\varepsilon \in \Pi_{lsc}$ and we obtain: $\exists \eta \in (0,1]$ such that
\begin{equation*}
\begin{split}
|a_{m,\varepsilon}(x,\zeta)| & \ge  |\sum_{j = 0}^{\infty} a_{m-j,\varepsilon}(x,\zeta)|\xi|^{-j}| - |\sum_{j = 1}^{\infty} a_{m-j,\varepsilon}(x,\zeta) |\xi|^{-j}| \ge \\
& \ge \frac{1}{s_\varepsilon'} - |\sum_{j = 1}^{N} a_{m-j,\varepsilon}(x,\zeta)|\xi|^{-j} + \sum_{j = N+1}^{\infty} a_{m-j,\varepsilon}(x,\zeta)|\xi|^{-j}| \ge \frac{1}{s'_\varepsilon}-\frac{1}{2s'_\varepsilon} = \frac{1}{2s'_\varepsilon}
\end{split}
\end{equation*}
for $|\xi| \ge r'_\varepsilon$, $\varepsilon \in (0,\eta]$ by the above. 
Thus, since $a_{m,\varepsilon}$ is homogeneous of degree $m$ in the second variable 
\begin{equation*}
|a_{m,\varepsilon}(x,\xi)| \ge \frac{1}{2s'_\varepsilon} |\xi|^m \ge \frac{1}{2s'_\varepsilon} \langle \xi \rangle^m \qquad \text{for all} \ \xi \in \mathbb{R}^n, \ |\xi| \ge 1
\end{equation*}
for $\varepsilon$ sufficiently small. 
\end{proof}
\begin{rem}\label{rem:ps-ell}
Again, Proposition~\ref{prop:ps-ell} remains valid if we replace logarithmic slow scale by slow scale.
\end{rem}
\subsection{Strong ellipticity}
As already announced in section~\ref{sec:GovEqn} we are interested in the depth extrapolation of the wave field $U$ in (\ref{eqn:GovEqn}). We will therefore introduce a notion of (logarithmic) slow scale ellipticity which in addition has a certain behavior with respect to $\zeta$, the dual variable of the depth. 

We first introduce some notation. We denote by $(\tau,\xi,\zeta) \in \mathbb{R} \times \mathbb{R}^{n-1} \times \mathbb{R}$ the dual variables corresponding to $(t,x,z) \in \mathbb{R} \times \mathbb{R}^{n-1} \times \mathbb{R}$. Thus, we will work on a $2(n+1)$-dimensional phase space. We already mentioned in subsection~\ref{subsec:GenPointValues} that given a symbol $a \in \widetilde{S}^m_{lsc}(\mathbb{R}^{n+1} \times \mathbb{R}^{n+1})$ and a generalized point $(t,x,z,\tau,\xi, \widetilde{\zeta})$ in $\mathbb{R}^{n+1} \times \mathbb{R}^n \times \widetilde{\mathbb{R}}$ then the generalized point value of the symbol $a$ at $(t,x,z,\tau,\xi, \widetilde{\zeta})$ is well-defined in $\widetilde{\mathbb{C}}$. 

Moreover let $\Gamma_s \subseteq \mathbb{R}^{n+1}$ be a (classical) conic neighborhood of $(\tau_0,\xi_0,\zeta_0)$ of the following form
\begin{equation*}
\Gamma_s = \{ \lambda (\tau,\xi;\zeta) \in \mathbb{R}^{n+1} \setminus 0 \ | \ (\tau,\xi;\zeta) \in B_s\Bigl( \frac{(\tau_0,\xi_0;\zeta_0)}{|(\tau_0,\xi_0;\zeta_0)|}\Bigr) \cap S^1(0), \ \lambda > 0 \}
\end{equation*}
for some $s>0$ small enough, where $B_s(\eta)$ denotes the open ball with radius $s$ around $\eta \in \mathbb{R}^{n+1}$ and $S^1(0) := \{ \eta \in \mathbb{R}^{n+1} : |\eta| = 1 \}$.

We set
\begin{equation*}
\Gamma_{s,M}:= \{ (\lambda (\tau,\xi;\zeta_\varepsilon))_\varepsilon \in \Gamma_s^{(0,1]} \ | \ \exists N \in \mathbb{N} : |\zeta_\varepsilon| = \mathcal{O} (\varepsilon^{-N} ) \text{ as } \varepsilon \to 0, \ \lambda > 0 \}
\end{equation*}
and introduce an equivalence relation on $\Gamma_s$ by
\begin{equation*}
(\zeta^1_\varepsilon)_\varepsilon \sim (\zeta^2_\varepsilon)_\varepsilon \Leftrightarrow \forall m > 0 : |\zeta^1_\varepsilon - \zeta^2_\varepsilon| = \mathcal{O} (\varepsilon^m) \quad \varepsilon \to 0.
\end{equation*}
We then denote by $\widetilde{\Gamma}_s:= \Gamma_{s,M} / \sim$ the generalized cone with respect to $\zeta$ generated from $\Gamma_s$. So $\widetilde{\Gamma}_s$ is a generalized conic neighborhood of $(\tau_0,\xi_0,\zeta_0)$.

Moreover  we recall that $\Gamma_s \hookrightarrow \widetilde{\Gamma}_s$. The canonical embedding of $\Gamma_s$ into $\widetilde{\Gamma}_s$ is given by $(\tau,\xi,\zeta) \mapsto (\tau,\xi,(\zeta)_\varepsilon + \mathcal{N}_\mathbb{R})$. 

Also note that in the definition of $\Gamma_{s,M}$, $(\tau,\xi;\zeta_\varepsilon)_\varepsilon \in \Gamma_s^{(0,1]}$ means that 
\begin{equation*}
(\tau,\xi;\zeta_\varepsilon) \in B_s\Bigl( \frac{(\tau_0,\xi_0;\zeta_0)}{|(\tau_0,\xi_0;\zeta_0)|}\Bigr) \cap S^1(0) \qquad \text{for every fixed } \varepsilon \in (0,1].
\end{equation*}
With this notation we now give the definition of strong lsc-micro-ellipticity.
\begin{defn}\label{defn:*-lsc-ellipticity}
Let $a$ be a generalized symbol in $\widetilde{S}^{m}_{lsc}(\mathbb{R}^{n+1} \times \mathbb{R}^{n+1})$. We say that the symbol $a$ is strong logarithmic slow scale micro-elliptic at a point $(t_0,x_0,z_0,\tau_0,\xi_0;\zeta_0) \in T^*\mathbb{R}^{n+1} \setminus 0$, and we write $(t_0,x_0,z_0,\tau_0,\xi_0;\zeta_0) \in \text{Ell}^*_{lsc}(a)$, if and only if there exist a relatively compact open neighborhood $U$ of $(t_0,x_0,z_0)$, a (classical) conic neighborhood $\Gamma$ of $(\tau_0,\xi_0,\zeta_{0})$ such that $\Gamma \hookrightarrow \widetilde{\Gamma}$ where $\widetilde{\Gamma} = \Gamma_M / \sim$ is the generalized conic neighborhood of $(\tau_0,\xi_0,\zeta_{0})$ generated from $\Gamma$ as above, nets $(r_\varepsilon)_\varepsilon , (s_\varepsilon)_\varepsilon \in \Pi_{lsc}$ and an $\eta \in (0,1]$ such that
\begin{equation}\label{eqn:star-lsc-ell}
|a_\varepsilon(t,x,z,\tau,\xi;\zeta_\varepsilon)| \geq \frac{1}{s_\varepsilon} \langle (\tau,\xi;\zeta_\varepsilon) \rangle^m \qquad U \times \Gamma_M, \ |(\tau,\xi;\zeta_\varepsilon)| \geq r_\varepsilon, \ \varepsilon \in (0,\eta].
\end{equation}
\end{defn}
Figure 1 below shall illustrate the relation of the sets $\Gamma_s$ and $\Gamma_{s,M}$ in the following special situation. We fix a point $(x_0,z_0) \in \mathbb{R}^{n}$, let $\zeta_0^2 :=  \frac{1}{c^{\ast 2}}(x_0,z_0) \tau_0^2 - |\xi_0|^2$ and let $\Gamma_s$ be a conic neighborhood around $(\tau_0,\xi_0,\zeta_0)$. Then  for any $s > 0$, however small, we can define
\begin{equation*}
\zeta_{0,\varepsilon}^{s 2} := \begin{cases}
\frac{1}{c^{\ast 2}}(x_0,z_0) \tau_0^2 - |\xi_0|^2 & \varepsilon \in (C_s, 1] \\
\frac{1}{c^{2}_\varepsilon}(x_0,z_0) \tau_0^2 - |\xi_0|^2 & \varepsilon \in (0, C_s]
\end{cases}
\end{equation*}
for some positive constant $C_s$ depending on $s>0$ and will be specified below. Then 
\begin{equation*}
|\zeta_{0}^{2} - \zeta_{0,\varepsilon}^{s \ 2}| = \Big| \Bigl(\frac{1}{c^{\ast 2}} - \frac{1}{c^{2}_\varepsilon}\Bigr)(x_0,z_0) \tau_0^2 \Big| \le C(\tau_0) \omega_\varepsilon^{-\mu} \qquad \forall \varepsilon \in (0,1].
\end{equation*}
Now let $r(s)$ be a function depending on $s$ such that $r(s) \to 0$ as $s \to 0$. 
Then, since $C(\tau_0) \omega_\varepsilon^{-\mu} \le r(s)/2$ for all $\varepsilon$ small enough, i.e. for $\varepsilon$ less or equal a $C_s >0$ (with $C_s \to 0$ as $s \to 0$) we get
\begin{equation*}
|\zeta_{0}^{2} - \zeta_{0,\varepsilon}^{s 2}| \le r(s)/2 \quad \forall \varepsilon \in (0, C_s].
\end{equation*}
By the definition of $\zeta_{0,\varepsilon}^{s 2}$ we even get
\begin{equation*}
|\zeta_{0}^{2} - \zeta_{0,\varepsilon}^{s 2}| \le r(s)/2 \quad \forall \varepsilon \in (0, 1].
\end{equation*}
So in particular we have $(\tau_0,\xi_0,\zeta_{0,\varepsilon}^{s 2}) \in \Gamma_s$ for all $\varepsilon \in (0,1]$ and $\zeta_{0,\varepsilon}^{s 2} \to \zeta_{0}^{2}$ as $\varepsilon \to 0$.

Therefore, if we working in an open ball around $(\tau_0,\xi_0,\zeta_{0})$ with radius $r(s)>0$ we can find a generalized point $(\tau_0,\xi_0,\zeta_{0,\varepsilon}^s)_\varepsilon$ such that $(\tau_0,\xi_0,\zeta_{0,\varepsilon}^s)$ is contained in $B_{r(s)} \bigl((\tau_0,\xi_0,\zeta_{0}) \bigr)$ for all $\varepsilon \in (0,1]$.
\begin{figure}[H]
\includegraphics{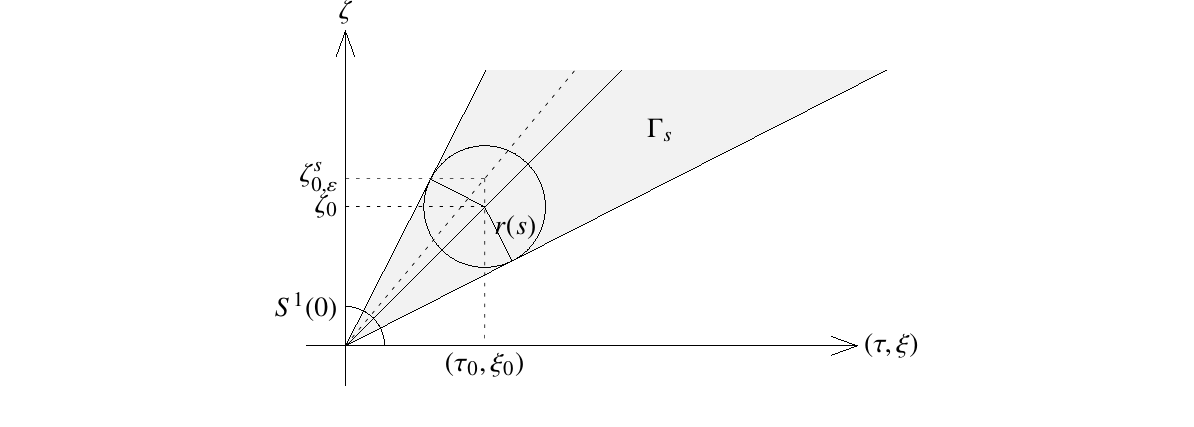}
\caption{Here the shaded area corresponds to the set $\Gamma_s$. The dashed line through $(\tau_0,\xi_0,\zeta^s_{0,\varepsilon})$ and the origin moves to to the line through $(\tau_0,\xi_0,\zeta_{0})$ and the origin as $\varepsilon \to 0$.}
\end{figure}
\begin{rem}\label{rem:strongell->ell}
In particular, since $\Gamma \hookrightarrow \widetilde{\Gamma}$ condition (\ref{eqn:star-lsc-ell}) implies
\begin{equation}\label{eqn:lsc-ell-1}
|a_\varepsilon(t,x,z,\tau,\xi;\zeta)| \geq \frac{1}{s_\varepsilon} \langle (\tau,\xi;\zeta) \rangle^m \qquad U \times \Gamma, \ |(\tau,\xi;\zeta)| \geq r_\varepsilon, \ \varepsilon \in (0,\eta].
\end{equation}
so the symbol $a$ is logarithmic slow scale micro-elliptic at $(t_0,x_0,z_0,\tau_0,\xi_0;\zeta_0)$.
\end{rem}
\begin{rem}
The definition of strong slow scale micro-ellipticity is straightforward. Again, one simply replaces lsc by sc in Definition~\ref{defn:*-lsc-ellipticity}.
\end{rem}
\begin{lem}\label{lem:equiv-lsc-ellipticity}
Let $a \in \widetilde{S}^m_{lsc}(\mathbb{R}^{n+1} \times \mathbb{R}^{n+1})$. Then $a$ is strong logarithmic slow scale micro-elliptic at $(t_0,x_0,z_0,\tau_0,\xi_0;\zeta_0)$ if and only if it is logarithmic slow scale micro-elliptic there.
So if $a$ satisfies (\ref{eqn:lsc-ell-1}), then it also satisfies (\ref{eqn:star-lsc-ell}) and vice versa.
\end{lem}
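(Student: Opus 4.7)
The lemma asserts the equivalence of two formulations of lsc micro-ellipticity at a single point: the classical-cone estimate (\ref{eqn:lsc-ell-1}) and the generalized-cone estimate (\ref{eqn:star-lsc-ell}). My plan is to prove the two implications directly, exploiting the key structural fact that an element of $\Gamma_M$ is by construction a net $((\tau,\xi,\zeta_\varepsilon))_\varepsilon$ lying in $\Gamma$ for every fixed $\varepsilon$, so that pointwise-in-$\varepsilon$ classical estimates automatically upgrade to estimates along nets.

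The direction (\ref{eqn:star-lsc-ell})$\Rightarrow$(\ref{eqn:lsc-ell-1}) is essentially the content of Remark \ref{rem:strongell->ell}: any $(\tau,\xi,\zeta)\in\Gamma$ embeds canonically as the constant net $(\tau,\xi,(\zeta)_\varepsilon+\mathcal N_{\mathbb R})\in\widetilde\Gamma$, and substituting this into (\ref{eqn:star-lsc-ell}) immediately recovers (\ref{eqn:lsc-ell-1}) with the same data $U,\Gamma,(r_\varepsilon)_\varepsilon,(s_\varepsilon)_\varepsilon,\eta$.

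For the converse, I would start from the hypothesis of lsc micro-ellipticity at $(t_0,x_0,z_0,\tau_0,\xi_0;\zeta_0)$: there exist a representative $(a_\varepsilon)_\varepsilon\in a$, a relatively compact open neighborhood $U$ of $(t_0,x_0,z_0)$, a classical conic neighborhood $\Gamma$ of $(\tau_0,\xi_0,\zeta_0)$, nets $(r_\varepsilon)_\varepsilon,(s_\varepsilon)_\varepsilon\in\Pi_{lsc}$ and $\eta\in(0,1]$ such that (\ref{eqn:lsc-ell-1}) holds. I claim that this very same data already verifies (\ref{eqn:star-lsc-ell}). Pick any $((\tau,\xi,\zeta_\varepsilon))_\varepsilon\in\Gamma_M$; by definition of $\Gamma_M$ one has $(\tau,\xi,\zeta_\varepsilon)\in\Gamma$ for every $\varepsilon\in(0,1]$. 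For any $\varepsilon\in(0,\eta]$ with $|(\tau,\xi,\zeta_\varepsilon)|\geq r_\varepsilon$ I then evaluate (\ref{eqn:lsc-ell-1}) at the specific point $(t,x,z,\tau,\xi,\zeta_\varepsilon)\in U\times\Gamma$ to obtain
\[
|a_\varepsilon(t,x,z,\tau,\xi;\zeta_\varepsilon)|\geq\frac{1}{s_\varepsilon}\langle(\tau,\xi;\zeta_\varepsilon)\rangle^m,
\]
which is precisely (\ref{eqn:star-lsc-ell}) with the same $U,\Gamma,(r_\varepsilon)_\varepsilon,(s_\varepsilon)_\varepsilon,\eta$.

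I do not anticipate a genuine obstacle: the proof is essentially definition-chasing. The single point worth flagging is that the moderateness bound $|\zeta_\varepsilon|=\mathcal O(\varepsilon^{-N})$ built into $\Gamma_M$ is never used, because (\ref{eqn:lsc-ell-1}) holds uniformly on $\Gamma$ subject only to $|(\tau,\xi,\zeta)|\geq r_\varepsilon$; in particular no shrinking of $\Gamma$ and no enlargement of $(r_\varepsilon)_\varepsilon$ or $(s_\varepsilon)_\varepsilon$ is needed, and independence of the chosen representative of $a$ is handled exactly as in the remark following Definition \ref{defn:lsc-ellipticity}.
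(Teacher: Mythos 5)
Your proof is correct and follows essentially the same route as the paper: one direction is Remark~\ref{rem:strongell->ell} via the canonical embedding $\Gamma\hookrightarrow\widetilde\Gamma$, and the converse is the observation that any $(\tau,\xi;\zeta_\varepsilon)_\varepsilon\in\Gamma_M$ lies in $\Gamma$ for each fixed $\varepsilon$, so the classical-cone estimate transfers verbatim with the same $U,\Gamma,(r_\varepsilon)_\varepsilon,(s_\varepsilon)_\varepsilon,\eta$. Your remark that the moderateness bound on $|\zeta_\varepsilon|$ is never used is accurate and a nice observation the paper does not make explicit.
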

\begin{proof}
The first direction is already shown by Remark~\ref{rem:strongell->ell}. 

On the other hand suppose that the symbol $a$ is logarithmic slow scale micro-elliptic at $(t_0,x_0,z_0,\tau_0,\xi_0;\zeta_0) \in T^*\mathbb{R}^{n+1} \setminus 0$. Then there exists a representative $(a_\varepsilon)_\varepsilon$ of $a$ such that the following is satisfied: $\exists$ relatively compact open neighborhood $U$ of $(t_0,x_0,z_0)$, $\exists$ (classical) conic neighborhood $\Gamma$ of $(\tau_0,\xi_0;\zeta_0)$, $\exists$ nets $(r_\varepsilon)_\varepsilon, (s_\varepsilon)_\varepsilon \in \Pi_{lsc}$, $\exists \eta \in (0,1]$ such that
\begin{equation*}
|a_\varepsilon(t,x,z,\tau,\xi;\zeta)| \geq \frac{1}{s_\varepsilon} \langle (\tau,\xi;\zeta) \rangle^m \quad \text{ on } U \times \Gamma, \ |(\tau,\xi;\zeta)| \geq r_\varepsilon, \ \varepsilon \in (0,\eta].
\end{equation*}
In particular, since $\Gamma$ is conic neighborhood of $(\tau_0,\xi_0;\zeta_0)$, it is of the form
\begin{equation*}
\Gamma = \{ \lambda (\tau,\xi;\zeta) \in \mathbb{R}^{n+1} \setminus 0 \ | \ (\tau,\xi;\zeta) \in B_s\Bigl( \frac{(\tau_0,\xi_0;\zeta_0)}{|(\tau_0,\xi_0;\zeta_0)|}\Bigr) \cap S^1(0), \ \lambda > 0 \}
\end{equation*}
for some $s>0$ small enough.

We define $\widetilde{\Gamma}:= \Gamma_{M} / \sim$ the generalized cone with respect to $\zeta$ constructed from $\Gamma$ as above.

Take $(\tau,\xi;\zeta_\varepsilon)_\varepsilon \in \Gamma_M$. Then $(\tau,\xi;\zeta_\varepsilon) \in \Gamma$ for any fixed $\varepsilon \in (0,1]$ and $\exists N \in \mathbb{N}$ such that $|\zeta_\varepsilon| = \mathcal{O}(\varepsilon^{-N})$ as $\varepsilon \to 0$. Hence $\exists \eta \in (0,1]$: 
\begin{equation*}
|a_\varepsilon(t,x,z,\tau,\xi;\zeta_\varepsilon)| \geq \frac{1}{s_\varepsilon} \langle (\tau,\xi;\zeta_\varepsilon) \rangle^m \quad \text{ on } U \times \Gamma_M, \ |(\tau,\xi;\zeta_\varepsilon)| \geq r_\varepsilon, \ \varepsilon \in (0,\eta].
\end{equation*}
As a conclusion we obtain that $(t_0,x_0,z_0,\tau_0,\xi_0;\zeta_0) \in \text{Ell}^*_{lsc}(a)$, i.e., the symbol $a$ is strong logarithmic slow scale micro-elliptic at the point $(t_0,x_0,z_0,\tau_0,\xi_0;\zeta_0)$.

We have therefore shown that logarithmic slow scale micro-ellipticity at a particular point is equivalent to strong logarithmic slow scale micro-ellipticity at that point.
\end{proof}
An inspection of the proof of Lemma~\ref{lem:equiv-lsc-ellipticity} shows the following result.
\begin{cor}\label{cor:equiv-sc-ellipticity}
Given $a \in \widetilde{S}^m_{sc}$ then $a$ is strong slow scale elliptic at a phase space point is equivalent to saying that $a$ is slow scale elliptic at that point. 
\end{cor}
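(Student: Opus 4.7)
The plan is to transfer the proof of Lemma~\ref{lem:equiv-lsc-ellipticity} essentially verbatim, replacing every occurrence of $\Pi_{lsc}$ by $\Pi_{sc}$ (and lsc by sc) in both hypothesis and conclusion. The structural ingredients of that argument — the classical conic neighborhood $\Gamma$ and the generalized cone $\widetilde{\Gamma} = \Gamma_M/\sim$ generated from it, together with the canonical embedding $\Gamma \hookrightarrow \widetilde{\Gamma}$ via $(\tau,\xi,\zeta) \mapsto (\tau,\xi,(\zeta)_\varepsilon + \mathcal{N}_\mathbb{R})$ — are purely set-theoretic in $\zeta$-space and make no reference to the particular growth rate distinguishing $\Pi_{lsc}$ from $\Pi_{sc}$.

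For the forward direction, I would start from the strong sc-micro-ellipticity bound, namely the sc-analogue of (\ref{eqn:star-lsc-ell}) holding on $U \times \Gamma_M$ with $(r_\varepsilon)_\varepsilon,(s_\varepsilon)_\varepsilon \in \Pi_{sc}$, and restrict it to those elements of $\Gamma_M$ arising from the canonical embedding of classical points $(\tau,\xi,\zeta) \in \Gamma$ as constant nets. This yields the sc-version of (\ref{eqn:lsc-ell-1}), i.e.\ sc-micro-ellipticity at the point, which is exactly the slow scale analogue of Remark~\ref{rem:strongell->ell}.

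For the converse, I would assume sc-micro-ellipticity at $(t_0,x_0,z_0,\tau_0,\xi_0;\zeta_0)$, so that the bound $|a_\varepsilon(t,x,z,\tau,\xi;\zeta)| \geq \langle(\tau,\xi;\zeta)\rangle^m/s_\varepsilon$ holds on $U \times \Gamma$ for $|(\tau,\xi;\zeta)| \geq r_\varepsilon$ and $\varepsilon \in (0,\eta]$, with $(r_\varepsilon)_\varepsilon, (s_\varepsilon)_\varepsilon \in \Pi_{sc}$ and $\Gamma$ of the standard form recalled before Definition~\ref{defn:*-lsc-ellipticity}. Construct $\widetilde{\Gamma} = \Gamma_M/\sim$ and pick any $(\tau,\xi;\zeta_\varepsilon)_\varepsilon \in \Gamma_M$; by definition of $\Gamma_M$, the point $(\tau,\xi;\zeta_\varepsilon)$ lies in $\Gamma$ for each fixed $\varepsilon$, so the scalar estimate can be applied pointwise in $\varepsilon$ with the same sc-nets $(r_\varepsilon)_\varepsilon$, $(s_\varepsilon)_\varepsilon$. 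This delivers the strong sc-micro-ellipticity condition and places $(t_0,x_0,z_0,\tau_0,\xi_0;\zeta_0)$ in $\mathrm{Ell}^*_{sc}(a)$.

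There is essentially no obstacle: the only place where the class $\Pi_{lsc}$ could conceivably matter is in manipulations of the nets $(r_\varepsilon)_\varepsilon$ and $(s_\varepsilon)_\varepsilon$, but in the above argument these nets are merely carried along from hypothesis to conclusion and never combined, differentiated, or estimated against a logarithm. Hence replacing $\Pi_{lsc}$ by $\Pi_{sc}$ throughout is legitimate, and the corollary follows from the proof of Lemma~\ref{lem:equiv-lsc-ellipticity} by direct inspection, as announced in the statement.
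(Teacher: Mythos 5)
Your proposal is correct and is precisely the argument the paper has in mind: the stated proof of the corollary is exactly "an inspection of the proof of Lemma~\ref{lem:equiv-lsc-ellipticity} shows the result," and you have carried out that inspection, correctly identifying that both directions — restriction along the canonical embedding $\Gamma \hookrightarrow \widetilde{\Gamma}$ for one implication, and pointwise-in-$\varepsilon$ application of the scalar estimate on $\Gamma_M$ for the other — carry the nets $(r_\varepsilon)_\varepsilon,(s_\varepsilon)_\varepsilon$ along without using any property distinguishing $\Pi_{lsc}$ from $\Pi_{sc}$.
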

\begin{lem}\label{lem:Sigma-Ell}
Let $L$ be the generalized differential operator given in (\ref{eqn:GovEqn}). Then $\text{Ell}_{sc}(L)$ is a subset of the complement of $\Sigma$, where
\begin{equation*}
\Sigma := \{ (t,x,z,\tau,\xi,\zeta) \in T^*\mathbb{R}^{n+1} \setminus 0 \ | \ \zeta^2-\frac{1}{c^{* 2}}(x,z)\tau^2 + |\xi|^2 = 0 \}
\end{equation*}
and $\frac{1}{c^{* 2}} \in \mathcal{C}^{0,\mu}(\mathbb{R}^n)$ is as in the beginning of section~\ref{sec:GovEqn}. Note that $\Sigma$ is a conic set and independent of the regularization parameter $\varepsilon \in (0,1]$. Moreover we have the following inclusion relations:
\begin{equation*}
\Sigma \subseteq \text{Ell}_{sc}(L)^c \subseteq \text{Ell}_{lsc}(L)^c.
\end{equation*}
\end{lem}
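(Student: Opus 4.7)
The second inclusion $\text{Ell}_{sc}(L)^c \subseteq \text{Ell}_{lsc}(L)^c$ is immediate, since $\Pi_{lsc} \subseteq \Pi_{sc}$ means lsc-micro-ellipticity at a point automatically gives sc-micro-ellipticity there; contrapositively, any non-sc-elliptic point is non-lsc-elliptic.

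The main inclusion $\Sigma \subseteq \text{Ell}_{sc}(L)^c$ I plan to prove by contradiction. Fix $(t_0,x_0,z_0,\tau_0,\xi_0,\zeta_0) \in \Sigma$ and suppose $L$ is sc-micro-elliptic there, so by the sc-variant of Definition~\ref{defn:lsc-ellipticity} there exist a representative $(L_\varepsilon)_\varepsilon$, a neighborhood $U \times \Gamma$ of the point, nets $(r_\varepsilon)_\varepsilon,(s_\varepsilon)_\varepsilon \in \Pi_{sc}$, and $\eta \in (0,1]$ with $|L_\varepsilon(x,z,\tau,\xi,\zeta)| \ge s_\varepsilon^{-1}\langle(\tau,\xi,\zeta)\rangle^2$ on $U \times \Gamma$ for $|(\tau,\xi,\zeta)| \ge r_\varepsilon$ and $\varepsilon \le \eta$. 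A direct computation of the symbol gives $L_\varepsilon = L_{2,\varepsilon} + L_{1,\varepsilon}$ where $L_{2,\varepsilon}(x,z,\tau,\xi,\zeta) = \rho_\varepsilon^{-1}(c_\varepsilon^{-2}\tau^2 - \zeta^2 - |\xi|^2)$ is the principal symbol and $L_{1,\varepsilon} = i\zeta\,\partial_z\rho_\varepsilon^{-1} + i\xi\cdot\nabla_x\rho_\varepsilon^{-1}$ is first order.

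The geometric heart of the argument is to exhibit a nearby cotangent direction $(\tau_0,\xi_\varepsilon,\zeta_\varepsilon)$ on which the principal symbol vanishes. Since $(t_0,x_0,z_0,\tau_0,\xi_0,\zeta_0) \in \Sigma$ we have $\zeta_0^2 + |\xi_0|^2 = c^{*-2}(x_0,z_0)\tau_0^2$, while by the H\"older estimate recorded in the remark following the coefficient assumptions, $|c^{*-2}(x_0,z_0) - c_\varepsilon^{-2}(x_0,z_0)| = \mathcal{O}(\omega_\varepsilon^{-\mu})$ for some $(\omega_\varepsilon)_\varepsilon \in \Pi_{lsc}$. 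When $\zeta_0 \neq 0$ I solve $\zeta_\varepsilon^2 + |\xi_0|^2 = c_\varepsilon^{-2}(x_0,z_0)\tau_0^2$ for a real root $\zeta_\varepsilon \to \zeta_0$ and keep $\xi_\varepsilon := \xi_0$; in the degenerate case $\zeta_0 = 0$, which forces $|\xi_0|^2 = c^{*-2}(x_0,z_0)\tau_0^2$, I set $\zeta_\varepsilon := 0$ and perturb $\xi_\varepsilon := (c^*(x_0,z_0)/c_\varepsilon(x_0,z_0))\xi_0$, so that $|\xi_\varepsilon|^2 = c_\varepsilon^{-2}(x_0,z_0)\tau_0^2$. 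In both cases $(\tau_0,\xi_\varepsilon,\zeta_\varepsilon) \to (\tau_0,\xi_0,\zeta_0)$ as $\varepsilon \to 0$, so it sits inside the classical conic neighborhood $\Gamma$ for $\varepsilon$ small enough, and as $\Gamma$ is conic, the whole ray $\lambda(\tau_0,\xi_\varepsilon,\zeta_\varepsilon)$ with $\lambda > 0$ lies in $\Gamma$; by construction and homogeneity $L_{2,\varepsilon}(x_0,z_0,\lambda\tau_0,\lambda\xi_\varepsilon,\lambda\zeta_\varepsilon) = 0$.

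Substituting this point into the ellipticity inequality for $\lambda$ large enough that $\lambda|(\tau_0,\xi_\varepsilon,\zeta_\varepsilon)| \ge r_\varepsilon$, only the first-order part $L_{1,\varepsilon}$ survives, and the derivative estimate $\lVert\partial\rho_\varepsilon^{-1}\rVert_{L^\infty} = \mathcal{O}(\omega_\varepsilon^{1-\mu})$ from the same remark bounds it by $|L_\varepsilon| \le C\lambda\omega_\varepsilon^{1-\mu}$. The ellipticity bound forces $|L_\varepsilon| \ge c_0\lambda^2/s_\varepsilon$ for some $c_0 > 0$, hence $C\omega_\varepsilon^{1-\mu}/\lambda \ge c_0/s_\varepsilon$. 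For any fixed small $\varepsilon$ this fails as $\lambda \to \infty$, the desired contradiction. The main obstacle will be handling the degeneracy $\zeta_0 = 0$ cleanly, and verifying that the perturbed direction genuinely stays inside the classical conic neighborhood $\Gamma$ provided by the ellipticity hypothesis (as $\Gamma$ is selected before the perturbation, so one must argue uniformly in the classical sphere slice rather than in a generalized cone).
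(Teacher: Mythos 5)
Your proof is correct, and it takes a genuinely different (and arguably cleaner) route than the paper's. The paper's proof funnels the argument through the ``strong slow scale micro-ellipticity'' machinery: it invokes Corollary~\ref{cor:equiv-sc-ellipticity} to pass from ellipticity on a classical cone $\Gamma$ to ellipticity on the generalized cone $\Gamma_M$, then places the $\varepsilon$-dependent root $(\tau_0,\xi_0,\zeta_{0,\varepsilon})$ inside $\Gamma_M$, and finally derives the contradiction indirectly by appealing to Proposition~\ref{prop:ps-ell} (equivalence of ellipticity of the full symbol with ellipticity of the principal symbol). You avoid all of this. You work directly with the classical ellipticity estimate at the classical points $(x_0,z_0,\lambda\tau_0,\lambda\xi_\varepsilon,\lambda\zeta_\varepsilon) \in U\times\Gamma$ for fixed small $\varepsilon$, and you derive the contradiction by pure scaling: after killing the homogeneous degree-$2$ part exactly, the residual first-order term grows only like $\lambda$, while the ellipticity lower bound grows like $\lambda^2/s_\varepsilon$, which is incompatible as $\lambda\to\infty$. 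This is more self-contained and more elementary than the paper's argument. You are also more careful on one point: the paper defines $\zeta_{0,\varepsilon}^2 = c_\varepsilon^{-2}(x_0,z_0)\tau_0^2 - |\xi_0|^2$ without addressing the possibility that this quantity is negative when $\zeta_0=0$, whereas you treat $\zeta_0=0$ as a separate case and perturb $\xi$ instead of $\zeta$. Your concern about whether the perturbed ray stays in $\Gamma$ resolves itself easily: since $\Gamma$ is open and conic, and $(\tau_0,\xi_\varepsilon,\zeta_\varepsilon)\to(\tau_0,\xi_0,\zeta_0)$, the ray lies in $\Gamma$ for all sufficiently small $\varepsilon$, and the argument only needs one such $\varepsilon$ with $\varepsilon\le\eta$ fixed before letting $\lambda\to\infty$.
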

\begin{proof}
Let $(t_0,x_0,z_0,\tau_0,\xi_0,\zeta_0) \in T^*\mathbb{R}^{n+1} \setminus 0$ be an arbitrary but fixed point of $\Sigma$. Hence,  $\zeta_0^2=\frac{1}{c^{* 2}}(x_0,z_0)\tau_0^2 - |\xi_0|^2$.

Assume that the operator $L$ with generalized symbol in $ \widetilde{S}^2_{lsc}(\mathbb{R}^{n+1} \times \mathbb{R}^{n+1})$ is slow scale micro-elliptic at the point $(t_0,x_0,z_0,\tau_0,\xi_0,\zeta_0) \in T^*\mathbb{R}^{n+1} \setminus 0$ . Then by Corollary~\ref{cor:equiv-sc-ellipticity} there is a relatively compact open neighborhood $U$ of $(t_0,x_0,z_0)$, a conic neighborhood $\Gamma$ of $(\tau_0,\xi_0,\zeta_0)$ such that for some $(r_\varepsilon)_\varepsilon, (s_\varepsilon)_\varepsilon \in \Pi_{sc}$ and some $\eta \in (0,1]$ we have  
\begin{equation*}
|L_\varepsilon(x,z,\tau,\xi,\zeta_\varepsilon)| \ge \frac{1}{s_\varepsilon} \langle (\tau,\xi,\zeta_\varepsilon) \rangle^2 
\end{equation*}
for all $(t,x,z,\tau,\xi,\zeta_\varepsilon)_\varepsilon \in U \times \Gamma_M$, $|(\tau,\xi,\zeta_\varepsilon)| \ge r_\varepsilon$, $\varepsilon \in (0,\eta]$ and $\widetilde{\Gamma}=\Gamma_{M}/ \sim$ is the generalized conic neighborhood of $(\tau_0,\xi_0,\zeta_{0})$ from above such that $\Gamma \hookrightarrow \widetilde{\Gamma}$.

For $\varepsilon$ small enough we now define $\zeta_{0,\varepsilon}^2 := \frac{1}{c_{\varepsilon}^2}(x_0,z_0)\tau_0^2 - |\xi_0|^2$ which tends to $\zeta_0^2$ (for fixed $\tau_0 \in \mathbb{R}$) as $\varepsilon$ tends to 0. 

Then one can choose $(\tau_0,\xi_0,\zeta_{0,\varepsilon})_\varepsilon \in \Gamma_M$, i.e. there is a positive constant $s$ such that
\begin{equation*}
(\tau_0,\xi_0;\zeta_{0,\varepsilon}) \in B_s\Bigl( \frac{(\tau_0,\xi_0;\zeta_0)}{|(\tau_0,\xi_0;\zeta_0)|}\Bigr) \cap S^1(0) \qquad \text{for every fixed } \varepsilon \in (0,1]
\end{equation*}
since
\begin{equation*}
(\tau_0,\xi_0;\zeta_{0,\varepsilon}) \to (\tau_0,\xi_0;\zeta_0) \qquad \text{ as } \varepsilon \to 0.
\end{equation*}

On the other hand we have that the principal symbol of $L_\varepsilon$ vanishes on $(x_0,z_0,\tau_0,\xi_0,\zeta_{0,\varepsilon})$, i.e.
\begin{equation*}
\sigma(L_\varepsilon)(x_0,z_0,\tau_0,\xi_0,\zeta_{0,\varepsilon}) = 0 
\end{equation*}
for all $\varepsilon$ small enough. So, by Proposition~\ref{prop:ps-ell} and Remark~\ref{rem:ps-ell}, the symbol $L$ is not strong slow scale micro-elliptic and therefore not slow scale micro-elliptic at $(t_0,x_0,z_0,\tau_0,\xi_0,\zeta_0)$ - a contradiction.
\end{proof}
\begin{lem}
Let $L$ and $\Sigma$ be as in Lemma~\ref{lem:Sigma-Ell}. Then $\Sigma = \text{Ell}^c_{lsc}(L)$.
\end{lem}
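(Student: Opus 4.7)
The inclusion $\Sigma \subseteq \text{Ell}^c_{lsc}(L)$ is already contained in Lemma~\ref{lem:Sigma-Ell}, so my plan is to establish the reverse inclusion $\text{Ell}^c_{lsc}(L) \subseteq \Sigma$; equivalently, if $(t_0,x_0,z_0,\tau_0,\xi_0,\zeta_0) \in T^{\ast}\mathbb{R}^{n+1} \setminus 0$ lies in $\Sigma^c$ then $L$ is lsc-micro-elliptic at that point.

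First I would read off the principal symbol of $L_\varepsilon$, namely
\begin{equation*}
\sigma(L_\varepsilon)(x,z,\tau,\xi,\zeta) = \frac{1}{\rho_\varepsilon(x,z)}\Bigl(\frac{\tau^2}{c^2_\varepsilon(x,z)} - |\xi|^2 - \zeta^2 \Bigr),
\end{equation*}
which is homogeneous of degree $2$ in $(\tau,\xi,\zeta)$. The assumption $(t_0,x_0,z_0,\tau_0,\xi_0,\zeta_0) \notin \Sigma$ means that the number
\begin{equation*}
M_0 := \bigl| \zeta_0^2 + |\xi_0|^2 - c^{\ast -2}(x_0,z_0)\tau_0^2 \bigr|
\end{equation*}
is strictly positive, and by homogeneity the same is true on the unit vector obtained by normalising $(\tau_0,\xi_0,\zeta_0)$.

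Next I would produce a relatively compact neighbourhood $U$ of $(t_0,x_0,z_0)$ and a classical conic neighbourhood $\Gamma$ of $(\tau_0,\xi_0,\zeta_0)$ on which
\begin{equation*}
\Bigl|\frac{\tilde\tau^2}{c^{\ast 2}(x,z)} - |\tilde\xi|^2 - \tilde\zeta^2 \Bigr| \ge \frac{M_0}{2} \qquad \text{for } (\tilde\tau,\tilde\xi,\tilde\zeta) \in \Gamma \cap S^1(0), \ (t,x,z)\in U,
\end{equation*}
which is possible by H\"older continuity of $c^{\ast}$. The regularisation identity $\frac{1}{c^2_\varepsilon} = \frac{1}{c^{\ast 2}} \ast \varphi_{\omega_\varepsilon^{-1}}$ combined with the remark in Section~\ref{sec:GovEqn} gives
\begin{equation*}
\sup_{(x,z)\in U} \Bigl| \frac{1}{c^2_\varepsilon(x,z)} - \frac{1}{c^{\ast 2}(x,z)} \Bigr| = \mathcal{O}(\omega_\varepsilon^{-\mu}) \to 0 \quad (\varepsilon \to 0),
\end{equation*}
so for all sufficiently small $\varepsilon$ the same inequality holds with $c^{\ast}$ replaced by $c_\varepsilon$ and with constant $M_0/4$ on the right. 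Invoking the homogeneity and the uniform upper bound $\rho_\varepsilon \le \rho_1$ from hypothesis (ii), I obtain on $U \times \Gamma$ for $\varepsilon$ small,
\begin{equation*}
|\sigma(L_\varepsilon)(x,z,\tau,\xi,\zeta)| \ge \frac{M_0}{4\rho_1}\, |(\tau,\xi,\zeta)|^2 \ge \frac{M_0}{8\rho_1}\, \langle (\tau,\xi,\zeta)\rangle^2
\end{equation*}
for $|(\tau,\xi,\zeta)| \ge 1$, which is lsc-ellipticity of the principal symbol with $(s_\varepsilon)_\varepsilon$ and $(r_\varepsilon)_\varepsilon$ chosen as constant (hence trivially lsc) nets.

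Finally I would appeal to Proposition~\ref{prop:ps-ell}, which transfers this principal-symbol bound to lsc-ellipticity of the full classical symbol of $L$, giving $(t_0,x_0,z_0,\tau_0,\xi_0,\zeta_0) \in \text{Ell}_{lsc}(L)$ and thus closing the argument. The main technical point here is matching the mollification rate $\omega_\varepsilon^{-\mu}$ against the margin $M_0$; the required smallness of $\varepsilon$ is controlled uniformly on $U \times (\Gamma \cap S^1(0))$ by compactness of the normalised directions, so no subtle lsc bookkeeping is needed beyond what has already been done.
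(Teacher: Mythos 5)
Your proposal is correct and follows essentially the same route as the paper: reduce to showing $\Sigma^c \subseteq \text{Ell}_{lsc}(L)$, obtain a uniform lower bound for the (limit) principal symbol on $U \times (\Gamma \cap S^1(0))$ from the strict inequality defining $\Sigma^c$, extend by homogeneity, absorb the $\mathcal{O}(\omega_\varepsilon^{-\mu})$ error coming from $1/c^2_\varepsilon - 1/c^{\ast 2}$, and conclude lsc-ellipticity via Proposition~\ref{prop:ps-ell}. The only cosmetic differences are that you track the $1/\rho_\varepsilon$ factor explicitly via the bound $\rho_\varepsilon \le \rho_1$, whereas the paper suppresses it by working with $\rho_\varepsilon L_\varepsilon$, and you name the appeal to Proposition~\ref{prop:ps-ell} that the paper leaves implicit.
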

\begin{proof}
By Lemma~\ref{lem:Sigma-Ell} it suffices to show the inclusion $\Sigma^c \subseteq \text{Ell}_{lsc}(L)$. To do this, we introduce the continuous function
\begin{equation*}
f(x,z,\tau,\xi,\zeta) := \zeta^2 - \frac{1}{c^{* 2}}(x,z) \tau^2 + |\xi|^2,
\end{equation*}
which coincides with the limit of the principal symbol of $\rho_\varepsilon L_\varepsilon$ as $\varepsilon \to 0$. So,
\begin{equation*}
\Sigma^c = \{ (t,x,z,\tau,\xi,\zeta) \in T^* \mathbb{R}^{n+1} \setminus 0 \ :
|f(x,z,\tau,\xi,\zeta)| > 0\}
\end{equation*}
is an open subset of the phase space.

Now given a point $(t_0,x_0,z_0,\tau_0,\xi_0,\zeta_0) \in \Sigma^c$, there exist a relatively compact open neighborhood $U$ of $(t_0,x_0,z_0)$, a conic neighborhood $\Gamma$ of $(\tau_0,\xi_0,\zeta_0)$ and a (small) $\delta > 0$ such that
\begin{equation*}
|f(x,z,\tau,\xi,\zeta)| = \bigl|\zeta^2 - \frac{1}{c^{* 2}}(x,z) \tau^2 + |\xi|^2\bigr| \ge \delta |(\tau,\xi,\zeta)|^2 \qquad U \times \bigl(\Gamma \cap S^1(0) \bigr).
\end{equation*}
Since $f$ is homogeneous of degree 2 in $(\tau,\xi,\zeta)$ it follows that
\begin{equation*}
|f(x,z,\tau,\xi,\zeta)| = \bigl|\zeta^2 - \frac{1}{c^{* 2}}(x,z) \tau^2 + |\xi|^2 \bigr| \ge \delta |(\tau,\xi,\zeta)|^2 \qquad U \times \Gamma, \ |(\tau,\xi,\zeta)| \ge 1.
\end{equation*}
Hence, there exists a constant $C_\delta \in (0,1]$, depending on $\delta > 0$, such that
\begin{equation*}
\begin{split}
|\sigma(L_\varepsilon(t,x,z,\tau,\xi,\zeta))| &\ge \bigl|\zeta^2 - \frac{1}{c^{* 2}}(x,z) \tau^2 + |\xi|^2\bigr| - \bigl|\Bigl(\frac{1}{c_\varepsilon^2} - \frac{1}{c^{* 2}}\Bigr)(x,z) \tau^2 \bigr| \ge\\
&\ge \delta |(\tau,\xi,\zeta)|^2 - C\omega_\varepsilon^{-\mu}|(\tau,\xi,\zeta)|^2 \ge\\
&\ge \frac{\delta}{2} |(\tau,\xi,\zeta)|^2 \qquad U \times \Gamma, \ |(\tau,\xi,\zeta)| \ge 1, \ \varepsilon \in (0,C_\delta].
\end{split}
\end{equation*}
Therefore, $L_\varepsilon$ is lsc-elliptic at $(t_0,x_0,z_0,\tau_0,\xi_0,\zeta_0)$, which is the desired conclusion.
\end{proof}
\begin{rem} 
We define the following set:
\begin{multline*}
\Lambda_M := \{ (t,x,z,\tau,\xi;\zeta_\varepsilon)_\varepsilon \in \mathbb{R}^{n+1} \times ( \mathbb{R}^n \times \mathbb{R}^{(0,1]}) \setminus 0 \ | \\
\ \zeta_\varepsilon^2 = \frac{1}{c_\varepsilon^2}(x,z) \tau^2 - |\xi|^2, 
\exists N \in \mathbb{N} : |\zeta_\varepsilon| = \mathcal{O}(\varepsilon^{-N}) \text{ as } \varepsilon \to 0\}.
\end{multline*}
If a is a symbol on $\Sigma$, then the evaluation on $\Lambda_M / \sim$ is in general not defined.  
If a is a symbol on $\Gamma$, then the generalized point value of $a$ at a point of $\Gamma_M / \sim$ is a well-defined element on $\widetilde{\mathbb{C}}$. 

A fixed generalized point $(t_0,x_0,z_0,\tau_0,\xi_0,\zeta_{0,\varepsilon})_\varepsilon \in \Lambda_M$ satisfies $ \zeta_{0,\varepsilon}^2 = \frac{1}{c_\varepsilon^2}(x_0,z_0) \tau_0^2 - |\xi_0|^2$. Since $\tau_0 \in \mathbb{R}$ is fixed we get
\begin{equation*}
\zeta_{0,\varepsilon}^2 = \frac{1}{c_\varepsilon^2}(x_0,z_0) \tau_0^2 - |\xi_0|^2 \to \zeta_{0}^2 = \frac{1}{c^{*2}}(x_0,z_0) \tau_0^2 - |\xi_0|^2
\end{equation*}
as $\varepsilon \to 0$ and where $(t_0,x_0,z_0,\tau_0,\xi_0,\zeta_{0}) \in \Sigma$. Moreover, 
\begin{equation*}
|\zeta_{0,\varepsilon}^2- \zeta_{0}^2| = |\Bigl(\frac{1}{c_\varepsilon^{2}}-\frac{1}{c^{*2}}\Bigr)(x_0,z_0) \tau_0^2| \le C \omega_\varepsilon^{-\mu} \tau_0^2.
\end{equation*}
\end{rem}
In the next subsection we will construct an approximated inverse for logarithmic slow scale elliptic pseudodifferential operators.
\subsection{Construction of a parametrix}\label{subsec:parametrix} 

In this subsection we will discuss uniqueness and existence results concerning the invertibility of elliptic pseudodifferential operators. Recall that the asymptotic expansion is unique modulo operators of order $-\infty$. As a consequence we will show that one obtains uniqueness of a parametrix modulo operators of infinite order. 

Note that given $(a_\varepsilon)_\varepsilon$ a symbol of class $S^{-\infty}_{lsc}$ the operator defined by $(u_\varepsilon)_\varepsilon \mapsto (a_\varepsilon(x,D)u_\varepsilon)_\varepsilon$, $((u_\varepsilon)_\varepsilon \in \mathcal{M}_{H^{\infty}})$ has a regular kernel representation in the sense that moderate nets are mapped into regular nets. 

In the following theorem we shall work on the level of representatives.
\begin{thm}\label{thm:parametrix}
Let $(a_\varepsilon)_\varepsilon \in S^m_{lsc}$ be logarithmic slow scale elliptic, i.e.
\begin{equation*}
\begin{split}
\exists (r_\varepsilon)_\varepsilon \in \Pi_{lsc}, \ \exists (s_\varepsilon)_\varepsilon \in \Pi_{lsc},& \ \exists \eta \in (0,1]:\\
 &|a_\varepsilon(x,\xi)| \ge \frac{1}{s_\varepsilon} \langle \xi \rangle^m \qquad |\xi| \ge r_\varepsilon, \ \varepsilon \in (0,\eta].
\end{split}
\end{equation*}
Then there exists $(p_\varepsilon)_\varepsilon \in S^{-m}_{lsc}$ such that $(p_\varepsilon\# a_\varepsilon)_\varepsilon = 1$ modulo $S^{-\infty}_{lsc}$.
\end{thm}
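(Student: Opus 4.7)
The plan is to carry out the classical parametrix construction adapted to the logarithmic slow scale setting: first build a rough inverse $p_{0,\varepsilon}$ that inverts $a_\varepsilon$ on the region where ellipticity holds, then correct it by a Neumann series summed asymptotically via Theorem~\ref{thm:AE}(ii). All composition manipulations are justified by Theorem~\ref{thm:composition}, and the key point is to verify that the lsc structure is preserved at every step.

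For the first step I would fix a cutoff $\chi \in \mathcal{C}^\infty(\mathbb{R}^n)$ with $\chi(\xi)=0$ for $|\xi|\le 1$ and $\chi(\xi)=1$ for $|\xi|\ge 2$, and set $\chi_\varepsilon(\xi) := \chi(\xi/r_\varepsilon)$ (well-defined since $r_\varepsilon$ is bounded below for small $\varepsilon$). Define
\begin{equation*}
p_{0,\varepsilon}(x,\xi) := \frac{\chi_\varepsilon(\xi)}{a_\varepsilon(x,\xi)}.
\end{equation*}
On the support of $\chi_\varepsilon$ the ellipticity estimate gives $|1/a_\varepsilon| \le s_\varepsilon \langle \xi \rangle^{-m}$. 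Derivatives of $1/a_\varepsilon$ are computed by Faà di Bruno in the usual way: each $\partial^{\alpha'}_\xi \partial^{\beta'}_x a_\varepsilon/a_\varepsilon$ is controlled by $\lambda_\varepsilon \langle \xi \rangle^{-|\alpha'|}$ via Proposition~\ref{prop:lsc-ell}(i), and the remaining $1/a_\varepsilon$ factor contributes $s_\varepsilon \langle \xi \rangle^{-m}$, which together yields
\begin{equation*}
|\partial^\alpha_\xi \partial^\beta_x (1/a_\varepsilon)(x,\xi)| \le C_{\alpha\beta} \lambda_\varepsilon^{|\alpha|+|\beta|} s_\varepsilon \langle \xi \rangle^{-m - |\alpha|}
\end{equation*}
on $|\xi| \ge r_\varepsilon$. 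Since $\Pi_{lsc}$ is stable under finite products and powers, this factor is again an lsc net. Derivatives of $\chi_\varepsilon$ are supported in the annulus $r_\varepsilon \le |\xi| \le 2 r_\varepsilon$, where $\langle \xi \rangle \sim r_\varepsilon \in \Pi_{lsc}$, so the Leibniz expansion of $\chi_\varepsilon/a_\varepsilon$ is bounded by an lsc net times $\langle \xi\rangle^{-m-|\alpha|}$, showing $(p_{0,\varepsilon})_\varepsilon \in S^{-m}_{lsc}$.

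For the iteration, Theorem~\ref{thm:composition} applied on representatives gives
\begin{equation*}
p_{0,\varepsilon} \# a_\varepsilon \sim \chi_\varepsilon + \sum_{|\alpha|\ge 1} \frac{1}{\alpha!}\, D^\alpha_\xi p_{0,\varepsilon}\, \partial^\alpha_x a_\varepsilon,
\end{equation*}
the $\alpha$-sum lying in $S^{-1}_{lsc}$. Moreover $\chi_\varepsilon - 1$ is supported in $\{|\xi|\le 2 r_\varepsilon\}$ where $\langle \xi\rangle \le C r_\varepsilon$, so $|\chi_\varepsilon-1| \le r_\varepsilon^N (C/\langle\xi\rangle)^N$ and, similarly for derivatives, $\chi_\varepsilon-1\in S^{-\infty}_{lsc}$. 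Hence $p_{0,\varepsilon}\# a_\varepsilon = 1 - c_{0,\varepsilon}$ with $c_{0,\varepsilon}\in S^{-1}_{lsc}$. The $j$-fold sharp-product $c_{0,\varepsilon}^{\# j}$ sits in $S^{-j}_{lsc}$ by Theorem~\ref{thm:composition}, so Theorem~\ref{thm:AE}(ii) produces $B_\varepsilon \in S^0_{lsc}$ with $B_\varepsilon \sim \sum_{j\ge 0} c_{0,\varepsilon}^{\# j}$. Setting $p_\varepsilon := B_\varepsilon \# p_{0,\varepsilon} \in S^{-m}_{lsc}$ and telescoping modulo $S^{-\infty}_{lsc}$ yields
\begin{equation*}
p_\varepsilon \# a_\varepsilon = B_\varepsilon \# (1 - c_{0,\varepsilon}) = \sum_{j\ge 0} c_{0,\varepsilon}^{\# j} - \sum_{j\ge 0} c_{0,\varepsilon}^{\#(j+1)} = 1 \mod S^{-\infty}_{lsc},
\end{equation*}
as required. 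The main obstacle I anticipate is bookkeeping: at each step one must check that the constants $s_\varepsilon$, $r_\varepsilon$, $\lambda_\varepsilon$, together with the new nets produced by Faà di Bruno for $1/a_\varepsilon$, by the Leibniz expansion for $\chi_\varepsilon/a_\varepsilon$, and by the iterated $\#$-products in Theorem~\ref{thm:composition} and Theorem~\ref{thm:AE}(ii), all remain logarithmic slow scale. This ultimately rests on the elementary observation that if $(\omega_\varepsilon)_\varepsilon,(\eta_\varepsilon)_\varepsilon \in \Pi_{lsc}$ then $|\omega_\varepsilon \eta_\varepsilon|^p \le (|\omega_\varepsilon|^{2p}|\eta_\varepsilon|^{2p})^{1/2} \le c_p \log(1/\varepsilon)$, so $\Pi_{lsc}$ is closed under products and powers, but one should state and use this closure explicitly each time a new net appears.
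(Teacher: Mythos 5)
Your proof is correct, but it takes a genuinely different route from the paper. The paper follows the Kumano-go style order-by-order construction: after building the rough inverse $p_{-m,\varepsilon}=a_\varepsilon^{-1}\psi(\xi/r_\varepsilon)$ (your $p_{0,\varepsilon}$), it \emph{recursively} solves for each $p_{-m-k,\varepsilon}\in S^{-m-k}_{lsc}$ by the explicit formula
$p_{-m-k,\varepsilon}=-\bigl\{\sum_{|\gamma|+j=k,\,j<k}\tfrac{(-i)^{|\gamma|}}{\gamma!}\partial_x^{\gamma}a_\varepsilon\,\partial_\xi^{\gamma}p_{-m-j,\varepsilon}\bigr\}a_\varepsilon^{-1}\psi(\xi/r_\varepsilon)$,
sums the series by Theorem~\ref{thm:AE}, and then carries out a careful bookkeeping step to verify that the truncated $\#$-expansion telescopes to $1$ modulo $S^{-N}_{lsc}$ for every $N$. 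You instead factor the problem through a Neumann series: once $p_{0,\varepsilon}\#a_\varepsilon=1-c_{0,\varepsilon}$ with $c_{0,\varepsilon}\in S^{-1}_{lsc}$, you sum $B_\varepsilon\sim\sum_j c_{0,\varepsilon}^{\#j}$ via Theorem~\ref{thm:AE}(ii) and set $p_\varepsilon=B_\varepsilon\# p_{0,\varepsilon}$, so the telescoping is encoded in the algebraic identity $B_\varepsilon\#(1-c_{0,\varepsilon})\sim 1$ and no separate verification step is needed. What the paper's method buys is an explicit formula for each homogeneous term of the parametrix (useful when one wants classical expansions, as in the factorization later on); what your method buys is a shorter argument with less $\#$-calculus bookkeeping. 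Both rest on exactly the same ingredients: Proposition~\ref{prop:lsc-ell}(i) to control derivatives of $1/a_\varepsilon$ by lsc nets, the fact that $\Pi_{lsc}$ is closed under products and powers (your remark at the end, correctly argued via $|\omega_\varepsilon|^p\le c_{2p}^{1/2}(\log\tfrac1\varepsilon)^{1/2}$), the behaviour of $\chi(\xi/r_\varepsilon)$ on the annulus $r_\varepsilon\le|\xi|\le 2r_\varepsilon$, the composition calculus of Theorem~\ref{thm:composition}, and asymptotic summation Theorem~\ref{thm:AE}(ii). One small presentational point: the chain $|\omega_\varepsilon\eta_\varepsilon|^p\le(|\omega_\varepsilon|^{2p}|\eta_\varepsilon|^{2p})^{1/2}$ is actually an equality; the content is in the next step, where the defining estimate with exponent $2p$ is applied to each factor separately to get a single power of $\log(1/\varepsilon)$.
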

\begin{proof}
\textit{Step 1:} As in \cite[Proposition 2.8]{Garetto:08} we show that there exists $(p_{-m,\varepsilon})_\varepsilon \in S^{-m}_{lsc}$ such that
\begin{equation}\label{eqn:3.1}
(p_{-m,\varepsilon} a_\varepsilon - 1)_\varepsilon \in S^{-\infty}_{lsc}.
\end{equation}
For the proof we let $\psi \in \mathcal{C}^{\infty}(\mathbb{R}^n)$, $0 \le \psi \le 1$, $\psi(\xi)=0$ for $|\xi| \le 1$ and $\psi(\xi)=1$ for $|\xi| \ge 2$.
Then
\begin{equation*}
p_{-m,\varepsilon}(x,\xi) := a_{\varepsilon}^{-1}(x,\xi) \psi(\xi / r_\varepsilon)
\end{equation*}
with $(r_\varepsilon)_\varepsilon \in \Pi_{lsc}$ as in the theorem does the job. Indeed one can show that
\begin{equation*}
|\partial_\xi^{\alpha} \partial_x^{\beta} a^{-1}_\varepsilon(x,\xi)| \le \omega_\varepsilon(\alpha,\beta) \langle \xi \rangle^{-|\alpha|} | a^{-1}_\varepsilon(x,\xi)| \qquad |\xi| \ge r_\varepsilon
\end{equation*}
for some $\omega_\varepsilon(\alpha,\beta) \in \Pi_{lsc}$ (cf. \cite[Lemma 6.3]{GarettoGramchevMoe:05}). Then, by the properties of the function $\psi$, we get $(p_{-m,\varepsilon})_\varepsilon \in S^{-m}_{lsc}$ for $|\xi| \ge 2r_\varepsilon$ and $|\xi| \le r_\varepsilon$. On the set $r_\varepsilon \le |\xi| \le 2r_\varepsilon$ we observe that
\begin{equation*}
\begin{split}
|\partial_\xi^{\alpha} \partial_x^{\beta} & p_{-m,\varepsilon}(x,\xi)| 
= |\sum_{\alpha' \le \alpha} {\binom{\alpha}{\alpha'}} \partial_\xi^{\alpha'} \partial_x^{\beta} a^{-1}_\varepsilon(x,\xi) \partial_\xi^{\alpha-\alpha'} \psi(\xi / r_\varepsilon)| \le \\ 
&\le \sum_{\alpha' \le \alpha} {\binom{\alpha}{\alpha'}} \omega_\varepsilon(\alpha',\beta) |a_\varepsilon^{-1}(x,\xi)| \langle \xi \rangle^{-|\alpha'|} r_\varepsilon^{-|\alpha-\alpha'|} \chi_{[r_e,2r_\varepsilon]}(\xi) \sup |(\partial^{\alpha-\alpha'} \psi)(\xi / r_\varepsilon)| \le \\
&\le \sum_{\alpha' \le \alpha} {\binom{\alpha}{\alpha'}} \omega_\varepsilon(\alpha',\beta) r_\varepsilon^{-|\alpha-\alpha'|} s_\varepsilon \langle 2r_\varepsilon \rangle^{|\alpha-\alpha'|}  \langle \xi \rangle^{-m-|\alpha|} \sup |(\partial^{\alpha-\alpha'} \psi)(\xi)|
\end{split}
\end{equation*}
and therefore, $(p_{m,\varepsilon})_\varepsilon$ is in $S^{-m}_{lsc}$. In order show (\ref{eqn:3.1}) we write
\begin{equation*}
p_{-m,\varepsilon} a_\varepsilon(x,\xi) = a_\varepsilon^{-1} a_\varepsilon(x,\xi) + a_\varepsilon^{-1} a_\varepsilon(x,\xi) (\psi(\xi/r_\varepsilon)-1) = 1 + (\psi(\xi/r_\varepsilon)-1).
\end{equation*}
Since for any $l >0$ and $\alpha \in \mathbb{N}^n \setminus 0$
\begin{eqnarray*}
& &\sup_{r_\varepsilon \le |\xi| \le 2 r_\varepsilon} |\langle \xi \rangle^{l} (\psi(\xi/r_\varepsilon)-1)| \le c(\psi) \langle 2 r_\varepsilon \rangle^l\\
& &\sup_{r_\varepsilon \le |\xi| \le 2 r_\varepsilon} |\langle \xi \rangle^{l} \partial_\xi^{\alpha} (\psi(\xi/r_\varepsilon)-1)| = \sup_{r_\varepsilon \le |\xi| \le 2 r_\varepsilon} |\langle \xi \rangle^{l} r_\varepsilon^{-|\alpha|} (\partial^{\alpha} \psi)(\xi/r_\varepsilon)| \le c(\psi) r_\varepsilon^{-|\alpha|} \langle 2 r_\varepsilon \rangle^l
\end{eqnarray*}
we have shown (\ref{eqn:3.1}).

\textit{Step 2:} We recursively define for $k \ge 1$:
\begin{equation*}
p_{-m-k,\varepsilon}(x,\xi) := -\Big\{ \sum_{\substack{ |\gamma| + j = k \\ j < k}} \frac{(-i)^{|\gamma|}}{\gamma!}
 \partial^{\gamma}_{x} a_{\varepsilon}(x,\xi) \partial_{\xi}^{\gamma} p_{-m-j,\varepsilon}(x,\xi) \Big\} a_{\varepsilon}^{-1}(x,\xi) \psi(\xi/r_\varepsilon).
\end{equation*}
By the same arguments as in \cite[Propositions 6.5 and 6.6]{GarettoGramchevMoe:05}, one can show that each $(p_{-m-k,\varepsilon})_\varepsilon$ is in $S^{-m-k}_{lsc}$ and by Theorem~\ref{thm:AE} there exists a net $(p_{\varepsilon})_\varepsilon \in S^{-m}_{lsc}$ with $(p_\varepsilon)_\varepsilon \sim \sum_j (p_{-m-j,\varepsilon})_\varepsilon$. 

\textit{Step 3:} The aim is to show that
$(p_\varepsilon \# a_\varepsilon - 1)_\varepsilon \in S^{-\infty}_{lsc}$.
Therefore, let $(\sigma_\varepsilon)_\varepsilon$ be the generalized symbol with the asymptotic expansion
\begin{equation*}
\sigma_\varepsilon(x,\xi) \sim \sum_{|\gamma| \ge 0} \frac{1}{\gamma!} \big( \partial_\xi^{\gamma} p_\varepsilon D_x^{\gamma} a_\varepsilon \big)(x,\xi).
\end{equation*}
and we will show that $(\sigma_\varepsilon - 1)_\varepsilon$ in $S^{-\infty}_{lsc}$.

We write
\begin{equation*}
\begin{split}
\sigma_\varepsilon - \sum_{|\gamma|<N} \frac{1}{\gamma!} D_x^{\gamma} a_\varepsilon \partial_\xi^{\gamma} p_\varepsilon &= \sigma_\varepsilon - \sum_{|\gamma|<N} \frac{1}{\gamma!} D_x^{\gamma} a_\varepsilon \sum_{l < N} \partial_\xi^{\gamma} p_{-m-l,\varepsilon} +\\
&\phantom{=}+ \sum_{|\gamma| < N} \frac{1}{\gamma!} D_x^{\gamma} a_\varepsilon \sum_{l \ge N} \partial_\xi^{\gamma} p_{-m-l,\varepsilon}.
\end{split}
\end{equation*}
Since the left-hand side and the last sum on the right-hand side are in $S^{-N}_{lsc}$ we obtain
\begin{equation*}
\bigl( \sigma_\varepsilon - \sum_{|\gamma|<N} \frac{1}{\gamma!} D_x^{\gamma} a_\varepsilon \sum_{l < N} \partial_\xi^{\gamma} p_{-m-l,\varepsilon} \bigr)_\varepsilon \in S^{-N}_{lsc}.
\end{equation*}
We now write
\begin{multline*}
\sum_{|\gamma|<N} \frac{1}{\gamma!} D_x^{\gamma} a_\varepsilon \sum_{l < N} \partial_\xi^{\gamma} p_{-m-l,\varepsilon}  = p_{-m,\varepsilon} a_\varepsilon +\\
\sum_{k=1}^{N-1} \Big\{ p_{-m-k,\varepsilon}a_\varepsilon + \sum_{\substack{ |\gamma| + l = k \\ l < k}} \frac{1}{\gamma!} \partial_\xi^{\gamma} p_{-m-l,\varepsilon} D_x^{\gamma} a_\varepsilon \Big\}
+\sum_{{\substack{ |\gamma| + l \ge N \\ |\gamma| < N, l < N}}} \frac{1}{\gamma!} \partial_\xi^{\gamma} p_{-m-l,\varepsilon} D_x^{\gamma} a_\varepsilon,
\end{multline*}
where the last expression is in $S^{-N}_{lsc}$. The second sum on the right-hand side vanishes for $|\xi| \ge 2r_\varepsilon$ and $|\xi| \le r_\varepsilon$ by construction. So it remains to estimate this expression on the set $r_\varepsilon \le |\xi| \le 2r_\varepsilon$. We observe for any $l >0$ and $\alpha \in \mathbb{N}^n$ that 
\begin{equation*}
\sup_{r_\varepsilon \le |\xi| \le 2 r_\varepsilon} |\langle \xi \rangle^{l} \partial_\xi^{\alpha} \psi(\xi/r_\varepsilon)| = \sup_{r_\varepsilon \le |\xi| \le 2 r_\varepsilon} |\langle \xi \rangle^{l} r_\varepsilon^{-|\alpha|} (\partial^{\alpha} \psi)(\xi/r_\varepsilon)| \le c(\psi) r_\varepsilon^{-|\alpha|} \langle 2 r_\varepsilon \rangle^l
\end{equation*}
and we conclude that this term is contained in $S^{-N}_{lsc}$. 

Using Step 1, i.e. $(p_{-m,\varepsilon} a_\varepsilon)_\varepsilon = 1 \mod S^{-\infty}_{lsc}$, we get for every $N \ge 1$
\begin{equation*}
\bigl(\sigma_\varepsilon - \sum_{|\gamma|<N} \frac{1}{\gamma!} (\partial_\xi^{\gamma} p_\varepsilon D_x^{\gamma} a_\varepsilon) \bigr)_\varepsilon = (\sigma_\varepsilon -1)_\varepsilon \quad \mod S^{-N}_{lsc}
\end{equation*}
and the proof is complete.
\end{proof}
Recall that $a \in \widetilde{S}^{m}_{lsc}$ is called lsc-elliptic if one of its representatives $(a_\varepsilon)_\varepsilon$ is lsc-elliptic. Then, the operator $p(x,D)$ has the symbol $(p_\varepsilon)_\varepsilon + \ensuremath{\mathcal N}_{S^{-m}} \in \widetilde{S}^{-m}_{lsc}$ and $(p_\varepsilon)_\varepsilon$ is as in Theorem~\ref{thm:parametrix}. Furthermore, by Step 3 of the same theorem we get
\begin{equation*}
p(x,D) \circ a(x,D)u(x) = \Bigl( \int e^{i (x-y) \xi} u_\varepsilon (y) \,dy \,\ \dbar \xi \Bigr)_\varepsilon + \mathcal{N}_{H^{\infty}}(\mathbb{R}^n).
\end{equation*}
Therefore, we obtain the following result.
\vbadness=1800
\begin{thm}
Let $a \in \widetilde{S}^{m}_{lsc}$ be a logarithmic slow scale elliptic symbol of order $m$. Then there exists a logarithmic slow scale elliptic pseudodifferential operator of order $-m$ with symbol $p \in \widetilde{S}^{-m}_{lsc}$ such that for all $u \in \GLtwo$
\begin{eqnarray*}
a(x,D) \circ p(x,D) u &=& u + r(x,D) u\\
p(x,D) \circ a(x,D) u &=& u + s(x,D) u
\end{eqnarray*}
where $r(x,D)$ and $s(x,D)$ are regularizing operators, that is they map $\GLtwo$ into $\GregLtwo$ (see section~\ref{sec:lscWaveFrontSet}).
\end{thm}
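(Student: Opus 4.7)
The plan is to upgrade Theorem~\ref{thm:parametrix}, which produces a one-sided parametrix at the level of representatives, into a statement in the Colombeau algebra $\widetilde{S}^{-m}_{lsc}$ that yields a two-sided parametrix modulo regularizing operators. First I would choose a representative $(a_\varepsilon)_\varepsilon \in a$ satisfying the logarithmic slow scale lower bound on all of $\mathbb{R}^n \times \mathbb{R}^n$, which exists by the definition of lsc-ellipticity, and then apply Theorem~\ref{thm:parametrix} to obtain $(p_\varepsilon)_\varepsilon \in S^{-m}_{lsc}$ with $(p_\varepsilon \# a_\varepsilon - 1)_\varepsilon \in S^{-\infty}_{lsc}$. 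Setting $p := [(p_\varepsilon)_\varepsilon] \in \widetilde{S}^{-m}_{lsc}$, I must check that the class is independent of all choices; this follows from the standard ideal property of $\mathcal{N}_{S^{-m}}$ combined with Proposition~\ref{prop:lsc-ell}~(ii), which shows that any two representatives differ by negligible terms whose compositions with $a$ remain negligible.

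Next I would verify that $p$ is itself lsc-elliptic of order $-m$. The principal contribution $p_{-m,\varepsilon}(x,\xi) = a_\varepsilon^{-1}(x,\xi)\psi(\xi/r_\varepsilon)$ can be bounded below using the moderateness estimate $|a_\varepsilon(x,\xi)| \le C\omega_\varepsilon \langle \xi \rangle^m$ (which holds since $(a_\varepsilon)_\varepsilon \in S^m_{lsc}$), giving $|p_{-m,\varepsilon}(x,\xi)| \ge (C\omega_\varepsilon)^{-1} \langle \xi \rangle^{-m}$ for $|\xi| \ge 2r_\varepsilon$, with $(C\omega_\varepsilon)_\varepsilon$ a logarithmic slow scale net. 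Proposition~\ref{prop:lsc-ell}~(ii) then transfers the lsc-ellipticity from $p_{-m,\varepsilon}$ to the full asymptotic sum $p_\varepsilon$.

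For the two-sided statement I would construct a right parametrix $q$ by repeating the recursive construction in the proof of Theorem~\ref{thm:parametrix} but using the composition formula $a \# q \sim \sum \frac{1}{\alpha!}D_\xi^\alpha a \, \partial_x^\alpha q$ in reverse; alternatively, one applies the first construction to the adjoint of $a$ (which is lsc-elliptic of the same order by the adjoint theorem) and takes adjoints back. This produces $(q_\varepsilon)_\varepsilon \in S^{-m}_{lsc}$ with $(a_\varepsilon \# q_\varepsilon - 1)_\varepsilon \in S^{-\infty}_{lsc}$. The standard associativity-mod-$S^{-\infty}_{lsc}$ calculation
\[
p = p \# 1 \equiv p \# (a \# q) = (p \# a) \# q \equiv 1 \# q = q \pmod{S^{-\infty}_{lsc}}
\]
(which rests on the fact that $S^{-\infty}_{lsc}$ is a two-sided ideal under $\#$, immediate from Theorem~\ref{thm:composition}) shows $p$ serves both sides.

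Finally I would pass to the Colombeau level: the resulting relations $a(x,D)\circ p(x,D) - I$ and $p(x,D)\circ a(x,D) - I$ are pseudodifferential operators whose symbols lie in $\widetilde{S}^{-\infty}_{lsc}$, and by the remark preceding Theorem~\ref{thm:parametrix} such operators admit regular kernel representations and map $\mathcal{M}_{H^\infty}$ into the $H^\infty$-regular subspace, so they descend to maps $\GLtwo \to \GregLtwo$. The main obstacle I anticipate is bookkeeping: every estimate must be tracked with its logarithmic slow scale net, so that the choices of $(r_\varepsilon)_\varepsilon$, $(s_\varepsilon)_\varepsilon$ and the auxiliary $(\omega_\varepsilon)_\varepsilon$ in the recursive definition of $p_{-m-k,\varepsilon}$ remain in $\Pi_{lsc}$ after products and inversions — but this is exactly what Proposition~\ref{prop:lsc-ell} and the closure of $\Pi_{lsc}$ under the relevant operations guarantee.
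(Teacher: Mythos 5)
Your proposal is correct and follows essentially the same route as the paper (which relies on Theorem~\ref{thm:parametrix} to produce the left parametrix at the level of representatives), but you usefully fill in two gaps the paper leaves implicit. First, the paper's theorem asserts that the parametrix $p$ is itself lsc-elliptic of order $-m$, yet the paper's discussion never verifies this; your observation that the leading term $a_\varepsilon^{-1}\psi(\xi/r_\varepsilon)$ inherits a lower bound $(C\omega_\varepsilon)^{-1}\langle\xi\rangle^{-m}$ from the \emph{moderateness} (upper bound) of $a_\varepsilon$, and that Proposition~\ref{prop:lsc-ell}~(ii) pushes this through the lower-order corrections, is exactly the missing verification. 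Second, Theorem~\ref{thm:parametrix} only produces a \emph{left} parametrix ($p\#a\equiv1$), whereas the statement claims both $a\circ p$ and $p\circ a$ are $I$ modulo regularizing operators; your construction of a right parametrix (via the adjoint or a mirrored recursion) and the standard associativity-mod-$S^{-\infty}_{lsc}$ argument $p\equiv p\#(a\#q)\equiv(p\#a)\#q\equiv q$ is the correct way to close this gap and is not spelled out in the paper. The only minor over-concern in your plan is the worry about well-definedness of the class of $p$: since the theorem is an existence statement, it suffices to exhibit one $(p_\varepsilon)_\varepsilon$ that works for a fixed choice of lsc-elliptic representative of $a$, so the independence-of-choices discussion is optional.
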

\section{A factorization procedure for the generalized strictly hyperbolic wave operator}\label{sec:Factorization}
Concerning products of logarithmic slow scale pseudodifferential operators that approximate the generalized operator $L$ from (\ref{eqn:GovEqn}), we will follow the ideas in \cite[Appendix II]{Kumano-go:81}, \cite[Chapter 23]{Hoermander:3}. First we recall from (\ref{eqn:GovEqn}) 
\begin{equation}\label{GoverningOperator}
\begin{split}
L(x,z,D_t,D_x,D_z) &=  \partial_{z}\frac{1}{\rho}\partial_z + \sum_{j=1}^{n-1} \partial_{x_j}\frac{1}{\rho}\partial_{x_j} -  \frac{1}{\rho}\frac{1}{c^2}\partial_{t}^2   =\\
& =: \partial_z \frac{1}{\rho}\partial_z + A_{\rho}(x,z,D_t,D_x)
\end{split}
\end{equation}
where the coefficients $1/\rho$ and $1/c^2$ are in $\ensuremath{\mathcal{G}}_{\infty,\infty}^{lsc}(\mathbb{R}^n)$ in the variables $(x,z)$. This implies that $L$ is an operator of class $\Psi^2_{lsc}(\mathbb{R}^n \times \mathbb{R}^{n+1})$ and a representative $(l_\varepsilon)_\varepsilon$ of the symbol of $L$ is in $S^2_{lsc}(\mathbb{R}^n \times \mathbb{R}^{n+1})$. Furthermore, the symbol of the operator $A_{\rho}$ is in $S^2_{lsc}(\mathbb{R}^n \times \mathbb{R}^n)$ and is given by 
\begin{equation*}
A_{\rho}(x,z,D_t,D_x) = - \frac{1}{\rho} \Big(\frac{1}{c^2}\partial_{t}^2 - \sum_{j=1}^{n-1} \partial_{x_j}^2 \Big) + \sum_{j=1}^{n-1} \Big(\partial_{x_j}\frac{1}{\rho}\Big)\partial_{x_j}.
\end{equation*}
Note that this operator is only a pseudodifferential operator in $(x,t)$ depending smoothly on the parameter $z$ on the level of representatives for any fixed $\varepsilon \in (0,1]$. We denote by $a_{\rho} \in \widetilde{S}^2_{lsc}(\mathbb{R}^n \times \mathbb{R}^n)$ the principal symbol of $A_\rho$.

Moreover, we set
\begin{equation*}
A(x,z,D_t,D_x) := - \frac{1}{c^2}(x,z)\partial_{t}^2 + \sum_{j=1}^{n-1} \partial_{x_j}^2.
\end{equation*}
As already mentioned in section~\ref{sec:GovEqn}, the operator $L$ is not globally strictly hyperbolic but we can restrict the analysis to an appropriate space on which the operator becomes strictly hyperbolic. In particular, we follow Stolk in \cite{Stolk:04} and introduce the following set of points on which we will then establish the factorization. Therefore we recall from section~\ref{sec:GovEqn}:
\begin{equation}\label{BasicSet}
I'_{\theta} := \Bigl\{ (x,z,\tau,\xi) \in \mathbb{R}^n \times \mathbb{R}^n \ | \ \tau \neq 0, \ |c^*(x,z) \tau^{-1} \xi | \le \sin \theta \Bigr\}
\end{equation}
for some fixed $\theta \in (0, \pi/2)$ and $c^*(x,z) = \lim_{\varepsilon \to 0} c_\varepsilon(x,z)$ in $\mathcal{C}^{0,\mu}(\mathbb{R}^n)$ as in section~\ref{sec:GovEqn}. Then $I'_{\theta}$ is a subset of $\mathbb{R}^n \times (\mathbb{R}^n \setminus 0)$, independent of the regularization parameter $\varepsilon \in (0,1]$ and conic with respect to $(\tau,\xi)$.

Fix $\theta \in (0, \pi/2)$ and denote by $a$ the (principal) symbol of $A$, i.e.
\begin{equation*}
a_{\varepsilon}(x,z,\tau,\xi):=\frac{\tau^2}{c_{\varepsilon}^2(x,z)} - |\xi|^2 \qquad \varepsilon \in (0,1].
\end{equation*}
Then $a$ is logarithmic slow scale elliptic on $I'_{\theta}$ and the generalized roots $[(\zeta_{j,\varepsilon})_\varepsilon]$ of the principal symbol of $L$ are given by $\zeta_{j,\varepsilon}:=\mp i \sqrt{a_{\varepsilon}(x,z,\tau,\xi)}$ which satisfy (\ref{eqn:CharRoots}) on $I'_{\theta}$, i.e.
\begin{equation*}
|\zeta_{1,\varepsilon}(x,z,\tau,\xi) - \zeta_{2,\varepsilon}(x,z,\tau,\xi)| \geq C |(\tau,\xi)| \qquad \text{on } I'_{\theta}, \text{ for } |(\tau,\xi)|\ge M
\end{equation*}
for some constants $C>0, M>0$ and all $\varepsilon$ sufficiently small.

Indeed, we have on $I'_{\theta}$
\begin{equation*}
\begin{split}
|a_{\varepsilon}(x,z,\tau,\xi)| &= \Bigl|\frac{\tau^2}{c_\varepsilon^2(x,z)}-|\xi|^2\Bigr| \ge \Biggl| \Bigl|\frac{\tau^2}{c^{* 2}(x,z)}-|\xi|^2 \Bigr|  - \Bigl| \Bigl(\frac{1}{c^2_\varepsilon(x,z)} - \frac{1}{c^{* 2}(x,z)}\Bigr) \tau^2 \Bigr| \Biggr| \ge\\
&\ge \frac{\tau^2}{c^{* 2}(x,z)} (1-\sin^2 \theta) - C \omega_\varepsilon^{-\mu} \tau^2 \ge C (\tau^2+|\xi|^2)
\end{split}
\end{equation*}
for some generic constant $C>0$ and $\varepsilon$ small enough. So, in particular $a_{\rho}$ is logarithmic slow scale elliptic, since the generalized function $1/\rho$ is strongly positive (see assumption (ii) in section\ref{sec:GovEqn}).
\begin{rem}
In the classical theory of pseudodifferential operators the characteristic set plays an important role in microlocal analysis. We recall that the characteristic set is defined as the set of points where the homogeneous principal symbol of the operator vanishes. Concerning vanishing properties of the homogeneous principal symbol in the generalized setting, we make the following remarks. We denote by $(l_\varepsilon)_\varepsilon$ the principal symbol of $L$.

(1) First note that
\begin{multline*}
l_\varepsilon(\zeta; x,z,\tau,\xi) = -\frac{1}{\rho_\varepsilon(x,z)}\zeta^2 + a_{\rho,\varepsilon}(x,z,\tau,\xi) = \\
= \Big(i\zeta - i\sqrt{a_{\varepsilon}(x,z,\tau,\xi)}\Big) \frac{1}{\rho_\varepsilon(x,z)} \Big(i\zeta + i\sqrt{a_{\varepsilon}(x,z,\tau,\xi)}\Big) = 0 \quad \text{on } I'_{\theta}, \ |(\tau,\xi)| \ge M
\end{multline*}
for some constant $M>0$ is not (classically, i.e. independent of $\varepsilon$) solvable in $T^* \mathbb{R}^{n+1}$ as $\varepsilon \to 0$.

But for any $(x,z,\tau,\xi) \in I'_{\theta}$ there exists two distinct $\zeta_\varepsilon(x,z,\tau,\xi)$ such that
\begin{equation*}
\begin{split}
l_\varepsilon(\zeta_\varepsilon; x,z,\tau,\xi) &= -\frac{1}{\rho_\varepsilon(x,z)} \zeta_\varepsilon^2 + a_{\rho,\varepsilon}(x,z,\tau,\xi) = \\
&= \Big(i\zeta_\varepsilon - i\sqrt{a_{\varepsilon}(x,z,\tau,\xi)}\Big) \frac{1}{\rho_\varepsilon(x,z)} \Big(i\zeta_\varepsilon + i\sqrt{a_{\varepsilon}(x,z,\tau,\xi)}\Big) = 0
\end{split}
\end{equation*}
for all $|(\tau,\xi)| \ge M > 0$ and $\varepsilon \in (0,1]$. In particular, this is satisfied if we set for fixed $(x,z,\tau,\xi) \in I'_{\theta}$
\begin{equation*}
\zeta_\varepsilon(x,z,\tau,\xi) := \pm \sqrt{\frac{\tau^2}{c_{\varepsilon}^2(x,z)} - |\xi|^2} \to \zeta(x,z,\tau,\xi) := \pm \sqrt{\frac{\tau^2}{c^{* 2}(x,z)} - |\xi|^2} \quad \text{as } \varepsilon \to 0.
\end{equation*}
So, the equation has a generalized solution for fixed $(x,z,\tau,\xi) \in I'_{\theta}$.

Note that the generalized principal symbol can always be factorized, i.e.
\begin{multline*}
l_\varepsilon(\zeta; x,z,\tau,\xi) = -\frac{1}{\rho_\varepsilon(x,z)} \zeta^2 + a_{\rho,\varepsilon}(x,z,\tau,\xi) = \\
= \Big(i\zeta + i\sqrt{a_{\varepsilon}(x,z,\tau,\xi)}\Big) \frac{1}{\rho_\varepsilon(x,z)} \Big(i\zeta - i\sqrt{a_{\varepsilon}(x,z,\tau,\xi)}\Big) \quad \text{on } I'_{\theta}, \ |(\tau,\xi)| \ge M
\end{multline*}
regardless of whether the symbol vanishes or not.

(2) The roots $\zeta_{1,\varepsilon}(x,z,\tau,\xi)$, $\zeta_{2,\varepsilon}(x,z,\tau,\xi)$ of the equation
\begin{equation*}
-\frac{1}{\rho_\varepsilon(x,z)}\zeta_\varepsilon^2 + a_{\rho,\varepsilon}(x,z,\tau,\xi) = 0
\end{equation*}
are called the generalized characteristic roots.
\end{rem}
Before we state the main theorem of this section, we give a few more details about notation. In the following we will study operators of the form
\begin{equation*}
S = \sum_{j=0}^2 S_j (x,z,D_t,D_x) \partial_z^{2-j}
\end{equation*}
on $\mathbb{R}^{n+1}$ where the coefficients are operators with symbols $S_j \in S^{k_j}_{lsc}$ for some real numbers $k_j$, $j=0,1,2$. Further, we write 
\begin{equation*}
S = \sum_{j=0}^2 S_{j} (x,z,D_t,D_x) \partial_z^{2-j} \quad \text{ on } I'_{\theta}
\end{equation*}
when the symbols of the coefficients $S_j$ are restricted to the set $I'_{\theta}$.

We will now establish the factorization procedure in order to write the operator $L$ in terms of two first-order pseudodifferential operators of the form $L_{j} = \partial_z + A_{j}$ on $I'_{\theta}$ where $A_{j}$ are pseudodifferential operators with generalized symbols in $\widetilde{S}^{1}_{lsc}$, $j=1,2$.

We are now in the position to show the following result:
\begin{thm}\label{thm:Fact}
Let $L = \partial_z \frac{1}{\rho} \partial_z +A_\rho(x,z,D_t,D_x)$ and $I'_{\theta}$ be as in (\ref{GoverningOperator}) and (\ref{BasicSet}) respectively. Then, on the set $I'_{\theta}$ there are generalized roots $\{(\zeta_{1,\varepsilon})_\varepsilon, (\zeta_{2,\varepsilon})_\varepsilon \}$ of the principal symbol of $(L_\varepsilon)_\varepsilon$ that fulfill the separation property
\begin{equation*}
|\zeta_{1,\varepsilon}(x,z,\tau,\xi) - \zeta_{2,\varepsilon}(x,z,\tau,\xi)| \geq C |(\tau,\xi)| \qquad \text{on } I'_{\theta}, \text{ for } |(\tau,\xi)|\ge M
\end{equation*}
for some constants $C>0, M>0, \eta \in (0,1]$ and all $\varepsilon \in (0,\eta]$. Furthermore, the operator $L$ can be factorized into
\begin{equation}\label{StateFact}
L = L_{1} \frac{1}{\rho} L_{2} + R \qquad \text{ on } I'_{\theta}
\end{equation}
where $L_{j}$ is written as $L_{j} = \partial_z + A_{j}$ and $A_{j}=A_{j}(x,z,D_t,D_x)$ is a parameter-dependent logarithmic slow scale (classical) generalized pseudodifferential operator of order 1 in $(t,x)$. The principal symbol of $A_j$ can be chosen either equal to $(-i \zeta_{j,\varepsilon})_\varepsilon = (\mp i \sqrt{a_\varepsilon})_\varepsilon$, $j=1,2$ or equal to $(i \zeta_{j,\varepsilon})_\varepsilon = (\mp i \sqrt{a_\varepsilon})_\varepsilon$, $j=1,2$ on $I'_{\theta}$. Furthermore, the remainder is given by $R = \Gamma_{1} + \Gamma_{2}\partial_z$ for some pseudodifferential operators $\Gamma_{j}=\Gamma_{j}(x,z,D_t,D_x)$ with parameter $z$ and generalized symbol $\gamma_{j}$ in $\widetilde{S}^{-\infty}_{lsc}$ on $I'_{\theta}$, $j=1,2$.
\end{thm}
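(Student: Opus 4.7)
The plan is to follow the classical Kumano-go / Hörmander factorization of second-order strictly hyperbolic operators, adapted to the logarithmic slow scale pseudodifferential calculus of Sections~2 and~4. I work at the level of representatives, with all constructions implicitly multiplied by conic cut-offs supported in $I'_{\theta}$ (equal to one on a slightly smaller conic subset) in order to handle the microlocal character of the statement.

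For the separation of the roots, the $\mathcal{C}^{0,\mu}$-convergence of $1/c_\varepsilon^2$ to $1/c^{*2}$ with rate $\mathcal{O}(\omega_\varepsilon^{-\mu})$, combined with the inequality $|c^{*} \tau^{-1}\xi|\le\sin\theta$ defining $I'_\theta$, gives
\begin{equation*}
a_\varepsilon = \frac{\tau^2}{c_\varepsilon^2}-|\xi|^2 \geq \frac{1-\sin^2\theta}{c^{*2}}\tau^2 - C\omega_\varepsilon^{-\mu}\tau^2 \geq C'|(\tau,\xi)|^2
\end{equation*}
on $I'_\theta$, for $|(\tau,\xi)|\ge M$ and $\varepsilon$ small, so that $\sqrt{a_\varepsilon}$ is an lsc-elliptic symbol of order $1$ on $I'_\theta$. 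The roots $\zeta_{j,\varepsilon}=\mp i\sqrt{a_\varepsilon}$ then satisfy the stated separation estimate with $C=2\sqrt{C'}$.

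For the factorization, write $L_j=\partial_z+A_j$ with $A_j\in\Psi^1_{lsc}$ depending smoothly on the parameter $z$. A straightforward expansion yields
\begin{equation*}
L_1\frac{1}{\rho}L_2 = \partial_z\frac{1}{\rho}\partial_z + \Bigl[\frac{1}{\rho}A_2+A_1\frac{1}{\rho}\Bigr]\partial_z + \bigl(\partial_z\tfrac{1}{\rho}\bigr)A_2 + \frac{1}{\rho}\,\partial_z A_2 + A_1\frac{1}{\rho}A_2,
\end{equation*}
so $L = L_1\frac{1}{\rho}L_2 + R$ modulo a remainder of the form $\Gamma_1+\Gamma_2\partial_z$ with $\Gamma_j \in \Psi^{-\infty}_{lsc}$ on $I'_\theta$ amounts to the two symbol identities
\begin{equation*}
\frac{1}{\rho}A_2+A_1\frac{1}{\rho}\equiv 0,\qquad \bigl(\partial_z\tfrac{1}{\rho}\bigr)A_2+\frac{1}{\rho}\,\partial_z A_2+A_1\frac{1}{\rho}A_2\equiv A_\rho
\end{equation*}
modulo $\widetilde{S}^{-\infty}_{lsc}(I'_\theta)$. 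The first relation forces $A_1\equiv -\frac{1}{\rho}A_2\rho$ (composition being that of Theorem~\ref{thm:composition}). Substituting and equating principal symbols yields $(a_2^{(0)})^2=-a_\varepsilon$, so $a_2^{(0)}=\pm i\sqrt{a_\varepsilon}$ and $a_1^{(0)}=\mp i\sqrt{a_\varepsilon}$, as required. I then construct the lower order terms $a_2^{(-k)}\in\widetilde{S}^{1-k}_{lsc}(I'_\theta)$ for $k\ge 1$ inductively: equating symbol terms of order $1-k$ on both sides of the second identity, using the composition expansion from Theorem~\ref{thm:composition} together with the already fixed $a_2^{(0)},\dots,a_2^{(-k+1)}$, one obtains an equation of the form $2a_2^{(0)}\cdot a_2^{(-k)} = F_k$ with $F_k\in\widetilde{S}^{1-k}_{lsc}(I'_\theta)$ explicitly built from the previously chosen data. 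Dividing by the lsc-elliptic factor $2a_2^{(0)}$, which is justified by Step~1 of the proof of Theorem~\ref{thm:parametrix}, determines $a_2^{(-k)}$ in the required class. Asymptotic summation via Theorem~\ref{thm:AE}(ii) produces $a_2\sim\sum_{k\ge 0}a_2^{(-k)}$ in $\widetilde{S}^1_{lsc}(I'_\theta)$, and $A_1$ is defined through the first relation. Since the ansatz $L_1\frac{1}{\rho}L_2$ is affine in $\partial_z$, the remainder $R=L-L_1\frac{1}{\rho}L_2$ automatically has the form $\Gamma_1+\Gamma_2\partial_z$, and the recursive construction guarantees that both $\Gamma_j$ have symbols in $\widetilde{S}^{-\infty}_{lsc}(I'_\theta)$.

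The main obstacle is the recursive step: one has to verify that at every level $k$ the new symbol $a_2^{(-k)}=F_k/(2a_2^{(0)})$ genuinely remains within the logarithmic slow scale class. This requires checking both that the compositions and $z$-derivatives entering $F_k$ produce seminorms of type $\mathcal{O}(\omega_\varepsilon)$ with $(\omega_\varepsilon)_\varepsilon\in\Pi_{lsc}$ (which follows from Theorem~\ref{thm:composition} applied recursively and the fact that products and derivatives of lsc nets are again lsc), and that division by the lsc-elliptic symbol $\sqrt{a_\varepsilon}$ preserves the lsc class (which follows from Proposition~\ref{prop:lsc-ell} together with the parametrix construction in Theorem~\ref{thm:parametrix}). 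A secondary technical point is the microlocal nature of the identity: all constructions are performed with conic cut-offs supported in $I'_\theta$, and one must absorb contributions originating from outside $I'_\theta$ into $\widetilde{S}^{-\infty}_{lsc}(I'_\theta)$ through the standard pseudolocal argument.
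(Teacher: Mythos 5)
Your proof is correct and follows the same Kumano-go/H\"ormander factorization scheme the paper uses, but with a different parameterization of the recursive construction. The paper builds $a_1$ and $a_2$ simultaneously: at each order $N$ it requires both residual coefficients $\gamma_1^{(N)}$ and $\gamma_2^{(N)}$ to drop by one order and solves the coupled $2\times2$ system
\begin{equation*}
-\gamma_{1,\varepsilon}^{(N)} = b_{1,\varepsilon}^{(N)}\tfrac{1}{\rho_\varepsilon} + \tfrac{1}{\rho_\varepsilon} b_{2,\varepsilon}^{(N)}, \qquad
-\gamma_{2,\varepsilon}^{(N)} = b_{1,\varepsilon}^{(N)}\tfrac{1}{\rho_\varepsilon} a_{2,\varepsilon}^{(1)} + a_{1,\varepsilon}^{(1)}\tfrac{1}{\rho_\varepsilon} b_{2,\varepsilon}^{(N)},
\end{equation*}
whose matrix $\frac{1}{\rho_\varepsilon}\begin{pmatrix}1&1\\ a_{2,\varepsilon}^{(1)}&a_{1,\varepsilon}^{(1)}\end{pmatrix}$ is invertible on $I'_\theta$ precisely because of the root separation you establish (determinant $\frac{1}{\rho_\varepsilon^2}(a_{1,\varepsilon}^{(1)}-a_{2,\varepsilon}^{(1)})$). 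You instead decouple the system by imposing the $\partial_z$-coefficient constraint exactly, setting $A_1 := -\frac{1}{\rho}A_2\rho$; this holds identically (not merely modulo smoothing) because $\rho\cdot\frac{1}{\rho}=1$ as multiplication operators with the paper's normalization, and it reduces the remaining equation to a single scalar recursion $2a_2^{(0)}\cdot a_2^{(-k)} = F_k$ at each step, solved by dividing by the lsc-elliptic factor $2a_2^{(0)}=\pm 2i\sqrt{a}$. Both routes depend on exactly the same lsc-ellipticity of $\sqrt{a}$ on $I'_\theta$, and your choice in fact agrees with the relation $A_{2}=-\rho A_{1}\frac{1}{\rho}$ (mod $\Psi^{-\infty}_{lsc}$) that the paper later extracts, by equating coefficients, in the proof of Lemma~\ref{lem:Diag}; your version is algebraically lighter at the cost of committing to one particular representative of the (non-unique) factorization. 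One small bookkeeping slip: since both sides of the second symbol identity are of order $2$, the new unknown $a_2^{(-k)}$ enters at overall order $2-k$, not $1-k$; the inductive argument is unaffected.
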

\begin{rem}
Note that the product $L_{1} \frac{1}{\rho} L_{2}$ in (\ref{StateFact}) is not a pseudodifferential operator on $\mathbb{R}^{n+1}$. But one can overcome this by introducing a generalized cut-off $\psi (t,x,z,D_t,D_x,D_z)$ such that the difference $\psi L_{1} \frac{1}{\rho} L_{2} - L_{1} \frac{1}{\rho} L_{2}$ is insignificant on some adequate subdomain of the phase space $T^* \mathbb{R}^{n+1}$ under a microlocal point of view. This will be specified in the next section where we introduce a generalized version of microlocal analysis.
\end{rem}
\subsection{Technical Preliminaries}\label{subsec:TechnicalPreliminaries}
As already mentioned above, the symbol $(a_\varepsilon)_\varepsilon$ is logarithmic slow scale elliptic on $I'_{\theta}$, $\theta \in (0,\pi/2)$, i.e.
for any $(x_0,z_0,\tau_0,\xi_0) \in I'_{\theta}$ there exist a relatively compact open neighborhood $U$ of $(x_0,z_0)$ and a conic neighborhood $\Gamma$ of $(\tau_0,\xi_0)$ such that for some $(r_\varepsilon)_\varepsilon \in \Pi_{lsc}$, $(s_\varepsilon)_\varepsilon \in \Pi_{lsc}$ and a constant $\eta \in (0,1]$ we have
\begin{equation*}
|a_{\varepsilon}(x,z,\tau,\xi)| \geq \frac{1}{s_\varepsilon} \langle (\tau,\xi) \rangle^2 \qquad \text{ for } (x,z,\tau,\xi) \in U \times \Gamma, \ |(\tau,\xi)| \ge r_\varepsilon, \ \varepsilon \in (0,\eta]. 
\end{equation*}
As demonstrated in Proposition~\ref{prop:lsc-ell}, we have stability under lower order logarithmic slow scale perturbations, and therefore the total symbol of the operator $(A_{\rho,\varepsilon})_\varepsilon$ itself is logarithmic slow scale elliptic on $I'_{\theta}$.

In order to describe a factorization for the operator $L$, we have to give a meaning to the square root of the symbol of $A$ on the set $I_{\theta}'$. 

Note that the square root of $(a_{\varepsilon})_\varepsilon$ is prescribed on the set $I_{\theta}'$. Outside of $I_{\theta}'$ it is in general not defined but we choose it without the singularity of the square root. 
We remark that such an extension is equal to $(\sqrt{a_\varepsilon})_\varepsilon$ when restricted to $I_{\theta}'$. 

In order to cut off singularities of the square root of $(a_\varepsilon)_\varepsilon$, we define a generalized symbol $(\chi_\varepsilon(x,z,\tau,\xi))_\varepsilon \in S^{0}_{lsc}(\mathbb{R}^n \times (\mathbb{R} \setminus 0) \times \mathbb{R}^{n-1})$ homogeneous of degree $0$ for $
|(\tau,\xi)| \ge 1$ and such that 
\begin{equation*}
\chi_\varepsilon(x,z,\tau,\xi) = \begin{cases}  1 & \text{ on } I_{\theta_1}' \\ 0 & \text{ outside } I_{\theta_2}'\end{cases}
\end{equation*}
for angles $\theta_1, \theta_2 \in (0,\pi/2)$, $\theta_1 < \theta_2$. Then, $(\chi_\varepsilon\sqrt{a_\varepsilon})_\varepsilon \in  S^{0}_{lsc}(\mathbb{R}^n \times (\mathbb{R} \setminus 0) \times \mathbb{R}^{n-1})$ and its restriction to $I_{\theta_1}'$ is equal to $(\sqrt{a_\varepsilon})_\varepsilon$. Moreover, $(\chi_\varepsilon \sqrt{a_\varepsilon})_\varepsilon$ vanishes outside $I_{\theta_2}'$.
\begin{rem}
For the construction of $\chi_\varepsilon$ we define for every fixed $\varepsilon \in (0,1]$ the function
\begin{equation*}
f_\varepsilon(x,z,\tau,\xi) := c_\varepsilon(x,z) \xi \tau^{-1} \qquad \text{ on } \mathbb{R}^n \times (\mathbb{R} \setminus 0) \times \mathbb{R}^{n-1}.
\end{equation*}
Then, for every fixed $\varepsilon \in (0,1]$ we let $\chi_\varepsilon$ be the smooth function defined on $\mathbb{R}^n \times (\mathbb{R} \setminus 0) \times \mathbb{R}^{n-1}$, $0 \le \chi_\varepsilon \le 1$, which is given by 
\begin{equation}\label{eqn:CutOffFunction}
\chi_\varepsilon(x,z,\tau,\xi) := \begin{cases}
0, & |f_\varepsilon| \geq \sin \gamma_2 \\
1, & |f_\varepsilon| \leq \sin \gamma_1\\
1- \frac{1}{{1 + e^{\frac{1}{|f_\varepsilon|-\sin \gamma_1} - \frac{1}{\sin \gamma_2 - |f_\varepsilon|}}}}, & \sin \gamma_1 < |f_\varepsilon| < \sin \gamma_2
\end{cases}
\end{equation}
for some fixed angles $\gamma_1$, $\gamma_2$ with $0 < \theta_1 < \gamma_1 < \gamma_2 < \theta_2 < \pi/2$.

We therefore get $\chi_\varepsilon = 1$ on $I_{\theta_1}'$ since $\exists \eta \in (0,1]$ such that
\begin{equation*}
I_{\theta_1}' \subseteq \{ (x,z,\tau,\xi) \ | \ \tau \neq 0, \ |c_\varepsilon(x,z) \xi / \tau| \le \sin \gamma_1 \} \qquad \varepsilon \in (0,\eta].
\end{equation*}
To show the inclusion, let $(x,z,\tau,\xi) \in I_{\theta_1}'$ and $f(x,z,\tau,\xi) := \lim_{\varepsilon \to 0} f_\varepsilon(x,z,\tau,\xi)$. Then there exists a constant $C>0$ such that
\begin{equation*}
\begin{split}
|f_\varepsilon(x,z,\tau,\xi)| &\le |(f_\varepsilon - f)(x,z,\tau,\xi)| + |f(x,z,\tau,\xi)| \le\\
&\le |(c_\varepsilon(x,z)-c^*(x,z)) \xi/ \tau| + |c^*(x,z) \xi/ \tau| \le \\
&\le \sin \theta_1 (1-C \omega_\varepsilon^{-\mu}) 
\end{split}
\end{equation*}
Since $\omega_\varepsilon^{-\mu}$ tends to $0$ as $\varepsilon \to 0$ and $\theta_1 < \gamma_1$, there exists $\eta \in (0,1]$ such that the right-hand side of the last equation is bounded from above by $\sin \gamma_1$.

Similarly, one can show that $\exists \eta \in (0,1]$ such that for all $\varepsilon \in (0,\eta]$:
\begin{equation*}
\{ (x,z,\tau,\xi) \ | \ \tau \neq 0, \ |c^*(x,z) \xi / \tau| \ge \sin \theta_2 \} \subseteq \{ (x,z,\tau,\xi) \ | \ \tau \neq 0, \ |c_\varepsilon(x,z) \xi / \tau| \ge \sin  \gamma_2 \} 
\end{equation*}
and hence $\chi_\varepsilon = 0$ outside $I_{\theta_2}'$ (note $\chi_\varepsilon$ is defined only for $\tau \neq 0$). Indeed
\begin{equation*}
\begin{split}
|f_\varepsilon(x,z,\tau,\xi)| &\ge |f(x,z,\tau,\xi)| - |(f_\varepsilon - f)(x,z,\tau,\xi)| \ge\\
&\ge |c^*(x,z) \xi/ \tau| \ \Bigl(1- \Big| \frac{c_\varepsilon(x,z)-c^*(x,z)}{c^*(x,z)} \Big| \Bigr) \ge \\
&\ge \sin \theta_2 (1-C \omega_\varepsilon^{-\mu}) \ge \sin \gamma_2
\end{split}
\end{equation*}
as $\varepsilon \to 0$.
Now, since $\frac{1}{c^2} \in \ensuremath{\mathcal{G}}_{\infty,\infty}^{lsc}$ and for every $\varepsilon \in (0,1]$ the function $\chi_\varepsilon$ is smooth on $\mathbb{R}^n \times (\mathbb{R} \setminus 0) \times \mathbb{R}^{n-1}$ and homogeneous of order $0$ for $|(\tau,\xi)| \ge 1$, we can conclude that $\chi_\varepsilon \in S^{0}_{lsc} (\mathbb{R}^n \times (\mathbb{R} \setminus 0) \times \mathbb{R}^{n-1})$ (\cite[Example 1.2]{GS:94}). 
\end{rem}
\subsection{Factorization procedure}
The aim is to decompose the operator $L$ as announced in (\ref{StateFact}) in Theorem~\ref{thm:Fact}. Therefore, we give a construction scheme for the generalized symbols $(a_{j,\varepsilon})_\varepsilon$ of the operators $A_{j}$, $j=1,2$ by means of their asymptotic expansions (in the sense of Definition~\ref{defn:AE_ph}), that is
\begin{equation}\label{asymptotica}
a_{j}(x,z,\tau,\xi) \ \sim \ \sum_{\mu = 0}^{\infty} b_{j}^{(\mu)}(x,z,\tau,\xi)  \qquad \text{in } \widetilde{S}^{1}_{cl,lsc} (I'_{\theta})
\end{equation}
where the sequence $\{ b_{j}^{(\mu)}\}_{\mu \in \mathbb{N}}$ consists of appropriate elements $b_{j}^{(\mu)} \in S^{1-\mu}_{lsc} (I'_{\theta})$, $j=1,2$ and will be constructed recursively.
Recall that (\ref{asymptotica}) means that there are representatives $(a_{j,\varepsilon})_\varepsilon$ of $a_j$ and $(b_{j,\varepsilon}^{(\mu)})_\varepsilon$ of $b_j^{(\mu)}$ such that for any cut-off $\varphi$ equal to 1 near the origin we have 
\begin{equation}\label{eqn:AE_a_j}
\Bigl(a_{j,\varepsilon} - \sum_{\mu = 0}^{N-1} (1-\varphi) b_{j,\varepsilon}^{(\mu)} \Bigr)_\varepsilon \in S^{1-N}_{lsc} (I_{\theta}'). 
\end{equation}
\begin{proof}[Proof of Theorem~\ref{thm:Fact}]
For the proof we apply a decomposition method similar to \cite[Appendix II]{Kumano-go:81} and \cite[Chapter 23]{Hoermander:3}.
In the following, we will show the desired factorization in the case that the principal symbol of $A_j$ is equal to $(-i \zeta_{j,\varepsilon})_\varepsilon = (\mp i \sqrt{a_\varepsilon})_\varepsilon$, $j=1,2$. The proof for the second possible choice of the principal symbol of $A_j$ is essentially the same.

We will now give a construction scheme for the symbols $a_j$ of the operator $A_j$, $j=1,2$, by means of their asymptotic expansions. To begin with, let $\zeta_{j,\varepsilon} = \pm \sqrt{a_\varepsilon}$, $j=1,2$ and $\varepsilon \in (0,1]$. For $j=1,2$ we set on $I'_{\theta}$
\begin{equation*}
\begin{array}{ccccc}
b_{j,\varepsilon}^{(0)}:= -i\zeta_{j,\varepsilon}, \ && \ a_{j,\varepsilon}^{(1)}:=b_{j,\varepsilon}^{(0)}, \ && \ A_{j,\varepsilon}^{(1)}:= \mbox{Op}(a_{j,\varepsilon}^{(1)})
\end{array}
\end{equation*}
\hfuzz=0.12pt 
where $A_{j,\varepsilon}^{(1)}$ should be thought as the operator whose symbol has the classical asymptotic expansion $a_{j,\varepsilon}^{(1)}$ on $I_{\theta}'$. We emphasize that $A_{j,\varepsilon}^{(1)}$ is intrinsically the restriction of a globally defined operator restricted to the set $I_{\theta}'$ as the symbol can always be multiplied by a generalized cut-off function $(\chi_\varepsilon)_\varepsilon$ of the form as indicated in (\ref{eqn:CutOffFunction}) which is identically 1 on $I_{\theta}'$ (with $\theta_1$ equal $\theta$). Moreover, the function $\varphi$ as in (\ref{eqn:AE_a_j}) cuts off the singularities near $\tau = 0$.

We define $L_{j,\varepsilon}^{(1)}:= \partial_z + A_{j,\varepsilon}^{(1)}$, $j=1,2$, $\varepsilon \in (0,1]$. Using $L_{1,\varepsilon}^{(1)} \frac{1}{\rho_\varepsilon}L_{2,\varepsilon}^{(1)}$ as a first approximation to $L_\varepsilon$ we obtain an error of the following form
\begin{equation}\label{first_approx}
\begin{split}
L_{1,\varepsilon}^{(1)} \frac{1}{\rho_\varepsilon} L_{2,\varepsilon}^{(1)} - L_\varepsilon & = \Big(\partial_z + A_{1,\varepsilon}^{(1)} \Big) \frac{1}{\rho_\varepsilon} \Big(\partial_z + A_{2,\varepsilon}^{(1)}\Big) - \partial_z \frac{1}{\rho_\varepsilon} \partial_z - A_{\rho,\varepsilon} =\\
& = \Big(A_{1,\varepsilon}^{(1)} \frac{1}{\rho_\varepsilon} + \frac{1}{\rho_\varepsilon} A_{2,\varepsilon}^{(1)}\Big) \partial_z + \partial_z \Big(\frac{1}{\rho_\varepsilon} A_{2,\varepsilon}^{(1)}\Big) + A_{1,\varepsilon}^{(1)} \frac{1}{\rho_\varepsilon} A_{2,\varepsilon}^{(1)} - A_{\rho,\varepsilon} =\\
& =: \sum_{j=1}^{2} \Gamma_{j,\varepsilon}^{(1)} \partial_z^{2-j} \qquad \text{ on } I_{\theta}'.
\end{split}
\end{equation}
Since $A_1^{(1)}$ and $-A_2^{(1)}$ have the same principal symbol on $I_{\theta}'$ it follows that the symbols of the operators $\Gamma_{j}^{(1)} = \Gamma_j^{(1)}(x,z,D_t,D_x)$ are in $S^{j-1}_{cl,lsc}$ on $I_{\theta}'$, $j=1,2$. 

To improve this approximation we proceed by induction. For convenience of the reader, we also compute the second order approximation of $L$. Therefore let $\varepsilon \in (0,1]$ be fixed and define
\begin{equation*}
\begin{array}{ccccc}
a_{j,\varepsilon}^{(2)} := a_{j,\varepsilon}^{(1)} + b_{j,\varepsilon}^{(1)}, \ && \ A_{j,\varepsilon}^{(2)} := \mbox{Op}(a_{j,\varepsilon}^{(2)}), \ && \ L_{j,\varepsilon}^{(2)}:= \partial_z + A_{j,\varepsilon}^{(2)}.
\end{array}
\end{equation*}
Here $b_{j,\varepsilon}^{(1)}$ will be specified immediately and in such a way that the symbol of $A_{j,\varepsilon}^{(2)}$ with classical asymptotic expansion $\bigl(a_{j,\varepsilon}^{(2)}\bigr)_\varepsilon$ is in $S^{1}_{cl,lsc}$.
From the above we already know that
\begin{equation*}
L_{1}^{(1)} \frac{1}{\rho} L_{2}^{(1)} - L = \sum_{j=1}^{2} \Gamma_{j}^{(1)} \partial_z^{2-j}.
\end{equation*}
We can formally write on $I_{\theta}'$
\begin{equation*}
\begin{split}
L_{1,\varepsilon}^{(2)} \frac{1}{\rho_\varepsilon} L_{2,\varepsilon}^{(2)} &- L_{\varepsilon} = \Big(L_{1,\varepsilon}^{(1)} + B_{1,\varepsilon}^{(1)} \Big) \frac{1}{\rho_\varepsilon} \Big(L_{2,\varepsilon}^{(1)} + B_{2,\varepsilon}^{(1)} \Big) - L_\varepsilon =\\
& = \sum_{j=1}^{2} \Gamma_{j,\varepsilon}^{(1)} \partial_z^{2-j} + \Big(\partial_z + A_{1,\varepsilon}^{(1)} \Big) \frac{1}{\rho_\varepsilon} B_{2,\varepsilon}^{(1)} + B_{1,\varepsilon}^{(1)} \frac{1}{\rho_\varepsilon} \Big(\partial_z + A_{2,\varepsilon}^{(1)} \Big) + B_{1,\varepsilon}^{(1)} \frac{1}{\rho_\varepsilon} B_{2,\varepsilon}^{(1)} =
\\
& = \sum_{j=1}^{2} \Gamma_{j,\varepsilon}^{(1)} \partial_z^{2-j} + \Big( B_{1,\varepsilon}^{(1)} \frac{1}{\rho_\varepsilon} + \frac{1}{\rho_\varepsilon} B_{2,\varepsilon}^{(1)} \Big) \partial_z + \partial_z \Big(\frac{1}{\rho_{\varepsilon}} B_{2,\varepsilon}^{(1)} \Big) + \\
& \hspace{4cm}+ B_{1,\varepsilon}^{(1)} \frac{1}{\rho_\varepsilon} A_{2,\varepsilon}^{(1)} + A_{1,\varepsilon}^{(1)} \frac{1}{\rho_\varepsilon} B_{2,\varepsilon}^{(1)} + B_{1,\varepsilon}^{(1)} \frac{1}{\rho_\varepsilon}B_{2,\varepsilon}^{(1)}
\end{split}
\end{equation*}
We now specify $\Big(b_{1,\varepsilon}^{(1)}\Big)_\varepsilon$, $\Big(b_{2,\varepsilon}^{(1)}\Big)_\varepsilon$ as follows: as $\Big(a_{j,\varepsilon}^{(1)}\Big)_\varepsilon = \mp \Big(i \sqrt{a_{\varepsilon}}\Big)_\varepsilon$ ($j=1,2$) is non-vanishing on $I_{\theta}'$ the matrix 
\begin{equation*}
\frac{1}{\rho_\varepsilon} \begin{pmatrix} 1 & 1 \\ a_{2,\varepsilon}^{(1)} & a_{1,\varepsilon}^{(1)} \end{pmatrix}
\end{equation*}
is invertible on $I_{\theta}'$. 

Then on $I_{\theta}'$ 
\begin{equation}\label{sol_lot}
-\rho_\varepsilon \frac{1}{a_{1,\varepsilon}^{(1)}-a_{2,\varepsilon}^{(1)}}  \begin{pmatrix} a_{1,\varepsilon}^{(1)} & -1 \\ -a_{2,\varepsilon}^{(1)} & 1 \end{pmatrix} \begin{pmatrix} \gamma_{1,\varepsilon}^{(1)} \\ \gamma_{2,\varepsilon}^{(1)} \end{pmatrix} = \begin{pmatrix} b_{1,\varepsilon}^{(1)} \\ b_{2,\varepsilon}^{(1)} \end{pmatrix}
\end{equation}
where the net $(\rho_\varepsilon)_\varepsilon$ has the same properties as the net $( 1/\rho_\varepsilon)_\varepsilon$.

Recall that $\exists C>0, \exists \eta \in (0,1]$ such that
\begin{equation*}
|a_{1,\varepsilon}^{(1)}(x,z,\tau,\xi)| \ge C |(\tau,\xi)| \quad \text{ on } I'_{\theta}, \ \varepsilon \in (0,\eta]
\end{equation*}
and $\forall \alpha, \beta \in \mathbb{N}^n, \exists (\omega_\varepsilon)_\varepsilon \in \Pi_{lsc}, \forall (x,z,\tau,\xi) \in I'_{\theta}$ 
\begin{equation*}
|\partial^{\alpha}_{(\tau,\xi)} \partial^{\beta}_{(x,z)} a_{1,\varepsilon}^{(1)}(x,z,\tau,\xi)| = \mathcal{O} (\omega_\varepsilon) |(\tau,\xi)|^{1-|\alpha|} \qquad \text{for } |(\tau,\xi)| \ge 1 \text{ as } \varepsilon \to 0.
\end{equation*}
So $\Bigl(a_{1,\varepsilon}^{(1)}{}^{ -1 }\Bigr)_\varepsilon \in S^{-1}_{cl,lsc}$ on $I'_{\theta}$.
Then, modulo  $S^{-\infty}_{lsc}(I_{\theta}')$ there are uniquely determined symbols $\Big(b_{1,\varepsilon}^{(1)}\Big)_\varepsilon$, $\Big(b_{2,\varepsilon}^{(1)}\Big)_\varepsilon \in S^{0}_{lsc}$ on $I_{\theta}'$ homogeneous for $|(\tau,\xi)| \ge 1$ solving the system
\begin{equation*}
\begin{split}
-\gamma_{1,\varepsilon}^{(1)} & = b_{1,\varepsilon}^{(1)} \frac{1}{\rho_\varepsilon} + \frac{1}{\rho_\varepsilon} b_{2,\varepsilon}^{(1)}\\
-\gamma_{2,\varepsilon}^{(1)} & = b_{1,\varepsilon}^{(1)} \frac{1}{\rho_\varepsilon} a_{2,\varepsilon}^{(1)} + a_{1,\varepsilon}^{(1)} \frac{1}{\rho_\varepsilon} b_{2,\varepsilon}^{(1)}
\end{split}
\end{equation*}
on the set $I_{\theta}'$ where $\Big(\gamma_{j,\varepsilon}^{(1)}\Big)_\varepsilon$ denotes the principal symbol of $\Gamma_{j}^{(1)}$, $j=1,2$. Note that $\Bigl(b_{j,\varepsilon}^{(1)}\Bigr)_\varepsilon \in S^{0}_{lsc} (I_{\theta}')$ and is homogeneous for $|(\tau,\xi)| \ge 1$. Since $(a_{j,\varepsilon}^{(1)})_\varepsilon$ is logarithmic slow scale elliptic and $(b_{j,\varepsilon}^{(1)})_\varepsilon$ is of lower order, we conclude that $(a_{j,\varepsilon}^{(2)})_\varepsilon$ is also logarithmic slow scale elliptic.

We write $B_{j,\varepsilon}^{(1)}:= \mbox{Op}(b_{j,\varepsilon}^{(1)})$ and obtain
\begin{equation*}
L_{1}^{(2)} \frac{1}{\rho} L_{2}^{(2)} - L = \sum_{j=1}^{2} \Gamma_{j}^{(2)} \partial_z^{2-j}
\end{equation*}
with $\Gamma_{j}^{(2)} = \Gamma_j^{(2)}(x,z,D_t,D_x)$ having symbols in $S^{j-2}_{cl,lsc}$ on $I_{\theta}'$, $j=1,2$. 

For $N \ge 1$ assume $\Big(b_{j,\varepsilon}^{(\mu)}\Big)_\varepsilon \in S^{1-\mu}_{lsc}(I_{\theta}')$ is homogeneous of order $1-\mu$ for $|(\tau,\xi)| \ge 1$ and determined for all $\mu < N$, $j=1,2$. For $\varepsilon \in (0,1]$ we set
\begin{equation*}
\begin{array}{ccccc}
a_{j,\varepsilon}^{(N)} := \sum_{\mu = 0}^{N-1} b_{j,\varepsilon}^{(\mu)}, \ && \ A_{j,\varepsilon}^{(N)}:= \mbox{Op} ( a_{j,\varepsilon}^{(N)} ), \ && \ L_{j,\varepsilon}^{(N)}:= \partial_z + A_{j,\varepsilon}^{(N)}.
\end{array}
\end{equation*}
Again, here $A_{j}^{(N)}$ is the polyhomogeneous generalized pseudodifferential operator with symbol $\Bigl(a_{j,\varepsilon}^{(N)}\Bigr)_\varepsilon$ in $S^{1}_{cl,lsc}$. Furthermore, we assume
\begin{equation*}
L_{1}^{(N)} \frac{1}{\rho} L_{2}^{(N)} - L = \sum_{j=1}^{2} \Gamma_{j}^{(N)} \partial_z^{2-j}
\end{equation*}
with $\Gamma_{j}^{(N)} = \Gamma_j^{(N)}(x,z,D_t,D_x)$ having symbols in $S^{j-N}_{cl,lsc}$ on $I_{\theta}'$, $j=1,2$. 

For the induction step we specify $b_{1}^{(N)}, b_{2}^{(N)}$ as follows: We denote by $\Big( \gamma_{j,\varepsilon}^{(N)} \Big)_{\varepsilon}$ the top order symbol of $\Gamma_j^{(N)}$. Since $\Big(a_{j,\varepsilon}^{(1)} \Big)_\varepsilon$ is logarithmic slow scale elliptic on $I_{\theta}'$, $j=1,2$, the matrix 
\begin{equation*}
\frac{1}{\rho_\varepsilon} \begin{pmatrix} 1 & 1 \\ a_{2,\varepsilon}^{(1)} & a_{1,\varepsilon}^{(1)} \end{pmatrix}
\end{equation*}
is invertible on $I_{\theta}'$ and the system
\begin{equation*}
\begin{split}
-\gamma_{1,\varepsilon}^{(N)} & = b_{1,\varepsilon}^{(N)} \frac{1}{\rho_\varepsilon} + \frac{1}{\rho_\varepsilon} b_{2,\varepsilon}^{(N)}\\
-\gamma_{2,\varepsilon}^{(N)} & = b_{1,\varepsilon}^{(N)} \frac{1}{\rho_\varepsilon} a_{2,\varepsilon}^{(1)} + a_{1,\varepsilon}^{(1)} \frac{1}{\rho_\varepsilon} b_{2,\varepsilon}^{(N)}
\end{split}
\end{equation*}
is uniquely solvable for $\Big( b_{1,\varepsilon}^{(N)} \Big)_\varepsilon, \Big( b_{2,\varepsilon}^{(N)} \Big)_\varepsilon \in S^{1-N}_{lsc}$ on $I_{\theta}'$. Moreover, $b_{2,\varepsilon}^{(N)}$ are homogeneous for $|(\tau,\xi)| \ge 1$. We write $B_{j,\varepsilon}^{(N)}:= \mbox{Op}(b_{j,\varepsilon}^{N})$ and indeed we get 
\begin{align*}
L_{1,\varepsilon}^{(N+1)} &\frac{1}{\rho_\varepsilon} L_{2,\varepsilon}^{(N+1)} - L_{\varepsilon} = \Big(L_{1,\varepsilon}^{(N)} + B_{1,\varepsilon}^{(N)} \Big) \frac{1}{\rho_\varepsilon} \Big(L_{2,\varepsilon}^{(N)} + B_{2,\varepsilon}^{(N)} \Big) - L_\varepsilon =\\
& = \sum_{j=1}^{2} \Gamma_{j,\varepsilon}^{(N)} \partial_z^{2-j} + \Big(\partial_z + A_{1,\varepsilon}^{(N)} \Big) \frac{1}{\rho_\varepsilon} B_{2,\varepsilon}^{(N)} + B_{1,\varepsilon}^{(N)} \frac{1}{\rho_\varepsilon} \Big(\partial_z + A_{2,\varepsilon}^{(N)} \Big) + B_{N,\varepsilon}^{(1)} \frac{1}{\rho_\varepsilon} B_{2,\varepsilon}^{(N)} =\\
& = \sum_{j=1}^{2} \Gamma_{j,\varepsilon}^{(N)} \partial_z^{2-j} + \Big(B_{1,\varepsilon}^{(N)} \frac{1}{\rho_\varepsilon} + \frac{1}{\rho_\varepsilon} B_{2,\varepsilon}^{(N)} \Big) \partial_z + \partial_z \Big(\frac{1}{\rho_{\varepsilon}} B_{2,\varepsilon}^{(N)} \Big) + \\
& \hspace{4cm}+ B_{1,\varepsilon}^{(N)} \frac{1}{\rho_\varepsilon} A_{2,\varepsilon}^{(N)} + A_{1,\varepsilon}^{(N)} \frac{1}{\rho_\varepsilon} B_{2,\varepsilon}^{(N)} + B_{1,\varepsilon}^{(N)} \frac{1}{\rho_\varepsilon}B_{2,\varepsilon}^{(N)} =\\
& =: \sum_{j=1}^2 \Gamma_{j,\varepsilon}^{(N+1)} \partial_z^{2-j}
\end{align*}
where $\Gamma_{j}^{(N+1)} =  \Gamma_{j}^{(N+1)}(x,z,D_t,D_x)$ have symbols in $ S^{j-(N+1)}_{cl,lsc}$ on $I_{\theta}'$. 

Then with $a_{j}$, $j=1,2$, such that
\begin{equation*}
a_j \sim \sum_{\mu=0}^{\infty} b_{j}^{(\mu)} \quad \text{on } I_{\theta}'
\end{equation*}
we found the desired operator $A_j$ that solves the problem.
\end{proof}
\begin{rem}
Theorem~\ref{thm:Fact} allows for two distinct factorizations, i.e.
\begin{equation*}
L = (\partial_z + A_{11})\frac{1}{\rho}(\partial_z + A_{21}) + R_1 =(\partial_z + A_{12})\frac{1}{\rho}(\partial_z + A_{22}) + R_2 \quad \text{ on } I_{\theta}'.
\end{equation*}
where the principal symbol of $(A_{j1,\varepsilon})_\varepsilon$ is equal to $(\mp i \sqrt{a_\varepsilon})_\varepsilon$, $j=1,2$ and that of $(A_{j2,\varepsilon})_\varepsilon$ is equal to $(\pm i \sqrt{a_\varepsilon})_\varepsilon$, $j=1,2$. $R_1$ and $R_2$ denote the corresponding remainders of the factorizations.

For completeness, we will compute the zeroth-order term for the symbols of $(A_{11,\varepsilon})_\varepsilon$ and $(A_{12,\varepsilon})_\varepsilon$ explicitly as they will be used later in section~\ref{sec:microlocal_diag}. First, we calculate $(A_{11,\varepsilon})_\varepsilon$. Recall from (\ref{first_approx}) that
\begin{equation*}
\gamma_{1,\varepsilon}^{(1)} =  -i \sum_{j=1}^{n-1} \frac{\partial a_{1,\varepsilon}^{(1)}}{\partial {\xi_j}} \frac{\partial \rho_\varepsilon^{-1}}{\partial {x_j}} = - a_\varepsilon^{-1/2} \frac{1}{\rho_\varepsilon^2} \sum_{j=1}^{n-1} \frac{\partial \rho_\varepsilon}{\partial x_j} \xi_j
\end{equation*}
and 
\begin{equation*}
\begin{split}
\gamma_{2,\varepsilon}^{(1)} & \equiv \partial_z \Bigl( \frac{1}{\rho_\varepsilon} a_{2,\varepsilon}^{(1)} \Bigr) - a_{1,\varepsilon}^{(1)} \# (\frac{1}{\rho_\varepsilon} a_{2,\varepsilon}^{(1)}) - a_{\rho,\varepsilon} - i \sum_{j=1}^{n-1} \Bigl( \partial_{x_j} \frac{1}{\rho_\varepsilon} \Bigr) \xi_j \quad \mod S^0_{lsc}.
\end{split}
\end{equation*}
Hence
\begin{equation*}
\begin{split}
\gamma_{2,\varepsilon}^{(1)} 
& = \partial_z \Bigl( \frac{1}{\rho_\varepsilon} a_{2,\varepsilon}^{(1)} \Bigr) - \frac{i}{\rho_\varepsilon} \sum_{j=1}^{n-1} \frac{\partial a_{1,\varepsilon}^{(1)}}{\partial {\xi_j}} \frac{\partial a_{2,\varepsilon}^{(1)}}{\partial {x_j}} = \\
& = \frac{i}{2} \frac{a_{\rho,\varepsilon}^{1/2}}{\rho_\varepsilon^{1/2}} \Bigl (\frac{\partial a_{\rho,\varepsilon}}{\partial z} \frac{1}{a_{\rho,\varepsilon}}  - \frac{\partial \rho_\varepsilon}{\partial z} \frac{1}{\rho_\varepsilon} \Bigr) + \frac{i}{\rho_\varepsilon} \sum_{j=1}^{n-1} \frac{\partial a_{\varepsilon}^{1/2}}{\partial {\xi_j}} \frac{\partial a_{\varepsilon}^{1/2}}{\partial {x_j}}.
\end{split}
\end{equation*}
We set $b_\varepsilon=a_\varepsilon^{1/2}$ and obtain by (\ref{sol_lot})
\begin{equation}\label{A_11}
\begin{split}
a_{11,\varepsilon} = -i b_\varepsilon &+ \frac{1}{2 b_\varepsilon} \sum_{j=1}^{n-1} \frac{\partial b_\varepsilon}{\partial {\xi_j}} \frac{\partial b_\varepsilon}{\partial {x_j}} - \frac{1}{4} \Bigl (\frac{\partial a_{\rho,\varepsilon}}{\partial z} \frac{1}{a_{\rho,\varepsilon}}  - \frac{\partial \rho_\varepsilon}{\partial z} \frac{1}{\rho_\varepsilon} \Bigr) +\\
&+ \frac{1}{2} \frac{1}{\rho_\varepsilon b_\varepsilon}  \sum_{j=1}^{n-1} \xi_j \frac{\partial \rho_\varepsilon}{\partial x_j} + \text{order}(-1). 
\end{split}
\end{equation}
Using the same arguments, we obtain for the operator $A_{12}$ the following expression.
\begin{equation}\label{A_12}
\begin{split}
a_{12,\varepsilon} =\phantom{-}i b_\varepsilon &- \frac{1}{2 b_\varepsilon} \sum_{j=1}^{n-1} \frac{\partial b_\varepsilon}{\partial {\xi_j}} \frac{\partial b_\varepsilon}{\partial {x_j}} - \frac{1}{4} \Bigl (\frac{\partial a_{\rho,\varepsilon}}{\partial z} \frac{1}{a_{\rho,\varepsilon}} -\frac{\partial \rho_\varepsilon}{\partial z} \frac{1}{\rho_\varepsilon} \Bigr) - \\
&- \frac{1}{2} \frac{1}{\rho_\varepsilon b_\varepsilon} \sum_{j=1}^{n-1} \xi_j \frac{\partial \rho_\varepsilon}{\partial x_j} + \text{order}(-1).
\end{split}
\end{equation}
\end{rem}
\section{The generalized wave front set}\label{sec:lscWaveFrontSet}
Given a pseudodifferential equation with smooth symbols, regularity properties can be used to describe the behavior of the solution. To study propagation of singularities the notion of the wave front set was introduced.
We recall that the complement of the classical wave front set of a distribution $u$ is the set $WF(u)^c$ and measures smoothness near a point in the sense that the Fourier transform of a localized piece of $u$ is rapidly decreasing in an open cone.

Another version is the Sobolev-based wave front set, where one studies subsets $\mbox{WF}^m(u)$ of $T^*\mathbb{R}^n \setminus 0$, $m \in \mathbb{N}$ such that
\begin{equation*}
(x_0,\xi_0) \notin \mbox{WF}^m(u) 
\end{equation*} 
if there exists a $P \in \Psi^m$ elliptic at $(x_0,\xi_0)$ such that $Pu \in L^2(\mathbb{R}^n)$. In this sense, wave front sets give a description of local smoothness of a distribution since $\mbox{WF}^m(u)=\emptyset$ iff $u \in H^m_{loc}(\mathbb{R}^n)$. As pointed out in \cite[Section 4]{Wunsch:08}, there is the following relation between these two versions:
\begin{thm}
Let $u \in \mathcal{D}'(\mathbb{R}^n)$. Then 
\begin{equation*}
\mbox{WF}(u) = \overline{\cup_{k} \mbox{WF}^k(u)}.
\end{equation*}
\end{thm}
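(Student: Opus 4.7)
The plan is to prove the two inclusions separately, the easy one first. For each fixed $k$ the inclusion $\mbox{WF}^k(u) \subseteq \mbox{WF}(u)$ holds: if $(x_0,\xi_0) \notin \mbox{WF}(u)$ then $u$ is microlocally $\mathcal{C}^\infty$ at $(x_0,\xi_0)$, so any zero-order elliptic witness at that point can be turned into one in $\Psi^k$ with output in $L^2(\mathbb{R}^n)$ by composition with $\langle D \rangle^k$ and a compactly supported cutoff. Since $\mbox{WF}(u)$ is closed in $T^*\mathbb{R}^n \setminus 0$, taking the union over $k$ and then the closure preserves the inclusion, giving $\overline{\cup_k \mbox{WF}^k(u)} \subseteq \mbox{WF}(u)$.

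For the reverse direction I would argue by contraposition. Assume $(x_0,\xi_0) \notin \overline{\cup_k \mbox{WF}^k(u)}$. Then there is an open conic neighborhood $V$ of $(x_0,\xi_0)$ in $T^*\mathbb{R}^n \setminus 0$ such that $V \cap \mbox{WF}^k(u) = \emptyset$ for every $k \in \mathbb{N}$. The decisive feature is that a single neighborhood $V$ works uniformly in $k$.

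The key construction is to exhibit one classical pseudodifferential operator $A \in \Psi^0$ elliptic at $(x_0,\xi_0)$ whose microsupport $\mbox{WF}'(A)$ is contained in $V$; for instance take $A = \varphi_1 \chi(D) \varphi_2$ with $\varphi_1, \varphi_2 \in \mathcal{C}^\infty_c$ nonzero at $x_0$ and $\chi \in \mathcal{C}^\infty(\mathbb{R}^n)$ homogeneous of degree zero for $|\xi| \ge 1$ and supported in a narrow cone around $\xi_0$, shrinking the supports until the essential support of $A$ lies in $V$. By the standard microlocal regularity lemma---$B \in \Psi^m$ with $\mbox{WF}'(B) \cap \mbox{WF}^s(u) = \emptyset$ implies $Bu \in H^{s-m}_{loc}(\mathbb{R}^n)$---applied to $A$ with $m=0$ for each $s=k$, one obtains $Au \in H^k_{loc}$ for every $k \in \mathbb{N}$, hence $Au \in \mathcal{C}^\infty$. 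Invoking a microlocal parametrix $B$ of $A$ at $(x_0,\xi_0)$, constructed in the spirit of Theorem~\ref{thm:parametrix}, we get $u = B(Au) - Ru$ microlocally at $(x_0,\xi_0)$ with $R$ smoothing there, and conclude $(x_0,\xi_0) \notin \mbox{WF}(u)$.

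The real obstacle is not analytic difficulty but bookkeeping: the hypothesis provides an infinite family of microlocal $H^k$ witnesses (one per $k$), each in principle with its own cutoff, and the task is to collapse this family into a single $\mathcal{C}^\infty$ statement. The uniformity afforded by the single neighborhood $V$---a direct consequence of the closure operation in the hypothesis---is exactly what permits the single uniform choice of $A$, after which the proof reduces to two standard tools from the pseudodifferential calculus developed earlier: localization of microlocal Sobolev regularity and elliptic regularity through a parametrix.
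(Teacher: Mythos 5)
The paper does not actually prove this theorem; it is quoted from Wunsch's lecture notes (reference [Wunsch:08, Section 4]), so there is no in-paper argument to compare against. Your proof is correct and is the standard one. The easy inclusion works exactly as you say: microlocal $\mathcal C^\infty$ regularity at $(x_0,\xi_0)$ upgrades to microlocal $H^k$ regularity for every $k$ by post-composing a zero-order microlocal cutoff with $\langle D\rangle^k$, and since $\mbox{WF}(u)$ is closed in $T^*\mathbb{R}^n\setminus 0$ the closure of the union remains inside it. For the converse, you have correctly identified the crux: the closure operation in the hypothesis furnishes a single conic neighborhood $V$ of $(x_0,\xi_0)$ disjoint from \emph{every} $\mbox{WF}^k(u)$, which is precisely the uniformity needed to fix one operator $A\in\Psi^0$ elliptic at $(x_0,\xi_0)$ with essential support in $V$. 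Applying the microlocal Sobolev inclusion $\mbox{WF}^{s-m}(Bv)\subseteq \mbox{WF}^{s}(v)\cap \mbox{WF}'(B)$ to $B=A$ and every $s=k$ gives $Au\in\bigcap_k H^k_{loc}=\mathcal C^\infty$, and a microlocal parametrix for $A$ at $(x_0,\xi_0)$ then transports smoothness back to $u$, so $(x_0,\xi_0)\notin \mbox{WF}(u)$. The two facts you cite (microlocal Sobolev regularity under $\Psi$DO action and existence of microlocal elliptic parametrices) are standard and legitimate to invoke, so the proof is complete.
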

Relating to the notion of wave front sets to pseudodifferential operators with smooth symbols, it is well known that for any $P \in \Psi^m$ with homogeneous principal symbol $P_m$ one has the following inclusion (on $T^*\mathbb{R}^n \setminus 0$)
\begin{equation*}
\mbox{WF}(u) \subseteq \mbox{WF}(Pu) \cup \text{Char}(P)
\end{equation*}
where $\text{Char}(P)=P^{-1}_m(0) \cap T^*\mathbb{R}^n \setminus 0$ is the characteristic set depending on the principal symbol of the operator. Moreover, pseudodifferential operators do not increase the wave front sets. For more information, we refer the reader to \cite[Section 18]{Hoermander:3}.

In the framework of Colombeau generalized functions in $\GLtwo (\mathbb{R}^n)$, we follow this idea and measure regularity by considering rapid decay on cones in the frequency domain after localization in space. We refer to \cite{NPS:98, GarettoHoermann:05, GaHoe:06, HO:04} for more details on the commonly used notion of a generalized wave front set based on  $\G^{\infty}$-regularity.
\subsection{Generalized Microlocal Analysis}
Recall that a function $u \in \GLtwo (\mathbb{R}^{n})$ is regular, denoted by $u \in \GregLtwo (\mathbb{R}^{n})$, if and only if there exists a representative $(u_\varepsilon)_\varepsilon$ of $u$ such that
\begin{equation*}
\exists N \in \mathbb{N} \ \forall \alpha \in \mathbb{N}^n: \quad \lVert D^{\alpha} u_\varepsilon \rVert_{L^2(\mathbb{R}^n)} = \mathcal{O} (\varepsilon^{-N}) \quad \text{ as } \varepsilon \to 0.
\end{equation*}
As already indicated above, we introduce the following definition which is similar to \cite{GarettoHoermann:05}:
\begin{defn}
A generalized function $u \in \GLtwo(\mathbb{R}^n)$ is said to be microlocally regular at $(x_0,\xi_0) \in T^* \mathbb{R}^n \setminus 0$ if there exist $\phi \in \mathcal{C}^{\infty}_c (\mathbb{R}^n)$ with $\phi(x_0) = 1$ and a conic neighborhood $\Gamma \subseteq \mathbb{R}^n \setminus 0$ of $\xi_0$ such that
\begin{equation}\label{eqn:MicroReg}
\exists N \in \mathbb{N} \ \forall l \in \mathbb{N}: \quad \lVert \langle\xi\rangle^{l} \mathcal{F}(\phi u_\varepsilon) \rVert_{L^2(\Gamma)} = \mathcal{O}(\varepsilon^{-N}) \quad \text{ as } \varepsilon \to 0.
\end{equation}
For $u \in \GLtwo$ we denote by $\mbox{WF}_g(u) \subseteq T^*\mathbb{R}^n \setminus 0$ the generalized wave front set of $u$ which is defined as the complement of all points in phase space where u is microlocally regular.

Moreover, we say that two generalized functions $u, v \in \GLtwo(\mathbb{R}^n)$ are microlocally equivalent at $(x_0,\xi_0) \in T^* \mathbb{R}^n \setminus 0$ if and only if there are $\phi \in \mathcal{C}^{\infty}_c (\mathbb{R}^n)$ with $\phi(x_0) = 1$ and a conic neighborhood $\Gamma \subseteq \mathbb{R}^n \setminus 0$ of $\xi_0$ such that
\begin{equation*}
\exists N \in \mathbb{N} \ \forall l \in \mathbb{N}: \quad \lVert \langle\xi\rangle^{l} \mathcal{F}\big(\phi (u_\varepsilon-v_\varepsilon)\big) \rVert_{L^2(\Gamma)} = \mathcal{O}(\varepsilon^{-N}) \quad \text{ as } \varepsilon \to 0.
\end{equation*}
\end{defn}
As we pointed out in subsection~\ref{subsec:parametrix}, one can construct a generalized parametrix for an logarithmic slow scale elliptic operator $A(x,D) \in \Psi^m_{lsc}$ modulo some regularizing error in $\Psi^{-\infty}_{lsc}$ that maps $\GLtwo$ into $\GregLtwo$. This fact can be deduced by the $L^2$-boundedness theorem for operators with symbol of class $S^0$, see \cite[Theorem 4.1]{Kumano-go:81}.
\begin{rem}
Let $P \in \Psi^{-\infty}_{rg}(\mathbb{R}^n)$, i.e. $p \in \widetilde{S}_{rg}^{-\infty} = \mathcal{M}_{S^{-\infty}}^{\infty} / \mathcal{N}_{S^{-\infty}}$, and $u \in \GLtwo(\mathbb{R}^n)$. Then $Pu$ is in $\GregLtwo(\mathbb{R}^n)$ and hence microlocally regular on $T^* \mathbb{R}^n \setminus 0$. This follows from the fact that the symbol of $D^{\gamma} P(x,D)$ is in $S^{-\infty}_{rg} = \mathcal{M}_{S^{-\infty}}^{\infty} \subseteq S^{0}_{rg}$ uniformly in $\gamma \in \mathbb{N}^n$. More precisely we have the estimate: 
\begin{equation*}
\begin{split}
\exists N \in \mathbb{N} \ & \forall \gamma \in \mathbb{N}^n \ \exists \ l \in \mathbb{N} \ \exists \ c >0:\\
&\lVert D^{\gamma} P_\varepsilon u_\varepsilon \rVert_{L^2} \leq c \max_{|\alpha+\beta| \leq l} \sup_{(x,\xi) \in \mathbb{R}^{2n}} |\partial_{\xi}^{\alpha} \partial_x^{\beta} \Big( \xi^{\gamma} \# p_\varepsilon(x,\xi)\Big) | \langle \xi \rangle^{|\alpha|} \lVert u_\varepsilon \rVert_{L^2} = \mathcal{O}(\varepsilon^{-N}) 
\end{split}
\end{equation*}
as $\varepsilon \to 0$. In particular, an operator $P \in \Psi^{-\infty}_{lsc}$ maps $\GLtwo(\mathbb{R}^n)$ into $\GregLtwo(\mathbb{R}^n)$.
\end{rem}
We now introduce the generalized microsupport of a symbol as follows: Let $p \in S^{m}_{rg}$ and $(x_0,\xi_0) \in T^* \mathbb{R}^n \setminus 0$. Then the symbol $p$ is $\GregLtwo$-smoothing at $(x_0,\xi_0)$ if there exist a representative $(p_\varepsilon)_\varepsilon \in p$, a relatively compact open neighborhood $U$ of $x_0$ and a conic neighborhood $\Gamma$ of $\xi_0$ such that
\begin{equation*}
\begin{split}
\exists N \in \mathbb{N} \ \forall m \in \mathbb{R} \ \forall \alpha, \beta \in \mathbb{N}^n & \ \forall (x,\xi) \in U \times \Gamma: \\
&|\partial_{\xi}^{\alpha} \partial_x^{\beta} p_\varepsilon(x,\xi)| = \mathcal{O}( \varepsilon^{-N}) \langle \xi \rangle^{m-|\alpha|} \quad \text{as } \varepsilon \to 0.
\end{split}
\end{equation*}
The generalized microsupport of $p$, denoted by $\mbox{$\mu$supp}_{g}(p)$, is defined as the complement of the set of points $(x_0,\xi_0)$ where $p$ is $\GregLtwo$-smoothing.

In the sequel we give an essential overview of the concepts in [19]
which will be employed in this section more frequently, referring to [19] for the
proofs of the main results and for further explanations.

In the sequel we will follow the ideas made in \cite{GarettoHoermann:05} where the authors measured regularity of a generalized function in $u \in \G(\Omega)$ and $\Omega$ is some open subset of $\mathbb{R}^n$ when acting on generalized pseudodifferential operators with slow scale symbols. Our definition is (for $u \in \GLtwo$) is now the following:
\begin{defn}
For any $u \in \GLtwo$ we define
\begin{equation*}
W_{\text{sc}}(u) := \underset{{\substack{
   p(x,D) \in \leftidx{_{pr}}{\Psi}{^{0}_{sc}}\\
   p(x,D)u \in \GregLtwo
  }}}{\mathbin{\scalebox{2}{\ensuremath{\cap}}}} \text{Ell}_{sc}(p)^c
\end{equation*}
where $\text{Ell}_{sc}(p)$ denotes the set of points where $p$ is slow-scale elliptic. Note that in the manner described in \cite{GarettoHoermann:05} we have that for any slow scale elliptic symbol $p \in \widetilde{S}^m_{sc}$ there exists a parametrix $q \in \widetilde{S}^{-m}_{rg}$ such that $p \# q = q \# p = 1$ in $\widetilde{S}^{0}_{rg}$.
\end{defn}
As in \cite[Proposition 2.8]{{GarettoHoermann:05}} one can show the following result: Let $\pi : T^*(\mathbb{R}^n) \setminus 0 \to \mathbb{R}^n : (x, \xi) \to x$. Then for any $u \in \GLtwo$ we have
\begin{equation*}
\pi(W_{sc} (u)) = \text{singsupp}_g (u)
\end{equation*}
where $\mathbb{R}^n \setminus \text{singsupp}_{g} (u) := \{ x \in \mathbb{R}^n: \exists U_x \subseteq \mathbb{R}^n \text{ open such that } u|_{U_x} \in \GregLtwo \}$.

Before we proceed, let us briefly recall the three main theorems in \cite[Theorem 3.6, Theorem 3.11, Theorem 4.1]{GarettoHoermann:05} but applied to the symbol classes and function spaces we use. We renounce to give the proofs as they can be obtained by minor changes in the arguments.

\begin{thm}\cite{GarettoHoermann:05}\label{thm:microlocal}
For any $P=p(x,D) \in \leftidx{_{pr}}{\Psi}{^{m}_{rg}}$ and $u \in \GLtwo$ we have
\begin{equation*}
W_{sc}(p(x,D)u) \subseteq W_{sc}(u) \cap \mbox{$\mu$supp}_g(p).
\end{equation*}
\end{thm}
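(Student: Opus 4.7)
The plan is to establish the two inclusions $W_{sc}(Pu) \subseteq \mbox{$\mu$supp}_{g}(p)$ and $W_{sc}(Pu) \subseteq W_{sc}(u)$ separately, each via a microlocal cut-off argument combining the composition calculus of Theorem~\ref{thm:composition} with a localized version of the parametrix construction of Theorem~\ref{thm:parametrix}. In both cases the common ingredient is the pseudolocal property
\begin{equation*}
\mbox{$\mu$supp}_{g}(a \# b) \subseteq \mbox{$\mu$supp}_{g}(a) \cap \mbox{$\mu$supp}_{g}(b),
\end{equation*}
which can be read off from the asymptotic expansion of $a \# b$ together with the fact that regularizing tails contribute empty microsupport.

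For the first inclusion, suppose $(x_0,\xi_0) \notin \mbox{$\mu$supp}_{g}(p)$; then a representative of $p$ is $\GregLtwo$-smoothing on some neighborhood $U \times \Gamma$ of $(x_0,\xi_0)$. I would pick a properly supported $\chi(x,D) \in \leftidx{_{pr}}{\Psi}{^{0}_{sc}}$ that is slow-scale elliptic at $(x_0,\xi_0)$ and whose symbol satisfies $\mbox{$\mu$supp}_{g}(\chi) \subseteq U \times \Gamma$. By pseudolocality, $\mbox{$\mu$supp}_{g}(\chi \# p) \subseteq \mbox{$\mu$supp}_{g}(\chi) \cap \mbox{$\mu$supp}_{g}(p) = \emptyset$, so $\chi \circ P$ has a symbol in $S^{-\infty}_{rg}$. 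As noted in the remark preceding the theorem, such operators map $\GLtwo$ into $\GregLtwo$, whence $\chi(x,D) P u \in \GregLtwo$, and $(x_0,\xi_0) \in \text{Ell}_{sc}(\chi)$ forces $(x_0,\xi_0) \notin W_{sc}(Pu)$.

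For the second inclusion, assume $(x_0,\xi_0) \notin W_{sc}(u)$, so that there exists $Q \in \leftidx{_{pr}}{\Psi}{^{0}_{sc}}$ slow-scale elliptic at $(x_0,\xi_0)$ with $Qu \in \GregLtwo$. A microlocal version of Theorem~\ref{thm:parametrix}, obtained by running the iterative construction with every symbol pre-multiplied by a slow-scale cut-off whose microsupport lies inside $\text{Ell}_{sc}(Q)$, yields $E \in \leftidx{_{pr}}{\Psi}{^{0}_{sc}}$ and $S \in \leftidx{_{pr}}{\Psi}{^{0}_{sc}}$ with $(x_0,\xi_0) \notin \mbox{$\mu$supp}_{g}(S)$ and $E(x,D)Q(x,D) = I + S(x,D)$. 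Now I would choose $r(x,D) \in \leftidx{_{pr}}{\Psi}{^{0}_{sc}}$ slow-scale elliptic at $(x_0,\xi_0)$ with $\mbox{$\mu$supp}_{g}(r)$ disjoint from $\mbox{$\mu$supp}_{g}(S)$, and decompose
\begin{equation*}
r(x,D) P u = (rPE)(Qu) - (rPS)u.
\end{equation*}
The first summand lies in $\GregLtwo$ because the symbol of $rPE$ belongs to $S^{m}_{rg}$ (regular composed with slow-scale is again regular), and regular operators preserve $\GregLtwo$. The second summand lies in $\GregLtwo$ as well, since $\mbox{$\mu$supp}_{g}(rPS) \subseteq \mbox{$\mu$supp}_{g}(r) \cap \mbox{$\mu$supp}_{g}(S) = \emptyset$ places $rPS$ in $\Psi^{-\infty}_{rg}$, which sends $\GLtwo$ into $\GregLtwo$. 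Since $(x_0,\xi_0) \in \text{Ell}_{sc}(r)$, this gives $(x_0,\xi_0) \notin W_{sc}(Pu)$.

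The principal obstacle I anticipate is the careful construction of the microlocal parametrix $E$ with the required clean microsupport information on the remainder $S$: one has to rerun the inductive construction in the proof of Theorem~\ref{thm:parametrix} with every term cut off by a slow-scale elliptic symbol supported near $(x_0,\xi_0)$, and verify that the slow-scale bookkeeping of the nets $(r_\varepsilon)_\varepsilon, (s_\varepsilon)_\varepsilon \in \Pi_{sc}$ is preserved at each step. Once this localized parametrix and the pseudolocality of compositions are secured, both inclusions reduce to the same recipe used above.
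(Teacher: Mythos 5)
The paper does not actually prove this theorem: it states it with the remark "We renounce to give the proofs as they can be obtained by minor changes in the arguments" of \cite[Theorem 3.6]{GarettoHoermann:05}, so there is no in-paper argument to compare against. Your proposal correctly reconstructs the argument one would read off from that reference, adapted to the $\GLtwo$ setting and the lsc/sc symbol classes used here.

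Both halves of your argument are sound. For $W_{sc}(Pu)\subseteq\mbox{$\mu$supp}_g(p)$, the choice of a properly supported slow-scale cut-off $\chi$ that is $sc$-elliptic at $(x_0,\xi_0)$ with $\mbox{$\mu$supp}_g(\chi)\subseteq U\times\Gamma$ is available (take a classical cut-off embedded into $\widetilde{S}^0_{sc}$), and pseudolocality of $\#$ then forces $\chi\#p$ to have empty microsupport, hence to lie in $S^{-\infty}_{rg}$, which maps $\GLtwo$ into $\GregLtwo$ as the paper records in the remark preceding the theorem. For $W_{sc}(Pu)\subseteq W_{sc}(u)$, the decomposition $rPu=(rPE)(Qu)-(rPS)u$ coming from the microlocal parametrix $EQ=I+S$ with $(x_0,\xi_0)\notin\mbox{$\mu$supp}_g(S)$ is exactly what is needed: the first term is regular because regular-composed-with-slow-scale is regular and regular operators preserve $\GregLtwo$, and the second is in $\Psi^{-\infty}_{rg}$ by the disjointness $\mbox{$\mu$supp}_g(r)\cap\mbox{$\mu$supp}_g(S)=\emptyset$. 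The "principal obstacle" you flag — rerunning the parametrix iteration of Theorem~\ref{thm:parametrix} with every term multiplied by a localizing slow-scale cut-off while keeping the $\Pi_{sc}$-bookkeeping — is indeed the only nontrivial technical point, and it goes through because the nets $(r_\varepsilon)_\varepsilon,(s_\varepsilon)_\varepsilon$ only enter multiplicatively at each recursion step. In short: a correct proof of a statement the paper itself leaves to the cited reference.
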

Moreover, we have the following theorem.
\begin{thm}\cite{GarettoHoermann:05}
For $u \in \GLtwo(\mathbb{R}^n)$ we have
\begin{equation}\label{eqn:gen-WF}
\mbox{WF}_g(u) = W_{sc}(u) = 
\underset{\substack{
   P \in \leftidx{_{pr}}{\Psi}{^{0}}\\
   Pu \in \GregLtwo
  }}{\mathbin{\scalebox{2}{\ensuremath{\cap}}}} Char(P)
\end{equation}
\end{thm}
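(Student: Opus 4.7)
The plan is to establish both equalities in (\ref{eqn:gen-WF}) by minor adaptations of \cite[Theorems~3.6, 3.11, 4.1]{GarettoHoermann:05} from the standard Colombeau algebra $\G$ with $L^{\infty}$-based microlocal regularity to the algebra $\GLtwo$ with the $L^2$-based microlocal regularity of the previous definition. The content is concentrated in the equivalence: \emph{microlocal regularity of $u$ at $(x_0,\xi_0)$} $\iff$ \emph{existence of a slow-scale elliptic $P$ at $(x_0,\xi_0)$ with $Pu \in \GregLtwo$}; the classical smooth characterization then follows essentially for free, using the Remark in which slow-scale ellipticity is shown to reduce to classical ellipticity on smooth symbols.

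For $\mbox{WF}_g(u)^c \subseteq W_{sc}(u)^c$, I would suppose $u$ is microlocally regular at $(x_0,\xi_0)$, as witnessed by $\phi \in \mathcal{C}^{\infty}_c(\mathbb{R}^n)$ with $\phi(x_0)=1$ and a conic neighborhood $\Gamma$ of $\xi_0$ on which (\ref{eqn:MicroReg}) holds. Choose $\phi_1 \in \mathcal{C}^{\infty}_c(\mathbb{R}^n)$ with $\phi_1(x_0)=1$ and $\phi \equiv 1$ on $\mathrm{supp}(\phi_1)$, and $\psi \in \mathcal{C}^{\infty}(\mathbb{R}^n)$ homogeneous of degree zero for $|\xi| \ge 1$, supported in $\Gamma$, and equal to $1$ in a strictly smaller cone around $\xi_0$. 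Setting $p(x,\xi) := \phi_1(x)\psi(\xi) \in S^0 \subseteq \widetilde{S}^0_{sc}$, the symbol $p$ is classically, hence slow-scale, elliptic at $(x_0, \xi_0)$, so $(x_0, \xi_0) \in \mathrm{Ell}_{sc}(p)$. I would verify $p(x,D)u \in \GregLtwo$ by splitting $p(x,D) u_{\varepsilon} = \phi_1 \, \psi(D)(\phi u_{\varepsilon}) + \phi_1 \, \psi(D)((1-\phi) u_{\varepsilon})$: the first term satisfies the uniform-in-$\alpha$ bound $\|D^{\alpha}(\cdot)\|_{L^2} = \mathcal{O}(\varepsilon^{-N})$ via Plancherel together with (\ref{eqn:MicroReg}), while the second is regularizing by pseudolocality of $\psi(D)$ applied to a function supported off $\mathrm{supp}(\phi_1)$.

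For the converse $W_{sc}(u)^c \subseteq \mbox{WF}_g(u)^c$, let $P \in \leftidx{_{pr}}{\Psi}{^0_{sc}}$ be slow-scale elliptic at $(x_0,\xi_0)$ with $Pu \in \GregLtwo$. Following the parametrix construction of Theorem~\ref{thm:parametrix} but carried out on a relatively compact conic neighborhood of $(x_0,\xi_0)$, I would obtain $Q \in \leftidx{_{pr}}{\Psi}{^0_{sc}}$ and a smooth microlocal cutoff $B$ of the form $\phi_1(x)\psi(\xi)$ as above, equal to $1$ microlocally near $(x_0,\xi_0)$, such that $B = QP + R$ with $R$ regularizing. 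Then $Bu = Q(Pu) + Ru \in \GregLtwo$, and unpacking the definition of $B$ via Plancherel converts this into (\ref{eqn:MicroReg}) on the conic subset where $\psi \equiv 1$, giving microlocal regularity of $u$ at $(x_0,\xi_0)$. For the second equality in (\ref{eqn:gen-WF}), the inclusion $W_{sc}(u) \subseteq \bigcap_P \mathrm{Char}(P)$ is immediate from the Remark following Proposition~\ref{prop:lsc-ell}, since every $P \in \leftidx{_{pr}}{\Psi}{^0}$ has smooth symbol, hence lies in $\leftidx{_{pr}}{\Psi}{^0_{sc}}$, and on smooth symbols $\mathrm{Ell}_{sc}(p)^c = \mathrm{Char}(P)$; the reverse inclusion then follows by combining $W_{sc}(u) = \mbox{WF}_g(u)$ with the observation that the witness $P' = \phi_1(x)\psi(\xi)$ produced in the first construction is already a \emph{smooth classical} element of $\leftidx{_{pr}}{\Psi}{^0}$.

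The main technical obstacle will be the microlocal parametrix construction with slow-scale control: one must track $\varepsilon$-asymptotics through the composition formula of Theorem~\ref{thm:composition} at each recursive step so that the remainder $R = B - QP$ actually ends up in $\Psi^{-\infty}_{sc}$ rather than merely at some finite negative order, and this must be done while the cutoff $B$ lives only in a fixed conic neighborhood of $(x_0,\xi_0)$. This is precisely the point where the logarithmic slow-scale bookkeeping of Definition~\ref{defn:lscSymbol} and Theorem~\ref{thm:AE} is indispensable; once this estimate is in place the Garetto--H\"ormann scheme carries over with only the change from $L^{\infty}$- to $L^2$-based seminorms in the microlocal regularity definition.
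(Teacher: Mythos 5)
Your overall plan matches the paper's: both adapt \cite[Theorems 3.6, 3.11, 4.1]{GarettoHoermann:05} from $\G(\Omega)$ with $L^\infty$-estimates to $\GLtwo$ with $L^2$-estimates, and your first inclusion $\mbox{WF}_g(u)^c \subseteq W_{sc}(u)^c$ (product-form cutoff $\phi_1(x)\psi(\xi)$, split into $\phi u$ and $(1-\phi)u$ pieces, Plancherel plus a disjoint-support kernel estimate) is essentially the paper's argument, modulo the fact that $\phi_1(x)\psi(D)$ is not properly supported and should be cut off by a proper $\chi(x,y)$ as the paper does. Your third-set sandwich is the same as the paper's. Where you genuinely diverge is the reverse inclusion $W_{sc}(u)^c \subseteq \mbox{WF}_g(u)^c$: you propose building a microlocal parametrix $Q$ of the given slow-scale elliptic $P$ so that $QP = B + R$, whereas the paper argues via the microsupport inclusion Theorem~\ref{thm:microlocal}, spreading the non-membership over an $x$-neighborhood, introducing a Fourier multiplier $p(D)$ that vanishes near $W_{sc}(\phi u)$, and concluding $p(D)\phi u\in\GregLtwo$ from $W_{sc}(p(D)\phi u)=\emptyset$. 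These are two legitimate organizations of the same underlying machinery.

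There is, however, a genuine gap in the last step of your reverse inclusion. You take $B$ to be the left-quantized cutoff $\phi_1(x)\psi(D)$ and claim that $Bu\in\GregLtwo$ can be ``unpacked via Plancherel'' to yield (\ref{eqn:MicroReg}). But (\ref{eqn:MicroReg}) controls $\psi(\xi)\widehat{\phi u_\varepsilon}(\xi)$, whereas the Fourier transform of $\phi_1\psi(D)u_\varepsilon$ is the convolution $\widehat{\phi_1}\ast(\psi\hat u_\varepsilon)$, and these are not the same object. To pass from one to the other you must control $[\phi_1,\psi(D)]u$, which is an honest operator of order $-1$ applied to $u$; since $u$ is only $L^2$-moderate, this term is not negligible in $\GregLtwo$ and cannot be discarded. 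The paper circumvents the issue by using the properly supported operator with amplitude $\chi(x,y)p(\xi)\phi(y)$, i.e.\ with the spatial cutoff on the $y$-side, and proving explicitly that this operator agrees with $v\mapsto p(D)(\phi v)$ up to a genuinely regularizing error; the Plancherel step then delivers (\ref{eqn:MicroReg}) cleanly. If you arrange your parametrix so that $QP$ matches an operator of the form $\psi(D)\phi_1$ (cutoff on the right) modulo $\Psi^{-\infty}_{sc}$, rather than $\phi_1\psi(D)$, the argument closes; as written this transition is not established, and it is the substantive step, more so than the slow-scale bookkeeping in the composition calculus which you flag as the main obstacle.
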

The proof of this theorem follows the same lines as in \cite[Theorem 3.9]{GarettoHoermann:05} but with slight changes. For completeness we give the proof.
\begin{proof}
We first show that if $(x_0,\xi_0) \notin \mbox{WF}_g(u)$ then $(x_0,\xi_0) \notin \mbox{W}_{sc}(u)$. So, suppose that \eqref{eqn:MicroReg} holds. By \cite[Remark 3.5]{GarettoHoermann:05} there exists $p(\xi) \in S^0(\mathbb{R}^n \times \mathbb{R}^n)$ such that $\mbox{\text{supp}}(p) \subseteq \Gamma$, $p=1$ in a conic neighborhood $\Gamma '$ of $\xi_0$, $|\xi| \ge 1$. Taking a typical proper cut-off $\chi$ one can write the properly supported pseudodifferential operator with amplitude $\chi(x,y) p(\xi) \phi(y)$ in the form $\sigma(x,D) \in \leftidx{_{pr}}{\Psi}{^{0}}_{sc}(\mathbb{R}^n)$ such that $\sigma(x,D) v - p(D)(\phi v) \in \GregLtwo(\mathbb{R}^n)$ for all $v \in \GLtwo$.   Then
\begin{equation*}
p(D) \phi(x,D) u = \left[ \left( \int_{\Gamma} e^{ix \xi} p(\xi) \widehat{\phi u_\varepsilon}(\xi) \, \dbar \xi \right)_{\! \varepsilon}  \right] \in \GregLtwo
\end{equation*}
Since $\sigma(x,D)u - p(D) \phi u \in \GregLtwo$ it follows that $\sigma(x,D)u \in \GregLtwo$ and hence $(x_0,\xi_0) \notin \mbox{W}_{sc}(u)$.

To show the converse assume that $(x_0,\xi_0) \notin \mbox{W}_{sc}(u)$. Then there exists an open neighborhood $U$ of $x_0$ such that $(x, \xi_0) \in \mbox{W}_{sc}(u)^c$ for all $x \in U$. We choose $\phi \in \mathcal{C}_c^{\infty} (U)$ and define the closed conic set $\Sigma'$ by
\begin{equation*}
\Sigma' := \{ \xi \in \mathbb{R}^n \setminus 0 \ | \ \exists x \in \mathbb{R}^n : (x, \xi) \in \mbox{W}_{sc}(\phi u) \}.
\end{equation*}
By Theorem~\ref{thm:microlocal} we obtain $\mbox{W}_{sc}(\phi u) \subseteq \mbox{W}_{sc}(u) \cap ( \mbox{supp}(\phi) \times \mathbb{R}^n )$. Since there is a $p \in S^0(\mathbb{R}^n \times \mathbb{R}^n)$, $0 \le p \le 1$ such that $p=1$ in a conic neighborhood $\Gamma'$ of $\xi_0$, $|\xi| \ge 1$ and $p(\xi)=0$ in a conic neighborhood $\Sigma_0'$ of $\Sigma'$ we get that $\mbox{$\mu$supp}(p) \subseteq \mathbb{R}^n \times \Sigma_0'$ and $\mbox{W}_{sc}( \phi u) \subseteq \mathbb{R}^n \times \Sigma'$. Therefore $\mbox{W}_{sc}(p(D) \phi u) \subseteq \mbox{W}_{sc}( \phi u) \cap \mbox{$\mu$supp}(p) = \emptyset$. Hence $p(D)\phi u \in \GregLtwo$ and 
\begin{equation*}
\int e^{i x \xi}p(\xi) \widehat{\phi u_\varepsilon}(\xi) \, \dbar \xi = \left( \phi u_\varepsilon \ast \check{p} \right)(x) \in \mathcal{M}_{H^\infty}^{\infty}.
\end{equation*}
Since $\check{p}$ is a Schwartz function outside the origin we distinguish between two cases. First, assume that $\mbox{dist}(x,\mbox{supp}(\phi)) > \delta$ for some $\delta >0$. Then we have that $\partial^{\alpha} (\phi u_\varepsilon \ast \check{p})(x) = \int_{|y| > \delta} \phi u_\varepsilon(x-y) \partial^{\alpha} \check{p} (y) \, d y$ and therefore for every $l \ge 0$:
\begin{equation*}
\begin{split}
\langle x \rangle^l |\partial^{\alpha} (\phi u_\varepsilon \ast \check{p})(x)| & \le c \int_{|y| \ge \delta} |\phi u_\varepsilon(x-y)| \langle x-y \rangle^l \langle y \rangle^l |\partial^{\alpha} \check{p} (y)| \, d y \le \\
& \le c \int_{\mbox{\tiny{supp}}(\phi)} |\phi u_\varepsilon(z)| \langle z \rangle^l \le \\
& \le c \lVert \phi(z) \langle z \rangle^l \rVert_{L^2(\mbox{\tiny{supp}}(\phi))} \lVert u_\varepsilon \rVert_{L^2(\mbox{\tiny{supp}}(\phi))}
\end{split}
\end{equation*}
where in the last inequality we used the H\"{o}lder inequality. Note that there is an $N \in \mathbb{N}$ such that for every $l\ge 0$ and every $\alpha \in \mathbb{N}^n$ the last expression is uniformly bounded by $\mathcal{O}(\varepsilon^{-N})$ as $\varepsilon \to 0$.

Therefore we have: $\exists N \in \mathbb{N}, \forall l \ge 0, \forall \alpha \in \mathbb{N}^n$
\begin{equation*}
\begin{split}
\lVert \langle x & \rangle^l \partial^{\alpha} (\phi u_\varepsilon \ast \check{p})(x) \rVert_{ L^2(\{ {\scriptscriptstyle x} : \mbox{\tiny{dist}}({\scriptscriptstyle x},\mbox{\tiny{supp}}(\phi)) > \delta \})} = \\
& = \mathcal{O}(\varepsilon^{-N} ) \lVert \langle x \rangle^l \partial^{\alpha} (\phi u_\varepsilon \ast \check{p})(x) \rVert_{L^{\infty}(\{ {\scriptscriptstyle x}: \mbox{\tiny{dist}}({\scriptscriptstyle x},\mbox{\tiny{supp}}(\phi)) > \delta \})}  \int \langle x \rangle^{-n-1}
 \, d x = \\ 
 & = \mathcal{O}(\varepsilon^{-2N}) \qquad \text{as }\varepsilon \to 0.
\end{split}
\end{equation*}

Next, consider the compact set $\mbox{dist}(x,\mbox{supp}(\phi)) \le \delta$. Since $(\phi u_\varepsilon \ast \check{p})(x) \in \mathcal{M}_{H^{\infty}}^{\infty}$ we obtain $\exists N \in \mathbb{N}, \forall l \ge 0 , \forall \alpha \in \mathbb{N}^n$:
\begin{equation*}
\lVert \langle x \rangle^l \partial^{\alpha} (\phi u_\varepsilon \ast \check{p})(x) \rVert_{ L^2(\{ {\scriptscriptstyle x} : \mbox{\tiny{dist}}({\scriptscriptstyle x},\mbox{\tiny{supp}}(\phi)) \le \delta \})} = \mathcal{O}(\varepsilon^{-2N})
\end{equation*}
as $\varepsilon \to 0$.

Putting this together we get $\exists N \in \mathbb{N}, \forall \alpha \in \mathbb{N}^n$, so that
\begin{equation*}
\lVert \partial^{\alpha} (\phi u_\varepsilon \ast \check{p})(x) \rVert_{ L^2(\mathbb{R}^n)} = \mathcal{O}(\varepsilon^{-N}) \quad \text{as } \varepsilon \to 0.
\end{equation*}
Taking the Fourier transform we obtain that $\exists N \in \mathbb{N}, \forall \alpha \in \mathbb{N}^n$
\begin{equation*}
\lVert \langle \xi \rangle^{|\alpha|} (\phi u_\varepsilon \ast \check{p}) \widehat{\phantom{x}} (\xi) \rVert_{ L^2(\mathbb{R}^n)} = \mathcal{O}(\varepsilon^{-N}) \quad \text{as } \varepsilon \to 0.
\end{equation*}
Hence
\begin{equation*}
\lVert \langle \xi \rangle^{|\alpha|} p(\xi) \widehat{\phi u_\varepsilon}(\xi) \rVert_{ L^2(\mathbb{R}^n)} = \mathcal{O}(\varepsilon^{-N}) \quad \text{as } \varepsilon \to 0.
\end{equation*}
In particular, since $p(\xi) = 1$ in a neighborhood $\Gamma$ of $\xi_0$, $|\xi| \ge 1$, we get: $\exists N \in \mathbb{N}, \forall \alpha \in \mathbb{N}^n$
\begin{equation*}
\lVert \langle \xi \rangle^{|\alpha|} \widehat{\phi u_\varepsilon}(\xi) \rVert_{ L^2(\Gamma)} = \mathcal{O}(\varepsilon^{-N}) \text{ as } \varepsilon \to 0,
\end{equation*}
as desired.
\end{proof}

%
\begin{thm}\cite{GarettoHoermann:05}
Let $P=p(x,D) \in \leftidx{_{pr}}{\Psi}{^{m}_{sc}}$ and $u \in \GLtwo$. Then 
\begin{equation*}
\mbox{WF}_{g}(Pu) \subseteq \mbox{WF}_{g}(u) \subseteq \mbox{WF}_{g}(Pu) \cup \text{Ell}_{\text{sc}}(p)^c.
\end{equation*}
\end{thm}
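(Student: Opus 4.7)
The statement is two inclusions and I would treat them separately. Throughout I use the identification $\mbox{WF}_g(v) = W_{sc}(v)$ of (\ref{eqn:gen-WF}), together with the observation that slow scale nets are in particular moderate of bounded order, so $\leftidx{_{pr}}{\Psi}{^m_{sc}} \subseteq \leftidx{_{pr}}{\Psi}{^m_{rg}}$. For the first inclusion, since $P \in \leftidx{_{pr}}{\Psi}{^m_{rg}}$, I would apply Theorem~\ref{thm:microlocal} directly to obtain
\[
\mbox{WF}_g(Pu) = W_{sc}(Pu) \subseteq W_{sc}(u) \cap \mbox{$\mu$supp}_g(p) \subseteq W_{sc}(u) = \mbox{WF}_g(u).
\]

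For the second inclusion I would argue by contraposition: fix $(x_0,\xi_0) \notin \mbox{WF}_g(Pu) \cup \text{Ell}_{sc}(p)^c$ and produce an sc-elliptic operator at $(x_0,\xi_0)$ that regularizes $u$. By the defining intersection for $W_{sc}(Pu)$ there exists $A \in \leftidx{_{pr}}{\Psi}{^0_{sc}}$ sc-elliptic at $(x_0,\xi_0)$ with $APu \in \GregLtwo$. Since $A$ and $P$ are both sc-elliptic there, the sc version of Proposition~\ref{prop:lsc-ell} together with the composition formula of Theorem~\ref{thm:composition} shows that $AP \in \leftidx{_{pr}}{\Psi}{^m_{sc}}$ is sc-elliptic at $(x_0,\xi_0)$. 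I would then choose a cut-off symbol $\chi \in \widetilde{S}^0_{sc}$ with $\mbox{$\mu$supp}_g(\chi)$ contained in a conic neighborhood of $(x_0,\xi_0)$ on which $AP$ remains sc-elliptic, and with $\chi$ itself sc-elliptic at $(x_0,\xi_0)$.

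The core step is then a microlocal variant of Theorem~\ref{thm:parametrix}: running its three-step scheme with the low-frequency cut-off $\psi(\xi/r_\varepsilon)$ replaced by $\chi$ and with the inversion of $a_\varepsilon \# p_\varepsilon$ performed only on $\mbox{$\mu$supp}_g(\chi)$, I would obtain $Q \in \leftidx{_{pr}}{\Psi}{^{-m}_{sc}}$ with
\[
Q \circ AP = \chi(x,D) + R, \qquad R \in \Psi^{-\infty}_{sc}.
\]
Applying this identity to $u$ gives $Q(APu) = \chi(x,D)u + Ru$. The left-hand side lies in $\GregLtwo$ because $APu \in \GregLtwo$ and $Q$ has nonpositive order with sc symbol (hence preserves $\GregLtwo$ by the sc version of the $L^2$-continuity theorem); the term $Ru$ lies in $\GregLtwo$ by the remark preceding Theorem~\ref{thm:microlocal}. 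Consequently $\chi(x,D)u \in \GregLtwo$, and the sc-ellipticity of $\chi$ at $(x_0,\xi_0)$ places $(x_0,\xi_0)$ in $\text{Ell}_{sc}(\chi)^c{}^c$, i.e.\ outside $W_{sc}(u) = \mbox{WF}_g(u)$, as required.

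The main obstacle is precisely this microlocal parametrix construction: Theorem~\ref{thm:parametrix} is formulated globally, so I have to localize both the symbolic inversion and the asymptotic summation to the conic neighborhood carved out by $\chi$, and verify that the remainder $R$ actually sits in $\Psi^{-\infty}_{sc}$ rather than merely being sc-smoothing at the single point $(x_0,\xi_0)$. Composing $\chi$ into every step of the recursive construction of the $b^{(\mu)}$-terms confines each error to $\mbox{$\mu$supp}_g(\chi)$, and together with the asymptotic expansion machinery of Theorem~\ref{thm:AE} it lets one push the order of the remainder to $-\infty$ while retaining sc control throughout, which is what makes $R$ globally regularizing on $\GLtwo$.
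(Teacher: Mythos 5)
The paper does not actually supply a proof of this statement; it is cited verbatim from \cite{GarettoHoermann:05} with the explicit remark that the proof "can be obtained by minor changes in the arguments" there. So the comparison is against the toolbox the paper sets up and the Garetto--H\"ormann scheme it points to. Your outline is correct in its essentials and goes by the right route: the first inclusion follows from Theorem~\ref{thm:microlocal} and the identity \eqref{eqn:gen-WF} together with the observation that $\leftidx{_{pr}}{\Psi}{^m_{sc}} \subseteq \leftidx{_{pr}}{\Psi}{^m_{rg}}$ (slow scale nets are $\mathcal{O}(\varepsilon^{-1})$), and the second inclusion rests on a microlocal parametrix at points where $p$ is sc-elliptic. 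Nonetheless a few points deserve sharpening.

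First, the detour through the defining intersection for $W_{sc}(Pu)$ --- extracting $A$ with $APu \in \GregLtwo$ and then building a parametrix for $AP$ --- is not needed. The more economical argument, and the one the paper's framework supports directly, builds the microlocal parametrix $q(x,D) \in \leftidx{_{pr}}{\Psi}{^{-m}_{rg}}$ of $P$ alone at $(x_0,\xi_0)$, so that $qP-I$ has $\mu\text{supp}_g$ disjoint from a conic neighborhood $\Gamma$ of $(x_0,\xi_0)$; one then writes $u = q(Pu) - (qP-I)u$, handles the first term by the first inclusion applied to $q$, and the second term by the $\mu\text{supp}$ estimate in Theorem~\ref{thm:microlocal}. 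Both routes are correct, but yours introduces extra bookkeeping via $A$ and the separate $\chi u \in \GregLtwo$ step.

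Second, two technical claims overreach. You place the parametrix in $\leftidx{_{pr}}{\Psi}{^{-m}_{sc}}$ and the remainder in $\Psi^{-\infty}_{sc}$; the paper's own remark (just before Theorem~\ref{thm:microlocal}) states that for a slow scale elliptic $p$ the parametrix $q$ is only guaranteed to lie in $\widetilde S^{-m}_{rg}$, not $\widetilde S^{-m}_{sc}$. This does not break your argument --- a $rg$ parametrix still preserves $\GregLtwo$ and a $\Psi^{-\infty}_{rg}$ remainder still regularizes $\GLtwo$ --- but the class labels should match. Relatedly, the assertion that $Q$ preserves $\GregLtwo$ because it has nonpositive order is not the right reason (and would fail for $m<0$): what matters is that $v \in \GregLtwo$ controls all Sobolev norms with a uniform $\varepsilon^{-N}$, so any properly supported operator with $rg$ symbol of arbitrary order maps $\GregLtwo$ to $\GregLtwo$. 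Finally, "replace $\psi(\xi/r_\varepsilon)$ by $\chi$" inside the three-step recursion of Theorem~\ref{thm:parametrix} is too compressed to be a proof: the low-frequency cut-off and the conic cut-off play different roles. The reliable construction is to build the formal parametrix $\widetilde q$ so that $\widetilde q \# (a\#p) - 1$ has $\mu\text{supp}_g$ in $\Gamma^c$, then set $Q := \chi(x,D)\circ\widetilde q(x,D)$ with $\mu\text{supp}_g(\chi) \Subset \Gamma$, and argue that every term of the asymptotic expansion of $Q(AP)-\chi(x,D)$ lives in $\mu\text{supp}_g(\chi)\cap\Gamma^c = \emptyset$, hence the remainder is in $\Psi^{-\infty}_{rg}$. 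With these adjustments your proof is sound.
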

In particular, this relation holds for any $P \in \leftidx{_{pr}}{\Psi}^m_{lsc}$.
\section{Microlocal decomposition of the wave equation}\label{sec:microlocal_diag}
The purpose of this section is to establish a microlocal diagonalization of the operator $L$. In what follows, we will restrict our analysis to the following subset $I_{\theta_2}$ of the phase space
\begin{equation*}
I_{\theta_2} := \{ (t,x,z,\tau,\xi;\zeta) \in T^*\mathbb{R}^{n+1} \setminus 0 \ | \ (x,z,\tau,\xi) \in I_{\theta_2}', \ |\zeta| < C |\tau| \},
\end{equation*}
assuming that $WF_g(U) \subseteq I_{\theta_2}$ (cf. \cite{Stolk:04}). Recall that if $LU = 0$, then by section~\ref{sec:ell} and \ref{sec:lscWaveFrontSet} we have that  $\mbox{WF}_{g}(U) \subseteq \text{Ell}_{\text{sc}}(L)^c = \text{Ell}_{\text{lsc}}(L)^c = \Sigma$. The inequality $|\zeta| \le \frac{1}{c^*}(x,z)|\tau|$ on $\Sigma$ implies $|\zeta| \le c_0^{-1} |\tau|$ on $\Sigma$ and explains the inequality $|\zeta| < C |\tau|$ on $I_{\theta_2}$. The microlocal diagonalization will then be stated on the set $I_{\theta_2}$. 

In order to decompose $LU=F$ microlocally on $I_{\theta_2}$ into a system of two first-order components, we will consider $(u_+,u_-)$ that are obtained from $(U,\frac{1}{\rho} \partial_z U)$ by an logarithmic slow scale elliptic 2$ \times$2 pseudodifferential operator matrix $Q = Q(x,z,D_t,D_x)$, i.e. there exists $P = P(x,z,D_t,D_x)$ the generalized parametrix such that $PQ = QP = I$ modulo an operator with symbol in $S^{-\infty}_{lsc}$. Then, with the change of variables
\begin{equation}\label{eqn:wavefields}
\begin{pmatrix} u_{+} \\ u_{-} \end{pmatrix} := Q^{-1} \begin{pmatrix} U \\ \frac{1}{\rho} \partial_z U \end{pmatrix} , \hspace{25pt} \begin{pmatrix} f_{+} \\ f_{-} \end{pmatrix} := Q^{-1} \begin{pmatrix} 0 \\ F \end{pmatrix}
\end{equation}
we search for an equivalent model to the equation
\begin{equation*}
LU=F  \hspace{35pt} \mbox{microlocally on} \ I_{\theta_2}
\end{equation*}
with $U,F \in \GLtwo(\mathbb{R}^{n+1})$ in terms of two first-order generalized pseudodifferential equations of the form 
\renewcommand{\arraystretch}{1.6}
\begin{equation}\label{eqn:DecoupledWaves}
\begin{array}{ccc}
\big( \partial_z - i B_{+}(x,z,D_t,D_x) \big) u_{+} \hspace{-5pt} & = & \hspace{-4pt} f_{+} \hspace{25pt} \mbox{microlocally on} \hspace{2pt} I_{\theta_2}\\
\big( \partial_z - i B_{-}(x,z,D_t,D_x) \big) u_{-} \hspace{-5pt} & = & \hspace{-4pt} f_{-} \hspace{25pt} \mbox{microlocally on} \hspace{2pt} I_{\theta_2},
\end{array}
\end{equation}
where $u_{\pm}, f_{\pm} \in \GLtwo(\mathbb{R}^{n+1})$ and $B_{\pm} = B_{\pm}(x,z,D_t,D_x)$ are logarithmic slow scale pseudodifferential operators of order 1. Note that the operators $B_{\pm}$ are acting in $(t,x)$ and depend on the parameter $z$. Moreover, we will show that there is a choice of the normalization $Q$ of the wave field such that the operators $B_\pm$ become self-adjoint.

We start by rewriting the homogeneous equation $LU = 0$ in $\GLtwo$ into a first-order system with evolution parameter $z$: 
\begin{equation} \label{eqn:Matrix}
\bigg[ I \partial_z - \begin{pmatrix} 0 & \rho  \\ -A_{\rho} & 0  \end{pmatrix} \bigg] \begin{pmatrix} U \\ \frac{1}{\rho} \partial_z U \end{pmatrix} = \vec{0}
\end{equation}
with $U \in \GLtwo(\mathbb{R}^{n+1})$, $I$ the 2$\times$2 identity matrix and 
\begin{equation*}
A_{\rho} = A_{\rho}(x,z,D_t,D_x) = -\frac{1}{\rho} \frac{1}{c^2} \partial^2_t + \sum_{j=1}^{n-1} \partial_{x_j} \frac{1}{\rho} \partial_{x_j}.
\end{equation*}
Notice that $A_{\rho}$ is a logarithmic slow scale elliptic pseudodifferential operator of order 2 on $I_{\theta_2}'$. For brevity, we will drop the identity matrix $I$ in equation (\ref{eqn:Matrix}).

\begin{lem}\label{lem:Diag}
For suitable chosen $Q$ and $B_{\pm}$ we can write
\begin{equation}\label{eqn:DecoupleSystem}
P \bigg[ \partial_z - \begin{pmatrix} 0 & \rho  \\ -A_\rho & 0  \end{pmatrix} \bigg] Q = \partial_z - \begin{pmatrix} i B_{+} & 0 \\ 0 & i B_{-} \end{pmatrix} + R 
\end{equation}
where $R=R(x,z,D_t,D_x)$ is a 2$\times$2 pseudodifferential operator matrix with entries in $\widetilde{S}^{1}_{lsc}$ and that are of order $-\infty$ on $I_{\theta_2}'$. 
\end{lem}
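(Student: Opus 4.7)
The plan is to construct the $2\times 2$ matrix $Q$ and the operators $B_\pm$ order-by-order in a logarithmic slow scale asymptotic expansion, paralleling the factorization scheme of Theorem~\ref{thm:Fact} but in the matrix setting. At the principal symbol level the operator matrix $M := \begin{pmatrix} 0 & \rho \\ -A_\rho & 0 \end{pmatrix}$ has characteristic polynomial $\lambda^2 + \rho a_\rho = \lambda^2 + a$, whose roots are $\lambda = \pm i b_\varepsilon$ with $b_\varepsilon = \sqrt{a_\varepsilon}$ (extended globally by the cut-off $\chi_\varepsilon$ of Section~\ref{subsec:TechnicalPreliminaries}, which equals $1$ on $I'_{\theta_2}$). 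I would first set
\begin{equation*}
Q_0 := \begin{pmatrix} 1 & 1 \\ i \chi_\varepsilon b_\varepsilon \rho^{-1} & -i \chi_\varepsilon b_\varepsilon \rho^{-1} \end{pmatrix},
\end{equation*}
so that on $I'_{\theta_2}$ the columns of $Q_0$ are eigenvectors of the principal symbol of $M$ corresponding to $\pm i b_\varepsilon$. On that set $\det Q_0 = -2i b_\varepsilon \rho^{-1}$ is lsc-elliptic of order $1$, using lsc-ellipticity of $b_\varepsilon$ and strong positivity of $\rho$, so $Q_0$ is lsc-elliptic as a matrix-valued symbol and admits a parametrix $P_0$ by Theorem~\ref{thm:parametrix}. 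The corresponding principal symbols of $B_\pm$ are $\pm b_\varepsilon$, in agreement with the principal symbols of the factors $A_{1j}$ of Theorem~\ref{thm:Fact}.

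The construction proceeds inductively. Suppose after $N$ steps we have matrices $Q^{(N-1)} = Q_0 + Q_{-1} + \dots + Q_{-(N-1)}$ and operators $B_\pm^{(N-1)}$ such that, writing $D^{(N-1)} := \mathrm{diag}(iB_+^{(N-1)}, iB_-^{(N-1)})$,
\begin{equation*}
P^{(N-1)}\bigl(\partial_z - M\bigr)Q^{(N-1)} = \partial_z - D^{(N-1)} + R^{(N-1)} \quad \text{on } I'_{\theta_2},
\end{equation*}
with the entries of $R^{(N-1)}$ of order $1-N$ on $I'_{\theta_2}$. Expanding the composition law of Theorem~\ref{thm:composition} and extracting terms of order $1-N$, one obtains a $2\times 2$ symbolic equation in which the diagonal components determine the new subprincipal corrections $b_{\pm,1-N}$ to the symbols of $B_\pm$, while the off-diagonal components become a linear system for the symbol of $Q_{-N}$. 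This system is solvable in $\widetilde{S}^{-N}_{lsc}(I'_{\theta_2})$ because the eigenvalue gap $2b_\varepsilon$ is lsc-elliptic on $I'_{\theta_2}$ by Section~\ref{subsec:TechnicalPreliminaries}. One then applies Theorem~\ref{thm:AE} to sum the series asymptotically, obtaining $Q$ and $B_\pm$ with the claimed symbol classes; the resulting remainder $R$ has entries in $\widetilde{S}^{1}_{lsc}$ globally and in $\widetilde{S}^{-\infty}_{lsc}$ on $I'_{\theta_2}$.

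The main obstacle is the inductive bookkeeping: at each stage one must verify that both the new correction $Q_{-N}$ and the subprincipal symbols $b_{\pm,1-N}$ produced by inverting the off-diagonal system lie in the logarithmic slow scale class and of the right order. Stability of lsc-ellipticity under lower-order perturbations (Proposition~\ref{prop:lsc-ell}) ensures $Q^{(N)}$ remains lsc-elliptic throughout the construction, while non-commutativity of the matrix entries enters starting at subprincipal order via the $\frac{1}{i}\partial_\xi a\,\partial_x b$ correction terms of Theorem~\ref{thm:composition}, exactly as for the scalar corrections arising in the proof of the factorization Theorem~\ref{thm:Fact}.
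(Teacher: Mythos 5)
Your approach is correct in its essentials, but it takes a genuinely different route from the paper. You perform a direct order-by-order diagonalization of the $2\times 2$ matrix symbol of $M = \begin{pmatrix} 0 & \rho \\ -A_\rho & 0 \end{pmatrix}$: principal eigenvector matrix $Q_0$, then inductively solve a commutator equation where the off-diagonal block determines $Q_{-N}$ (by dividing by the eigenvalue gap $2b_\varepsilon$, which is lsc-elliptic on $I'_{\theta_2}$) and the diagonal block produces the subprincipal corrections to $B_\pm$. The paper instead takes the already-proved factorization Theorem~\ref{thm:Fact} as the essential input: it applies (\ref{eqn:DecoupleSystem}) to $P\begin{pmatrix}1 \\ \tfrac{1}{\rho}\partial_z\end{pmatrix}$, which isolates $L$ in the left column, and then matches the two scalar factorizations $L = (\partial_z + A_{1k})\tfrac{1}{\rho}(\partial_z + A_{2k}) + \cdots$, $k=1,2$, against the diagonalized form. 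This yields the explicit closed formulas $-iB_\pm = P_{j2}A_{1j}P_{j2}^{-1} + P_{j2}\partial_z(P_{j2}^{-1})$ and the block expression (\ref{eqn:solutionQ}) for $P$ without having to re-run the induction at the matrix level. The paper even remarks, after Lemma~\ref{lem:selfadj_B}, that the recursive normalization-matrix diagonalization is precisely the method of Stolk \cite{Stolk:04}, and contrasts it with its own factorization-based derivation; your proposal reconstructs the Stolk route. Two points in your sketch need more care than ``expand the composition law of Theorem~\ref{thm:composition}'': first, $P(\partial_z - M)Q = \partial_z + P(\partial_z Q) - PMQ$, and the Leibniz term $P(\partial_z Q)$ is not captured by the $(t,x)$-calculus of Theorem~\ref{thm:composition}; it contributes at subprincipal order and below and must be tracked explicitly in the induction. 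Second, $Q_0$ is a Douglis–Nirenberg matrix symbol (top row of order $0$, bottom row of order $1$), so ellipticity and the existence of a parametrix for the matrix cannot be cited from the scalar Theorem~\ref{thm:parametrix} as written; either one invokes a DN-graded version, or one factors $Q_0$ through a diagonal weight as the paper implicitly does in (\ref{eqn:solutionQ}). Neither is a fatal gap, but both deserve a sentence.
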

For the proof we introduce the following notation.
Let $R$ be a pseudodifferential operator valued 2$\times$2 error-matrix with entries of the form 
\begin{equation}\label{eqn:Error1}
\sum_{j=1}^2 R_j(x,z,D_t,D_x) \partial_z^{2-j}
\end{equation}
and the symbol of $R_j = R_j(x,z,D_t,D_x)$ is in $\widetilde{S}^{-\infty}_{lsc}$, $j = 1,2$.  In the following we will sometimes write $\Psi^{-\infty}_{1,lsc}$ to denote an operator of the form (\ref{eqn:Error1}). Similarly we will write $\Psi_{2,lsc}^{-\infty}$ for an operator of the form
\begin{equation*}
\sum_{j=0}^2 R_j(x,z,D_t,D_x) \partial_z^{2-j}
\end{equation*}
with $R_j= R_j(x,z,D_t,D_x)$ having symbols in $S^{-\infty}_{lsc}$ for $j = 0,1,2$.
\begin{proof}
We will now search for appropriate choices of the operators $P,Q,R$ and $B_{\pm}$, where $Q,P,R$ are as above and $B_{\pm}$ as already indicated in (\ref{eqn:wavefields}) and (\ref{eqn:DecoupledWaves}) such that equation (\ref{eqn:DecoupleSystem}) holds on $I_{\theta_2}'$, i.e. $R$ is of order $-\infty$ on $I_{\theta_2}'$.

To start with, we set $P = \begin{pmatrix} P_{11} & P_{12} \\ P_{21} & P_{22} \end{pmatrix}$. We first apply the left-hand side of (\ref{eqn:DecoupleSystem}) to $P \begin{pmatrix} 1 \\ \frac{1}{\rho} \partial_z  \end{pmatrix}$ and obtain 
\begin{equation}\label{three1}
P \begin{pmatrix} \partial_z & -\rho \\ A_\rho & \partial_z \end{pmatrix} QP \begin{pmatrix} 1 \\ \frac{1}{\rho} \partial_z \end{pmatrix} = \begin{pmatrix} P_{12} (\partial_z \frac{1}{\rho} \partial_z + A_\rho) + S^{(1,+)} \\ P_{22} (\partial_z \frac{1}{\rho} \partial_z + A_\rho) + S^{(1,-)} \end{pmatrix}
\end{equation}
on $I_{\theta_2}'$, with $S^{(1,\pm)}$ in $\Psi^{-\infty}_{2,lsc}$ on $I_{\theta_2}'$. Similarly, we compute for the right-hand side
\begin{multline}\label{three2}
\bigg[ \begin{pmatrix} \partial_z \hspace{-2pt} - \hspace{-1pt} i B_{+} & 0 \\ 0 & \partial_z \hspace{-2pt} - \hspace{-1pt} i B_{-} \end{pmatrix} + R \bigg] P \begin{pmatrix} 1 \\ \frac{1}{\rho} \partial_z \end{pmatrix} = \\ = \begin{pmatrix} \enspace (\partial_z \hspace{-1pt} - \hspace{-1pt} iB_{+})(P_{12} \frac{1}{\rho} \partial_z + P_{11}) + S^{(2,+)} \enspace \\ (\partial_z \hspace{-2pt} - \hspace{-1pt} iB_{-})(P_{22} \frac{1}{\rho} \partial_z + P_{21}) + S^{(2,-)} \end{pmatrix} \quad \mbox{on} \ I_{\theta_2}'
\end{multline}
where $S^{(2,\pm)} \in \Psi^{-\infty}_{2,lsc}$ on $I_{\theta_2}'$. Summarizing this, allows us to write
\begin{align*}
\begin{array}{cccc}
P_{12} (\partial_z \frac{1}{\rho} \partial_z + A_\rho) \hspace{-5pt} &=  (\partial_z - iB_+)(P_{12} \frac{1}{\rho} \partial_z + P_{11}) + \displaystyle \sum_{j=0}^2 R_j^{(+)} \partial_z^{2-j}\\
P_{22} (\partial_z \frac{1}{\rho} \partial_z + A_\rho) \hspace{-5pt} &= (\partial_z - iB_-)(P_{22} \frac{1}{\rho} \partial_z + P_{21}) + \displaystyle \sum_{j=0}^2 R_j^{(-)} \partial_z^{2-j}
\end{array}
\end{align*}
on $I_{\theta_2}'$ where $R_j^{(\pm)} = R_j^{(\pm)}(x,z,D_t,D_x)$ have symbols in $S_{lsc}^{-\infty}$ on $I_{\theta_2}'$, $j=0,1,2$. 

Otherwise, in view of Theorem~\ref{thm:Fact}, the operator $L=\partial_z \frac{1}{\rho} \partial_z + A_\rho$ can be written in the following form
\begin{equation}\label{fac_1.1}
L = \big( \partial_z + A_{11} \big) \frac{1}{\rho} \big( \partial_z + A_{21} \big) + \sum_{j=1}^{2} R_{j,1}^{(\infty)} \partial_z^{2-j} \quad \mbox{on} \ I_{\theta_2}'
\end{equation}
with the symbols of $A_{j1} = A_{j1}(x,z,D_t,D_x)$ in $S^1_{lsc}$ on $I_{\theta_2}'$, $j=1,2$. Here $A_{11}$ and $-A_{21}$ have the same principal symbol equal to $-i (\sqrt{a_\varepsilon})_\varepsilon$, where $a_\varepsilon := \frac{\tau^2}{c_\varepsilon^2(x,z)} - |\xi|^2$ is the principal symbol of $\rho A_\rho$. Also we obtain
\begin{equation}\label{fac_1.2}
L = \big( \partial_z + A_{12} \big) \frac{1}{\rho} \big( \partial_z + A_{22} \big) + \sum_{j=1}^{2} R_{j,2}^{(\infty)} \partial_z^{2-j} \quad \mbox{on} \ I_{\theta_2}'
\end{equation}
where the symbols of $A_{j2} = A_{j2}(x,z,D_t,D_x)$ are in $S^1_{lsc}$ on $I_{\theta_2}'$, $j=1,2$. At this point, $A_{12}$ and $-A_{22}$ have the same principal symbol equal to $i (\sqrt{a_\varepsilon})_\varepsilon$. Expansion of the right-hand side of (\ref{fac_1.1}), respectively (\ref{fac_1.2}), results in
\begin{equation*}
\partial_z \frac{1}{\rho} \partial_z + A_\rho = \partial_z \frac{1}{\rho} \partial_z + \big( A_{1k} \frac{1}{\rho} + \frac{1}{\rho} A_{2k} \big) \partial_z
 + \partial_z \big( \frac{1}{\rho} A_{2k}\big) + A_{1k} \frac{1}{\rho} A_{2k} + \sum_{j=1}^{2} R_{j,k}^{(\infty)} \partial_z^{2-j}
\end{equation*}
on $I_{\theta_2}'$, $k=1,2$. Equating coefficients gives $A_{2k} = -\rho A_{1k} \frac{1}{\rho}$ modulo an operator $S^{-\infty}_{lsc}$ on $I_{\theta_2}'$, $k=1,2$. Using this, equations (\ref{fac_1.1}) and (\ref{fac_1.2}) now read
\begin{align}
\partial_z \frac{1}{\rho} \partial_z + A_\rho \hspace{-3pt} &= \big( \partial_z + A_{11} \big) \big( \frac{1}{\rho} \partial_z - A_{11} \frac{1}{\rho} \big) \hspace{8pt} \pmod{\Psi_{1,lsc}^{-\infty}} \tag{7.9'}\label{fac_2.1}\\
& = \big( \partial_z + A_{12} \big) \big( \frac{1}{\rho} \partial_z  - A_{12} \frac{1}{\rho} \big) \hspace{8pt} \pmod{\Psi_{1,lsc}^{-\infty}}\tag{7.10'}\label{fac_2.2}
\end{align}
on $I_{\theta_2}'$. At this point we choose $P_{12}$ and $P_{22}$ such that the requirements of Theorem~\ref{thm:parametrix} are satisfied on $I_{\theta_2}'$ and we denote by $P_{12}^{-1}$ and ${P}_{22}^{-1}$ the corresponding parametrixes. Inserting ${P}_{12}^{-1} P_{12}$ into (\ref{fac_2.1}) and ${P}_{22}^{-1} P_{22}$ into (\ref{fac_2.2}) yields
\begin{eqnarray*}
\partial_z \frac{1}{\rho} \partial_z + A_\rho \hspace{-3pt} & = & \hspace{-3pt} \big( \partial_z + A_{11} \big) {P}_{12}^{-1} \big( P_{12} \frac{1}{\rho} \partial_z - P_{12} A_{11} \frac{1}{\rho} \big) \hspace{8pt} \pmod{\Psi_{2,lsc}^{-\infty}}\\
\hspace{-3pt} & = & \hspace{-3pt} \big( \partial_z + A_{12} \big) {P}_{22}^{-1} \big( P_{22} \frac{1}{\rho} \partial_z - P_{22} A_{12} \frac{1}{\rho} \big) \hspace{8pt} \pmod{\Psi_{2,lsc}^{-\infty}}.
\end{eqnarray*}
on $I_{\theta_2}'$. We define
\renewcommand{\arraystretch}{1.6}
\begin{equation}\label{eqn:solutionsB}
\begin{array}{ccc}
\displaystyle -iB_{+}  \hspace{-5pt} & := & \hspace{-3pt} P_{12} A_{11} {P}_{12}^{-1} + P_{12} \partial_z \big( {P}_{12}^{-1} \big) \qquad \ \text{ on} \ I_{\theta_2}'\\
\displaystyle -iB_{-}  \hspace{-5pt} & := & \hspace{-3pt} P_{22} A_{12} {P}_{22}^{-1} + P_{22} \partial_z \big( {P}_{22}^{-1} \big) \qquad \ \text{ on} \ I_{\theta_2}'
\end{array}
\end{equation}
to denote the uniquely determined solutions to 
\begin{eqnarray*}
\big( \partial_z + A_{11} \big) {P}_{12}^{-1} \hspace{-3pt} &=& \hspace{-3pt} {P}_{12}^{-1} \big( \partial_z - i B_+ \big) \\
\big( \partial_z + A_{12} \big) {P}_{22}^{-1} \hspace{-3pt} &=& \hspace{-3pt} {P}_{22}^{-1} \big( \partial_z - i B_- \big),
\end{eqnarray*}
modulo logarithmic slow scale smoothing operators on $I_{\theta_2}'$. Consequently, $B_{\pm}$ are operators with symbols in $S^1_{lsc}$ on $I_{\theta_2}'$ with real principal symbols. Note that $B_{\pm}$ are only prescribed on $I_{\theta_2}'$.
Thus
\begin{eqnarray*}
\partial_z \frac{1}{\rho} \partial_z + A_\rho \hspace{-3pt} & = & \hspace{-3pt} {P}_{12}^{-1} \big( \partial_z - iB_{+} \big) \big( P_{12} \frac{1}{\rho} \partial_z - P_{12} A_{11} \frac{1}{\rho} \big) \hspace{8pt} \pmod{\Psi_{2,lsc}^{-\infty}}\\
\hspace{-3pt} & = & \hspace{-3pt} {P}_{22}^{-1} \big( \partial_z - iB_{-} \big) \big( P_{22} \frac{1}{\rho} \partial_z - P_{22} A_{12} \frac{1}{\rho} \big) \hspace{8pt} \pmod{\Psi_{2,lsc}^{-\infty}}
\end{eqnarray*}
on $I_{\theta_2}'$. Further, if we choose for $P$ from above $P_{11}:= -P_{12} A_{11} \frac{1}{\rho}$ and $P_{21}:= -P_{22} A_{12} \frac{1}{\rho}$ we obtain
\begin{equation}\label{eqn:solutionQ}
P = \begin{pmatrix} -P_{12} A_{11} \frac{1}{\rho} & P_{12} \\ -P_{22} A_{12} \frac{1}{\rho} & P_{22} \end{pmatrix} = \begin{pmatrix} P_{12} & 0 \\ 0 & P_{22} \end{pmatrix} \begin{pmatrix} -A_{11} \frac{1}{\rho} & 1 \\ -A_{12} \frac{1}{\rho} & 1 \end{pmatrix}  \qquad \text{on } I'_{\theta_2}
\end{equation}
where $P_{12}$ and $P_{22}$ are logarithmic slow scale elliptic pseudodifferential operators (in the sense of Theorem~\ref{thm:parametrix}) on $I_{\theta_2}'$. 

Since the principal symbol of $A_{11}$ is $-i(\sqrt{a_\varepsilon})_\varepsilon$ and the principal symbol of $A_{12}$ is $i(\sqrt{a_\varepsilon})_\varepsilon$, it follows that $P$ is logarithmic slow scale elliptic (in the above sense) whenever $P_{12}$ and $P_{22}$ satisfy the requirements of Theorem~\ref{thm:parametrix}. Furthermore, the approximative inverse matrix $Q$ of $P$ is given by
\begin{equation*}
Q = \begin{pmatrix} -C & C \\ -A_{12} \frac{1}{\rho} C & 1 + A_{12} \frac{1}{\rho} C \end{pmatrix} \begin{pmatrix} {P}_{12}^{-1} & 0 \\ 0 & {P}_{22}^{-1} \end{pmatrix} \qquad \text{on } I'_{\theta_2}
\end{equation*}
where $C$ is the generalized parametrix of $A_{11} \frac{1}{\rho} - A_{12} \frac{1}{\rho}$ in the sense of Theorem~\ref{thm:parametrix}. We have therefore found appropriate operator-valued matrices $P,Q,R$ and operators $B_{\pm}$ solving (\ref{eqn:DecoupleSystem}) on $I_{\theta_2}'$.

Outside $I_{\theta_2}'$, we choose $P$ logarithmic slow scale elliptic and of the same order as in (\ref{eqn:solutionQ}) on $I_{\theta_2}'$. Then Lemma~\ref{lem:Diag} is satisfied also on the complement of $I_{\theta_2}'$.
\end{proof}
We note that the derived one-way wave equations are not unique. Only the principal symbol remains the same for any choice of $P_{12}$ and $P_{22}$. Recall that the principal symbol of the operator $B_\pm$ is directly related to the generalized wave front set of the full wave equation. In the smooth setting it turns out, that in order to also describe the wave amplitudes, the subprincipal symbols of $B_\pm$ are needed (cf. \cite{Zhang:03}, \cite{OptRootStolk:10}).

Moreover, by (\ref{A_11}), (\ref{A_12}) and (\ref{eqn:solutionsB}) the zeroth-order terms of $B_\pm$ also depend on the partial derivatives with respect to $z$ of the coefficients, i.e. $\partial_z c_\varepsilon(x,z)$ and $\partial_z \rho_\varepsilon(x,z)$. In the following, we will show that for an appropriate choice of the operator $Q$ such zeroth-order terms can be eliminated. In particular, we have the following result.
\begin{lem}\label{lem:selfadj_B}
Let $Q$, $P$, $B_\pm$ and $R$ as in Lemma~\ref{lem:Diag}. Then there is a choice of $Q$ such that the operators $B_\pm$ become self-adjoint on $I_{\theta_2}'$. Moreover, the symbols of the operators $B_\pm$ are of the form
\begin{equation}\label{eqn:solutionsBselfadj}
b_\pm(x,z,\tau,\xi) = \pm \ \bigg(b + \frac{i}{2b} \sum_{j=1}^{n-1} \frac{\partial b}{\partial \xi_j} \frac{\partial b}{\partial x_j} \bigg) + \text{order}(-1) \qquad \ \text{ on} \ I_{\theta_2}'.
\end{equation}
\end{lem}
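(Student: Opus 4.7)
The plan is to exploit the freedom left open in Lemma~\ref{lem:Diag}: the operators $P_{12}$ and $P_{22}$ need only be lsc-elliptic, so any admissible choice produces valid $B_\pm$ via (\ref{eqn:solutionsB}). I will show that taking $p_{12,\varepsilon} = p_{22,\varepsilon} = \chi_\varepsilon(\rho_\varepsilon/b_\varepsilon)^{1/2}$, with $\chi_\varepsilon$ a cutoff of the type constructed in subsection~\ref{subsec:TechnicalPreliminaries} that is identically $1$ on $I_{\theta_2}'$ and removes the singularity at $\tau=0$, forces the subprincipal symbol of $B_\pm$ into exactly the form (\ref{eqn:solutionsBselfadj}), which automatically satisfies the reality condition underlying self-adjointness at the subprincipal level. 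Lower-order obstructions are then removed inductively by correcting $P_{12}$ and $P_{22}$ at each order and summing asymptotically via Theorem~\ref{thm:AE}.

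Concretely, I will expand $-iB_+ = P_{12}A_{11}P_{12}^{-1} + P_{12}\partial_z(P_{12}^{-1})$ using Theorem~\ref{thm:composition}. Conjugation leaves the principal symbol of $B_+$ equal to $b_\varepsilon$, while the subprincipal symbol of $-iB_+$ receives three contributions: $a_{11,-1,\varepsilon}$ from (\ref{A_11}), the Poisson-bracket correction $-\{p_{12,\varepsilon},b_\varepsilon\}/p_{12,\varepsilon}$ coming from the commutator $[P_{12},A_{11}]P_{12}^{-1}$, and $-\partial_z\log p_{12,\varepsilon}$ coming from $P_{12}\partial_z(P_{12}^{-1})$. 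Using $a_{\rho,\varepsilon}\equiv a_\varepsilon/\rho_\varepsilon$ and $b_\varepsilon^2=a_\varepsilon$ on $I_{\theta_2}'$, one obtains the identity
\begin{equation*}
\tfrac{\partial_z a_{\rho,\varepsilon}}{a_{\rho,\varepsilon}}-\tfrac{\partial_z\rho_\varepsilon}{\rho_\varepsilon} = \partial_z\log a_\varepsilon - 2\partial_z\log\rho_\varepsilon = 2\partial_z\log(b_\varepsilon/\rho_\varepsilon),
\end{equation*}
whence with $p_{12,\varepsilon}=(\rho_\varepsilon/b_\varepsilon)^{1/2}$ the term $-\partial_z\log p_{12,\varepsilon}$ cancels precisely the $z$-derivative part of $a_{11,-1,\varepsilon}$. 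A short computation using $\partial_{\xi_j}b_\varepsilon = -\xi_j/b_\varepsilon$ then gives
\begin{equation*}
\{p_{12,\varepsilon},b_\varepsilon\}/p_{12,\varepsilon} = \frac{1}{2\rho_\varepsilon b_\varepsilon}\sum_{j=1}^{n-1}\xi_j\,\partial_{x_j}\rho_\varepsilon,
\end{equation*}
which, subtracted from $a_{11,-1,\varepsilon}$, cancels exactly the lateral $\rho$-derivative term there. What remains is $\frac{1}{2b_\varepsilon}\sum_j\partial_{\xi_j}b_\varepsilon\,\partial_{x_j}b_\varepsilon$, and multiplication by $i$ yields (\ref{eqn:solutionsBselfadj}) for $B_+$. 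The derivation for $B_-$ is entirely analogous, built from (\ref{A_12}) with the same choice of $p_{22,\varepsilon}$, and the opposite sign pattern in (\ref{A_12}) is what produces the overall $-$ sign in (\ref{eqn:solutionsBselfadj}).

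Self-adjointness at the subprincipal level then follows from the adjoint asymptotic expansion of the previous subsection: a symbol with real principal symbol $b$ represents a self-adjoint operator modulo $\Psi^0_{lsc}$ iff the imaginary part of its subprincipal symbol equals $-\frac{1}{2}\sum_j\partial_{x_j}\partial_{\xi_j}b$. From $b^2=\tau^2/c^2-|\xi|^2$ one computes $\partial_{x_j}\partial_{\xi_j}b = \xi_j\partial_{x_j}b/b^2$, hence $-\frac{1}{2}\sum_j\partial_{x_j}\partial_{\xi_j}b = \frac{1}{2b}\sum_j\partial_{\xi_j}b\,\partial_{x_j}b$, which matches the imaginary part of (\ref{eqn:solutionsBselfadj}). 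To upgrade to self-adjointness modulo $\Psi^{-\infty}_{lsc}$, I will argue by induction on $k$: assuming $B_+$ self-adjoint modulo $\Psi^{-k}_{lsc}$, the obstruction $B_+-B_+^*$ has symbol in $\widetilde{S}^{-k}_{lsc}$, and an algebraic equation structurally identical to the subprincipal one solves for a correction of order $-k-\tfrac12$ to $p_{12,\varepsilon}$; the sequence of corrections is summed by Theorem~\ref{thm:AE} to produce the final $P_{12}$, and analogously for $P_{22}$.

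The main obstacle is the subprincipal bookkeeping that singles out $(\rho_\varepsilon/b_\varepsilon)^{1/2}$: the identity above for $\partial_z a_{\rho,\varepsilon}/a_{\rho,\varepsilon}-\partial_z\rho_\varepsilon/\rho_\varepsilon$ is the key to absorbing the $z$-derivatives, and the explicit Poisson-bracket evaluation is what matches the lateral $\rho$-derivative. A secondary technical point is verifying that $\chi_\varepsilon(\rho_\varepsilon/b_\varepsilon)^{1/2}$, suitably extended off $I_{\theta_2}'$, really represents an lsc-elliptic element of $\widetilde{S}^{-1/2}_{lsc}$; this is a consequence of the strong positivity assumption on $\rho_\varepsilon$ (assumption (ii) of section~\ref{sec:GovEqn}), the lsc-ellipticity of $b_\varepsilon$ on $I_{\theta_2}'$ established at the start of section~\ref{sec:Factorization}, and the logarithmic slow scale growth of all derivatives of $\rho_\varepsilon$ and $c_\varepsilon$ coming from the regularization.
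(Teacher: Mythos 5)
Your computations are correct and reproduce the paper's subprincipal symbol, but the route is genuinely different from the one taken in the paper. You pick the leading symbol $p_{12,\varepsilon}=(\rho_\varepsilon/b_\varepsilon)^{1/2}$ by fiat, verify directly that the $z$-derivative and lateral $\rho$-derivative terms of $a_{11,-1}$ cancel against $-\partial_z\log p_{12}$ and $-\{p_{12},b\}/p_{12}$ respectively, check self-adjointness at the subprincipal level via the identity $\text{Im}(b_{+,0})=-\tfrac12\sum_j\partial_{x_j}\partial_{\xi_j}b=\tfrac1{2b}\sum_j\partial_{\xi_j}b\,\partial_{x_j}b$, and then propose to upgrade to full self-adjointness by an order-by-order correction of $p_{12}$. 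The paper instead imposes the operator-matrix symmetry constraint $P=\mathrm{diag}(1,-1)\,Q^*\begin{pmatrix}0&-i\\i&0\end{pmatrix}$, which reduces to the single equation $P_{12}^{-1}(P_{12}^{-1})^*=i(A_{11}\tfrac1\rho-\tfrac1\rho A_{11}^*)$; constructing the self-adjoint square root of the known self-adjoint elliptic right-hand side then yields anti-self-adjointness of $-iB_\pm$ at \emph{all} orders simultaneously by a purely structural matrix argument, after which the subprincipal form (\ref{eqn:solutionsBselfadj}) is read off from the asymptotic expansion of $P_{12}$. The trade-off: your route makes the subprincipal cancellations transparent (and indeed your Poisson-bracket and $\partial_z\log$ identities are exactly those implicitly used in the paper's final computation), but the inductive step you sketch is the weakest part of the proposal. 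You assert that killing the order-$-k$ obstruction $B_+-B_+^*$ reduces to ``an algebraic equation structurally identical to the subprincipal one,'' but that needs an argument: the obstruction is anti-self-adjoint of order $-k$, the correction $\delta$ to $p_{12}$ enters through $-\{\delta,b\}-\partial_z\delta$ plus lower order, and solvability of this for real $\delta$ (and the resulting moderateness/lsc bounds) should be made explicit. The paper's approach is cleaner exactly because it replaces that open-ended induction by the well-defined square-root construction and the one-line matrix identity; if you want to keep your more hands-on route you should flesh out that step, ideally by showing the linearized map $\delta\mapsto$ (order-$-k$ part of $B_+-B_+^*$) is onto the anti-self-adjoint symbols of that order.
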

\begin{proof}
In the following, we will restrict ourself to the set $I_{\theta_2}'$ and show that $B_{\pm}$ can be chosen self-adjoint. The result follows by an argument used in \cite{Stolk:04}.

We will tackle the problem by choosing generalized classical pseudodifferential operators $P_{12}, P_{22}$ such that 
\begin{equation}\label{eqn:selfadj}
Q^{-1} = P = \begin{pmatrix} 1 & 0  \\ 0 & -1  \end{pmatrix} Q^{\ast} \begin{pmatrix} 0 & -i  \\ i & 0  \end{pmatrix}
\end{equation}
is valid modulo a smoothing operator matrix and where $Q^*$ denotes the adjoint of $Q$. We therefore explicitly compute the right hand side and obtain
\begin{equation*} 
\begin{pmatrix} 1 & 0  \\ 0 & -1  \end{pmatrix} Q^{\ast} \begin{pmatrix} 0 & -i  \\ i & 0  \end{pmatrix} = 
\begin{pmatrix} -i (P_{12}^{-1})^* C^* \frac{1}{\rho} A_{12}^* & i (P_{12}^{-1})^* C^*  \\ -i (P_{22}^{-1})^*-i (P_{22}^{-1})^* C^* \frac{1}{\rho} A_{12}^* & i (P_{22}^{-1})^* C^*  \end{pmatrix}.
\end{equation*}
Then (\ref{eqn:selfadj}) is equivalent to the following two systems of equations
\begin{align*}
& \begin{cases}  -P_{12} A_{11} \frac{1}{\rho} = -i (P_{12}^{-1})^* C^* \frac{1}{\rho} A_{12}^*  \\ \phantom{-}P_{12} = i (P_{12}^{-1})^* C^* \end{cases} \\
\intertext{and}
& \begin{cases}  -P_{22} A_{12} \frac{1}{\rho} = -i (P_{22}^{-1})^* - i (P_{22}^{-1})^* C^* \frac{1}{\rho} A_{12}^*  \\ \phantom{-}P_{22} = i (P_{22}^{-1})^* C^*. \end{cases}
\end{align*}
These systems are equivalent to solving the following equations
\begin{eqnarray*}
i ( A_{11} \frac{1}{\rho} - \frac{1}{\rho}A_{11}^* ) = P_{12}^{-1}(P_{12}^{-1})^* && -i (A_{12} \frac{1}{\rho} - \frac{1}{\rho}A_{12}^*) = P_{22}^{-1}(P_{22}^{-1})^*
\end{eqnarray*}
where we used the fact that $\frac{1}{\rho} A_{11}^* - \frac{1}{\rho}A_{12}^* = (C^{-1})^* = (C^*)^{-1}$. 

As a next step, we will choose $P_{12}^{-1}$ and $P_{22}^{-1}$ equal to the self-adjoint square root of $i ( A_{11} \frac{1}{\rho} - \frac{1}{\rho}A_{11}^* )$ and $-i (A_{12} \frac{1}{\rho} - \frac{1}{\rho}A_{12}^*)$ respectively. We show that $P_{12}^{-1}$ can be chosen in such a way. The result for $P_{22}^{-1}$ follows by the same method. We first note that the operator $T:=i( A_{11} \frac{1}{\rho} - \frac{1}{\rho}A_{11}^* )$ is self-adjoint. As we are searching for the square root, we are interested in an asymptotic expansion $X = \sum_{j=0}^{\infty} X^{(j)}$ with $X^{(j)} \in \Psi_{lsc}^{1/2-j}$ with $X \sim \pm T^{1/2}$ in $\widetilde{S}_{cl,lsc}^1$. 

Therefore, let $X^{(0)}_\varepsilon := \bigl( (b_\varepsilon \frac{1}{\rho_\varepsilon})^{1/2} + (b_\varepsilon \frac{1}{\rho_\varepsilon})^{1/2})^* \bigr)/2$ be self-adjoint in $S_{lsc}^{1/2}$ where $b_\varepsilon$ is the principal symbol of $iA_{11, \varepsilon}$. Then 
\begin{equation*}
X^{(0) 2}_\varepsilon = T_\varepsilon + R^{(0)}_\varepsilon \qquad \text{with }  R^{(0)}_\varepsilon \in S^{0}_{lsc}.
\end{equation*}
Moreover, the remainder $R_\varepsilon^{(0)}$ is self-adjoint. Suppose we have $X^{(j)}_\varepsilon$, $j=0, \ldots , N$, such that 
\begin{equation}\label{eqn:self-adj-square-root}
\sum_{j=0}^{N} X^{(j)}_\varepsilon = T_\varepsilon + R_\varepsilon^{(N)}
\end{equation}
with $R^{(N)}_\varepsilon \in S^{-N}_{lsc}$ self-adjoint. Then with $X_\varepsilon^{(N+1)}:= - \frac{1}{2\sqrt{b_\varepsilon/\rho_\varepsilon}} R^{(N)}_\varepsilon$, $X^{(N+1)}_\varepsilon$ is self-adjoint since $b_\varepsilon/\rho_\varepsilon$ is real-valued and (\ref{eqn:self-adj-square-root}) holds with $N+1$ instead of $N$.

We recall from (\ref{eqn:DecoupleSystem}) that 
\begin{equation*}
P \begin{pmatrix} \partial_z & -\rho  \\ A_\rho & \partial_z  \end{pmatrix}  Q = \partial_z + P \begin{pmatrix} 0 & -\rho  \\ A_\rho & 0  \end{pmatrix}  Q + P \frac{\partial Q}{\partial z} = \partial_z - i \ \begin{pmatrix} B_+ & 0  \\ 0 & B_-  \end{pmatrix} + R. 
\end{equation*}
Using (\ref{eqn:selfadj}) we obtain for the second term in the middle expression
\begin{equation*}
P \begin{pmatrix} 0 & -\rho  \\ A_\rho & 0  \end{pmatrix}  Q = 
-\begin{pmatrix} 1 & 0  \\ 0 & -1  \end{pmatrix} \Bigl[ P \begin{pmatrix} 0 & -\rho  \\ A_\rho & 0  \end{pmatrix}  Q \Bigr]^* \begin{pmatrix} 1 & 0  \\ 0 & -1  \end{pmatrix}.
\end{equation*}
Now this term is the sum of an anti-self-adjoint diagonal matrix and a self-adjoint off-diagonal matrix. Also we have
\begin{equation*}
P \frac{\partial Q}{\partial z} = 
-\begin{pmatrix} 1 & 0  \\ 0 & -1  \end{pmatrix} \Bigl[ \ P \frac{\partial Q}{\partial z} \ \Bigr]^* \begin{pmatrix} 1 & 0  \\ 0 & -1  \end{pmatrix}.
\end{equation*}
So this expression is again the sum of a self-adjoint off-diagonal part and an anti-self-adjoint diagonal part. 

Hence $-iB_{\pm}$ are anti-self-adjoint and $B_{\pm}$ are self-adjoint.

It remains to show that $B_{\pm}$ has the form (\ref{eqn:solutionsBselfadj}). In order to compute the zeroth order term of $B_+$, we need to incorporate the principal and the subprincipal symbol of $P_{12}^{-1}$. We define the operator $P_{12}^{-1} = \mbox{Op}(\sqrt{2} a_\rho^{1/4} \rho^{-1/4}) + R_1$ of order $1/2$ and the symbol of $R_1$ is in $S^{-1/2}_{lsc}$. Then the parametrix $P_{12}$ has the following asymptotic expansion
\begin{equation*}
\begin{split}
P_{12,\varepsilon} &= \psi \Bigl[ \frac{1}{\sqrt{2}} a_{\rho,\varepsilon}^{-1/4} \rho_\varepsilon^{1/4} - \frac{1}{2} R_1 a_{\rho,\varepsilon}^{-1/2} \rho_\varepsilon^{1/2} +\\
& + \frac{1}{\sqrt{2}} \Bigl( \frac{i}{8} a_\varepsilon^{-3/2} \sum_{j=1}^{n-1}\frac{\partial a_\varepsilon}{\partial x_j} \xi_j - \frac{i}{4} \frac{1}{\rho_\varepsilon} a_\varepsilon^{-1/2} \sum_{j=1}^{n-1} \frac{\partial \rho_\varepsilon}{\partial x_j} \xi_j \Bigr) \rho_\varepsilon^{1/2} a_\varepsilon^{-3/4} \Bigr] + \\
&\phantom{=.}+\text{order}(-5/2).
\end{split}
\end{equation*}
where $\psi$ is a cut-off function as in the proof of Theorem~\ref{thm:parametrix}.
We obtain
\begin{equation*}
P_{12,\varepsilon} \frac{\partial P_{12,\varepsilon}^{-1}}{\partial z} = \frac{1}{4} \bigg( \frac{\partial a_{\rho,\varepsilon}}{\partial z} \frac{1}{a_{\rho,\varepsilon}} - \frac{\partial \rho_\varepsilon}{\partial z} \frac{1}{\rho_\varepsilon} \bigg) + \text{order}(-1)
\end{equation*}
and after some calculation
\begin{equation*}
P_{12,\varepsilon} A_{11,\varepsilon} P_{12,\varepsilon}^{-1} = A_{11,\varepsilon} - \frac{1}{2} \frac{1}{\rho_\varepsilon} a_\varepsilon^{-1/2} \sum_{j=1}^{n-1} \frac{\partial \rho_\varepsilon}{\partial x_j} \xi_j + \text{order}(-1).
\end{equation*}
Using (\ref{A_11}) and (\ref{A_12}) from section ~\ref{sec:Factorization} we get
\begin{equation*}
-ib_{+,\varepsilon} = -ib_\varepsilon + \frac{1}{2b_\varepsilon} \sum_{j=1}^{n-1} \frac{\partial b_\varepsilon}{\partial \xi_j} \frac{\partial b_\varepsilon}{\partial x_j} + \text{order}(-1)
\end{equation*}
where $b_\varepsilon := \sqrt{a_\varepsilon}$.
Similarly, if we write $P_{22}^{-1}:=\mbox{Op}(-\sqrt{2}  a_\rho^{1/4} \rho^{-1/4}) + R_2$, $R_2 \in S_{lsc}^{-1/2}$, then one can show that
\begin{equation*}
-ib_{-,\varepsilon} = ib_\varepsilon - \frac{1}{2b_\varepsilon} \sum_{j=1}^{n-1} \frac{\partial b_\varepsilon}{\partial \xi_j} \frac{\partial b_\varepsilon}{\partial x_j} + \text{order}(-1).
\end{equation*}
So the proof is complete.
\end{proof}
\begin{rem}
We derived the decoupled system (\ref{eqn:DecoupledWaves}) by using the two different exact factorizations of the full wave equation. Once factorized the equation, we were able to write down the normalization matrix explicitly. As a result, we obtained the decomposition in two first-order hyperbolic wave equations. Here, the factorizations were shown by induction and we got a direct connection between the exact factorizations of the wave equation and the decoupled first-order system.

This method is different to the one used in \cite{Stolk:04}. There, the first-order wave equations were obtained by recursively defining a normalization matrix which diagonalizes the matrix in (\ref{eqn:Matrix}) in the sense of (\ref{eqn:DecoupleSystem}). Moreover, no exact factorization of the wave equation is directly used as input data.
\end{rem}

With this in mind, we are now in the position to state the following main theorem.
\begin{thm}\label{thm:MicrolocalDecomposition}
Let $L = \partial_z \frac{1}{\rho} \partial_z + A_\rho(x,z,D_t,D_x)$ and $U,F \in \GLtwo(\mathbb{R}^{n+1})$. Then there are first-order pseudodifferential operators $B_{\pm}$ and a globally elliptic pseudodiffential operator matrix $Q$ as in Lemma~\ref{lem:Diag} such that the equation
\begin{equation}\label{startingpoint}
LU \equiv F \hspace{10pt} \mbox{microlocally on} \hspace{3pt} I_{\theta_2}
\end{equation}
holds if and only if
\begin{equation}\label{eqn_ml_plus}
L_{0,\pm} u_{\pm}  := \big( \partial_z - iB_{\pm}(x,z,D_t,D_x) \big) u_{\pm} \equiv f_{\pm} \hspace{5pt} \mbox{microlocally on} \hspace{3pt} I_{\theta_2}
\end{equation}
where
\begin{equation*}
\begin{pmatrix} u_{+} \\ u_{-} \end{pmatrix} := Q^{-1} \begin{pmatrix} U \\ \frac{1}{\rho} \partial_z U \end{pmatrix} , \hspace{25pt} \begin{pmatrix} f_{+} \\ f_{-} \end{pmatrix} := Q^{-1} \begin{pmatrix} 0 \\ F \end{pmatrix}.
\end{equation*}
Here $B_{\pm}$ is as in (\ref{eqn:solutionsB}) on $I_{\theta_2}'$ and $Q$ as in (\ref{eqn:solutionQ}) on $I_{\theta_2}'$. Moreover, by Lemma~\ref{lem:selfadj_B} the matrix $Q$ can be chosen such that $B_{\pm}$ become self-adjoint on $I_{\theta_2}'$.
\end{thm}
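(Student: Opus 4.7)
The plan is to combine the exact first-order reformulation (\ref{eqn:Matrix}) with the microlocal diagonalization in Lemma~\ref{lem:Diag}. Setting $M:=\begin{pmatrix} 0 & \rho \\ -A_\rho & 0 \end{pmatrix}$, direct inspection of the two components shows that $LU=F$ holds as an exact identity (not merely microlocally) if and only if
$$\bigl(\partial_z-M\bigr)\begin{pmatrix} U \\ \tfrac{1}{\rho}\partial_z U \end{pmatrix}=\begin{pmatrix} 0 \\ F \end{pmatrix},$$
since the first row reduces to $0=0$ and the second is $\partial_z(\tfrac{1}{\rho}\partial_z U)+A_\rho U=F$. Since $P$ is a logarithmic slow scale elliptic parametrix of $Q$, the compositions $PQ$ and $QP$ agree with the identity modulo operators with symbol in $\widetilde{S}^{-\infty}_{lsc}$, which send $\GLtwo$ into $\GregLtwo$; together with (\ref{eqn:wavefields}) this gives $Q\bigl(\begin{smallmatrix}u_+\\u_-\end{smallmatrix}\bigr)\equiv\bigl(\begin{smallmatrix}U\\(1/\rho)\partial_z U\end{smallmatrix}\bigr)$ modulo a globally regular error.

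For the direction $(\Rightarrow)$, I would apply $P$ from the left to the vector equation, insert $QP$ next to $\bigl(\begin{smallmatrix}U\\(1/\rho)\partial_z U\end{smallmatrix}\bigr)$, and use Lemma~\ref{lem:Diag} to arrive at
$$\begin{pmatrix} f_+ \\ f_- \end{pmatrix}=P\begin{pmatrix} 0 \\ F \end{pmatrix}\equiv\Bigl[\partial_z-\begin{pmatrix} iB_+ & 0 \\ 0 & iB_- \end{pmatrix}\Bigr]\begin{pmatrix} u_+ \\ u_- \end{pmatrix}+R\begin{pmatrix} u_+ \\ u_- \end{pmatrix}$$
microlocally on $I_{\theta_2}$. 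The remainder $R$ has symbol of order $-\infty$ on $I'_{\theta_2}$ and acts only in the $(t,x)$-variables, so its generalized microsupport (lifted to $T^*\mathbb{R}^{n+1}$ as a cylinder in $\zeta$) is disjoint from $I_{\theta_2}$; the pseudolocality property $\mathrm{WF}_g(Rw)\subseteq\mathrm{WF}_g(w)\cap\mu\mathrm{supp}_g(R)$ from Section~\ref{sec:lscWaveFrontSet} then forces $R\bigl(\begin{smallmatrix}u_+\\u_-\end{smallmatrix}\bigr)$ to be microlocally regular on $I_{\theta_2}$, which is (\ref{eqn_ml_plus}). The converse $(\Leftarrow)$ is treated symmetrically: multiplying the relation in Lemma~\ref{lem:Diag} by $Q$ from the left and using $QP\equiv I$ gives $(\partial_z-M)Q\equiv Q\,\mathrm{diag}(\partial_z-iB_\pm)+QR$ modulo global smoothing; feeding this into $\bigl(\begin{smallmatrix}u_+\\u_-\end{smallmatrix}\bigr)$, invoking the decoupled hypothesis together with $Q\bigl(\begin{smallmatrix}f_+\\f_-\end{smallmatrix}\bigr)\equiv\bigl(\begin{smallmatrix}0\\F\end{smallmatrix}\bigr)$, and noting that $QR\bigl(\begin{smallmatrix}u_+\\u_-\end{smallmatrix}\bigr)$ remains microlocally regular on $I_{\theta_2}$ (since $Q$ cannot enlarge wave front sets), the second component yields $LU\equiv F$ microlocally on $I_{\theta_2}$.

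The main obstacle is the microlocal bookkeeping. Three distinct error sources must be handled simultaneously---global $\GregLtwo$-smoothing produced by the parametrix identities $PQ,QP\equiv I$; microlocal smoothing on $I'_{\theta_2}$ carried by the remainder $R$ of Lemma~\ref{lem:Diag}; and the microlocal residual on $I_{\theta_2}$ encoded in the hypothesis---and each must be shown to remain negligible after composition with the surrounding $\Psi^m_{lsc}$-operators (which themselves have symbols constructed only over $I'_{\theta_2}$). The most delicate point is the lift from $I'_{\theta_2}\subseteq T^*\mathbb{R}^n$ up to $I_{\theta_2}\subseteq T^*\mathbb{R}^{n+1}$: since the entries of $P$, $Q$, $R$ and the operators $A_\rho$, $B_\pm$ depend parametrically on $z$ with symbols independent of $\zeta$, the pseudolocality inclusion has to be applied on the cylinder $I'_{\theta_2}\times\mathbb{R}_\zeta$, whose intersection with $T^*\mathbb{R}^{n+1}\setminus 0$ contains $I_{\theta_2}$ by construction.
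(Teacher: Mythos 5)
Your overall strategy---rewriting $LU=F$ as the exact first-order matrix identity $(\partial_z-M)\bigl(\begin{smallmatrix}U\\(1/\rho)\partial_z U\end{smallmatrix}\bigr)=\bigl(\begin{smallmatrix}0\\F\end{smallmatrix}\bigr)$, applying the elliptic parametrix $P$, inserting $QP\equiv I$, invoking Lemma~\ref{lem:Diag} to replace the off-diagonal coupling by $\mathrm{diag}(\partial_z-iB_\pm)$ plus a remainder $R$, and then controlling the errors microlocally---is the same route the paper takes, and the two directions of the equivalence are organized in the same way. However, there is a genuine gap in the step you flag as ``the most delicate point,'' and the device you propose to close it does not actually do so.

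You argue that because the entries of $P,Q,R$ and $B_\pm$ are $\zeta$-independent, the pseudolocality inclusion can be applied on the cylinder $I'_{\theta_2}\times\mathbb{R}_\zeta\subseteq T^*\mathbb{R}^{n+1}\setminus 0$. This does not work. A symbol that lies in $S^{-\infty}_{lsc}$ in the $(\tau,\xi)$ variables over $I'_{\theta_2}$ but is independent of $\zeta$ is \emph{not} of order $-\infty$ as a symbol on $\mathbb{R}^{n+1}$ over the cylinder: on $I'_{\theta_2}\times\mathbb{R}_\zeta$, $|\zeta|$ can be arbitrarily large relative to $|(\tau,\xi)|$, so rapid decay in $|(\tau,\xi)|$ does not give rapid decay in $|(\tau,\xi,\zeta)|$. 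Worse, the composites such as $\partial_z-\mathrm{diag}(iB_\pm)$ and $L_{1}\frac{1}{\rho}L_{2}$ are not pseudodifferential operators on $\mathbb{R}^{n+1}$ at all (their full symbols have no useful decay in $\zeta$), so $\mu\mathrm{supp}_g$ and the inclusion from Theorem~\ref{thm:microlocal} are not even directly applicable to them on $T^*\mathbb{R}^{n+1}$. The estimates you need hold only once one works on a conic set where $|\zeta|\lesssim|\tau|$, which is precisely the constraint built into $I_{\theta_2}$; your cylinder discards that constraint.

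The paper closes this gap with a concrete device that your proposal is missing: a microlocal cut-off $\psi=\psi(x,z,D_t,D_x,D_z)$, homogeneous of degree $0$ in $(\tau,\xi,\zeta)$, equal to $1$ for $|\zeta|<2C|\tau|$ (with $|\zeta|>1$) and vanishing for $|\zeta|>3C|\tau|$. Because $\psi$ vanishes where $|\zeta|$ dominates, H\"ormander's Theorem~18.1.35 guarantees that for any $W\in\Psi^m_{lsc}$ in the $(t,x)$ variables (with parameter $z$) the composite $\psi W$ \emph{is} a genuine pseudodifferential operator on $\mathbb{R}^{n+1}$ of the same order, with $\psi W\equiv W$ modulo $\Psi^{-\infty}_{lsc}$ on $I_{\theta_2}$. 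It is this insertion of $\psi$ that turns (\ref{three1}), (\ref{three2}), and the action of $R$ into honest microlocal identities on $I_{\theta_2}$, and it is what makes ``$R\bigl(\begin{smallmatrix}u_+\\u_-\end{smallmatrix}\bigr)$ microlocally regular on $I_{\theta_2}$'' a provable statement rather than an appeal to a pseudolocality theorem whose hypotheses are not satisfied. Without introducing $\psi$ (or an equivalent localizer in the $\zeta$-direction) your argument does not establish the claimed equivalence.
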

\begin{proof}
We define a microlocal cut-off $\psi=\psi(x,z,D_t,D_x,D_z)$ around $\xi=\tau=0$. In particular let $\psi(x,z,\tau,\xi,\zeta)$ be a homogeneous symbol of degree $0$ in $(\tau,\xi,\zeta)$ outside a compact set that satisfies
\begin{equation*}
\psi(x,z,\tau,\xi,\zeta) = \begin{cases} 0 & |\zeta| > 3 C |\tau| \\ 1 & |\zeta| < 2 C |\tau|, \ |\zeta| > 1. \end{cases}
\end{equation*}
We note that $\psi$ satisfies the requirements of [16, Theorem 18.1.35] since it vanishes if $|\zeta| > 3 C |\tau|$.

So given an operator $W \in \Psi^{m}_{lsc}$ on $I_{\theta_2}'$ the operator given by $\psi W$ is in $\Psi^m_{lsc}$ on $I_{\theta_2}$ by \cite[Theorem 18.1.35]{Hoermander:3}. Also, the symbol of $\psi W$ is equal to the symbol of $W$ modulo $\Psi^{-\infty}_{lsc}$ on $I_{\theta_2}$. 

As already announced in section~\ref{sec:Factorization}, the operators $L_{j}$, $j=1,2$ in (\ref{StateFact}) are not pseudodifferential operators on $\mathbb{R}^{n+1}$. But, when applied to the cut-off function $\psi=\psi(x,z,D_t,D_x,D_z)$, we see that $\psi L_{j}$ equals $L_{j} \psi$ and  $L_{j} \psi$ equals $L_{j}$ modulo $\Psi^{-\infty}_{lsc}$ on $I_{\theta_2}$, $j=1,2$.

We may regard (\ref{three1}) and (\ref{three2}) as microlocal equations on $I_{\theta_2}$. 

Then $LU \equiv F$ microlocally on $I_{\theta_2}$ if and only if 
\begin{equation*}
\psi P \begin{pmatrix} \partial_z & -\rho \\ A_\rho & \partial_z \end{pmatrix} QP \begin{pmatrix} U \\ \frac{1}{\rho} \partial_z  U \end{pmatrix} \equiv P \begin{pmatrix} \partial_z & -\rho \\ A_\rho & \partial_z \end{pmatrix} \begin{pmatrix} U \\ \frac{1}{\rho} \partial_z  U \end{pmatrix} \qquad \text{microlocally on} \ I_{\theta_2},
\end{equation*} 
$U \in \GLtwo(\mathbb{R}^{n+1})$. On the other hand we achieve
\begin{multline*}
\psi \bigg[\hspace{2pt} \begin{pmatrix} \partial_z \hspace{-2pt} - \hspace{-1pt} i B_{+} & 0 \\ 0 & \partial_z \hspace{-2pt} - \hspace{-1pt} i B_{-} \end{pmatrix} + R \bigg] P \begin{pmatrix} U \\ \frac{1}{\rho} \partial_z  U \end{pmatrix} \equiv \\ \equiv  \begin{pmatrix} \partial_z \hspace{-2pt} - \hspace{-1pt} i B_{+} & 0 \\ 0 & \partial_z \hspace{-2pt} - \hspace{-1pt} i B_{-} \end{pmatrix} P \begin{pmatrix} U \\ \frac{1}{\rho} \partial_z  U \end{pmatrix} \qquad \text{microlocally on} \ I_{\theta_2}
\end{multline*}
for $U \in \GLtwo(\mathbb{R}^{n+1})$. Thus
\begin{equation*}
P \begin{pmatrix} \partial_z & -\rho \\ A_\rho & \partial_z \end{pmatrix} \begin{pmatrix} U \\ \frac{1}{\rho} \partial_z  U \end{pmatrix} \equiv \begin{pmatrix} \partial_z \hspace{-2pt} - \hspace{-1pt} i B_{+} & 0 \\ 0 & \partial_z \hspace{-2pt} - \hspace{-1pt} i B_{-} \end{pmatrix} P \begin{pmatrix} U \\ \frac{1}{\rho} \partial_z  U \end{pmatrix}
\end{equation*}
holds microlocally on $I_{\theta_2}$. Therefore equation (\ref{startingpoint}) is equivalent to
\begin{equation*}
P \begin{pmatrix} 0 \\ F \end{pmatrix} \equiv \begin{pmatrix} \partial_z \hspace{-2pt} - \hspace{-1pt} i B_{+} & 0 \\ 0 & \partial_z \hspace{-2pt} - \hspace{-1pt} i B_{-} \end{pmatrix} P \begin{pmatrix} U \\ \frac{1}{\rho} \partial_z  U \end{pmatrix} \qquad \mbox{microlocally on} \ I_{\theta_2}.
\end{equation*}
\end{proof}
\section{Approximated first-order wave equations}
One particular technique for wave modeling is the one-way wave equation which is often used since it requires only one boundary condition, as it is a first-order evolution equation. In the case that the background data is smooth, it also provides a way to propagate in a predetermined direction (\cite{OptRoot2011}). Our background data involves generalized functions and due to the lack of an available theory connecting propagation of singularities with a generalized Hamiltonian flow we need a substitute.

With view to Theorem~\ref{thm:MicrolocalDecomposition}, we now introduce approximated first-order equations to (\ref{eqn_ml_plus}) of the form
\begin{equation}\label{approx_oneway}
L_\pm u_\pm := (\partial_z - iB_\pm(x,z,D_t,D_x) + C(x,z,D_t,D_x)) u_\pm = 0 \qquad z>z_0
\end{equation}
where the operator $C$ has a symbol in $S_{lsc}^1$ and serves as a correction term which should suppress certain singularities of the solutions. In detail, we choose the principal symbol $c=[(c_\varepsilon)_\varepsilon]$ of $C$ such that
\begin{alignat*}{2}
c_\varepsilon(x,z,\tau,\xi) &= 0 && \qquad \text{ on } I'_{\theta_1} \\
c_\varepsilon(x,z,\tau,\xi) &\ge \eta(\tau^2 + |\xi|^2)^{\frac{1}{2}} && \qquad \text{ outside } I'_{\theta_2}.
\end{alignat*}
for some positive constant $\eta \in \mathbb{R}$, all $\varepsilon \in (0,1]$ and where $0 < \theta_1 < \theta_2 < \pi/2$. The precise definition of the damping operator will be given in the subsection below. In this case, the operator $C$ is chosen logarithmic slow scale elliptic outside $I'_{\theta_2}$ and therefore $C$ is also slow scale elliptic there. Without the operator $C$ in (\ref{approx_oneway}) the operator $L_\pm$ reduces to $L_{0,\pm}$ and is a standard first-order hyperbolic operator on $I'_{\theta_2}$. In the region where $c$ is logarithmic slow scale elliptic the principal symbol of $L_\pm$ is logarithmic slow scale elliptic and therefore $c$ leads to a suppression of singularities of the solution to $L_\pm u_\pm = 0$, $z > z_0$. In detail, we have for the singularities of the solution to $L_\pm u_\pm=0$ , $z > z_0$ that
\begin{equation*}
\begin{split}
\mbox{WF}_g(u_\pm) & \subseteq WF_g(L_\pm u_\pm) \cup \text{Ell}_{lsc}^c(L_\pm)  \subseteq\\
& \subseteq \text{Ell}_{lsc}^c(L_{0,\pm}) \cap \text{Ell}_{lsc}^c(C) \subseteq\\
& \subseteq \{ (t,x,z,\tau,\xi;\zeta) \in T^*\mathbb{R}^{n+1} \setminus 0 \ | \ (x,z,\tau,\xi) \in I'_{\theta_2}, |\zeta| < C|\tau| \}  \qquad z > z_0
\end{split}
\end{equation*}
since $L_\pm u_\pm = 0$ for $z > z_0$. So, $C$ suppresses singularities outside $I'_{\theta_2}$.
Moreover, having solutions $u_\pm$ to $L_\pm u_\pm = 0$, $z > z_0$ with initial conditions $u_\pm(z_0) = u_{\pm,0}$ then they are also approximations to $L_{0,\pm} u_\pm \equiv 0$ microlocally on $I_{\theta_1}$ for $z > z_0$ with the same initial conditions. This can be seen by 
\begin{equation*}
\mbox{WF}_g(L_\pm u_\pm - L_{0,\pm} u_\pm) = \mbox{WF}_g(C u_\pm) \subseteq \mbox{WF}_g(u_\pm) \cap \mbox{$\mu$supp}_g(C) \qquad z > z_0
\end{equation*}
where $\mbox{$\mu$supp}_g(C) \subseteq I'^c_{\theta_1}$ and therefore
\begin{equation*}
\mbox{WF}_g(L_\pm u_\pm - L_{0,\pm} u_\pm) \cap I_{\theta_1} = \emptyset \qquad z > z_0.
\end{equation*}
We conclude that $L_\pm u_\pm \equiv L_{0,\pm} u_\pm$ microlocally on $I_{\theta_1}$, $z > z_0$. Then
\begin{equation*}
\psi \begin{pmatrix} \partial_z \hspace{-2pt} - \hspace{-1pt} i B_{+} & 0 \\ 0 & \partial_z \hspace{-2pt} - \hspace{-1pt} i B_{-} \end{pmatrix} \begin{pmatrix} u_+ \\ u_- \end{pmatrix} \equiv \vec{0} \qquad \text{microlocally on} \ I_{\theta_1},\ z>z_0
\end{equation*}
and by the proof of Theorem~\ref{thm:MicrolocalDecomposition} this is equivalent to 
\begin{equation}\label{eqn:ml_diag_reverse}
\psi \bigg[\hspace{2pt} P \begin{pmatrix} \partial_z & -\rho \\ A_\rho & \partial_z \end{pmatrix} Q - R \bigg] \begin{pmatrix} u_+ \\ u_- \end{pmatrix} \equiv \vec{0} \qquad \text{microlocally on} \ I_{\theta_1},\ z>z_0.
\end{equation} 
Since the entries of the pseudodifferential matrix $R$ are operators with symbol in $S^{-\infty}_{lsc}$ on $I_{\theta_2}'$, we have
\begin{equation*}
R  \begin{pmatrix} u_+ \\ u_- \end{pmatrix} \equiv \vec{0} \qquad \text{microlocally on} \ I_{\theta_1},\  z>z_0.
\end{equation*}
We now define $U$ being microlocally equivalent to $Q_+ u_+ + Q_- u_-$ on  $I_{\theta_1}, z>z_0$ where we have set $Q_+ := Q_{11}$ and $Q_- := Q_{12}$, i.e.
\begin{equation*}
U:=Q_+ u_+ + Q_- u_- + \widetilde{f}
\end{equation*}
for some $\widetilde{f} \equiv 0$ microlocally on $\ I_{\theta_1},\ z>z_0$. Then
\begin{equation*}
\begin{pmatrix} U \\ \frac{1}{\rho} \partial_z  U \end{pmatrix} \equiv Q \begin{pmatrix} u_+ \\ u_- \end{pmatrix} \qquad \text{microlocally on} \ I_{\theta_1}, \ z>z_0
\end{equation*}
since $u_+ \equiv Q_+ u_+ \equiv Q_{21} u_+$ and $u_- \equiv Q_- u_- \equiv Q_{22} u_-$ microlocally on $\ I_{\theta_1},\ z>z_0$. Hence (\ref{eqn:ml_diag_reverse}) holds if and only if 
\begin{equation*}
\psi P \begin{pmatrix} \partial_z & -\rho \\ A_\rho & \partial_z \end{pmatrix}  \begin{pmatrix} U \\ \frac{1}{\rho} \partial_z  U \end{pmatrix} \equiv \vec{0} \qquad \text{microlocally on} \ I_{\theta_1},\ z>z_0
\end{equation*} 
which means that $LU \equiv 0$ microlocally on $\ I_{\theta_1},\ z>z_0$.

In summary, we get the following. If $u_\pm$ solves the problem
\begin{alignat}{2}
L_\pm u_\pm &= 0 && \qquad \text{ on } (z_0,Z) \times \mathbb{R}^{n}\label{eqn_CP1_+} \\
u_\pm(z_0) &= u_{\pm,0} && \qquad \text{ on } \mathbb{R}^n \label{eqn_CP2_+}
\end{alignat}
then $U:=Q_+ u_+ + Q_- u_- + \widetilde{f}$ for some $\widetilde{f} \equiv 0$ microlocally on $\ I_{\theta_1},\ z \in (z_0, Z)$ solves 
\begin{equation*}
LU \equiv 0 \qquad \text{microlocally on} \ I_{\theta_1},\ z \in (z_0, Z).
\end{equation*}
\subsection{Requirements on the damping operator}
We now give sufficient conditions on the damping operator $C$ for the operator $L_\pm$ such that the Cauchy problems in (\ref{eqn_CP1_+})-(\ref{eqn_CP2_+}) are well-posed. The well-posedness will be the content of the next section. 

We define the principal symbol of $c$ of $C$ by
\begin{equation*}
c_\varepsilon(x,z,\tau,\xi) := \omega(x,z,\tau,\xi) (1-\chi_\varepsilon(x,z,\tau,\xi))
\end{equation*}
with $\chi_\varepsilon$ as in subsection~\ref{subsec:TechnicalPreliminaries} and $\omega$ is a smooth symbol homogeneous of degree 1 in $(\tau,\xi)$ and bounded below by some constant times $(\tau^2+|\xi|^2)^{1/2}$. Then $c_\varepsilon \in S^{1}_{lsc}(\mathbb{R}^n \times \mathbb{R}^n)$. Now, since $c_\varepsilon(x,z,\tau,\xi)$ is real-valued and homogeneous of degree 1, there exists a self-adjoint symbol $C_\varepsilon \in S^1_{lsc}(\mathbb{R}^n \times \mathbb{R}^n)$ with principal symbol equal to $c_\varepsilon$. This can easily seen by the following.

Defining 
\begin{equation*}
C_\varepsilon := \frac{c_\varepsilon + c^{ *}_\varepsilon}{2},
\end{equation*}
then $C_\varepsilon$ is the self-adjoint symbol as desired (cf. next section).  

In the following, we will work with such a self-adjoint damping operator $C_\varepsilon = C_\varepsilon^{(0)}+C^{(1)}_\varepsilon$ of order 1 with parameter $z$ and $C^{(k)}_\varepsilon \in S^{1-k}_{cl,lsc}$ ($k=0,1$). We set $C^{(0)}_\varepsilon := c_\varepsilon$ and get 
\begin{equation*}
\begin{split}
|\partial_{(x,z)}^{\beta} \partial_{(\tau,\xi)}^{\alpha} C_\varepsilon^{(k)}(x,z,\tau,\xi)| &\le C \omega_\varepsilon (1+|(\tau,\xi)|)^{1-k-|\alpha|} \le\\
&\le C \omega_\varepsilon (1+|(\tau,\xi)|)^{-|\alpha|-k+\frac{|\alpha|+|\beta|}{L}}(1+c_\varepsilon(x,z,\tau,\xi))^{1-\frac{|\alpha|+|\beta|}{L}} 
\end{split}
\end{equation*}
on the support of $C^{(0)}_\varepsilon = c_\varepsilon$ since we have $1+c_\varepsilon(x,z,\tau,\xi) \ge 1 + \eta(\tau^2+|\xi|^2)^{1/2}$ there. We thus get,
\begin{multline*}
\forall \alpha, \beta \in \mathbb{N}^n \ \forall L > |\alpha|+|\beta|+2k \ \exists \omega_\varepsilon \in \Pi_{lsc} \ \exists C>0 \ \exists \eta \in (0,1]:\\
|\partial_{(x,z)}^{\beta} \partial_{(\tau,\xi)}^{\alpha} C_\varepsilon^{(k)}(x,z,\tau,\xi)| 
\le C \omega_\varepsilon (1+|(\tau,\xi)|)^{-|\alpha|-k+\frac{|\alpha|+|\beta|}{L}}(1+c_\varepsilon(x,z,\tau,\xi))^{1-\frac{|\alpha|+|\beta|}{L}} 
\end{multline*}
for all $\varepsilon \in (0,\eta]$, $k=0,1$.
\section{Well-posedness of the approximated first-order equations}
To avoid an overkill through parameters, we will reduce the problem (\ref{eqn_CP1_+})-(\ref{eqn_CP2_+}) and show well-posedness for the following generalized Cauchy problem
\begin{alignat}{2}
P u &:= (\partial_z -iA(z,x,D_x)+B(z,x,D_x)) u= f && \qquad \text{ on } (0,Z) \times \mathbb{R}^{n}\label{eqn_CP1_general} \\
u(0,.) &\phantom{:}= u_0 && \qquad \text{ on } \mathbb{R}^n \label{eqn_CP2_general}.
\end{alignat}
where $u=u(z,x) \in \GLtwo(\mathbb{R}^{n+1})$ and $f=f(z,x) \in \GLtwo(\mathbb{R}^{n+1})$
under the following assumptions:
\begin{itemize}[leftmargin=.35in]
\setlength\itemsep{1em}
\item[(i)]
$A$ is a generalized pseudodifferential operator with symbol in $\widetilde{S}^{1}_{lsc}(\mathbb{R}^{n+1} \times \mathbb{R}^n)$.

\item[(ii)]
$B$ is a generalized pseudodifferential operator of order $\gamma>0$ with non-negative real homogeneous principal symbol $b=B^{(0)}=B^{(0)}(z,x,\xi)$, i.e. $b$ is homogeneous for $|\xi| \ge 1$ and in $\widetilde{S}^{-\infty}_{lsc}$ for $|\xi| \le 1$.

\item[(iii)]
Moreover, there are representatives $(B^{(0)}_\varepsilon)_\varepsilon$ and $(B^{(1)}_\varepsilon)_\varepsilon$ of $B^{(0)}$ and $B^{(1)} := B - B^{(0)}$ respectively such that the derivatives of $(B^{(0)}_\varepsilon)_\varepsilon$ and $(B^{(1)}_\varepsilon)_\varepsilon$ can be estimated as follows: $\forall \alpha \in \mathbb{N}^{n}$, $\forall \beta \in \mathbb{N}^{n+1}$, $\exists \omega_{0,\varepsilon},  \omega_{1,\varepsilon} \in \Pi_{lsc}$ such that
\begin{equation*}
|\partial^{\beta}_{(x,z)} \partial^{\alpha}_{\xi} B^{(0)}_\varepsilon(x,z,\xi)| =
\mathcal{O} (\omega_{0,\varepsilon}) (1+|\xi|)^{-|\alpha|+\frac{|\alpha|+|\beta|}{L} \gamma} (1+B^{(0)}_\varepsilon(x,z,\xi))^{1-\frac{|\alpha|+|\beta|}{L}}
\end{equation*}
for $|\alpha|+|\beta| < L$ as $\varepsilon \to 0$ and
\begin{equation*}
|\partial^{\beta}_{(x,z)} \partial^{\alpha}_{\xi} B^{(1)}_\varepsilon(x,z,\xi)| =
\mathcal{O} (\omega_{1,\varepsilon}) (1+|\xi|)^{-|\alpha|-1+\frac{|\alpha|+|\beta|+2}{L} \gamma} (1+B^{(0)}_\varepsilon(x,z,\xi))^{1-\frac{|\alpha|+|\beta|+2}{L}}
\end{equation*}
for $2+|\alpha|+|\beta| < L$ as $\varepsilon \to 0$.
\end{itemize}
\begin{rem}
In this section we construct a square root for the operator $1+B$ modulo a smoothing operator with symbol in $\widetilde{S}^{-\infty}_{lsc}(\mathbb{R}^{n+1} \times \mathbb{R}^n)$. Using this square root we show that the Cauchy problem (\ref{eqn_CP1_general})-(\ref{eqn_CP2_general}) has a unique solution that satisfies an energy estimate.
\end{rem}
\begin{lem}\label{lem:sqr}
Assume that $B(z,x,D_x)$ is a self-adjoint generalized pseudodifferential operator with symbol in $\widetilde{S}^{\gamma}_{lsc}(\mathbb{R}^{n+1} \times \mathbb{R}^n)$, $\gamma  > 0$, that satisfies (ii), (iii) and $2\gamma<L$.
Then, there exists $Q(z,x,D_x) \in \Psi^{\gamma/2}_{1-\frac{\gamma}{L},\frac{\gamma}{L},lsc}(\mathbb{R}^n)$ with $\partial^j_z Q(z,x,D_x) \in \Psi^{\gamma/2-j\gamma/L}_{1-\frac{\gamma}{L},\frac{\gamma}{L},lsc}(\mathbb{R}^n)$ such that $Q$ is self-adjoint and $Q^2 = 1+B+R$ where $R$ is a generalized pseudodifferential operator with symbol in $\widetilde{S}^{-\infty}_{lsc}(\mathbb{R}^{n+1} \times \mathbb{R}^n)$.
\end{lem}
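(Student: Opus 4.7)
The plan is to construct $Q$ via its symbol through an asymptotic expansion $q \sim \sum_{j\ge 0} q_j$ with $q_j \in \widetilde{S}^{\gamma/2 - j\gamma/L}_{1-\gamma/L,\gamma/L,lsc}$, arranged so that the composition $q\#q$ matches $1+B$ to all orders; Theorem~\ref{thm:AE} then gives an honest symbol $q$ modulo $\widetilde{S}^{-\infty}_{lsc}$, and a final symmetrization $Q \mapsto (Q+Q^*)/2$ produces the self-adjoint operator (the correction from taking the adjoint lies in a strictly lower class by the adjoint theorem and can be absorbed).

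The initial term is the natural candidate $q_{0,\varepsilon} := (1+B^{(0)}_\varepsilon)^{1/2}$. The verification that $(q_{0,\varepsilon})_\varepsilon$ belongs to $S^{\gamma/2}_{1-\gamma/L,\gamma/L,lsc}$ is the analytic core of the argument: one applies the Faà di Bruno formula to $\partial_\xi^\alpha \partial_{(x,z)}^\beta q_{0,\varepsilon}$ and uses the structural estimate in assumption (iii), i.e.
\begin{equation*}
|\partial_{(x,z)}^{\beta} \partial_{\xi}^{\alpha} B^{(0)}_\varepsilon| \le C \omega_{0,\varepsilon} \, (1+|\xi|)^{-|\alpha|+\frac{|\alpha|+|\beta|}{L}\gamma} (1+B^{(0)}_\varepsilon)^{1-\frac{|\alpha|+|\beta|}{L}},
\end{equation*}
together with the bound $|\partial_t^k (1+t)^{1/2}| \le C_k (1+t)^{1/2-k}$, to show that each derivative of $q_{0,\varepsilon}$ is estimated by $\omega_{0,\varepsilon}^{k_{\alpha,\beta}} (1+|\xi|)^{\gamma/2 -(1-\gamma/L)|\alpha|+(\gamma/L)|\beta|}$ as required. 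The hypothesis $2\gamma < L$ is exactly what guarantees $1-\gamma/L > \gamma/L$, so that the symbol class is of admissible type $(\rho,\delta)$ with $\delta<\rho$ (cf. Definition~\ref{defn:lscSymbol}), and the pseudodifferential calculus of Theorem~\ref{thm:composition} applies.

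Next I would compute, using the composition formula,
\begin{equation*}
q\#q \sim \sum_{|\alpha|\ge 0} \frac{1}{\alpha!} D^\alpha_\xi q \cdot \partial^\alpha_x q,
\end{equation*}
and recursively define $q_j$ for $j\ge 1$: having chosen $q_0,\dots,q_{j-1}$, set $q_j := -(2q_0)^{-1} E_j$, where $E_j$ collects all contributions from the $q_i \# q_k$ ($i+k+|\alpha|=j$, $\max(i,k)<j$) of homogeneous order $\gamma - j\gamma/L$. The order bookkeeping works because each $\xi$-derivative drops the order by $1-\gamma/L$ while each $x$-derivative raises it by $\gamma/L$, so the $(i,k,\alpha)$-term in $q_i \# q_k$ has order at most $(\gamma/2 - i\gamma/L)+(\gamma/2 - k\gamma/L) -|\alpha|(1-\gamma/L)+|\alpha|(\gamma/L) = \gamma - (i+k+|\alpha|(1-2\gamma/L))\gamma/L$; the lsc-ellipticity of $1+B^{(0)}_\varepsilon$ (uniform lower bound $\ge 1$) makes division by $2q_0$ preserve the symbol class, modulo a factor of a logarithmic slow scale net. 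Theorem~\ref{thm:AE} then produces $q \sim \sum q_j$ with $q\#q - (1+B) \in \widetilde{S}^{-\infty}_{lsc}$. The $z$-derivative statement follows by differentiating the recursion: each $\partial_z$ applied to $q_0 = (1+B^{(0)})^{1/2}$ brings down a factor controlled by (iii) with $|\beta|=1$, hence lowers the order by $\gamma/L$, and the same order drop propagates inductively to each $q_j$.

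The main obstacle is the first step: rigorously establishing that $q_{0,\varepsilon}=(1+B^{(0)}_\varepsilon)^{1/2}$ together with all its derivatives (and their $z$-derivatives) satisfies the exotic $S^{\gamma/2}_{1-\gamma/L,\gamma/L,lsc}$ bounds with only logarithmic slow scale loss in $\varepsilon$. All subsequent steps, including the asymptotic summation, the verification that $q\#q \equiv 1+B$ modulo $\widetilde{S}^{-\infty}_{lsc}$, and the self-adjoint symmetrization, are formally parallel to the classical construction (as in Taylor's treatment for strictly hyperbolic symmetrizers); the only non-routine aspect in the generalized setting is keeping careful track of the various logarithmic slow scale nets that appear at each stage and checking that they remain in $\Pi_{lsc}$ after finite products, which is automatic since $\Pi_{lsc}$ is closed under the relevant operations.
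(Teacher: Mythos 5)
Your overall architecture (Faà di Bruno for $q_0=(1+B^{(0)})^{1/2}$, asymptotic expansion via Theorem~\ref{thm:AE}, composition calculus) is the right skeleton and matches the pre-lemma Remark that the paper sets up precisely for this purpose, but two of your steps do not survive scrutiny.

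\textbf{The order bookkeeping for the recursion does not close.} You assert $q_j\in\widetilde S^{\gamma/2-j\gamma/L}_{1-\gamma/L,\gamma/L,lsc}$, set $q_j:=-(2q_0)^{-1}E_j$ where $E_j$ has order $\gamma-j\gamma/L$, and then say division by $2q_0$ merely \emph{preserves} the class. Those three statements are mutually inconsistent: if $(2q_0)^{-1}$ has order $0$ (which it does -- by the pre-lemma Remark with $f(t)=(1+t)^{-1/2}$, $\delta=-1/2$, the order is $\gamma\max(\delta,0)=0$, since $B^{(0)}\ge 0$ may vanish and $q_0$ is only bounded below by $1$, not $\xi$-elliptic of order $\gamma/2$), then $q_j$ has order $\gamma-j\gamma/L$, not $\gamma/2-j\gamma/L$. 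For $L>2$ this is already $>\gamma/2$ at $j=1$, so the orders do not decrease and Theorem~\ref{thm:AE} cannot be invoked. (Your displayed order for the $(i,k,\alpha)$-term equals $\gamma-(i+k)\gamma/L-|\alpha|(1-2\gamma/L)$, not $\gamma-j\gamma/L$, so the terms in $E_j$ do not even sit on a single level.) What actually makes the recursion close -- and what hypothesis (iii) and the Remark are engineered to deliver -- is the refined two-parameter estimate in powers of both $(1+|\xi|)$ and $(1+B^{(0)}_\varepsilon)$: dividing by $q_0$ costs nothing in $\langle\xi\rangle$ but lowers the $(1+B^{(0)})$-exponent by $1/2$, and it is that exponent which is driven to $-\infty$ along the expansion before being traded for $\langle\xi\rangle$-decay. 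You invoke (iii) correctly for $q_0$ but drop it for $q_j$, $j\ge 1$, which is exactly where it is indispensable.

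\textbf{The final symmetrization is not justified as stated.} Replacing $Q$ by $(Q+Q^*)/2$ does make the operator self-adjoint, but it generically destroys $Q^2=1+B+\mathcal{O}(S^{-\infty}_{lsc})$: if $D:=(Q-Q^*)/2$ is merely of strictly lower order (say $\gamma/2-(1-2\gamma/L)$), then $((Q+Q^*)/2)^2-(1+B)=R-QD-DQ-D^2\notin S^{-\infty}_{lsc}$. What you would need is $Q-Q^*\in S^{-\infty}_{lsc}$, which does hold here because $(Q^*)^2=(Q^2)^*=1+B+R^*$ shares the same real lsc-elliptic principal symbol $(1+B^{(0)})^{1/2}\ge 1$ with $Q$, so the standard uniqueness-mod-smoothing argument applies; but this has to be said, not waved at as ``absorbed.'' The paper's own handling of the analogous square root in the proof of Lemma~\ref{lem:selfadj_B} side-steps the issue entirely by making each $X^{(j)}$ self-adjoint and checking that the remainder $R^{(N)}$ is self-adjoint at every stage, which is the cleaner route and is what the proof attributed to \cite[Lemma 3]{Stolk:05} effectively does.
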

\begin{rem}
In the proof we will make use of the following fact.

Let $f:[0,\infty] \to \mathbb{R}$ be a classical symbol of order $\delta$, i.e. $\forall k \in \mathbb{N}$, $\exists C_k>0$ such that $|f^{(k)}(y)| \le C_k (1+y)^{\delta-k}$, then $f \circ B^{(0)}$ is a symbol in $\widetilde{S}^{\gamma \max(\delta,0)}_{1-\frac{\gamma}{L},\frac{\gamma}{L},lsc}$. 

First note that $|f(B^{(0)})| \le C (1+B^{(0)})^{\delta} \le C \omega_\varepsilon (1+|\xi|)^{\gamma \max(\delta,0)}$.

So let $(\alpha,\beta) \neq \vec{0}$. Using Faà di Bruno's formula, we get that the expression $\partial^{\beta}_{(x,z)} \partial^{\alpha}_\xi f(B^{(0)})$ is a sum of terms of the form
\begin{equation}\label{eqn_sq}
cf^{(k)}(B^{(0)}) \prod_{j=1}^k \partial^{\beta_j}_{(x,z)} \partial^{\alpha_j}_\xi B^{(0)}
\end{equation}
for some constant $c$ and such that $\sum_j \alpha_j = \alpha$, $\sum_j \beta_j = \beta$ and $(\alpha_j,\beta_j) \neq 0$. Since $B^{(0)}$ satisfies (iii) one can estimate the expression in (\ref{eqn_sq}) by
\begin{equation*}
C \omega_\varepsilon (1+|\xi|)^{-|\alpha|+\frac{|\alpha|+|\beta|}{L}\gamma} (1+B^{(0)})^{\delta -\frac{|\alpha|+|\beta|}{L}} \le C \omega_\varepsilon (1+|\xi|)^{\gamma \max(\delta,0)-|\alpha|+\frac{|\alpha|+|\beta|}{L}\gamma}
\end{equation*}
where the constant $C$ and the logarithmic slow scale net $\omega_\varepsilon$ depend on $\alpha$ and $\beta$.
\end{rem}
\begin{proof}
The proof of Lemma~\ref{lem:sqr} follows the same lines as in \cite[Lemma 3]{Stolk:05}.
\end{proof}
In order to get the energy estimates, we will adapt a version of the sharp G\r{a}rding inequality theorem of Hörmander as stated in \cite[Theorem 18.1.14]{Hoermander:3}.
\begin{lem}[Sharp G\r{a}rding inequality]\label{SharpGardingInequality}
Suppose that $a \in \widetilde{S}^1_{lsc}(\mathbb{R}^n \times \mathbb{R}^n)$ has a representative $(a_\varepsilon)_\varepsilon$ such that $\exists \eta \in (0,1]$
\begin{equation*}
\operatorname{\rm{Re}} a_\varepsilon(x,\xi) \ge 0 \qquad \forall \varepsilon \in (0,\eta].
\end{equation*}
Then there exists a logarithmic slow scale net $(\omega_\varepsilon)_\varepsilon \in \Pi_{lsc}$ and a constant $C>0$ such that
\begin{equation}\label{eqn:sharpgarding}
\operatorname{\rm{Re}}( a_\varepsilon(x,D_x)u, u ) \ge -C\omega_\varepsilon \lVert u \rVert^2_{L^2}
\end{equation}
for all $u \in \mathscr{S}(\mathbb{R}^n)$ and $\varepsilon \in (0,\eta]$.
\end{lem}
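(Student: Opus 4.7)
The plan is to adapt Hörmander's classical proof of the sharp Gårding inequality (Theorem~18.1.14 in \cite{Hoermander:3}) to the generalized setting, with careful bookkeeping of the $\varepsilon$-dependence to ensure that the resulting error constant is logarithmic slow scale rather than merely moderate. The core device is the Friedrichs symmetrization applied at each level $\varepsilon$.

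First I would construct, for each fixed $\varepsilon\in(0,\eta]$, the Friedrichs part of $a_\varepsilon(x,D_x)$ using a Gaussian wave packet transform: set
\begin{equation*}
a^F_\varepsilon(x,D_x)u := \int_{\mathbb{R}^{2n}} a_\varepsilon(y,\eta)\,(u,\Phi_{y,\eta})\,\Phi_{y,\eta}\,dy\,d\eta,
\end{equation*}
with $\Phi_{y,\eta}(x):=c_n\langle\eta\rangle^{n/4}e^{ix\cdot\eta}e^{-\langle\eta\rangle|x-y|^2/2}$. Since $\operatorname{Re} a_\varepsilon(y,\eta)\geq 0$ pointwise for all $\varepsilon\in(0,\eta]$, this rewrites manifestly as a non-negative quadratic form:
\begin{equation*}
\operatorname{Re}(a^F_\varepsilon(x,D_x)u,u) = \int_{\mathbb{R}^{2n}}\operatorname{Re} a_\varepsilon(y,\eta)\,|(u,\Phi_{y,\eta})|^2\,dy\,d\eta \geq 0 \qquad \forall u\in\mathscr{S}(\mathbb{R}^n).
\end{equation*}

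Second I would analyze the remainder $r_\varepsilon:=a^F_\varepsilon-a_\varepsilon$ on the symbol side. A direct Taylor expansion as in Hörmander's proof shows that $(r_\varepsilon)_\varepsilon\in S^0$ and, crucially, each seminorm $q^{(0)}_{\alpha,\beta}(r_\varepsilon)$ is controlled by finitely many seminorms $q^{(1)}_{\alpha',\beta'}(a_\varepsilon)$ with $|\alpha'|+|\beta'|\leq |\alpha|+|\beta|+c_n$ for some dimensional constant $c_n$. Since $(a_\varepsilon)_\varepsilon\in S^1_{lsc}$, each of these finitely many seminorms is $\mathcal{O}(\omega_{\alpha',\beta',\varepsilon})$ for some $(\omega_{\alpha',\beta',\varepsilon})_\varepsilon\in\Pi_{lsc}$, and the class $\Pi_{lsc}$ is stable under finite sums and maxima, so a single logarithmic slow scale net $(\omega_\varepsilon)_\varepsilon$ dominates all of them. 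Hence $(r_\varepsilon)_\varepsilon\in S^0_{lsc}$ with seminorm bounds $\mathcal{O}(\omega_\varepsilon)$ uniformly up to any prescribed order.

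Third I would invoke the Calderón--Vaillancourt $L^2$-boundedness theorem, which requires only a \emph{fixed} (dimension-dependent, $\varepsilon$-independent) number of seminorms of the order-zero symbol $r_\varepsilon$. This yields a constant $C>0$ such that
\begin{equation*}
\lVert (a^F_\varepsilon(x,D_x)-a_\varepsilon(x,D_x))u\rVert_{L^2}\leq C\omega_\varepsilon\lVert u\rVert_{L^2},
\end{equation*}
and combining with the positivity of $a^F_\varepsilon$ from the first step gives
\begin{equation*}
\operatorname{Re}(a_\varepsilon(x,D_x)u,u) = \operatorname{Re}(a^F_\varepsilon(x,D_x)u,u) - \operatorname{Re}((a^F_\varepsilon-a_\varepsilon)u,u) \geq -C\omega_\varepsilon\lVert u\rVert_{L^2}^2,
\end{equation*}
which is \eqref{eqn:sharpgarding}.

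The main obstacle is the second step: one must verify that the Friedrichs symmetrization really does preserve the logarithmic slow scale character of the symbol. A polynomially growing net like $\varepsilon^{-N}$ would pass through Calderón--Vaillancourt unharmed, but the whole point here is that the \emph{much finer} logarithmic slow scale property must survive. This works precisely because Calderón--Vaillancourt is a ``finite order'' theorem (needing only $\leq c_n$ derivatives of the symbol), so the reduction to a single dominating net $(\omega_\varepsilon)_\varepsilon\in\Pi_{lsc}$ involves only a finite max. If one instead had to sum infinitely many seminorms, stability in $\Pi_{lsc}$ would fail and only a moderate bound would remain.
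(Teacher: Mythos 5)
Your proof is correct and follows essentially the same route as the paper: both construct a Friedrichs-type symmetrization of $a_\varepsilon$ (the paper uses a compactly supported even window $\psi$, you use a Gaussian wave packet $\Phi_{y,\eta}$, but this is a cosmetic variant), exploit the pointwise non-negativity of $\operatorname{Re} a_\varepsilon$ to get $\operatorname{Re}(a^F_\varepsilon u,u)\ge 0$, and then show that the remainder $a^F_\varepsilon - a_\varepsilon$ has order $0$ with logarithmic slow scale seminorm bounds before invoking $L^2$-boundedness. You also correctly identify the crux — that both the symmetrization error and the Calderón--Vaillancourt bound need only finitely many seminorms, so a single dominating net in $\Pi_{lsc}$ suffices — which is precisely the reason the paper's argument preserves the lsc class.
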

The proof follows the same lines as in \cite[Theorem 18.1.14]{Hoermander:3}.
\begin{proof}
Let $r \in \widetilde{S}^0_{lsc}$ and $(r_\varepsilon)_\varepsilon \in r$ be a representative. Then, there exist a logarithmic slow scale net $(\omega_{\varepsilon})_\varepsilon \in \Pi_{lsc}$, a constant $C>0$ and an $\eta \in (0,1]$ such that 
\begin{equation*}
|(r_\varepsilon(x,D)u,u)| \le C \omega_{\varepsilon} \lVert u \rVert^2_{L^2} \qquad \varepsilon \in (0,\eta]
\end{equation*}
and
\begin{equation*}
\operatorname{\rm{Re}} (r_\varepsilon(x,D)u,u) \ge -C \omega_{\varepsilon} \lVert u \rVert^2_{L^2} \qquad \varepsilon \in (0,\eta]
\end{equation*}
Now let $a_\varepsilon \in S^1_{lsc}$ and write
\begin{equation*}
a_\varepsilon = \operatorname{\rm{Re}} a_\varepsilon + i \operatorname{\rm{Im}} a_\varepsilon
= \frac{a_\varepsilon+a_\varepsilon^*}{2} + \frac{a_\varepsilon-a_\varepsilon^*}{2}.
\end{equation*}
Then, since the second term on the right side is anti-self-adjoint it follows that
\begin{equation*}
\operatorname{\rm{Re}} \Bigl( \frac{a_\varepsilon-a_\varepsilon^*}{2} u, u \Bigr) = 0.
\end{equation*}
Moreover, since
\begin{equation*}
\operatorname{\rm{Re}} (a_\varepsilon) - \frac{a_\varepsilon+a_\varepsilon^*}{2} \in S^0_{lsc}
\end{equation*}
it suffices to show (\ref{eqn:sharpgarding}) with $a$ replaced by $\operatorname{\rm{Re}} a$.

As a next step, we let $\phi \in \Smooth_0(\mathbb{R}^{2n})$ be an even function with $L^2$-norm equal to $1$. Also we define $\psi \in \mathscr{S}$ by $\psi(x,D) := \phi(x,D)^* \phi(x,D)$. Then, $\psi$ is even and $\iint \psi(y,\eta) \,d y \,d \eta = 1$.

The aim is to write $a_\varepsilon$ as a superposition of two terms $a_{1,\varepsilon}$ and $a_{0,\varepsilon}$ where $a_{1,\varepsilon}$ is of the form
\begin{equation*}
a_{1,\varepsilon}(x,\xi) := \iint \psi((x-y)q(\eta),(\xi-\eta)/q(\eta)) a_\varepsilon(y,\eta) \,d y \,d \eta \qquad \varepsilon \in (0,1]
\end{equation*}
with $q(\eta) = (1+|\eta|^2)^{1/4}$.
Then, one can show that there exists an $\eta \in (0,1]$ such that
\begin{equation*}
(a_{1,\varepsilon}(x,D)u,u) \ge 0 \qquad u \in \mathscr{S}, \ \varepsilon \in (0,\eta]
\end{equation*} 
and $D^{\alpha}_{\xi} D^{\beta}_x a_{0,\varepsilon}(x,\xi) = D^{\alpha}_{\xi} D^{\beta}_x (a_\varepsilon(x,\xi)- a_{1,\varepsilon}(x,\xi))$ is a finite sum of terms
\begin{multline*}
b_{0,\varepsilon}(x,\xi) = \iint \psi_1((x-y)q(\eta),(\xi-\eta)/q(\eta)) b_\varepsilon(y,\eta) \,d y \,d \eta \\
 -b_\varepsilon(x,\xi) \iint \psi_1(y,\eta) \,d y \,d \eta \qquad \varepsilon \in (0,1]
\end{multline*}
where $\psi_1 \in \mathscr{S}$ is even and $(b_\varepsilon)_\varepsilon \in S^{1-|\alpha|}_{lsc}$. Then, by the same arguments as in \cite[Theorem 18.1.14]{Hoermander:3}, one can show that if $(b_\varepsilon)_\varepsilon \in S^{\mu}_{lsc}$ then there exists a logarithmic slow scale net $(\omega_\varepsilon)_\varepsilon$  such that $|b_{0,\varepsilon}(x,\xi)| = \mathcal{O}(\omega_\varepsilon) (1+|\xi|)^{\mu-1}$ as $\varepsilon \to 0$. This finishes the proof.
\end{proof}
\begin{rem}\label{rem_SharpGardingInequality}
Suppose that $r \in \widetilde{S}^0_{lsc}$. Then there exist a representative $(r_\varepsilon)_\varepsilon \in r$, a logarithmic slow scale net $(\omega_{1,\varepsilon})_\varepsilon \in \Pi_{lsc}$, a constant $C_1>0$ and an $\eta_1 \in (0,1]$ such that $|r_\varepsilon(x,\xi)| \le C_1 \omega_{1,\varepsilon}$ for all $\varepsilon \in (0,\eta_1]$. Hence, the symbol $(r_\varepsilon(x,\xi) + C_1 \omega_{1,\varepsilon})_\varepsilon$ satisfies the requirements of Theorem~\ref{SharpGardingInequality}, i.e.
\begin{equation*}
\operatorname{\rm{Re}} (r_\varepsilon(x,\xi) + C_1 \omega_{1,\varepsilon}) \ge 0 \qquad \varepsilon \in (0,\eta_1].
\end{equation*}
We therefore can conclude that there exist $(\omega_{2,\varepsilon})_\varepsilon \in \Pi_{lsc}$, a constant $C_2 >0$ and an $\eta_2 \in (0,1]$ such that for any $u \in \mathscr{S}$ and $\varepsilon \in (0,\eta_2]$ we have
\begin{equation*}
\operatorname{\rm{Re}}( (r_\varepsilon(x,D_x) + C_1 \omega_{1,\varepsilon})u, u ) \ge -C_2 \omega_{2,\varepsilon} \lVert u \rVert^2_{L^2} 
\end{equation*}
which is equivalent to the following
\begin{equation*}
\exists C>0 \ \exists (\omega_\varepsilon)_\varepsilon \in \Pi_{lsc}: \quad 
\operatorname{\rm{Re}}( r_\varepsilon(x,D_x) u, u ) \ge -C \omega_{\varepsilon} \lVert u \rVert^2_{L^2}
\end{equation*}
for all $u \in \mathscr{S}$ and $\varepsilon$ small enough.
\end{rem}
Concerning the well-posedness of the Cauchy problem (\ref{eqn_CP1_general})-(\ref{eqn_CP2_general}), we will first show the following energy estimate as shown in \cite{Stolk:05}, \cite{Hoermander:3}. A slight change in the proof shows that there is also an energy estimate when the symbols depend on $\varepsilon$.
\begin{lem}
Let $A$ and $B$ satisfy $(i), (ii), (iii)$ with $2\gamma < L$. If $s \in \mathbb{R}$ and $(\lambda_\varepsilon)_\varepsilon \in \Pi_{lsc}$ is generalized real and larger than some logarithmic slow scale net depending on $s$, then for every $u \in \mathcal{C}^1([0,Z]; H^{s}) \cap \mathcal{C}^0([0,Z]; H^{s+\hat{\gamma}})$ with $\hat{\gamma} := \max(\gamma,1)$ and every $p \in [1,\infty]$ we have:
\begin{equation}\label{eqn:energy-estimate}
\Big( \frac{1}{2} \int_0^Z \lVert e^{-\lambda_\varepsilon z} u(z,.) \rVert_{H^s}^p \lambda_\varepsilon \,d z \Big)^{\frac{1}{p}} \le \lVert u(0,.) \rVert_{H^s} +2 \int_0^Z e^{-\lambda_\varepsilon z}\lVert  P_\varepsilon u(z,.) \rVert_{H^s} \,d z
\end{equation}
with the interpretation of the maximum when $p=\infty$.
\end{lem}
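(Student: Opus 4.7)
The plan is to reduce to an $L^2$-energy inequality for a conjugated unknown and then to extract the desired $L^p$-in-$z$ estimate from a scalar first-order ODE inequality.

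First I would introduce $v_s(z) := e^{-\lambda_\varepsilon z} \langle D_x\rangle^s u(z,\cdot)$, so that $\lVert v_s(z)\rVert_{L^2} = \lVert e^{-\lambda_\varepsilon z} u(z,\cdot)\rVert_{H^s}$. A direct computation on representatives yields
\begin{equation*}
\partial_z v_s + \lambda_\varepsilon v_s - i\widetilde A_\varepsilon v_s + \widetilde B_\varepsilon v_s = e^{-\lambda_\varepsilon z} \langle D_x\rangle^s P_\varepsilon u,
\end{equation*}
where $\widetilde A_\varepsilon := \langle D_x\rangle^s A_\varepsilon \langle D_x\rangle^{-s} \in \Psi^1_{lsc}$ and $\widetilde B_\varepsilon := \langle D_x\rangle^s B_\varepsilon \langle D_x\rangle^{-s} \in \Psi^{\gamma}_{lsc}$ share principal symbols with $A_\varepsilon$ and $B_\varepsilon$ respectively. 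The self-adjoint part $\widetilde B_\varepsilon^+ := (\widetilde B_\varepsilon + \widetilde B_\varepsilon^*)/2$ inherits the nonnegative real principal symbol of $B$ and, up to $s$-dependent logarithmic slow scale nets, still satisfies analogues of assumption (iii), so that Lemma~\ref{lem:sqr} applies to it.

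Next I would differentiate the $L^2$-energy and take real parts to obtain
\begin{equation*}
\tfrac{1}{2}\tfrac{d}{dz}\lVert v_s\rVert^2 + \lambda_\varepsilon \lVert v_s\rVert^2 = \operatorname{Re}(e^{-\lambda_\varepsilon z}\langle D_x\rangle^s P_\varepsilon u, v_s) + \operatorname{Re}(i\widetilde A_\varepsilon v_s, v_s) - \operatorname{Re}(\widetilde B_\varepsilon v_s, v_s).
\end{equation*}
Because $A_\varepsilon$ has real principal symbol in the applications of interest, $\widetilde A_\varepsilon - \widetilde A_\varepsilon^*$ lies in $\widetilde S^0_{lsc}$, and $L^2$-boundedness of order-zero generalized pseudodifferential operators gives $|\operatorname{Re}(i\widetilde A_\varepsilon v_s, v_s)| = \tfrac{1}{2}|(i(\widetilde A_\varepsilon - \widetilde A_\varepsilon^*)v_s, v_s)| \leq C_1 \omega_{1,\varepsilon}\lVert v_s\rVert^2$. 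For the $\widetilde B_\varepsilon$-term I would use Lemma~\ref{lem:sqr}: there is a self-adjoint $Q \in \Psi^{\gamma/2}_{1-\gamma/L, \gamma/L, lsc}$ with $Q^2 = 1 + \widetilde B_\varepsilon^+ + R$ and $R \in \widetilde S^{-\infty}_{lsc}$, whence
\begin{equation*}
\operatorname{Re}(\widetilde B_\varepsilon v_s, v_s) = (\widetilde B_\varepsilon^+ v_s, v_s) = \lVert Q v_s\rVert^2 - \lVert v_s\rVert^2 - (R v_s, v_s) \geq -(1 + C_2 \omega_{2,\varepsilon})\lVert v_s\rVert^2.
\end{equation*}
Choosing $\lambda_\varepsilon \geq 2(1 + C_1\omega_{1,\varepsilon} + C_2\omega_{2,\varepsilon})$, which is the promised $s$-dependent logarithmic slow scale lower bound, estimating the source term by Cauchy--Schwarz, and dividing through by $\lVert v_s\rVert$ (formally, after a standard regularization $\lVert v_s\rVert^2 + \delta$, $\delta\downarrow 0$), I arrive at the scalar ODE inequality
\begin{equation*}
\tfrac{d}{dz}\lVert v_s(z)\rVert + \tfrac{\lambda_\varepsilon}{2}\lVert v_s(z)\rVert \leq e^{-\lambda_\varepsilon z}\lVert P_\varepsilon u(z,\cdot)\rVert_{H^s} =: h(z).
\end{equation*}

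Finally, with $g(z) := \lVert v_s(z)\rVert$ and $M := g(0) + \int_0^Z h(z)\,dz$, solving $g' + (\lambda_\varepsilon/2) g \leq h$ explicitly (multiply by $e^{\lambda_\varepsilon z/2}$ and integrate) yields the pointwise bound $g(z) \leq M$, while integrating the inequality directly over $[0,Z]$ and dropping $g(Z)\geq 0$ gives $(\lambda_\varepsilon/2)\int_0^Z g(z)\,dz \leq M$. Combining both via $g^p \leq M^{p-1} g$ produces $(\lambda_\varepsilon/2)\int_0^Z g^p\,dz \leq M^p$ for $1 \leq p < \infty$, while $p = \infty$ follows from $g \leq M$ directly. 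Taking $p$-th roots and observing that $M \leq \lVert u(0,\cdot)\rVert_{H^s} + 2\int_0^Z e^{-\lambda_\varepsilon z}\lVert P_\varepsilon u\rVert_{H^s}\,dz$ delivers the claim. The main obstacle I expect is the careful propagation of logarithmic slow scale nets through the conjugation $\langle D_x\rangle^s B_\varepsilon \langle D_x\rangle^{-s}$ and the symmetrization $(\widetilde B_\varepsilon + \widetilde B_\varepsilon^*)/2$: one must check that assumption (iii) survives these operations with new, $s$-dependent nets, so that Lemma~\ref{lem:sqr} is applicable; once that bookkeeping is settled, the remaining steps are a routine Gronwall-type manipulation.
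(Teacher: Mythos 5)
Your proposal is correct, and the main outline (conjugation by $\langle D_x\rangle^s$, a Gårding-type lower bound on the real part of the zeroth-order evolution generator, the square root $Q$ from Lemma~\ref{lem:sqr} applied to the nonnegative part of $B$, followed by an exponentially weighted Gronwall argument) is the same as the paper's. Where you deviate, you deviate in ways that are cleaner. First, you conjugate at the outset so everything is an $L^2$-estimate, whereas the paper first proves the $s=0$ case and conjugates at the end; both work, and in both cases one has to accept that the $\langle D_x\rangle^s$-conjugated operators inherit assumptions (ii)--(iii) with new $s$-dependent logarithmic slow scale nets, a bookkeeping step neither the paper nor you carries out in detail. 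Second, for the $A$-part you avoid the sharp G\aa rding inequality (Lemma~\ref{SharpGardingInequality}) and instead observe that $\operatorname{Re}(i\widetilde A_\varepsilon v,v)=\tfrac12\,(i(\widetilde A_\varepsilon-\widetilde A_\varepsilon^*)v,v)$ with $\widetilde A_\varepsilon-\widetilde A_\varepsilon^*$ of order $0$ since $A$ has real principal symbol, so $L^2$-boundedness of order-zero generalized operators already gives $|\operatorname{Re}(i\widetilde A_\varepsilon v,v)|\le C\omega_\varepsilon\|v\|^2$; this is an equivalent but more elementary route, since the G\aa rding step really only exploits the order-zero bound on $\operatorname{Im}(a_\varepsilon)$. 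Third, and most noticeably, the passage from the pointwise scalar inequality $g'+\tfrac{\lambda_\varepsilon}{2}g\le h$ to the $L^p$-in-$z$ estimate is handled differently: the paper sets $\lambda_\varepsilon=C\omega_\varepsilon$, reinserts a factor $e^{(C\omega_\varepsilon-\lambda_\varepsilon)z}$, passes to $\lambda_\varepsilon>2C\omega_\varepsilon$, and then integrates by parts in the convolution term to compute the $L^p$-norm, while you derive the two endpoint bounds $g\le M$ and $(\lambda_\varepsilon/2)\int_0^Z g\,dz\le M$ directly from the ODE inequality and interpolate via $g^p\le M^{p-1}g$; the latter is shorter and even produces a slightly sharper constant (no factor $2$ in front of the source integral, which of course implies the stated bound). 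The one small point you should make explicit is the regularization when dividing by $\|v_s\|$: replacing $\|v_s\|$ with $(\|v_s\|^2+\delta)^{1/2}$ and letting $\delta\downarrow 0$ works, but the paper's route via $M_\varepsilon(z)=\sup_{\bar z\le z}e^{-\lambda_\varepsilon\bar z}\|u(\bar z)\|$ and a quadratic-inequality argument avoids that division entirely, which is why the paper's final constant carries an extra factor $2$.
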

\begin{proof}
Let $(A_\varepsilon)_\varepsilon$ be a representative of $A$. Since the principal symbol of $A$ is real-valued, we have that
\begin{equation*}
\operatorname{\rm{Re}}(-iA_\varepsilon(x,z,\xi)) \ge -C_1 \omega_{1,\varepsilon}
\end{equation*}
for some constant $C_1>0$ and some $(\omega_{1,\varepsilon})_\varepsilon \in \Pi_{lsc}$, for all $\varepsilon$ sufficiently small. Using the sharp G\r{a}rding inequality we obtain
\begin{equation}\label{eqn:Garding_iA}
\begin{split}
\exists (\omega_{2,\varepsilon})_\varepsilon \in \Pi_{lsc} \ \exists C_2 > 0 & \ \exists \eta_2 \in (0,1] : \\
&\operatorname{\rm{Re}} (-iA_\varepsilon(x,z,D_x) v, v) \ge -C_2 \omega_{2,\varepsilon} \lVert v \rVert^2_{L^2} \quad v \in H^1 
\end{split}
\end{equation}
for $z \in [0,Z]$, $\varepsilon \in (0,\eta_2]$.  

Note that by Remark~\ref{rem_SharpGardingInequality} the property (\ref{eqn:Garding_iA}) is invariant under zeroth-order perturbations of $-iA_\varepsilon(x,z,D_x)$. Now using Lemma~\ref{lem:sqr} we can write $B_\varepsilon=Q_\varepsilon^2-1-R_\varepsilon$ for some self-adjoint $Q_\varepsilon \in S^{\gamma/2}_{1-\frac{\gamma}{L},\frac{\gamma}{L},lsc} (\mathbb{R}^{n+1} \times \mathbb{R}^{n})$ and some $R_\varepsilon \in S^{-\infty}_{lsc} (\mathbb{R}^{n+1} \times \mathbb{R}^{n})$. We therefore obtain: $\exists (\omega_{\varepsilon})_\varepsilon \in \Pi_{lsc} \ \exists C > 0 \ \exists \eta \in (0,1]$ such that
\begin{align}
\operatorname{\rm{Re}}((-iA_\varepsilon+B_\varepsilon)(x,z,D_x) v, v) &= \operatorname{\rm{Re}}((-iA_\varepsilon-1-R_\varepsilon)(x,z,D_x) v, v) + (Q^2_\varepsilon v,v) =\nonumber \\
&= \operatorname{\rm{Re}}((-iA_\varepsilon-1-R_\varepsilon)(x,z,D_x) v, v) + (Q_\varepsilon v,Q_\varepsilon v) \ge \nonumber \\
&\ge -C \omega_\varepsilon \lVert v \rVert^2_{L^2} \qquad v \in H^{\hat{\gamma}} \label{eqn:Garding}
\end{align}
for any $z \in [0,Z]$ and $\varepsilon \in (0,\eta]$.

We first consider the corresponding $L^2$-energy estimates, i.e. $s=0$. Therefore, set $f_\varepsilon := \partial_z u - (iA_\varepsilon-B_\varepsilon)(x,z,D_x) u$. 
Then, taking the scalar products with respect to $z$ we 
\begin{equation*}
\frac{\partial}{\partial z} e^{-2 \lambda_\varepsilon z} \lVert u(z) \rVert^2_{L^2} = -2 \lambda_\varepsilon e^{-2 \lambda_\varepsilon z} \lVert u(z) \rVert^2_{L^2} + e^{-2 \lambda_\varepsilon z} 2 \operatorname{\rm{Re}}\bigl(\frac{\partial}{\partial z} u(z),u(z)\bigr)
\end{equation*}
and hence
\begin{equation*}
\begin{split}
2 \operatorname{\rm{Re}}(f_\varepsilon(z) & ,u(z)) e^{-2 \lambda_\varepsilon z} =\\
&= \frac{\partial}{\partial z} e^{-2 \lambda_\varepsilon z} \lVert u(z) \rVert^2_{L^2} + 2 \operatorname{\rm{Re}}\Bigl(\bigl((-iA_\varepsilon+B_\varepsilon)(x,z,D_x) +\lambda_\varepsilon \bigr) u(z),u(z)\Bigr) e^{-2 \lambda_\varepsilon z}\\
&\ge \frac{\partial}{\partial z} e^{-2 \lambda_\varepsilon z} \lVert u(z) \rVert^2_{L^2}
\end{split}
\end{equation*}
under the requirement that $(\lambda_\varepsilon)_\varepsilon \in \Pi_{lsc}$ with $\lambda_\varepsilon \ge C \omega_\varepsilon$, where $C$ and $\omega_\varepsilon$ are as in (\ref{eqn:Garding}). Integration from $0$ to $z$, $\bar{z} \le z \le Z$ then gives
\begin{equation*}
e^{-2 \lambda_\varepsilon z} \lVert u(z) \rVert^2_{L^2} - \lVert u(0) \rVert^2_{L^2} \le 2 \int_0^z e^{-\lambda_\varepsilon \bar{z}} \lVert f_\varepsilon(\bar{z}) \rVert_{L^2} e^{-\lambda_\varepsilon \bar{z}} \lVert u(\bar{z}) \rVert_{L^2} \,d \bar{z}.
\end{equation*}
Setting 
\begin{equation*}
M_\varepsilon(z):= \sup_{0 \le \bar{z} \le z} e^{-\lambda_\varepsilon \bar{z}} \lVert u(\bar{z}) \rVert_{L^2}
\end{equation*}
we get
\begin{equation*}
M_\varepsilon(z)^2 \le \lVert u(0) \rVert^2_{L^2} + 2 M_\varepsilon(z) \int_0^z e^{-\lambda_\varepsilon \bar{z}} \lVert f_\varepsilon(\bar{z}) \rVert_{L^2} \,d \bar{z}.
\end{equation*}
Hence
\begin{equation*}
\Bigl(M_\varepsilon(z)- \int_0^z e^{-\lambda_\varepsilon \bar{z}} \lVert f_\varepsilon(\bar{z}) \rVert_{L^2} \,d \bar{z}\Bigr)^2 \le \Bigl(\lVert u(0) \rVert_{L^2} + \int_0^z e^{-\lambda_\varepsilon \bar{z}} \lVert f_\varepsilon(\bar{z}) \rVert_{L^2} \,d \bar{z}\Bigr)^2
\end{equation*}
which yields
\begin{equation}\label{eqn:energy_L2}
e^{-\lambda_\varepsilon z} \lVert u(z) \rVert_{L^2} \le \lVert u(0) \rVert_{L^2} + 2 \int_0^z e^{-\lambda_\varepsilon \bar{z}} \lVert f_\varepsilon(\bar{z}) \rVert_{L^2} \,d \bar{z}.
\end{equation}
for all $(\lambda_\varepsilon)_\varepsilon \in \Pi_{lsc}$ with $\lambda_\varepsilon \ge C \omega_\varepsilon$. In (\ref{eqn:energy_L2}) we set $\lambda_\varepsilon = C \omega_\varepsilon$ and multiply the obtained result by the factor $e^{(C\omega_\varepsilon-\lambda_\varepsilon)z}$. Then 
\begin{equation*}
e^{-\lambda_\varepsilon z} \lVert u(z) \rVert_{L^2} \le e^{(C\omega_\varepsilon-\lambda_\varepsilon)z} \lVert u(0) \rVert_{L^2} + 2 \int_0^z e^{-\lambda_\varepsilon \bar{z}} \lVert f_\varepsilon(\bar{z}) \rVert_{L^2} e^{(C\omega_\varepsilon-\lambda_\varepsilon)(z-\bar{z})} \,d \bar{z}.
\end{equation*}
If we choose $(\lambda_\varepsilon)_\varepsilon \in \Pi_{lsc}$ with $\lambda_\varepsilon > 2C \omega_\varepsilon$, then $C\omega_\varepsilon-\lambda_\varepsilon \le -\frac{\lambda_\varepsilon}{2}$ and hence
\begin{equation}\label{eqn:energy_L2_1}
e^{-\lambda_\varepsilon z} \lVert u(z) \rVert_{L^2} \le e^{-\frac{\lambda_\varepsilon}{2}z} \lVert u(0) \rVert_{L^2} + 2 \int_0^z e^{-\lambda_\varepsilon \bar{z}} \lVert f_\varepsilon(\bar{z}) \rVert_{L^2} e^{-\frac{\lambda_\varepsilon}{2}(z-\bar{z})} \,d \bar{z}.
\end{equation}
This shows the desired energy estimate in the case that $p=\infty$. For $1 \le p < \infty$ we observe for the $L^p$-norm of the first term on the right-hand side of (\ref{eqn:energy_L2_1}):
\begin{equation*}
\int_0^Z e^{-\frac{\lambda_\varepsilon}{2} p z} \,d z = \frac{2}{\lambda_\varepsilon p} \Bigl(1-e^{-\frac{\lambda_\varepsilon}{2} p Z} \Bigr) \le \frac{2}{\lambda_\varepsilon p}.
\end{equation*}
So,
\begin{equation*}
\lVert e^{-\frac{\lambda_\varepsilon}{2}z} \lVert u(0) \rVert_{L^2} \rVert_{L^p} \le \Bigl(\frac{2}{\lambda_\varepsilon} \Bigr)^{\frac{1}{p}} \lVert u(0) \rVert_{L^2}.
\end{equation*}
Concerning the $L^p$-norm of the second term on the right-hand side of (\ref{eqn:energy_L2_1}), we compute
\begin{equation*}
\begin{split}
2^p \int_0^Z  & e^{-\frac{\lambda_\varepsilon}{2} p z}\Bigl( \int_0^z e^{-\lambda_\varepsilon \bar{z}} \lVert f_\varepsilon(\bar{z}) \rVert_{L^2} e^{\frac{\lambda_\varepsilon}{2}\bar{z}} \,d \bar{z} \Bigr)^p \,d z =\\
& = -2^p \frac{2}{\lambda_\varepsilon p} e^{-\frac{\lambda_\varepsilon}{2} p z} \Bigl(\int_0^z e^{-\lambda_\varepsilon \bar{z}} \lVert f_\varepsilon(\bar{z}) \rVert_{L^2} e^{\frac{\lambda_\varepsilon}{2}\bar{z}} \,d \bar{z} \Bigr)^p \Bigr|^Z_0 +\phantom{\Bigr)^{p-1} e^{-\lambda_\varepsilon z} \lVert f_\varepsilon(z) \rVert_{L^2} e^{\frac{\lambda_\varepsilon}{2}z} \,d z} \\
& + 2^p \frac{2}{\lambda_\varepsilon} \int_0^Z  e^{-\frac{\lambda_\varepsilon}{2} p z} \Bigl(\int_0^z e^{-\lambda_\varepsilon \bar{z}} \lVert f_\varepsilon(\bar{z}) \rVert_{L^2} e^{\frac{\lambda_\varepsilon}{2}\bar{z}} \,d \bar{z} \Bigr)^{p-1} e^{-\lambda_\varepsilon z} \lVert f_\varepsilon(z) \rVert_{L^2} e^{\frac{\lambda_\varepsilon}{2}z} \,d z \le\\
& \le 2^p \frac{2}{\lambda_\varepsilon} \int_0^Z \Bigl( \int_0^z e^{-\lambda_\varepsilon \bar{z}} \lVert f_\varepsilon(\bar{z}) \rVert_{L^2} e^{-\frac{\lambda_\varepsilon}{2}(z-\bar{z})} \,d \bar{z} \Bigr)^{p-1} e^{-\lambda_\varepsilon z} \lVert f_\varepsilon(z) \rVert_{L^2} \,d z \le \\
& \le \frac{2}{\lambda_\varepsilon} \Bigr( 2 \int_0^Z e^{-\lambda_\varepsilon z} \lVert f_\varepsilon(z) \rVert_{L^2} \,d z \Bigr)^p.
\end{split}
\end{equation*}
Summarizing and using the Minkowski inequality, we obtain for every $p \in [1,\infty]$
\begin{equation*}
\Bigl( \int_0^Z \lVert e^{-\lambda_\varepsilon z} u(z,.) \rVert^p_{L^2} \,d z \Bigr)^{\frac{1}{p}} \le \Bigl( \frac{2}{\lambda_\varepsilon} \Bigr)^{\frac{1}{p}} \Bigl( \lVert u (0,.) \rVert_{L^2} + 2 \int_0^Z e^{-\lambda_\varepsilon z} \lVert P_\varepsilon u(z,.) \rVert_{L^2} \,d z  \Bigl)
\end{equation*}
which shows the theorem in the case that $s=0$.

The $L^2$-energy estimate can now be used to obtain the $H^s$-energy estimates for $s \in \mathbb{R}$, $s \neq 0$. Therefore, let $u \in \mathcal{C}^1([0,Z];H^{s}(\mathbb{R}^n)) \cap \mathcal{C}^0([0,Z];H^{s+\hat{\gamma}}(\mathbb{R}^n))$. Then, $\langle D_x \rangle^s P_\varepsilon \langle D_x \rangle^{-s}$ satisfies the same assumptions as $P_\varepsilon$ and $\langle D_x \rangle^s u \in \mathcal{C}^1([0,Z];L^{2}(\mathbb{R}^n)) \cap \mathcal{C}^0([0,Z];H^{\hat{\gamma}}(\mathbb{R}^n))$. Note that in particular we have an estimate of the form (\ref{eqn:Garding}) if we replace $-iA_\varepsilon + B_\varepsilon$ by $\langle D_x \rangle^s (-iA_\varepsilon + B_\varepsilon) \langle D_x \rangle^{-s}$ and $v$ by $\langle D_x \rangle^s u$ but now with a lower bound $C(s) \omega_{\varepsilon}(s)$ depending on $s$. Using the $L^2$-energy estimate we obtain
\begin{equation*}
\begin{split}
\Bigl( \frac{1}{2} \int_0^Z \lVert e^{-\lambda_\varepsilon z} & u(z,.) \rVert^p_{H^s} \lambda_\varepsilon \,d z \Bigr)^{\frac{1}{p}} = \Bigl( \frac{1}{2} \int_0^Z \lVert e^{-\lambda_\varepsilon z} \langle D_x \rangle^s u(z,.) \rVert_{L^2})^p \lambda_\varepsilon \,d z \Bigr)^{\frac{1}{p}} \le\\
& \le \lVert \langle D_x \rangle^s u (0, .) \rVert_{L^2} + 2 \int_0^Z e^{-\lambda_\varepsilon z} \lVert \langle D_x \rangle^s P_\varepsilon \langle D_x \rangle^{-s} \langle D_x \rangle^s u(z, .) \rVert_{L^2} \,d z =\\
& = \lVert u (0,.) \rVert_{H^s} + 2 \int_0^Z e^{-\lambda_\varepsilon z} \lVert P_\varepsilon u(z,.) \rVert_{H^s} \,d z  
\end{split}
\end{equation*}
since $P_\varepsilon \langle D_x \rangle^{-s} \langle D_x \rangle^s -P_\varepsilon \in S^{-\infty}_{lsc}$.
This completes the proof.
\end{proof}
\begin{thm}
Let $A$ and $B$ be generalized symbols with parameter $z \in [0,Z]$ that satisfy $(i), (ii)$ and $(iii)$ with $2\gamma < L$. Then, for every $f \in \GLtwo((0,Z) \times \mathbb{R}^n)$ and $u_0 \in \GLtwo(\mathbb{R}^n)$ the Cauchy problem (\ref{eqn_CP1_general})-(\ref{eqn_CP2_general}) has a unique solution $u \in \GLtwo((0,Z) \times \mathbb{R}^n)$ and the energy estimate (\ref{eqn:energy-estimate}) remains valid for this solution.
\end{thm}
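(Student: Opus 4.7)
The plan is to work at the level of representatives, invoke classical existence theory for each fixed $\varepsilon$, and promote the result to $\GLtwo$ by means of the energy estimate of the preceding lemma. Fix representatives $(u_{0,\varepsilon})_\varepsilon \in \mathcal{M}_{H^{\infty}}$ of $u_0$, $(f_\varepsilon)_\varepsilon$ of $f$, and $(A_\varepsilon)_\varepsilon$, $(B_\varepsilon)_\varepsilon$ of the symbols of $A$ and $B$. For each fixed $\varepsilon \in (0,1]$ the symbols are smooth of order one, the principal symbol of $B_\varepsilon$ is non-negative, and the subprincipal structure is controlled by assumptions (ii)--(iii). Classical results on first-order pseudodifferential evolution equations (cf.\ \cite[Chapter 23]{Hoermander:3}) then furnish a unique $u_\varepsilon \in \mathcal{C}^1([0,Z];H^s) \cap \mathcal{C}^0([0,Z];H^{s+\hat{\gamma}})$ solving $P_\varepsilon u_\varepsilon = f_\varepsilon$, $u_\varepsilon(0,\cdot) = u_{0,\varepsilon}$, for every $s \in \mathbb{R}$.

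The second step is moderateness. Specializing the previous lemma to $p = \infty$ and multiplying by $e^{\lambda_\varepsilon Z}$ yields
\begin{equation*}
\max_{z \in [0,Z]} \lVert u_\varepsilon(z,\cdot) \rVert_{H^s} \le e^{\lambda_\varepsilon Z}\Bigl(\lVert u_{0,\varepsilon}\rVert_{H^s} + 2 \int_0^Z e^{-\lambda_\varepsilon z}\lVert f_\varepsilon(z,\cdot)\rVert_{H^s}\,d z\Bigr)
\end{equation*}
for every $s \in \mathbb{R}$ and every $(\lambda_\varepsilon)_\varepsilon \in \Pi_{lsc}$ exceeding the $s$-dependent threshold coming from the sharp G\r{a}rding estimate. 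Because $(\lambda_\varepsilon)_\varepsilon$ is logarithmic slow scale, $e^{\lambda_\varepsilon Z} = \mathcal{O}(\varepsilon^{-\delta})$ for every $\delta > 0$; combined with the moderateness of $(u_{0,\varepsilon})_\varepsilon$ and of $(f_\varepsilon)_\varepsilon$, this gives $\sup_{z \in [0,Z]} \lVert u_\varepsilon(z,\cdot) \rVert_{H^s} = \mathcal{O}(\varepsilon^{-N_s})$ for some $N_s \in \mathbb{N}$. Derivatives in $z$ are controlled inductively via $\partial_z u_\varepsilon = (iA_\varepsilon - B_\varepsilon) u_\varepsilon + f_\varepsilon$ together with the $H^s \to H^{s-\hat{\gamma}}$ continuity of the logarithmic slow scale pseudodifferential operators $A_\varepsilon$, $B_\varepsilon$. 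Integrating the resulting $L^\infty_z H^s$-bounds over the finite interval $[0,Z]$ produces the $L^2((0,Z)\times\mathbb{R}^n)$-type estimates required for $(u_\varepsilon)_\varepsilon \in \mathcal{M}_{H^\infty}((0,Z)\times\mathbb{R}^n)$, so $u := [(u_\varepsilon)_\varepsilon] \in \GLtwo((0,Z)\times\mathbb{R}^n)$ is well-defined.

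For independence of the chosen representatives and for uniqueness one runs the same argument on differences. If the data $(u_{0,\varepsilon})_\varepsilon$ and $(f_\varepsilon)_\varepsilon$ are replaced by equivalent representatives, the difference $v_\varepsilon$ of the corresponding solutions satisfies $P_\varepsilon v_\varepsilon \in \mathcal{N}_{H^\infty}$ with initial datum in $\mathcal{N}_{H^\infty}$; the energy estimate then bounds $\sup_z \lVert v_\varepsilon(z,\cdot) \rVert_{H^s}$ by the slowly growing factor $e^{\lambda_\varepsilon Z}$ times a quantity vanishing faster than every power of $\varepsilon$, which leaves the product negligible. The same scheme applied to two Colombeau solutions $u,\tilde u$ of (\ref{eqn_CP1_general})--(\ref{eqn_CP2_general}) gives $u - \tilde u \equiv 0$ in $\GLtwo$, and the estimate (\ref{eqn:energy-estimate}), being valid on every representative, passes to the equivalence class.

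The main obstacle I anticipate is bookkeeping the $s$-dependence of the constants $C(s)$ and of the logarithmic slow scale net $\omega_\varepsilon(s)$ produced by the sharp G\r{a}rding inequality applied to the conjugated operator $\langle D_x\rangle^s P_\varepsilon \langle D_x\rangle^{-s}$. One must verify that the threshold $\lambda_\varepsilon$ can be chosen independent enough of $s$ that the moderateness estimate holds simultaneously for every Sobolev index, so that the bound genuinely captures $\mathcal{M}_{H^\infty}$ and not merely each $H^s$ separately. A secondary but routine point is the transition from $\sup_{z \in [0,Z]} \lVert \cdot \rVert_{H^s}$ to the mixed norms entering the definition of $\mathcal{M}_{H^\infty}((0,Z)\times\mathbb{R}^n)$, which reduces to integrating bounded quantities over $[0,Z]$ once the $s$-uniformity has been secured.
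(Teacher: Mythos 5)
Your proposal is correct and follows essentially the same route as the paper: solve for each fixed $\varepsilon$ by classical theory (Hörmander Ch.~23 and Stolk), establish moderateness from the energy estimate using the fact that $(\lambda_\varepsilon)_\varepsilon \in \Pi_{lsc}$ forces $e^{\lambda_\varepsilon Z}$ to be at most of polynomial growth in $\varepsilon^{-1}$, recover the $z$-derivative bounds by feeding the equation back in inductively, and obtain uniqueness and representative-independence from the same estimate applied to differences. Your closing remarks on tracking the $s$-dependence of the G\r{a}rding constant $\omega_\varepsilon(s)$ and on passing from $\sup_z H^s$ bounds to the mixed $L^2$ norm of $\mathcal{M}_{H^\infty}$ make explicit two points the paper's proof treats tersely, but there is no substantive difference in method.
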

\begin{proof}
Let $(g_\varepsilon)_\varepsilon \in g$, $(f_\varepsilon)_\varepsilon \in f$ be representatives. We fix $\varepsilon \in (0,1]$ and consider the smooth Cauchy problem
\begin{alignat}{2}
P_\varepsilon u_\varepsilon &= f_\varepsilon && \qquad \text{ on } (0,Z) \times \mathbb{R}^{n}\label{eqn_CP1_rep} \\
u_\varepsilon(0,.) &= u_{0,\varepsilon} && \qquad \text{ on } \mathbb{R}^n. \nonumber
\end{alignat}
Then, by \cite[Theorem 23.1.2]{Hoermander:3} and \cite[Theorem 5]{Stolk:05}, we get the existence of a solution $u_\varepsilon \in \Smooth([0,Z]; H^{\infty}(\mathbb{R}^n))$. Note that the additional regularity with respect to $z$ variable follows from (\ref{eqn_CP1_rep}).  

Concerning uniqueness, assume that $f_\varepsilon = 0$ and $u_{0,\varepsilon} = 0$. 
Furthermore, suppose that moderateness of the solutions is already shown.
We apply the energy estimate (\ref{eqn:energy-estimate}) and get
\begin{equation*}
e^{-\lambda_\varepsilon Z} \lVert u_\varepsilon(z,.) \rVert_{H^{s}} 
\le \max_{z \in [0,Z]}  \lVert e^{-\lambda_\varepsilon z} u_\varepsilon(z,.) \rVert_{H^{s}} = 0 \qquad \forall s \ge 0
\end{equation*}
Therefore, $u_\varepsilon = 0$.

It remains to show that $(u_\varepsilon)_\varepsilon \in \mathcal{E}_{M,L^2}((0,Z) \times \mathbb{R}^n)$. For moderateness with respect to $z \in [0,Z]$ we note that $\lVert u_\varepsilon \rVert_{L^2((0,Z) \times \mathbb{R}^n)} \le Z \sup_{z \in [0,Z]} \lVert u_\varepsilon \rVert_{L^2(\mathbb{R}^n)}$.
 
Since (\ref{eqn:energy-estimate}) is applicable for $u_\varepsilon \in \Smooth([0,Z]; H^{\infty}(\mathbb{R}^n))$, we obtain
\begin{equation*}
\begin{split}
e^{-\lambda_\varepsilon Z} \lVert u_\varepsilon(z,.) \rVert_{H^{s}} &\le \max_{z \in [0,Z]}  \lVert e^{-\lambda_\varepsilon z} u_\varepsilon(z,.) \rVert_{H^{s}} \le\\
&\le \lVert u_\varepsilon(0,.) \rVert_{H^s} +2 \int_0^Z e^{-\lambda_\varepsilon z}\lVert  P_\varepsilon u_\varepsilon(z,.) \rVert_{H^s} \,d z.
\end{split}
\end{equation*}
Hence
\begin{equation*}
\begin{split}
\lVert u_\varepsilon(z,.) \rVert_{H^{s}} &\le e^{\lambda_\varepsilon Z} \Bigl( \lVert u_\varepsilon(0,.) \rVert_{H^s} +2 \int_0^Z e^{-\lambda_\varepsilon z}\lVert  P_\varepsilon u_\varepsilon(z,.) \rVert_{H^s} \,d z \Bigr).
\end{split}
\end{equation*}
and therefore $u_\varepsilon$ is moderate with respect to $x$ if and only if $\lambda_\varepsilon$ is of log-type, which is indeed satisfied because of the assumption $(\lambda_\varepsilon)_\varepsilon \in \Pi_{lsc}$. Recall that a net $r_\varepsilon$ is of log-type if $|r_\varepsilon|= \mathcal{O} \bigl(\log(\frac{1}{\varepsilon}) \bigr)$ as $\varepsilon \to 0$.

Concerning the $z$-derivatives, we write
\begin{equation*}
\partial_z \langle D_x \rangle^s u_\varepsilon = \langle D_x \rangle^s (iA_\varepsilon-B_\varepsilon)(x,z,D_x) u_\varepsilon + \langle D_x \rangle^s f_\varepsilon
\end{equation*}
and obtain that for every $s \ge 0$ there exists an $N \in \mathbb{N}$ such that $\lVert \partial_z \langle D_x \rangle^s u_\varepsilon \rVert_{L^2} = \mathcal{O}(\varepsilon^{-N})$ uniformly in $z$. For the higher order $z$-derivatives one uses an induction argument when differentiating the equation (\ref{eqn_CP1_rep}).
\end{proof}

\subsection*{Acknowlegdments}
The author is very grateful to G\"{u}nther H\"{o}rmann and Shantanu Dave for many helpful discussions and suggestions during the preparation of the paper.

\newpage

\mbox{}
\setlength\parskip{0pt	plus 1pt minus 0pt}

\bibliographystyle{plain}

\begin{thebibliography}{10}

\bibitem{Anton:99}
A.~Antonevich.
\newblock {The {S}chr{\"{o}}dinger equation with point interaction in an
  algebra of new generalized functions}.
\newblock In {\em Nonlinear Theory of Generalized Functions}, pages 23--34.
  Chapman \& Hall, Research notes in mathematics series, 1999.

\bibitem{BO:92}
H.~A. Biagioni and M.~Oberguggenberger.
\newblock {Generalized solutions to the {K}orteweg-de {V}ries and the
  regularized long-wave equations}.
\newblock {\em SIAM J. Math. Anal.}, 23(4):923--940, 1992.

\bibitem{BrekhovskikhGodin:99}
L.~M. Brekhovskikh and O.~A. Godin.
\newblock {\em {Acoustics of layered media II: Point sources and bounded
  beams}}.
\newblock Springer, 2nd updated and enlarged ed. edition, April 1999.

\bibitem{CP:82}
J.~Chazarain and A.~Piriou.
\newblock {\em {Introduction to the theory of linear partial differential
  equations}}.
\newblock North-Holland, Amsterdam, 1982.

\bibitem{Cl:85}
J.F. Claerbout.
\newblock {\em {Imaging the earth's interior}}.
\newblock Blackwell Scientific Publications, 1985.

\bibitem{Colombeau:85}
J.~F. Colombeau.
\newblock {\em {Elementary introduction to new generalized functions}}.
\newblock North-Holland, 1985.

\bibitem{Garetto:06}
C.~Garetto.
\newblock {Microlocal analysis in the dual of a Colombeau algebra: generalised
  wave front sets and noncharacteristic regularity}.
\newblock {\em New York J. Math.}, 12(3):275--318, 2006.

\bibitem{Garetto:08}
C.~Garetto.
\newblock {Generalized Fourier integral operator on spaces of Colombeau type}.
\newblock {\em Operator Theory, Advances and Applications}, 189:137--184, 2008.

\bibitem{GarettoGramchevMoe:05}
C.~Garetto, T.~Gramchev, and M.~Oberguggenberger.
\newblock {Pseudodifferential operators with generalized symbols and regularity
  theory}.
\newblock {\em Electronic Journal of Differential Equations}, 116:1--43, 2005.

\bibitem{GarettoHoermann:05}
C.~Garetto and G.~H\"ormann.
\newblock {Microlocal analysis of generalized functions: pseudodifferential
  techniques and propagation of singularities}.
\newblock {\em Proc. Edinb. Math. Soc.}, 48(48):603--629, 2005.

\bibitem{GaHoe:06}
C.~Garetto and G.~H\"ormann.
\newblock {On duality theory and pseudodifferential techniques for Colombeau
  algebras: generalized delta functionals, kernels and wave front sets}.
\newblock {\em Bull. Acad. Serbe Cl. Sci.}, 31:115--136, 2006.

\bibitem{GHO:09}
C.~Garetto, G.~H\"ormann, and M.~Oberguggenberger.
\newblock {Generalized oscillatory integrals and Fourier integral operators}.
\newblock {\em Proceedings of the Edinburgh Mathematical Society},
  52(2):351--386, 2009.

\bibitem{GarMoe:2011a}
C.~Garetto and M.~Oberguggenberger.
\newblock {Symmetrisers and generalised solutions for strictly hyperbolic
  systems with singular coefficients}.
\newblock {\em e-Print: arXiv:1104.2281v1 [math.AP]}, preprint, 2011.

\bibitem{GS:94}
A.~Grigis and J.~Sj\"ostrand.
\newblock {\em {Microlocal analysis for differential operators}}.
\newblock Cambridge University Press, 1994.

\bibitem{GKOS:01}
M.~Grosser, M.~Kunzinger, M.~Oberguggenberger, and R.~Steinbauer.
\newblock {\em {Geometric theory of generalized functions}}.
\newblock Kluwer, Dordrecht, 2001.

\bibitem{Hoermander:67}
L.~{H\"ormander}.
\newblock {Pseudo-differential operators and hypoelliptic equations.}
\newblock {\em {Proc. Sympos. on Singular Integrals, Amer. Math. Soc. 10,
  138-183.}}, 1967.

\bibitem{Hoermander:3}
L.~H\"ormander.
\newblock {\em {The analysis of linear partial differential operators III}}.
\newblock Springer, 2000.

\bibitem{GH:04a}
G.~H{\"o}rmann.
\newblock {{H}{\"o}lder-Zygmund regularity in algebras of generalized
  functions}.
\newblock {\em Zeitschr. f. Anal. Anw.}, 23:139--165, 2004.

\bibitem{HO:04}
G.~H{\"o}rmann and M.~Oberguggenberger.
\newblock {Elliptic regularity and solvability for partial differential
  equations with Colombeau coefficients}.
\newblock {\em Electron. J. Diff. Eqns.}, No. 14(14):1--30, 2004.

\bibitem{Howe:98}
M.~S. Howe.
\newblock {\em {Acoustics of fluid-structure interactions}}.
\newblock Cambridge University Press, August 1998.

\bibitem{KinslerFrey:99}
L.~E. Kinsler, A.~R. Frey, A.~B. Coppens, and J.~V. Sanders.
\newblock {\em {Fundamentals of acoustics}}.
\newblock Wiley, 4th edition, December 1999.

\bibitem{Kumano-go:81}
H.~Kumano-go.
\newblock {\em {Pseudo-differential operators}}.
\newblock MIT Press, Cambridge, 1981.

\bibitem{NPS:98}
M.~Nedeljkov, S.~Pilipovi{\'{c}}, and D.~Scarpalezos.
\newblock {\em {The linear theory of Colombeau generalized functions}}.
\newblock Pitman Research Notes in Mathematics Series. Addison Wesley Longman,
  385 edition, 1998.

\bibitem{Moe:92}
M.~Oberguggenberger.
\newblock {\em {Multiplication of distributions and applications to partial
  differential equations}}.
\newblock Pitman Research Notes in Mathematics 259, Longman, Harlow, 1992.

\bibitem{SO:09}
S.~Oerlemans.
\newblock {\em {Detection of aeroacoustic sound sources on aircraft and wind
  turbines}}.
\newblock Thesis, University of Twente, Enschede, 2009.
\newblock ISBN 978-90-806343-9-8.

\bibitem{OptRoot2011}
Timotheus Johannes Petrus~Maria Op~'t Root.
\newblock {\em {Imaging seismic reflections}}.
\newblock PhD thesis, University of Twente, Enschede, April 2011.

\bibitem{OptRootStolk:10}
T.J.P.M. Op't~Root and C.C. Stolk.
\newblock {One-way wave propagation with amplitude based on pseudo-differential
  operators}.
\newblock {\em Wave Motion}, 47(2):67--84, March 2010.

\bibitem{Pilipovic:13}
S.~Pilipovi{\'{c}}, D.~Scarpalezos, and J.~Vindas.
\newblock {Classes of generalized functions with finite type regularities}.
\newblock {\em Oper. Theory Adv. Appl., vol. 231}, 2013.

\bibitem{Stolk:04}
C.~C. Stolk.
\newblock {A pseudodifferential equation with damping for one-way wave
  propagation in inhomogeneous media}.
\newblock {\em Wave Motion}, 40(2):111--121, 2000.

\bibitem{Stolk:05}
C.~C. Stolk.
\newblock {Parametrix for a hyperbolic initial value problem with dissipation
  in some region}.
\newblock {\em Asymptotic analysis}, 43(1-2):151--169, 2005.

\bibitem{Taylor:81}
M.~E. Taylor.
\newblock {\em {Pseudodifferential operators}}.
\newblock Princeton Univ. Press, 1981.

\bibitem{Wunsch:08}
J.~Wunsch.
\newblock {\em {Microlocal analysis and evolution equations}}.
\newblock Lecture Notes, 2008.

\bibitem{Zhang:03}
Y.~Zhang, G.~Zhang, and N.~Bleistein.
\newblock {True amplitude wave equation migration arising from true amplitude
  one-way wave equations}.
\newblock {\em Inverse Problems}, Vol. 19(5):1113--1138, 2003.

\end{thebibliography}

\end{document}